\patchcmd{\@setaddresses}{\indent}{\noindent}{}{}
\patchcmd{\@setaddresses}{\indent}{\noindent}{}{}
\patchcmd{\@setaddresses}{\indent}{\noindent}{}{}
\patchcmd{\@setaddresses}{\indent}{\noindent}{}{}
\let\savering\ring
\let\savesquare\square
\let\ring\relax
\let\square\relax
\let\ring\savering
\let\square\savesquare
\def\namedlabel#1#2{\begingroup
    #2%
    \def\@currentlabel{#2}%
    \phantomsection\label{#1}\endgroup}
\newtheorem{theorem}{\sc Theorem}[section]
\newtheorem{proposition}[theorem]{\sc Proposition}
\newtheorem{notation}[theorem]{\sc Notation}
\newtheorem{lemma}[theorem]{\sc Lemma}
\newtheorem{corollary}[theorem]{\sc Corollary}
\newtheorem*{proposition*}{Proposition}
\newtheorem*{theorem*}{\sc Theorem}
\theoremstyle{definition}
\newtheorem{definition}[theorem]{\sc Definition}
\declaretheorem[name={\sc Example},qed={$\lozenge$},sibling=theorem]{example}
\theoremstyle{remark}
\newtheorem{remark}[theorem]{\sc Remark}
\newenvironment{invisible}{{\noindent\sc \colorbox{yellow}{Invisible:}\;}\color{gray}}{\medskip}
\newcommand{\Cc}{\mathcal{C}}
\newcommand{\id}{\mathrm{Id}}
\newcommand{\im}{\mathrm{im}}
\newcommand{\op}{\mathrm{op}}
\newcommand{\cop}{\mathrm{cop}}
\newcommand{\can}{\mathfrak{can}}
\newcommand{\N}{\mathbb N}
\newcommand{\Q}{\mathbb Q}
\newcommand{\Z}{\mathbb Z}
\newcommand{\pred}[1]{{^\star #1}}
\def\Alg{{\sf Alg}}
\def\Coalg{{\sf Coalg}}
\def\Bialg{{\sf Bialg}}
\def\Hopf{{\sf Hopf}}
\def\Grp{{\sf Grp}}
\def\Mon{{\sf Mon}}
\newcommand{\qB}{Q(B)}
\newcommand{\qq}{q_B}
\crefname{part}{\S}{\S\S}
\crefname{chapter}{\S}{\S\S}
\crefname{section}{\S}{\S\S}
\crefname{subsection}{\S}{\S\S}
\begin{document}
\allowdisplaybreaks

\title[On the Hopf envelope of finite-dimensional bialgebras]{On the Hopf envelope of finite-dimensional bialgebras}

\thanks{This paper was written while the authors were members of the ``National Group for Algebraic and Geometric Structures and their Applications'' (GNSAGA-INdAM).
It is partially based upon work from COST Action CaLISTA CA21109 supported by COST (European Cooperation in Science and Technology) \href{www.cost.eu}{www.cost.eu}.
The authors acknowledge partial support by the European Union - NextGenerationEU under NRRP, Mission 4 Component 2 CUP D53D23005960006 - Call PRIN 2022 No.\, 104 of February 2, 2022 of Italian Ministry of University and Research; Project 2022S97PMY \textit{Structures for Quivers, Algebras and Representations (SQUARE)}. PS was partially supported by the project PID2024-157173NB-I00 funded by MCIN/AEI/10.13039/501100011033 and by FEDER, UE. 
Esta publicación ha sido parcialmente financiada con fondos propios de la Junta de Andalucía, en el marco de la ayuda DGP\_EMEC\_2023\_00216.
We wish to express our gratitude to the referee for the valuable comments and suggestions, which have contributed to improving the previous version of the paper. We also thank A. Chirv\u{a}situ, J.~Cuadra D\'iaz, D.~Ferri, R.~Fioresi, M.~Gran, D.~Nikshych and P.~Schauenburg for insightful discussions concerning the themes of this work. }

\begin{abstract}
The Hopf envelope of a bialgebra is the free Hopf algebra generated by the given bialgebra. Its existence, as well as that of the cofree Hopf algebra, is a well-known fact in Hopf algebra theory, but their construction is not particularly handy or friendly. In this note, we offer a novel realisation of the Hopf envelope and of the cofree Hopf algebra of a finite-dimensional bialgebra as a particular quotient and sub-bialgebra, respectively, of the bialgebra itself. Our construction can also be extended to the infinite-dimensional case, provided that the bialgebra satisfies additional conditions, such as being right perfect as an algebra or admitting a $n$-antipode, the latter being a notion hereby introduced. Remarkably, the machinery we develop also allows us to give a new description of the Hopf envelope of a commutative bialgebra and of the cofree cocommutative Hopf algebra of a cocommutative bialgebra.
\end{abstract}

\keywords{Bialgebras, Hopf algebras, n-Hopf algebras, Hopf envelope, cofree Hopf algebra}

\author[A.~Ardizzoni]{Alessandro Ardizzoni\, \orcidlink{0000-0001-7384-611X}}
\address{%
\parbox[b]{\linewidth}{University of Turin, Department of Mathematics ``G. Peano'', via
Carlo Alberto 10, I-10123 Torino, Italy}}
\email{alessandro.ardizzoni@unito.it}
\urladdr{\url{www.sites.google.com/site/aleardizzonihome}}

\author[C.~Menini]{Claudia Menini\, \orcidlink{0000-0003-2782-7377}}
\address{%
\parbox[b]{\linewidth}{University of Ferrara, Department of Mathematics and Computer Science, Via Machiavelli
30, Ferrara, I-44121, Italy}}
\email{men@unife.it}
\urladdr{\url{https://sites.google.com/a/unife.it/claudia-menini}}

\author[P.~Saracco]{Paolo Saracco\, \orcidlink{0000-0001-5693-7722}}
\address{University of Turin, Department of Mathematics ``G. Peano'', via Carlo Alberto 10, I-10123 Torino, Italy}
\email{p.saracco@unito.it}
\urladdr{\url{https://sites.google.com/view/paolo-saracco}}

\subjclass[2020]{16T05, 16T10, 18M05} 

\date{\today}

\maketitle
\tableofcontents

\section{Introduction}

In \cite[\S7]{Manin-book}, Manin introduced the \emph{Hopf envelope} of a bialgebra $B$, which can also be legitimately referred to as the \emph{free Hopf algebra generated by} $B$, by generalising the construction of the free Hopf algebra over a coalgebra given earlier by Takeuchi \cite{Takeuchi}. The Hopf envelope of $B$ is a Hopf algebra $\mathrm{H}\left( B\right) $ together with a bialgebra map $\eta _{B}:B\rightarrow \mathrm{H} \left( B\right) $ such that for any Hopf algebra $H$ and for every bialgebra map $f:B\rightarrow H$, there exists a unique Hopf algebra map  $g:\mathrm{H}\left( B\right) \rightarrow H$ such that $g\circ \eta _{B}=f$ (see also \cite[Theorem 2.6.3]{Pareigis}, and see \cite{PaulJoost} for a recent extension of this construction to Hopf categories). However, the general construction of the Hopf envelope of a bialgebra may be particularly unfriendly, as it involves the coproduct of infinitely many copies of $B$, possibly with opposite multiplication and comultiplication. Consequently, it is always desirable to have at disposal more handy realisations of it, at least in some favourable cases, as it has been proven useful recently in \cite{Farinati,FarinatiGaston} and in the forthcoming \cite{AMSaracco}.

Our first aim is to suggest an alternative approach which, among other things, allows us to explicitly describe the Hopf envelope of a finite-dimensional and, more generally, of a right perfect bialgebra $B$ as an explicit quotient of $B$ itself. More precisely we have the following:

\begin{theorem*}[\ref{prop:HBfd}, \ref{prop:HBArt}, \ref{prop:HBfdArt}]
Let $B$ be a finite-dimensional or, more generally,  right perfect bialgebra. Then $\mathrm{H}(B) = \frac{ B}{\ker(i_B)} \cong \mathrm{im}(i_B)$ where $i_B\colon B \to B\oslash B\coloneqq (B \otimes B)/(B \otimes B) B^+$, $b \mapsto \overline{b \otimes 1}$.
\end{theorem*}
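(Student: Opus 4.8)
The plan is to realise $B/\ker(i_B)$ as the Hopf envelope by checking its universal property, isolating the one genuinely hard point (that the quotient is a Hopf algebra at all). Throughout I write $e:=\overline{1\otimes 1}\in B\oslash B$ and $j_B\colon B\to B\oslash B$, $b\mapsto\overline{1\otimes b}$. Since multiplication in $B\otimes B$ is componentwise and $(B\otimes B)B^+=(B\otimes B)\Delta(B^+)$ is the left ideal obtained from the diagonal right $B$-action, $B\oslash B$ is a left $B$-module via its first tensorand and $i_B$ is left $B$-linear with $i_B(b)=b\cdot e$; hence $\mathrm{im}(i_B)=B\cdot e\cong B/\ker(i_B)$ and $\ker(i_B)$ is a priori a left ideal. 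The computation $\Delta(b)-\varepsilon(b)(1\otimes 1)=(1\otimes 1)\Delta\!\big(b-\varepsilon(b)1\big)\in(B\otimes B)B^+$ yields the identity
\[
\sum \overline{b_{(1)}\otimes b_{(2)}}=\varepsilon(b)\,e\qquad\text{in }B\oslash B,
\]
which is the engine of the argument: read through $i_B$ and $j_B$ it says $\sum i_B(b_{(1)})\cdot j_B(b_{(2)})=\varepsilon(b)e$, i.e. $j_B$ is a one-sided convolution inverse of $i_B$.

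The first solid step is the key lemma: if $H$ is a Hopf algebra then $i_H$ is injective. Indeed $\pi\colon H\otimes H\to H$, $x\otimes y\mapsto xS(y)$, annihilates $(H\otimes H)H^+$ because $\pi\big((x\otimes y)\Delta(c)\big)=\sum xc_{(1)}S(c_{(2)})S(y)=\varepsilon(c)\,xS(y)=0$ for $c\in H^+$; hence it descends to $\bar\pi\colon H\oslash H\to H$ with $\bar\pi\circ i_H=\mathrm{id}_H$, so $i_H$ is a split monomorphism. Next comes functoriality: a bialgebra map $f\colon B\to H$ induces the algebra map $f\otimes f$, which sends $(B\otimes B)\Delta_B(B^+)$ into $(H\otimes H)\Delta_H(H^+)$ (as $f(B^+)\subseteq H^+$), hence descends to $\bar f\colon B\oslash B\to H\oslash H$ with $\bar f\circ i_B=i_H\circ f$. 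When $H$ is a Hopf algebra, injectivity of $i_H$ forces $\ker(i_B)\subseteq\ker(i_H\circ f)=\ker(f)$. Thus $\ker(i_B)$ is contained in every bi-ideal $I$ with $B/I$ a Hopf algebra, which will supply the factorisation half of the universal property.

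The crux, and the step where finite-dimensionality (or right perfectness) is indispensable, is to prove that $B/\ker(i_B)$ is itself a Hopf algebra. I expect this to split into two tasks. First, one must promote $\ker(i_B)$ from a left ideal to a bi-ideal (a two-sided ideal and a coideal), so that $\bar B:=B/\ker(i_B)\cong\mathrm{im}(i_B)$ is a quotient bialgebra of $B$. Second, one must produce an antipode, i.e. a convolution inverse of $\mathrm{id}_{\bar B}$ in the (finite-dimensional) convolution algebra $\mathrm{End}(\bar B)$. The identity above already hands us a one-sided convolution inverse $j_B$ of $i_B$; the obstruction is that $j_B$ takes values in $B\oslash B$ rather than in $\mathrm{im}(i_B)$, and that the inverse is only one-sided. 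This is exactly where the hypotheses enter: finiteness is used to force the relevant surjections to be bijections — a surjective endomorphism of a finite-dimensional space (respectively, via the descending chain condition available over a right perfect algebra) is injective — which both makes $j_B$ factor through $\mathrm{im}(i_B)$ and upgrades the one-sided convolution inverse to a genuine two-sided antipode; equivalently, one realises $\mathrm{im}(i_B)$ as a finite-dimensional sub-bialgebra of a Hopf algebra and invokes the fact that such a sub-bialgebra is automatically a Hopf subalgebra. That the hypotheses cannot be dropped is visible already for $B=k[\mathbb{N}]$ (the monoid bialgebra, $t$ grouplike): there $B\oslash B\cong k[\mathbb{Z}]=\mathrm{H}(B)$, but $i_B$ is the non-surjective inclusion $k[\mathbb{N}]\hookrightarrow k[\mathbb{Z}]$, $\ker(i_B)=0$, and $B/\ker(i_B)=k[\mathbb{N}]$ is not a Hopf algebra.

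Granting that $\bar B=B/\ker(i_B)$ is a Hopf algebra, the universal property follows formally. The quotient map $q\colon B\to\bar B$ is a bialgebra map into a Hopf algebra, so it is one of the maps $f$ of the second paragraph; conversely, for any Hopf algebra $H$ and bialgebra map $f\colon B\to H$ we have $\ker(i_B)\subseteq\ker(f)$, so $f$ factors as $g\circ q$ for a unique bialgebra map $g\colon\bar B\to H$, which is automatically a Hopf algebra map since source and target are Hopf algebras, and is unique by surjectivity of $q$. Hence $\bar B$ satisfies the universal property characterising $\mathrm{H}(B)$, giving $\mathrm{H}(B)\cong B/\ker(i_B)\cong\mathrm{im}(i_B)$. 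The whole difficulty is therefore concentrated in the middle step, and I would expect the finite-dimensional and right perfect cases to be handled by the two variants of the ``surjective implies bijective'' principle indicated above, in line with the three cited propositions.
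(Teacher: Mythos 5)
Your scaffolding is sound and matches the paper's: $\ker(i_B)$ is a left ideal and a coideal, $i_H$ is a split injection (via $x\otimes y\mapsto xS(y)$) whenever $H$ is a Hopf algebra, functoriality of $\oslash$ gives $\ker(i_B)\subseteq\ker(f)$ for every bialgebra map $f$ into a Hopf algebra, and once $B/\ker(i_B)$ is known to be a Hopf algebra the universal property is indeed formal; your $\Bbbk\mathbb{N}$ example correctly shows the hypotheses are needed. But the step you yourself flag as the crux --- that $B/\ker(i_B)$ is a bialgebra at all, and then a Hopf algebra --- is exactly where the proposal stops being a proof, and the mechanisms you gesture at would not fill it. Everything hinges on the \emph{surjectivity of $i_B$} (and of $i_{B^{\mathrm{cop}}}$), which you never establish. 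Surjectivity is what produces an operator $S$ with $1\oslash y=S(y)\oslash 1$, hence makes the projection $q_B\colon B\to B/\ker(i_B)B$ right convolution invertible; applied to $B^{\mathrm{cop}}$, it is also what upgrades the left ideal $\ker(i_B)$ to a two-sided ideal, so that the quotient in the statement is even a bialgebra. In the paper this surjectivity is \emph{not} a ``surjective endomorphism of a finite-dimensional space is injective'' phenomenon: it comes from the existence of a left $n$-antipode, i.e.\ some $S$ with $S*\id^{*n+1}=\id^{*n}$, extracted from the stabilizing descending chain of principal left ideals $A*\id\supseteq A*\id^{*2}\supseteq\cdots$ in the convolution algebra $A=\Bbbk[\id]$ (finite-dimensional case), respectively in $\mathrm{Hom}_\Bbbk(C,B)$ for finite-dimensional subcoalgebras $C$ together with the fundamental theorem of coalgebras (right perfect case). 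Nothing in your proposal plays this role.

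Your fallback --- ``realise $\mathrm{im}(i_B)$ as a finite-dimensional sub-bialgebra of a Hopf algebra'' --- is circular: the only ambient candidates are $B\oslash B$, which in general is not even a bialgebra (it is a quotient bialgebra of $B\otimes B^{\mathrm{cop}}$ only under extra hypotheses), and $\mathrm{H}(B)$, but embedding $B/\ker(i_B)$ into $\mathrm{H}(B)$ amounts to knowing $\ker(\eta_B)\subseteq\ker(i_B)$, which is essentially the theorem; the inclusion you do have, $\ker(i_B)\subseteq\ker(\eta_B)$, points the wrong way. Relatedly, reading $\sum\overline{b_{(1)}\otimes b_{(2)}}=\varepsilon(b)e$ as ``$j_B$ is a one-sided convolution inverse of $i_B$'' is not yet meaningful, since $B\oslash B$ carries no algebra structure; it becomes a genuine convolution identity, $q_B*(q_B\circ S)=u\circ\varepsilon$, only after surjectivity of $i_B$ lets you rewrite $\overline{1\otimes b}$ as $i_B(S(b))$ and push down to the quotient. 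Finally, even granting all of this, the paper needs one more nontrivial cancellation input to produce the antipode: a surjective, right convolution invertible bialgebra map onto a left $n$-Hopf algebra (finite-dimensional case, where $Q(B)$ is again finite-dimensional hence $n$-Hopf), or onto a right perfect --- hence weakly finite, by Skryabin's results --- bialgebra (perfect case), forces the target to be a Hopf algebra. So the gap is not a detail of bookkeeping: the $n$-antipode machinery and the weak-finiteness input are the actual mathematical content of the theorem, and they are absent from the proposal.
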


Interestingly, in all the cases considered herein we can show that $\mathrm{H}(B) \cong B \oslash B$.

Secondly, by dualizing our techniques, our approach allows us to handily describe also the dual construction, that is, the \emph{cofree Hopf algebra on a bialgebra} $B$, at least in the finite-dimensional setting. The cofree Hopf algebra of $B$ is a Hopf algebra $\mathrm{C}(B)$ with a bialgebra morphism $\epsilon_B \colon \mathrm{C}(B) \to B$ such that for every Hopf algebra $H$ and any bialgebra morphism $f \colon H \to B$, there exists a unique Hopf algebra morphism $\tilde f \colon H \to \mathrm{C}(B)$ satisfying $\epsilon_B \circ \tilde f = f$. A non-constructive proof of its existence, based on special adjoint functor theorem, was given in \cite{Agore}. Here, we prove the following:

\begin{theorem*}[\ref{prop:CBfd}]
Let $B$ be a finite-dimensional bialgebra. \\
Then $\mathrm{C}(B)= \im(p_B)$ 
where $p_B\colon B\boxslash B\coloneqq \left(B \otimes B\right)^{\mathrm{co}B} \to B$, $x^i \otimes y_i \mapsto x^i\varepsilon(y_i)$.
\end{theorem*}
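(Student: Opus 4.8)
The plan is to derive the statement from the Hopf envelope \cref{prop:HBfd} applied to the linear dual $B^{*}$, using that $(-)^{*}$ is a contravariant self-equivalence of the category of finite-dimensional bialgebras which interchanges the (left-adjoint) Hopf envelope with the (right-adjoint) cofree Hopf algebra. Concretely I would set $C\coloneqq\mathrm{H}(B^{*})^{*}$ and $\epsilon_{B}\coloneqq\eta_{B^{*}}^{*}\colon C\to (B^{*})^{*}=B$, and then prove two things: that $C$ is identified with $\mathrm{im}(p_{B})$ and $\epsilon_{B}$ with the inclusion $\mathrm{im}(p_{B})\hookrightarrow B$; and that $(C,\epsilon_{B})$ satisfies the universal property defining $\mathrm{C}(B)$.

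For the identification, feeding $B^{*}$ into \cref{prop:HBfd} gives $\mathrm{H}(B^{*})\cong\mathrm{im}(i_{B^{*}})$ with $\eta_{B^{*}}$ corresponding to $i_{B^{*}}\colon B^{*}\to B^{*}\oslash B^{*}$. Since $\mathrm{im}(\varphi)^{*}\cong\mathrm{im}(\varphi^{*})$ for a linear map $\varphi$ between finite-dimensional spaces, dualising yields $C=\mathrm{im}(i_{B^{*}})^{*}=\mathrm{im}(i_{B^{*}}^{*})$, with $\epsilon_{B}=i_{B^{*}}^{*}$ corestricted. It then remains to recognise $i_{B^{*}}^{*}$ as $p_{B}$: by the defining duality between the two constructions (the cofree-side objects being the linear duals of the Hopf-envelope-side ones), the annihilator of the defining right ideal $(B^{*}\otimes B^{*})(B^{*})^{+}$ inside $(B^{*}\otimes B^{*})^{*}=B\otimes B$ is precisely $B\boxslash B=(B\otimes B)^{\mathrm{co}B}$, and pairing a coinvariant $\sum x^{i}\otimes y_{i}$ with $i_{B^{*}}(\beta)=\overline{\beta\otimes 1}$ gives $\sum\beta(x^{i})\varepsilon(y_{i})=\beta\bigl(\sum x^{i}\varepsilon(y_{i})\bigr)$; hence $i_{B^{*}}^{*}=p_{B}$ and $C=\mathrm{im}(p_{B})$ with $\epsilon_{B}$ the inclusion. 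These are bookkeeping computations of annihilators and pairings.

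The main obstacle is the universal property of $(C,\epsilon_{B})$, which quantifies over \emph{all} Hopf algebras $H$, including infinite-dimensional ones, so that a morphism $f\colon H\to B$ cannot be dualised outright. I would bypass this via the finite dual $H^{\circ}$. Given a bialgebra map $f\colon H\to B$, the kernel $\ker f$ is a cofinite ideal (as $H/\ker f\hookrightarrow B$ is finite-dimensional), so transposition produces a bialgebra map $f^{*}\colon B^{*}\to H^{\circ}$ into the Hopf algebra $H^{\circ}$. Conversely, because $C$ is finite-dimensional, every Hopf algebra map $\tilde f\colon H\to C$ transposes to a Hopf algebra map $C^{*}=\mathrm{H}(B^{*})\to H^{\circ}$, and a direct pairing computation shows that $\tilde f\mapsto\tilde f^{*}$ and $g\mapsto g^{*}\circ\kappa_{H}$ (with $\kappa_{H}\colon H\to(H^{\circ})^{*}$ the canonical map) are mutually inverse, and that $\epsilon_{B}\circ\tilde f=f$ corresponds exactly to $g\circ\eta_{B^{*}}=f^{*}$ under $\epsilon_{B}=\eta_{B^{*}}^{*}$.

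Finally, this bijection carries the factorisation problem for $(C,\epsilon_{B})$ onto the one solved by the universal property of $\mathrm{H}(B^{*})$ in \cref{prop:HBfd}, producing a unique lift $\tilde f$ and thereby the universal property; combined with the identification above, it gives $\mathrm{C}(B)=\mathrm{im}(p_{B})$. The steps demanding genuine care are that $f^{*}$ actually lands in $H^{\circ}$ and is a bialgebra map, and the mutual-inverse verification on the relevant morphism sets; once these are in place the argument closes.
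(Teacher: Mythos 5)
Your proposal is correct, but it follows a genuinely different route from the paper's own proof. The paper proves \cref{prop:CBfd} intrinsically, on the cofree side: finite-dimensionality makes $B$ (and $B^\op$) a left $n$-Hopf algebra (\cref{coro:fdS}), hence $p_B$ and $p_{B^\op}$ are injective (\cref{prop:semiantip}); \cref{prop:pconvd} then shows that $K(B)=\im(p_B)\cong B\boxslash B$ is a sub-bialgebra whose inclusion $k_B$ is right convolution invertible; since $K(B)$ is again finite-dimensional it is itself a left $n$-Hopf algebra, so \cref{lem:n-Hopfconv}\,\ref{item:fHopf2} upgrades it to a Hopf algebra, and the universal property is \cref{prop:pconvd}\,\ref{item:pconvd2}. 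You instead transport the free-side theorem through linear duality applied to $B^*$ (note that the identification $\mathrm{H}(B^*)\cong\im(i_{B^*})$ you invoke is really \cref{prop:HBfdArt} rather than \cref{prop:HBfd} alone, since one needs $\ker(i_{B^*})$ to be a two-sided ideal): your annihilator computation identifying $(B^*\oslash B^*)^*$ with $B\boxslash B$ and $i_{B^*}^*$ with $p_B$ is correct, and you correctly isolate and resolve the one genuine obstacle — that the universal property quantifies over possibly infinite-dimensional Hopf algebras $H$ — by passing to $H^\circ$, which is a Hopf algebra, noting that $f^*=f^\circ$ lands in $H^\circ$ because $\ker f$ is cofinite, and setting up the bijection between Hopf algebra maps $H\to C$ (with $C$ finite-dimensional) and Hopf algebra maps $C^*=\mathrm{H}(B^*)\to H^\circ$ via $\tilde f\mapsto\tilde f^*$ and $g\mapsto g^*\circ\kappa_H$; the needed verifications (that $g^*\circ\kappa_H$ is a bialgebra map, that the two assignments are mutually inverse, and that $\epsilon_B\circ\tilde f=f$ corresponds to $g\circ\eta_{B^*}=f^*$ under $\epsilon_B=\eta_{B^*}^*$) are routine pairing computations and do go through, so existence and uniqueness of the lift are imported from the universal property of $\mathrm{H}(B^*)$. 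In effect you give a concrete, self-contained instantiation of the abstract duality $\mathrm{C}(B^\circ)\cong\mathrm{H}(B)^\circ$, which the paper records only in \cref{rmk:findual} and exploits only in \cref{ex:CBnontriv}, never for the proof of \cref{prop:CBfd} itself. What each approach buys: yours is shorter granted \cref{prop:HBfd}--\cref{prop:HBfdArt} and the standard finite-dual package ($H^\circ$ is Hopf, functoriality of $(-)^\circ$, the evaluation map), and it makes the free/cofree symmetry completely transparent; the paper's avoids finite duals altogether, yields the extra identification $\mathrm{C}(B)=K(B)$ together with the explicit isomorphism $B\boxslash B\cong\mathrm{C}(B)$ as byproducts, and — crucially — its method (convolution invertibility of $k_B$ plus $n$-Hopf and perfectness arguments) is the one that survives outside the finite-dimensional setting, where duality breaks down, as in \cref{prop:CBleftArt}.
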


The proofs of these results rely on a thoughtful study of the canonical maps $i_B$ and $p_B$, which we introduce in \cref{ssec:iB}. A cardinal role in this is played by the surjectivity of $i_B$ and the injectivity of $p_B$ (studied in \cref{ssec:i_B} and \cref{ssec:p_B}), by the related notion of $n$-Hopf algebra and its one-sided analogues (introduced and studied in \cref{ssec:nHopf}), and by their relationship with the property of being right perfect, as an algebra, for the given bialgebra (analysed in \cref{ssec:Artin}). In particular, they use in a crucial way the facts that a finite-dimensional bialgebra is always a $n$-Hopf algebra for some $n \in \mathbb{N}$ (\cref{coro:fdS}), so that it has both $i_B$ surjective and $p_B$ injective (\cref{prop:semiantip}), and that, more generally, also right perfect bialgebras have both $i$ surjective and $p$ injective (\cref{pro:Artinian}).

Even if, from a categorical perspective, the free and cofree constructions, as well as the canonical maps $i_B$ and $p_B$ and 
the objects $B \oslash B$ and $B \boxslash B$ 
are dual to each other, their treatment cannot be reduced to a straightforward dualization, as not every result we prove admits an elementary dualisation and not every tool we use, such as the fundamental theorem of coalgebras, has a dual counterpart. 
For example, 
for a commutative bialgebra $B$, the free Hopf algebra $\mathrm{H}(B)$, which we can prove to coincide with the free commutative Hopf algebra $\mathrm{HP}_c(B)$ up to isomorphism (\cref{prop:HPcB}), can be realised as the quotient $B\oslash B$ (\cref{thm:intcomm}). The dual statement does not hold in cocommutative setting, as shown by \cref{thm:CcB} and the paragraph after it.
Therefore, in the present work we treat both cases in parallel.

The careful reader may wonder why we treat separately the finite-dimensional and the right perfect case. Our motivation is twofold. First of all, we use different techniques in the finite-dimensional setting than in the perfect one. Namely, properties of $n$-Hopf algebras, which are interesting in and of themselves, too. Secondly, the results we achieve for finite-dimensional bialgebras are, as one may expect, stronger than those that we have for right perfect ones, also thanks to the role played by $n$-Hopf algebras. For instance, even if we are able to explicitly realise the cofree Hopf algebra of a right perfect bialgebra $B$ as a sub-bialgebra of $B$ (\cref{prop:CBleftArt}, via an iterative construction based on Kelly's well-(co)pointed endofunctors \cite{Kelly1}), its description is not as handy and as elegant as for the finite-dimensional ones. For the sake of completeness, we address also the construction of the free and cofree Hopf algebras on a $n$-Hopf algebra via the same iterative construction.

We conclude with a number of concrete examples in \cref{ssec:example}; among all, that of monoid algebras. 



\section{Preliminaries}

Let $\Bbbk$ be a field and let $\mathfrak{M}$ be the category of vector spaces over $\Bbbk$. If $B$ is a $\Bbbk$-bialgebra, then we can consider the category $\mathfrak{M}_{B}^{B}$ of right Hopf modules over $B$ and the free Hopf module functor $F = \left( -\right) \otimes B_{\bullet }^{\bullet } \colon \mathfrak{M} \to \mathfrak{M}_{B}^{B}$, which is fully faithful and admits a right adjoint given by the space of coinvariant elements functor $G = \left(-\right) ^{\mathrm{co}B} \colon \mathfrak{M}_{B}^{B} \to \mathfrak{M}$.
These functors are involved in the celebrated structure theorem for Hopf modules and they are known to be part of an adjoint triple $E\dashv F\dashv G$ (see e.g.\ \cite[Section 3]{Sar21}), where $E=\overline{\left( -\right) }^{B}:\mathfrak{M}_{B}^{B}\rightarrow \mathfrak{M}$ maps a $B$-Hopf module $M$ to  $\overline{M}^{B} \coloneqq M/MB^{+}$ ($B^{+}=\ker (\varepsilon _{B})$ is the augmentation ideal). The units and counits of these adjunctions are given by
\begin{gather*}
\eta _{M} \colon M\rightarrow \overline{M}^{B}\otimes B, \quad m\mapsto \sum \overline{%
m_{0}}\otimes m_{1},\qquad \epsilon _{V} \colon \overline{\left( V\otimes B\right) }%
^{B}\overset{\cong }{\rightarrow }V,\quad \overline{v\otimes b}\mapsto
v\varepsilon _{B}\left( b\right)  \\
\nu _{V} \colon V\overset{\cong }{\rightarrow }\left( V\otimes B\right) ^{\mathrm{co} %
B},\quad v\mapsto v\otimes 1_{B},\qquad \theta _{M} \colon M^{\mathrm{co}B}\otimes
B\rightarrow M,\quad m\otimes b\mapsto mb.
\end{gather*}%
As observed in \cite[Section 3]{Sar21}, the two adjoints of $\left( -\right) \otimes B_{\bullet }^{\bullet }$ are
connected by a natural transformation $\sigma :(-)^{\mathrm{co}B}\rightarrow
\overline{(-)}^{B}$ defined on components by \[\sigma _{M}:M^{\mathrm{co}%
B}\rightarrow \overline{M}^{B},m\mapsto \overline{m} \coloneqq m+MB^{+}.\]

For the sake of clarity and simplicity, let us introduce the following notation.

\begin{notation}\label{not:oslash}
Let $B$ be a bialgebra. For every $X$ in $\mathfrak{M}_B$, the category of right $B$-modules, we denote by $X\oslash B$ the quotient
\[X\oslash B = \frac{X_\bullet\otimes B_\bullet}{(X_\bullet\otimes B_\bullet)B^+},\]
where $B^+ = \ker(\varepsilon_B)$.  We also denote by $x\oslash a$ the equivalence class of $x\otimes a$, for all $x\in X,a\in B$, so that $xb_1\oslash ab_2=x\oslash a\varepsilon(b)$, for every $b\in B$.

Similarly, for $N \in \mathfrak{M}^{B}$, the category of right $B$-comodules, we denote by $N \boxslash B$ the subspace
\[
N \boxslash B = \left(N^\bullet \otimes B_\bullet^\bullet\right)^{\mathrm{co}B}
\]
of $N \otimes B$ and by $n^i \boxslash b_i$ the element $n^i \otimes b_i$ in $N \boxslash B$, so that
\begin{equation}\label{eq:coinv}
{n^i}_{0} \boxslash {b_i}_{1} \otimes {n^i}_{1} {b_i}_{2} = n^i \boxslash b_i \otimes 1_B.
\end{equation}
\end{notation}

In this way,
\[\overline{X_{\bullet }\otimes B^{\bullet}_{\bullet }}^{B}=\frac{X_{\bullet }\otimes B_{\bullet }}{\left( X_{\bullet }\otimes B_{\bullet }\right) B^{+}}= X\oslash B
\quad\text{and}\quad
\left(N^\bullet \otimes B^\bullet\right) \square_B \Bbbk \cong \left(N^\bullet \otimes B_\bullet^\bullet\right)^{\mathrm{co}B} = N \boxslash B\]
for every $X$ in $\mathfrak{M}_B$, $N$ in $\mathfrak{M}^B$.
We are mainly interested in the quotient $B \oslash B$ and in the subspace $B \boxslash B$.

\begin{proposition}\label{lem:oslash}
Let $B$ be a bialgebra. Then $B\oslash B$ is a coalgebra in $_{B\otimes B^\cop}\mathfrak{M}$ with comultiplication, counit and left $B\otimes B$-module structure given, for every $x,y,a,b\in B$, by
\begin{equation}\label{def:oslashcoalg}
   \Delta(x\oslash y) = (x_1\oslash y_2) \otimes (x_2\oslash y_1),
   \quad
   \varepsilon(x\oslash y)=\varepsilon(x)\varepsilon(y)
   \quad \text{and}\quad
   (a\otimes b)\cdot(x\oslash y)=ax\oslash by.
   \end{equation}
In particular, $B\oslash B$ is a quotient coalgebra of $B\otimes B^\cop.$ Analogously, $B \boxslash B$ is an algebra in $\prescript{B \otimes B^{\mathrm{op}}}{}{\mathfrak{M}}$ with multiplication, unit and $B \otimes B$-coaction given, for all $u^i \boxslash v_i,x^j \boxslash y_j \in B \boxslash B$, by
\begin{equation}\label{eq:boxslashalg}
\begin{gathered}
\left(u^i \boxslash v_i\right)\cdot \left(x^j \boxslash y_j\right) = u^ix^j \boxslash y_jv_i, \quad 1_{B \boxslash B} = 1_B \boxslash 1_B \quad \text{and} \\
\left(x^j \boxslash y_j\right)_{-1} \otimes \left(x^j \boxslash y_j\right)_0 = \left({x^j_{}}_1 \otimes {y_j}_1\right) \otimes \left({x^j_{}}_{2} \boxslash {y_j}_{2}\right).
\end{gathered}
\end{equation}
In particular, $B\boxslash B$ is a subalgebra of $B\otimes B^\op$ and $x^iy_i = \varepsilon\left(x^i\right)\varepsilon\left(y_i\right)1_B$ for all $x^i \boxslash y_i \in B \boxslash B$.
\end{proposition}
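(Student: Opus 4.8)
The plan is to realise $B \otimes B^{\cop}$ and $B \otimes B^{\op}$ as bialgebras (being tensor products of the bialgebras $B$ and $B^{\cop}$, resp.\ $B$ and $B^{\op}$), and then to exhibit $B \oslash B$ as a quotient of the former and $B \boxslash B$ as a subobject of the latter. The point of departure is the standard observation that any bialgebra $H$ is tautologically a coalgebra in the monoidal category $\prescript{}{H}{\mathfrak{M}}$ of its left modules: its comultiplication $\Delta_H \colon H \to H \otimes H$ is left $H$-linear precisely because $\Delta_H$ is an algebra map and the tensor square carries the module structure $h \cdot (a \otimes b) = \sum h_1 a \otimes h_2 b$, while $\varepsilon_H$ is $H$-linear for the trivial action; dually, $H$ is an algebra in $\prescript{H}{}{\mathfrak{M}}$. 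It therefore suffices to show that $B \oslash B$ arises from $B \otimes B^{\cop}$ by dividing out a subspace that is simultaneously a coideal and a left $B \otimes B$-submodule, and that $B \boxslash B$ sits inside $B \otimes B^{\op}$ as a subspace that is simultaneously a subalgebra and a left $B \otimes B^{\op}$-subcomodule. All the displayed structure maps are then inherited from the ambient bialgebras.

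For the $\oslash$ side, set $I \coloneqq (B_\bullet \otimes B_\bullet)B^+$, so that $B \oslash B = (B \otimes B^{\cop})/I$. That $I$ is a left $B \otimes B$-submodule is immediate, since left multiplication by $a \otimes b$ commutes with the diagonal right action: $(a \otimes b) \cdot \big((u \otimes v) \cdot c\big) = (au \otimes bv) \cdot c \in I$; and $\varepsilon_{B \otimes B^{\cop}}$ plainly vanishes on $I$. The heart of the matter is to check that $I$ is a coideal, i.e.\ that $(\pi \otimes \pi) \circ \Delta_{B \otimes B^{\cop}}$ kills $I$, where $\pi \colon B \otimes B \to B \oslash B$ is the projection. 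Applying $\Delta_{B \otimes B^{\cop}}$ to a generator $(x \otimes y) \cdot b$ and regrouping yields, after projecting, an expression of the form $\sum (x_1 b_1 \oslash y_2 b_4) \otimes (x_2 b_2 \oslash y_1 b_3)$ involving a four-fold coproduct of $b$. The key observation is that the two innermost legs $b_2, b_3$ are the two halves of a single coproduct sitting in the middle of the three-fold coproduct of $b$; hence applying the defining relation $x_2 c_1 \oslash y_1 c_2 = x_2 \oslash y_1 \varepsilon(c)$ of $B \oslash B$ in the second tensorand collapses them against $\varepsilon$, and a second application of the same relation in the first tensorand collapses the remaining legs, producing $\varepsilon(b) \sum (x_1 \oslash y_2) \otimes (x_2 \oslash y_1)$. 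This vanishes on $B^+$, proving at once that $I$ is a coideal and that the formula for $\Delta$ in \eqref{def:oslashcoalg} is well defined.

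For the dual $\boxslash$ side the computation runs along the same lines, with coinvariance replacing the module relation. Here $B \boxslash B = (B \otimes B)^{\mathrm{co}B}$ for the diagonal right $B$-coaction, and the asserted product is simply the restriction of the multiplication of $B \otimes B^{\op}$. The unit $1_B \boxslash 1_B = 1_B \otimes 1_B$ is visibly coinvariant, so the crux is closure under multiplication: given coinvariant $z = \sum u^i \otimes v_i$ and $w = \sum x^j \otimes y_j$, one must show $zw = \sum u^i x^j \otimes y_j v_i$ is coinvariant. Applying the diagonal coaction and rewriting exhibits the result as $\sum_i \Phi_i\big(\sum_j x^j_1 \otimes {y_j}_1 \otimes x^j_2 {y_j}_2\big)$, where $\Phi_i(a \otimes b \otimes c) = u^i_1\, a \otimes b\,{v_i}_1 \otimes u^i_2\, c\,{v_i}_2$ does not involve the index $j$; invoking the coinvariance \eqref{eq:coinv} of $w$ replaces the inner sum by $\sum_j x^j \otimes y_j \otimes 1$, and a symmetric manoeuvre using the coinvariance of $z$ then yields $\sum u^i x^j \otimes y_j v_i \otimes 1$, as required. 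The identity $x^i y_i = \varepsilon(x^i)\varepsilon(y_i)1_B$ drops out by applying $\varepsilon \otimes \varepsilon \otimes \mathrm{id}$ to \eqref{eq:coinv}, and that the coaction of \eqref{eq:boxslashalg} corestricts to $B \boxslash B$ (so that the latter is genuinely a left $B \otimes B^{\op}$-subcomodule) follows from coassociativity of the total $B \otimes B \otimes B$-coaction together with the coinvariance condition.

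I expect the main obstacle to be the bookkeeping in the two Sweedler computations, and in particular the correct way of peeling off the (co)unit: on the $\oslash$ side one must recognise the two middle legs of the iterated coproduct as a single collapsible coproduct and apply the defining relation in one tensorand at a time, while on the $\boxslash$ side one must isolate the effect of coinvariance on a single index by factoring through the auxiliary maps $\Phi_i$ and their mirror. Although the two halves of the statement are formally dual, this duality does not spare either computation, as the quotient and the subobject must each be analysed on its own terms; the remaining verifications — coassociativity, counitality, the (co)unit axioms, and the monoidal-category compatibilities — are then forced by the corresponding properties of the ambient bialgebras $B \otimes B^{\cop}$ and $B \otimes B^{\op}$.
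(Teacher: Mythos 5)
Your proposal is correct and takes essentially the same route as the paper's own proof: the identical Sweedler computation showing that $(B\otimes B)B^+$ is a coideal (equivalently, that $\Delta$ and $\varepsilon$ descend to the quotient), the same appeal to the fact that a bialgebra is a coalgebra in its category of left modules (dually, an algebra in its left comodules) applied to $B \otimes B^{\mathrm{cop}}$ and $B \otimes B^{\mathrm{op}}$, and the same two-fold application of the coinvariance relation to prove closure of $B \boxslash B$ under multiplication, with $x^iy_i = \varepsilon(x^i)\varepsilon(y_i)1_B$ obtained by applying $\varepsilon \otimes \varepsilon \otimes \mathrm{id}_B$ to that relation. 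There is nothing to add.
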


\begin{proof}
    Note that $\Delta$ is well-defined as
    \begin{align*}
    \Delta(xb_1\oslash yb_2) & = (x_1b_1\oslash y_2b_4) \otimes (x_2b_2\oslash y_1b_3) = (x_1b_1\oslash y_2b_3) \otimes (x_2\oslash y_1)\varepsilon(b_2) \\
    & = (x_1b_1\oslash y_2b_2) \otimes (x_2\oslash y_1) = (x_1\oslash y_2)\varepsilon(b) \otimes (x_2\oslash y_1) = \Delta(x\oslash y)\varepsilon(b)
    \end{align*}
    and $\varepsilon$, too, since
    \[\varepsilon(xb_1\oslash yb_2)=\varepsilon(xb_1)\varepsilon(yb_2)=\varepsilon(x)\varepsilon(y)\varepsilon(b)
    =\varepsilon(x\oslash y)\varepsilon(b).\]
  Now, every bialgebra $A$ is a coalgebra in the monoidal category $_{A}\mathfrak{M}$. In particular, this holds for $A = B \otimes B^\cop$. Since we just showed that $B \oslash B$ is a quotient of $B \otimes B^\cop$ by a coideal left ideal, it is a quotient coalgebra in the category $_{B \otimes B^\cop}\mathfrak{M}$.
    Dually, one proves that $B \boxslash B$ is an algebra in $\prescript{B\otimes B^{\mathrm{op}}}{}{\mathfrak{M}}$.
    \begin{invisible}
   \rem{Similarly, observe that if $u^i \otimes v_i$ and $x^j \otimes y_j$ are in $B \boxslash B$, then
   \begin{align*}
   \left(u^ix^j\right)_1 \otimes \left(y_jv_i\right)_1 \otimes \left(u^ix^j\right)_2 \left(y_jv_i\right)_2 & = {u^i_{}}_1{x^j_{}}_1 \otimes {y_j}_1{v_i}_1 \otimes {u^i_{}}_2{x^j_{}}_2{y_j}_2{v_i}_2 = {u^i_{}}_1x^j \otimes y_j{v_i}_1 \otimes {u^i_{}}_2{v_i}_2 \\
   & = u^ix^j \otimes y_jv_i \otimes 1,
   \end{align*}
   whence multiplication and unit are well-defined. It is also clear that $\Bbbk \to B \boxslash B, 1_\Bbbk \mapsto 1_B \otimes 1_B,$ is left $B \otimes B$-colinear. Finally, from
   \begin{align*}
   \left(u^ix^j \otimes y_jv_i\right)_{-1} \otimes \left(u^ix^j \otimes y_jv_i\right)_0 & = \left({u^i_{}}_1{x^j_{}}_1 \otimes {y_j}_1{v_i}_1\right) \otimes \left({u^i_{}}_{2}{x^j_{}}_{2} \otimes {y_j}_{2}{v_i}_{2}\right) \\
   & = \left({u^i_{}}_1{x^j_{}}_1 \otimes {y_j}_1{v_i}_1\right) \otimes \left({u^i_{}}_{2} \otimes {v_i}_{2}\right)\cdot \left({x^j_{}}_{2} \otimes {y_j}_{2}\right) \\
   & = \left(u^i \otimes v_i\right)_{-1}\left(x^j \otimes y_j\right)_{-1} \otimes \left(u^i \otimes v_i\right)_0 \cdot \left(x^j \otimes y_j\right)_0,
   \end{align*}
   we deduce that $B \boxslash B$ is an algebra in $\prescript{B\otimes B^{\mathrm{op}}}{}{\mathfrak{M}}$.}
   \end{invisible}
   %
   Finally, recall from \eqref{eq:coinv} that for every $x^i \boxslash {y_i} \in B\boxslash B=\left(B^\bullet \otimes B^\bullet\right)^{\mathrm{co}B}$ we have that
    \begin{equation}\label{eq:coinvs}
    x^i_1 \otimes {y_i}_1 \otimes x^i_2{y_i}_2 = x^i \otimes {y_i} \otimes 1_B
    \end{equation}
    so that $x^iy_i = \varepsilon\left(x^i\right)\varepsilon\left(y_i\right)1_B$ by applying $\varepsilon \otimes \varepsilon \otimes \id_B$ to both sides.
\end{proof}

\begin{remark}
    It is noteworthy the fact that when $B$ is a Hopf algebra, $B \boxslash B = L(B,B) \cong B$ is a particular instance of the so-called \emph{Ehresmann-Schauenburg bialgebroid}, see \cite{Schauneburg-BialgOverNoncommRings}.
\end{remark}

\subsection{The maps \texorpdfstring{$i_B$}{i\_B} and \texorpdfstring{$p_B$}{p\_B}}
\label{ssec:iB}
For every bialgebra $B$, one can consider the (somehow, canonical - see later) maps $i_B \colon B\to B\oslash B,x\mapsto x\oslash 1$ and $p_B \colon B \boxslash B \to B, x^i \boxslash y_i \mapsto x^i\varepsilon(y_i)$.
We discuss here some of their properties because they play a central role in the construction of the free and cofree Hopf algebras generated by a bialgebra $B$.

In order to properly motivate the introduction of $i_B$ and $p_B$, recall that a \emph{right Hopf algebra} is a bialgebra with a \emph{right antipode}  $S^{r}$, i.e.\ a right convolution inverse of $\mathrm{Id}_{B}$. Similarly, a \emph{left Hopf algebra} is a bialgebra with a \emph{left antipode} $S^{l}$. Recall also that a one-sided antipode always preserves unit and counit, as observed in \cite[Remark 3.8]{Sar21}. Hence, an anti-multiplicative and anti-comultiplicative one-sided antipode is automatically an anti-bialgebra map.

\begin{proposition}
\label{prop:Frobenius}Let $B$ be a bialgebra. The following are equivalent.

\begin{enumerate}[label=(\alph*),ref= {\itshape(\alph*)},itemsep=0.1cm,leftmargin=*]
\item\label{item:1sided1} The coinvariant functor $\left( -\right) ^{\mathrm{co}B}:\mathfrak{M}%
_{B}^{B}\rightarrow \mathfrak{M}$ is Frobenius.

\item\label{item:1sided2} The canonical natural transformation $\sigma :\left( -\right) ^{%
\mathrm{co}B}\rightarrow \overline{\left( -\right) }^{B}$ is invertible.

\item\label{item:1sided3} The map $i_{B}:B\rightarrow B\oslash B,b\mapsto b\oslash 1_{B},$ is invertible.

\item\label{item:1sided5} The map $p_B \colon B \boxslash B \to B, x^i \boxslash y_i \mapsto x^i\varepsilon\left(y_i\right),$ is invertible.

\item\label{item:1sided4} $B$ is right Hopf algebra with anti-multiplicative and
anti-comultiplicative right antipode $S^{r}$.
\end{enumerate}
Moreover, if \ref{item:1sided4} holds true, then $i_{B}^{-1}\left( x\oslash y\right)
=xS^{r}\left( y\right) $ and $p_B^{-1}(x) = x_1 \boxslash S^r(x_2)$ for every $x,y\in B.$
\end{proposition}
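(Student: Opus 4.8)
The plan is to prove the five conditions equivalent along the cycle \ref{item:1sided1} $\Leftrightarrow$ \ref{item:1sided2} $\Rightarrow$ \{\ref{item:1sided3}, \ref{item:1sided5}\} $\Rightarrow$ \ref{item:1sided4} $\Rightarrow$ \ref{item:1sided2}, and to read off the inverse formulas from the explicit maps produced along the way. The equivalence \ref{item:1sided1} $\Leftrightarrow$ \ref{item:1sided2} is formal and rests on the adjoint triple $E\dashv F\dashv G$ recalled above: the right adjoint $G=(-)^{\mathrm{co}B}$ of $F$ is Frobenius precisely when $F$ is also a right adjoint of $G$, i.e.\ when $G$ is itself a left adjoint of $F$; since left adjoints are unique up to a unique natural isomorphism and $E=\overline{(-)}^{B}$ is the left adjoint of $F$, this holds exactly when the canonical comparison $\sigma\colon G\Rightarrow E$ is invertible (this is the Frobenius characterisation of \cite{Sar21}). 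I would then recognise $i_B$ and $p_B$ as single components of $\sigma$: evaluating $\sigma$ at the Hopf module $B_\bullet\otimes B_\bullet^\bullet$ (diagonal action, coaction on the right tensorand) gives, after the identifications $(B_\bullet\otimes B_\bullet^\bullet)^{\mathrm{co}B}\cong B$ and $\overline{B_\bullet\otimes B_\bullet^\bullet}^{B}=B\oslash B$, exactly $b\mapsto b\oslash 1_B=i_B(b)$; dually, $\sigma$ at $B^\bullet\otimes B_\bullet^\bullet$ (action on the right tensorand, codiagonal coaction) yields, via $(B^\bullet\otimes B_\bullet^\bullet)^{\mathrm{co}B}=B\boxslash B$ and $\overline{B^\bullet\otimes B_\bullet^\bullet}^{B}\cong B$, the map $x^i\boxslash y_i\mapsto x^i\varepsilon(y_i)=p_B(x^i\boxslash y_i)$. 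Hence \ref{item:1sided2} forces both \ref{item:1sided3} and \ref{item:1sided5}.

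For \ref{item:1sided3} $\Rightarrow$ \ref{item:1sided4} I would set $S^{r}:=i_B^{-1}(1_B\oslash -)$. As $i_B(bc)=bc\oslash 1_B=b\cdot(c\oslash 1_B)$, the map $i_B$ (hence $i_B^{-1}$) is left $B$-linear for the action $b\cdot(x\oslash y)=bx\oslash y$, so $i_B^{-1}(x\oslash y)=x\,S^{r}(y)$ and $\sum y_1 S^{r}(y_2)=i_B^{-1}\!\left(\sum y_1\oslash y_2\right)=i_B^{-1}\!\left(\varepsilon(y)\,1_B\oslash 1_B\right)=\varepsilon(y)1_B$, using $\sum y_1\oslash y_2=\varepsilon(y)(1_B\oslash 1_B)$; thus $S^r$ is a right antipode. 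Since $i_B$ is a coalgebra morphism by \cref{lem:oslash} and $y\mapsto 1_B\oslash y$ is anti-comultiplicative by \eqref{def:oslashcoalg}, $S^r$ is anti-comultiplicative. Anti-multiplicativity is where the $B\otimes B^{\cop}$-module structure genuinely enters: from $S^{r}(b)\oslash 1_B=i_B(S^r(b))=1_B\oslash b$ one gets $S^{r}(b)\oslash a=(1_B\otimes a)\cdot(S^r(b)\oslash 1_B)=1_B\oslash ab$, whence $S^{r}(b)S^{r}(a)=i_B^{-1}(S^r(b)\oslash a)=i_B^{-1}(1_B\oslash ab)=S^{r}(ab)$; preservation of unit and counit is automatic \cite[Remark 3.8]{Sar21}. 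The implication \ref{item:1sided5} $\Rightarrow$ \ref{item:1sided4} is entirely dual, extracting $S^{r}$ from the algebra isomorphism $p_B^{-1}$ and using the $B\otimes B^{\op}$-comodule algebra structure of $B\boxslash B$ from \eqref{eq:boxslashalg}.

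For \ref{item:1sided4} $\Rightarrow$ \ref{item:1sided2} I would exhibit an explicit inverse of each component $\sigma_M$. Define $\pi_M\colon M\to M$ by $\pi_M(m)=\sum m_0 S^{r}(m_1)$. Anti-comultiplicativity together with the right-antipode identity show $\pi_M(m)\in M^{\mathrm{co}B}$, while anti-multiplicativity gives $\pi_M(mb)=\sum m_0 b_1 S^r(b_2)S^r(m_1)=\varepsilon(b)\pi_M(m)$, so $\pi_M$ descends to $\bar\pi_M\colon\overline{M}^{B}\to M^{\mathrm{co}B}$. Then $\bar\pi_M\circ\sigma_M=\id$ on coinvariants (immediate, since $S^r(1_B)=1_B$), and $\sigma_M\circ\bar\pi_M=\id$ because $S^{r}(b)-\varepsilon(b)1_B\in B^{+}$ forces $\sum m_0 S^{r}(m_1)\equiv m \pmod{MB^{+}}$; naturality of $\bar\pi$ is clear, so $\sigma$ is invertible, closing the cycle. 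The moreover part then follows formally: $i_B^{-1}(x\oslash y)=x\,S^{r}(y)$ was obtained above, while $\psi(x):=\sum x_1\boxslash S^{r}(x_2)$ is well defined into $B\boxslash B$ (coinvariance via the right antipode and anti-comultiplicativity) and satisfies $p_B\circ\psi=\id$, hence $\psi=p_B^{-1}$ by invertibility of $p_B$.

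I expect the main obstacle to be the step \ref{item:1sided4} $\Rightarrow$ \ref{item:1sided2}: one must check that the single formula $\pi_M(m)=\sum m_0S^r(m_1)$ simultaneously lands in the coinvariants, annihilates $MB^{+}$, and is a two-sided inverse of $\sigma_M$ for \emph{every} Hopf module $M$, which relies delicately on the interplay of the right-antipode identity with both the anti-multiplicativity and the anti-comultiplicativity of $S^{r}$. The second subtle point is the anti-multiplicativity of $S^r$ in \ref{item:1sided3},\ref{item:1sided5} $\Rightarrow$ \ref{item:1sided4}, which is precisely what forces one to use the module, respectively comodule, structure on $B\oslash B$ and $B\boxslash B$ rather than mere linearity.
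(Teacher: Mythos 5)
Your proof is correct, but it takes a genuinely different route from the paper's. The paper's proof is essentially a reduction to the machinery of \cite[Section 3]{Sar21}: after identifying $i_B$ (via $\omega_B \colon B \to (B\widehat{\otimes}B)^{\mathrm{co}B}$, $x \mapsto x \otimes 1$) and $p_B$ (via $\overline{B^\bullet \otimes B^\bullet_\bullet}^B \cong B$) with components of $\sigma$ at the two Hopf modules $B_\bullet \otimes B_\bullet^\bullet$ and $B^\bullet \otimes B^\bullet_\bullet$ --- exactly the identifications you make --- it cites \cite[Theorem 3.7]{Sar21} for \ref{item:1sided2} $\Leftrightarrow$ \ref{item:1sided3} $\Leftrightarrow$ \ref{item:1sided4}, \cite[Lemma 2.3]{Sar21} for \ref{item:1sided1} $\Leftrightarrow$ \ref{item:1sided2}, disposes of \ref{item:1sided5} $\Rightarrow$ \ref{item:1sided2} by ``mimicking the proof of \cite[Proposition 3.4]{Sar21}'', and reads the inverse formulas off \cite[Equality (10)]{Sar21}. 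You instead unfold that machinery into a self-contained cycle: you extract $S^r$ directly from $i_B^{-1}$ (resp.\ $p_B^{-1}$), using the $B \otimes B^\cop$-module and coalgebra structure of $B \oslash B$ (resp.\ the $B \otimes B^\op$-comodule and algebra structure of $B \boxslash B$) to obtain anti-(co)multiplicativity, and you close the cycle \ref{item:1sided4} $\Rightarrow$ \ref{item:1sided2} by exhibiting the explicit natural inverse $\bar\pi_M(\overline{m}) = m_0 S^r(m_1)$ --- which is, in substance, an inline re-proof of the hard direction of \cite[Theorem 3.7]{Sar21}, and indeed coincides with the general formula $\sigma_M^{-1}(\overline{m}) = m_0S^r(m_1)$ that the paper quotes at the very end of its proof. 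The paper buys brevity; you buy transparency about exactly which structural ingredient (module vs.\ comodule, coalgebra vs.\ algebra) powers each implication.

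Two points deserve attention. First, the step \ref{item:1sided5} $\Rightarrow$ \ref{item:1sided4}, which you dismiss as ``entirely dual'', is not a mirror-image one-liner: anti-multiplicativity of $S^r \coloneqq (\varepsilon \otimes B) \circ p_B^{-1}$ is indeed immediate from $p_B^{-1}$ being an algebra map, and the right-antipode identity follows from $x^iy_i = \varepsilon(x^i)\varepsilon(y_i)1_B$ in \cref{lem:oslash}, but anti-comultiplicativity requires first establishing $p_B^{-1}(a) = a_1 \boxslash S^r(a_2)$ by left $B$-colinearity and then exploiting, twice, the fact that the coaction of $B \boxslash B$ lands in $(B \otimes B) \otimes (B \boxslash B)$ so that the identity $S^r(x^i)\varepsilon(y_i) = \varepsilon(x^i)y_i$ can be applied on the second tensor leg; this does go through, and it is no sketchier than the paper's own ``mimicking'' remark at the corresponding point, but it should be written out. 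Second, your ``moreover'' formulas are derived for the particular $S^r = i_B^{-1}(1_B \oslash -)$, whereas the statement concerns the antipode given in \ref{item:1sided4}; to bridge this, either invoke uniqueness of anti-bialgebra one-sided antipodes (\cite{NT}, as recalled after \cref{prop:preserve_right_ants}), or note directly that for any such $S^r$ the map $x \oslash y \mapsto xS^r(y)$ is well defined (by anti-multiplicativity) and inverts $i_B$ (by anti-comultiplicativity, via $xS^r(y) \oslash 1_B = xS^r(y_3) \oslash y_1S^r(y_2) = x \oslash y$), which settles the formula without appealing to your construction.
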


\begin{proof}
We will use some results from \cite[Section 3]{Sar21}. First note that using the notations in
loc.~cit., we have $\widehat{B}:=B\widehat{\otimes }B=B_{\bullet
}\otimes B_{\bullet }^{\bullet }$ so that $\overline{B\widehat{\otimes }B}%
^{B}=B\oslash B$ in our notations.

Thus we have $\sigma _{\widehat{B}}:\left( B\widehat{\otimes }B\right) ^{%
\mathrm{co}B}\rightarrow B\oslash B,x\otimes y\mapsto x\oslash y$. By \cite[%
Remark 3.2]{Sar21}, every element in $\left( B\widehat{\otimes }B\right) ^{%
\mathrm{co}B}$ is of the form $x\otimes 1.$ Hence $\omega _{B}:B\rightarrow
\left( B\widehat{\otimes }B\right) ^{\mathrm{co}B},x\mapsto x\otimes 1$, is
bijective with inverse $B\otimes \varepsilon $ (see also \cite[Remark 3.15]%
{Sar21} for this bijection). Since we have a commutative diagram%
\begin{equation*}
\xymatrix@R=0.6cm{B\ar[r]^-{\omega _{B}}\ar[dr]_{i_{B}}& \left( B\widehat{\otimes }%
B\right) ^{\mathrm{co}B}\ar[d]^{\sigma _{\widehat{B}}}\\ &B\oslash B}
\end{equation*}%
we get that $i_{B}$ is bijective if and only if $\sigma _{\widehat{B}}$ is
bijective. Therefore, by \cite[Theorem 3.7]{Sar21}, the conditions \ref{item:1sided2}, \ref{item:1sided3} and \ref{item:1sided4} are equivalent.
Moreover \ref{item:1sided1} and \ref{item:1sided2} are equivalent in view of \cite[Lemma 2.3]{Sar21}.
In order to deal with \ref{item:1sided5} as well,
consider the Hopf module $\tilde B \coloneqq B \operatorname{\tilde \otimes} B = B^\bullet \otimes B^\bullet_\bullet$. The associated component of $\sigma$ is
\[\sigma_{\tilde B} \colon B \boxslash B \to \overline{B^\bullet \otimes B^\bullet_\bullet}^B, \qquad x^i \otimes y_i \mapsto \overline{x^i \otimes y_i}.\]
A moment of thought shall convince the reader that
\[\overline{B^\bullet \otimes B^\bullet_\bullet}^B \cong \left(B \otimes B_\bullet\right) \otimes_B \Bbbk \cong B \otimes \left(B_\bullet \otimes_B \Bbbk\right) \cong B\]
via the isomorphism sending any $\overline{x \otimes y}$ to $x\varepsilon(y)$. Therefore, as above, $\sigma_{\tilde B}$ is bijective if and only if $p_B$ is bijective and so \ref{item:1sided2} implies \ref{item:1sided5}. If we assume \ref{item:1sided5} instead, then we know that $\sigma_{\tilde B}$ is invertible and, by mimicking the proof of \cite[Proposition 3.4]{Sar21}, we can show that $\sigma$ is a natural isomorphism, i.e., \ref{item:1sided2}.
\begin{invisible}
Let us see that this entails that $\sigma$ is a natural isomorphism. First of all, let $M$ be a Hopf module and observe that $\mu_M \colon M^\bullet \otimes B^\bullet_\bullet \to M^\bullet_\bullet$ is a morphism of Hopf modules, whence the following diagram commutes by naturality of $\sigma$
\[
\xymatrix @C=45pt{
\left(M^\bullet  \otimes B^\bullet_\bullet\right)^{\mathrm{co}B} \ar[r]^-{\mu_M^{\mathrm{co}B}} \ar[d]_-{\sigma_{M \tilde\otimes B}} & M^{\mathrm{co}B} \ar[d]^-{\sigma_M} \\
\overline{M^\bullet \otimes B^\bullet_\bullet}^B \ar[r]_-{\overline{\mu_M}} & \overline{M}^B
}
\]
and hence, if $\sigma_M^{-1}$ exists, then it has to satisfy
\[\sigma_M^{-1}\left(\overline{m}\right) = \sigma_M^{-1}\left(\overline{\mu_M}\left(\overline{m \otimes 1}\right)\right) = \mu_M^{\mathrm{co}B}\left(\sigma_{M \tilde\otimes B}^{-1}\left(\overline{m \otimes 1}\right)\right)\]
for every $m \in M$. Then, observe that $\delta_M \colon M^\bullet \to M \otimes B^\bullet$ is a right colinear map and therefore the following diagram commutes as well
\[
\xymatrix @C=45pt{
\left(M^\bullet  \otimes B^\bullet_\bullet\right)^{\mathrm{co}B} \ar[r]^-{(\delta_M \otimes B)^{\mathrm{co}B}} \ar[d]_-{\sigma_{M \tilde \otimes B}} & \left(M \otimes B^\bullet \otimes B^\bullet_\bullet\right)^{\mathrm{co}B} \ar[r]^-{\cong} & M \otimes \left(B \operatorname{\tilde \otimes} B\right)^{\mathrm{co}B} \ar[d]^-{M \otimes \sigma_{\tilde B}} \\
\overline{M^\bullet \otimes B^\bullet_\bullet}^B \ar[r]_-{\overline{\delta_M \otimes B}} & \overline{M \otimes B^\bullet \otimes B^\bullet_\bullet}^B \ar[r]_-{\cong} & M \otimes \overline{B \operatorname{\tilde \otimes} B}^B
}
\]
whence, if $\sigma_{M \tilde\otimes B}^{-1}$ exists, then it has to satisfy
\[\sigma_{M \tilde\otimes B}^{-1}\left(\overline{m \otimes b}\right) = (M \otimes \varepsilon \otimes B)(\delta_M \otimes B)\sigma_{M \tilde\otimes B}^{-1}\left(\overline{m \otimes b}\right) = (M \otimes \varepsilon \otimes B)\left(M \otimes \sigma_{\tilde B}^{-1}\right)\left(m_0 \otimes \overline{m_1 \otimes b}\right).\]
Summing up, if $\sigma_M^{-1}$ exists, then it has to satisfy
\[\sigma_M^{-1}\left(\overline{m}\right) = \mu_M\left((M \otimes \varepsilon \otimes B)\left(M \otimes \sigma_{\tilde B}^{-1}\right)\left(m_0 \otimes \overline{m_1 \otimes 1}\right)\right)\]
Define
\[S(a) \coloneqq (\varepsilon \otimes B)\sigma_{\tilde B}^{-1}\left(\overline{a \otimes 1}\right) = (\varepsilon \otimes B)\left(p_B^{-1}(a)\right),\]
so that
\[\sigma_M^{-1}\left(\overline{m}\right) = m_0S(m_1).\]
Now, since $p_B$ makes the following diagram commute
\[
\xymatrix @C=45pt{
\left(B \operatorname{\tilde \otimes} B\right)^{\mathrm{co}B} \ar[d]_-{\Delta \otimes B} \ar[r]^-{p_B} & B \ar[d]^-{\Delta} \\
B \otimes \left(B \operatorname{\tilde \otimes} B\right)^{\mathrm{co}B} \ar[r]_-{B \otimes p_B} & B \otimes B
}
\]
we conclude that for every $a \in B$,
\begin{align*}
\sigma_{\tilde B}^{-1}\left(\overline{a \otimes 1}\right) & = (B \otimes \varepsilon \otimes B)(\Delta \otimes B)\sigma_{\tilde B}^{-1}\left(\overline{a \otimes 1}\right) = (B \otimes \varepsilon \otimes B)(\Delta \otimes B)p_B^{-1}\left(a\right) \\
& = a_1 \otimes (\varepsilon \otimes B)\left(p_B^{-1}(a_2)\right) = a_1 \otimes S(a_2).
\end{align*}
Furthermore, since $\varepsilon(p_B(x^i \otimes y_i)) = \varepsilon(x^i)\varepsilon(y_i)$, we see that
\[\varepsilon(S(a)) = \varepsilon\left((\varepsilon \otimes B)\left(p_B^{-1}(a)\right)\right) = (\varepsilon \otimes \varepsilon)\left(p_B^{-1}(a)\right) = \varepsilon(a).\]
Therefore, by recalling that $\sigma_{\tilde B}^{-1}\left(\overline{a \otimes 1}\right) \in \left(B \operatorname{\tilde \otimes} B\right)^{\mathrm{co}B}$, we find that
\[a_1 \otimes S(a_2) \otimes 1 = a_1 \otimes S(a_3)_1 \otimes a_2S(a_3)_2\]
i.e.
\[S(a) \otimes 1 = S(a_2)_1 \otimes a_1S(a_2)_2.\]
If we also remark that for every $x^i \otimes y_i \in \left(B \operatorname{\tilde \otimes} B\right)^{\mathrm{co}B}$,
\[x^i_1 \otimes {y_i}_1 \otimes x^i_2{y_i}_2 = x^i \otimes {y_i} \otimes 1\]
entails that
\[x^i{y_i} = \varepsilon(x^i)\varepsilon({y_i})1,\]
whence $a_1S(a_2) = \varepsilon S(a)1 = \varepsilon (a)1$, and if we remark that $p_B$ is a morphism of algebras where $\left(B \operatorname{\tilde \otimes} B\right)^{\mathrm{co}B}$ has the restriction of the algebra structure of $B^\op \otimes B$, then for all $a,b \in B$,
\[a_1S(ba_2) = a_1(\varepsilon \otimes B)\left(p_B^{-1}(ba_2)\right) = a_1(\varepsilon \otimes B)\left(p_B^{-1}(a_2)\right)(\varepsilon \otimes B)\left(p_B^{-1}(b)\right) = \varepsilon(a)S(b).\]
In view of \cite[Proposition 3.4]{Sar21}, we conclude that $\sigma$ is a natural isomorphism, whence \ref{item:1sided2}.
\end{invisible}

Looking at \cite[Equality (10)]{Sar21}, we have $\sigma _{\widehat{B}%
}^{-1}\left( x\oslash y\right) =xS^{r}\left( y\right) \otimes 1$ so that $%
i_{B}^{-1}\left( x\oslash y\right) =\left( B\otimes \varepsilon \right)
\sigma _{\widehat{B}}^{-1}\left( x\oslash y\right) =xS^{r}\left( y\right) .$ Similarly, $p_B^{-1}(x) = \sigma_{\tilde B}^{-1}(\overline{x \otimes 1}) = x_1 \otimes S^r(x_2)$, since in general $\sigma_M^{-1}(\overline{m}) = m_0 S^r(m_1)$ for all $ m\in M$ right Hopf module.
\end{proof}

By using the map $i_B$, we can address the question whether $B\oslash B$ is not only a quotient coalgebra of $B\otimes B^\cop$, as stated in \cref{lem:oslash}, but even a quotient bialgebra, and also the dual question.

\begin{lemma}
\label{lem:invosl}
    Let $B$ be a bialgebra. Then $B \oslash B$ is a quotient bialgebra of $B\otimes B^\cop$ if and only if $\Delta(B^+)(B\otimes B)\subseteq (B\otimes B)\Delta(B^+)$. Moreover, when $B \oslash B$ is such a quotient and it further admits an antipode $S$, then the latter is given by $S(a\oslash b)= b\oslash a$ for every $a,b\in B$.
    Similarly, $B \boxslash B$ is a sub-bialgebra of $B \otimes B^{\mathrm{op}}$ if and only if $\left({x^i}_1 \otimes {y_i}_1\right) \otimes \left({x^i}_2 \otimes {y_i}_2\right) \in (B \boxslash B) \otimes (B \otimes B)$ for all $x^i \otimes y_i \in B \boxslash B$ and if it further admits an antipode $S$, then $S(x^i \boxslash y_i) = y_i \boxslash x^i$.
\end{lemma}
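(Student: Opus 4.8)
The four assertions fall into two genuinely different pairs, and I would treat the $\oslash$ and $\boxslash$ sides separately rather than dualising mechanically.

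\textbf{The quotient bialgebra criterion.} Write $B \oslash B = (B \otimes B^\cop)/K$ with $K = (B_\bullet \otimes B_\bullet)B^+ = (B \otimes B)\Delta(B^+)$, the second equality because the diagonal right action is $(x \otimes y)\cdot b = (x \otimes y)\Delta(b)$. By \cref{lem:oslash} the projection $B \otimes B^\cop \to B \oslash B$ is already a coalgebra map, so $K$ is a coideal; hence $B \oslash B$ is a quotient \emph{bialgebra} exactly when $K$ is in addition a two-sided ideal of the algebra $B \otimes B$. Since $K = (B \otimes B)\Delta(B^+)$ is visibly a left ideal, the only point is that it be a right ideal. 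I would then observe that, because $\Delta(B^+) \subseteq K$ (take $x = y = 1$), the inclusion $K(B \otimes B) \subseteq K$ is equivalent to $\Delta(B^+)(B \otimes B) \subseteq (B \otimes B)\Delta(B^+)$: one direction is immediate, and conversely one multiplies this inclusion on the left by $B \otimes B$ and uses that $B \otimes B$ is unital. When $K$ is a biideal the quotient is automatically a bialgebra, with the structure of \cref{lem:oslash}.

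\textbf{The antipode on $B \oslash B$.} Suppose $B \oslash B$ is Hopf with antipode $S$, and set $i_B(a) = a \oslash 1$ and $j_B(b) = 1 \oslash b$. From the multiplication $(a \oslash b)(c \oslash d) = ac \oslash bd$ and the coproduct \eqref{def:oslashcoalg} one sees that $i_B \colon B \to B \oslash B$ is a bialgebra map while $j_B \colon B^\cop \to B \oslash B$ is a bialgebra map (i.e.\ $j_B$ is anti-comultiplicative). The general fact I would invoke is that, for a coalgebra map $f$ into a Hopf algebra, $S \circ f$ is the two-sided convolution inverse of $f$. Using the defining relation $\sum a_1 \oslash a_2 = \varepsilon(a)\,(1 \oslash 1)$, a one-line computation gives $i_B * j_B = u\varepsilon$ in $\mathrm{Hom}(B, B \oslash B)$ and $j_B * i_B = u\varepsilon$ in the convolution algebra $\mathrm{Hom}(B^\cop, B \oslash B)$. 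Since a right inverse must coincide with a two-sided inverse when the latter exists, I conclude $S \circ i_B = j_B$ and $S \circ j_B = i_B$, that is $S(a \oslash 1) = 1 \oslash a$ and $S(1 \oslash b) = b \oslash 1$; anti-multiplicativity of $S$ then yields $S(a \oslash b) = S(1 \oslash b)\,S(a \oslash 1) = b \oslash a$. A pleasant feature of this route is that it bypasses verifying $\sum a_2 \oslash a_1 = \varepsilon(a)\,(1 \oslash 1)$ by hand, which is not visible from the defining relation and is instead forced by the existence of $S$.

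\textbf{The subcoalgebra criterion.} As $B \boxslash B$ is already a subalgebra of $B \otimes B^\op$ by \cref{lem:oslash}, being a sub-bialgebra amounts to being a subcoalgebra of $B \otimes B$, i.e.\ $\Delta(w) = \sum ({x^i}_1 \otimes {y_i}_1) \otimes ({x^i}_2 \otimes {y_i}_2) \in (B \boxslash B) \otimes (B \boxslash B)$ for every $w = x^i \otimes y_i \in B \boxslash B$. The crux is that the \emph{second} leg is automatically coinvariant: applying $\Delta \otimes \Delta \otimes \mathrm{Id}$ to the coinvariance relation \eqref{eq:coinvs} and reordering the middle two tensor factors gives
\[\sum {x^i}_1 \otimes {y_i}_1 \otimes {x^i}_2 \otimes {y_i}_2 \otimes {x^i}_3{y_i}_3 = \sum {x^i}_1 \otimes {y_i}_1 \otimes {x^i}_2 \otimes {y_i}_2 \otimes 1_B,\]
which says exactly that the $({x^i}_2 \otimes {y_i}_2)$-leg of $\Delta(w)$ lies in $B \boxslash B$. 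Hence $\Delta(w)$ always belongs to $(B \otimes B) \otimes (B \boxslash B)$, and the sole remaining obstruction to being a subcoalgebra is that the first leg lie in $B \boxslash B$ — precisely the stated condition.

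\textbf{The antipode on $B \boxslash B$.} This is the step I expect to be the main obstacle, since $B \boxslash B$ has no handy algebra generators on which to read off $S$. Instead I would exploit the two canonical maps $p_B(x^i \boxslash y_i) = x^i \varepsilon(y_i)$ and $q_B(x^i \boxslash y_i) = \varepsilon(x^i) y_i$: a direct check shows $p_B \colon B \boxslash B \to B$ and $q_B \colon B \boxslash B \to B^\op$ are bialgebra maps. Using $x^i y_i = \varepsilon(x^i)\varepsilon(y_i)1_B$ from \cref{lem:oslash}, I would verify $p_B * q_B = u\varepsilon$ in $\mathrm{Hom}(B \boxslash B, B)$ and $q_B * p_B = u\varepsilon$ in $\mathrm{Hom}(B \boxslash B, B^\op)$; since $p_B, q_B$ are algebra maps out of the Hopf algebra $B \boxslash B$, their composites with $S$ are the corresponding two-sided convolution inverses, whence $p_B \circ S = q_B$ and $q_B \circ S = p_B$. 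The decisive identity is $(p_B \otimes q_B) \circ \Delta = \iota$, the inclusion $B \boxslash B \hookrightarrow B \otimes B$, which follows from the counit axioms. Applying it to $S(w)$ and using anti-comultiplicativity of $S$,
\[\iota(S(w)) = \sum p_B(S(w_{(2)})) \otimes q_B(S(w_{(1)})) = \sum q_B(w_{(2)}) \otimes p_B(w_{(1)}) = \sum y_i \otimes x^i,\]
so $S(x^i \boxslash y_i) = y_i \boxslash x^i$; this simultaneously proves that $y_i \boxslash x^i$ does land in $B \boxslash B$. The subtlety, and the reason a direct verification is awkward, is exactly that flipping the tensor factors need not preserve coinvariance, so well-definedness of the formula is not a priori clear and emerges only as a by-product of this argument.
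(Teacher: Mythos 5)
Your proof is correct and follows essentially the same route as the paper's: the same reduction of the quotient criterion to $\ker(i_B\circ\text{--})=(B\otimes B)\Delta(B^+)$ being a right ideal, the same convolution-inverse argument identifying $S\circ i_B$ with $b\mapsto 1\oslash b$ (the paper's $i_B^\dagger$), the same observation that one tensor leg of $\Delta$ on $B\boxslash B$ is automatically coinvariant, and the same use of $(p_B\otimes p_B^\dagger)\circ\Delta=\iota$ together with anti-comultiplicativity of $S$ to pin down $S(x^i\boxslash y_i)=y_i\boxslash x^i$. The only differences are notational ($j_B$, $q_B$ for the paper's $i_B^\dagger$, $p_B^\dagger$) and that you re-derive from \eqref{eq:coinvs} the coinvariance of the second leg, which the paper instead quotes from \cref{lem:oslash}.
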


\begin{proof}
It follows from \cref{lem:oslash} that $(B\otimes B)B^+=(B\otimes B)\Delta(B^+)$ is a coideal left ideal in $B\otimes B^\cop$. For $B \oslash B$ to be a quotient bialgebra of $B \otimes B^\cop$ it is necessary and sufficient that $(B \otimes B)\Delta(B^+)$ be 
a two-sided ideal, and this is clearly equivalent to the inclusion $\Delta(B^+)(B \otimes B) \subseteq (B \otimes B)\Delta(B^+)$.

Now, denote by $i_B^\dagger$ the linear map $B \to B \oslash B, b \mapsto 1 \oslash b$. When $B \oslash B$ is such a quotient, then its algebra structure is given by $(a\oslash b)(a'\oslash b')=aa'\oslash bb'$ with unity $1\oslash 1$, and so $(i_B*i_B^\dagger)(b)=i_B(b_1)i_B^\dagger(b_2)=(b_1\oslash 1)(1\oslash b_2)=b_1\oslash b_2=\varepsilon(b)1\oslash 1$, for every $b\in B$, i.e., $i_B*i_B^\dagger=u_{B\oslash B} \circ \varepsilon_B$.
Assume now that the bialgebra $B\oslash B$ has an antipode $S$.
Since $i_B*i_B^\dagger=u_{B\oslash B} \circ \varepsilon_B$ and since the coalgebra map $i_B \colon B \to B\oslash B$ is convolution invertible with inverse $S \circ i_B$, we get that $i_B^\dagger = S \circ i_B$ and so it is also the left convolution inverse of $i_B$.
In a similar way, $i_B^\dagger\colon B^{\mathrm{cop}} \to B \oslash B$ has right convolution inverse $i_B$ and left convolution inverse $S \circ i_B^\dagger$, so that we also have $i_B=S \circ i_B^\dagger$.
As a consequence $S(a \oslash b) = S(i_B(a)i_B^\dagger(b)) = Si_B^\dagger(b)Si_B(a) = i_B(b)i_B^\dagger(a) = b\oslash a$, for $a,b\in B$.

\begin{invisible}
In view of \cref{lem:oslash}, $B \boxslash B$ is a subalgebra of $B \otimes B^{\mathrm{op}}$ and it is clearly an augmented subalgebra of it. Thus, $B \boxslash B$ is a sub-bialgebra of $B \otimes B^{\mathrm{op}}$ if and only if
\[\left({x^i}_1 \otimes {y_i}_1\right) \otimes \left({x^i}_2 \otimes {y_i}_2\right) \in (B \boxslash B) \otimes (B \boxslash B)\]
for all $x^i \otimes y_i \in B \boxslash B$. We already know from \cref{lem:oslash} that
\[\left({x^i}_1 \otimes {y_i}_1\right) \otimes \left({x^i}_2 \otimes {y_i}_2\right) \in (B \otimes B) \otimes (B \boxslash B),\]
so it is necessary and sufficient to have
\[\left({x^i}_1 \otimes {y_i}_1\right) \otimes \left({x^i}_2 \otimes {y_i}_2\right) \in (B \boxslash B) \otimes (B \otimes B).\]
\end{invisible}
Dually, one proves that $B \boxslash B$ is a sub-bialgebra of $B \otimes B^{\mathrm{op}}$ if and only if $\left({x^i}_1 \otimes {y_i}_1\right) \otimes \left({x^i}_2 \otimes {y_i}_2\right) \in (B \boxslash B) \otimes (B \otimes B)$ for all $x^i \otimes y_i \in B \boxslash B$.
Finally, suppose that $B \boxslash B$ is such a sub-bialgebra. Denote by $p_B^\dagger$ the linear map $B \boxslash B \to B$, $x^i \boxslash y_i \mapsto \varepsilon\left(x^i\right)y_i$.
Then, we have
\[(p_B \otimes p_B^\dagger)\Delta(x^i \boxslash y_i) = p_B({x^i}_1 \boxslash {y_i}_1) \otimes p_B^\dagger({x^i}_2 \boxslash {y_i}_2) = {x^i}_1\varepsilon({y_i}_1) \otimes \varepsilon({x^i}_2){y_i}_2 = x^i \otimes y_i\]
for every $x^i \boxslash y_i \in B \boxslash B$ so that
\begin{equation}
\label{eq:pBpBdag}
 (p_B \otimes p_B^\dagger)\Delta(x^i \boxslash y_i)=  x^i \otimes y_i. 
\end{equation}  
Therefore
\[\left(p_B*p_B^\dagger\right)(x^i \boxslash y_i) = m_B(p_B \otimes p_B^\dagger)\Delta(x^i \boxslash y_i)\overset{\eqref{eq:pBpBdag}}=m_B(x^i \boxslash y_i) = x^iy_i = \varepsilon\left(x^i\right)\varepsilon\left(y_i\right)1_B,\] 
in view of \cref{lem:oslash} again. If $B \boxslash B$ admits an antipode $S$, then necessarily $p_B^\dagger = p_B \circ S$. Similarly, the algebra map $p_B^\dagger \colon B \boxslash B \to B^{\mathrm{op}}$ has $p_B$ as right convolution inverse
\begin{invisible}
indeed
\[\left(p_B^\dagger*p_B\right)(x^i \boxslash y_i) = {x^i}_2\varepsilon\left({y_i}_2\right)\varepsilon\left({x^i}_1\right){y_i}_1 = x^iy_i = \varepsilon\left(x^i\right)\varepsilon\left(y_i\right)1_B,\]
\end{invisible}%
and $p_B \circ S$ as left one, implying that $p_B = p_B^\dagger \circ S$.
As a consequence, for all $x^i \boxslash y_i \in B \boxslash B$ we have
\begin{align*}
S\left(x^i \boxslash y_i\right) & \overset{\eqref{eq:pBpBdag}}= \left(p_B \otimes p_B^\dagger\right)\Delta\left(S\left(x^i \boxslash y_i\right)\right) = \left((p_B \circ S) \otimes (p_B^\dagger \circ S)\right)\Delta^{\mathrm{cop}}\left(x^i \boxslash y_i\right) \\
& = \left(p_B^\dagger \otimes p_B\right)\left({x^i}_2 \boxslash {y_i}_2 \otimes {x^i}_1 \boxslash {y_i}_1\right) = y_i \otimes x^i. \qedhere
\end{align*}
\end{proof}


\begin{lemma}
\label{lem:foslf}
A bialgebra map $f:B\to C$ induces 
\begin{itemize}[leftmargin=*]
    \item a coalgebra map
$f\oslash f \colon B\oslash B\rightarrow C\oslash C, \; x\oslash y\mapsto f(x)\oslash f(y)$, and
\item an algebra map
$f \boxslash f \colon B \boxslash B \to C \boxslash C, \; x^i \boxslash y_i \mapsto f(x^i) \boxslash f(y_i)$. 
\end{itemize}
In particular, $\left( f\oslash f\right) \circ i_{B}=i_{C}\circ f$ and  $p_C \circ (f \boxslash f) = f \circ p_B$.
\begin{invisible}
, \rem{i.e.\ the following diagrams commute
\[
\xymatrix@R=0.6cm{
B \ar[r]^-{i_B} \ar[d]_-{f} & B \oslash B \ar[d]^-{f \oslash f} \\
C\ar[r]_-{i_{C}} & C \oslash C
}
\hspace{1.5cm}
\xymatrix@R=0.6cm{
B \boxslash B \ar[r]^-{p_B} \ar[d]_-{f \boxslash f} & B \ar[d]^-{f} \\
C \boxslash C \ar[r]_-{p_{C}} & C
}
\]}
\end{invisible}
\end{lemma}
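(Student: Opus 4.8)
The plan is to treat the two assertions in parallel, in each case first settling well-definedness — which I expect to be the only step requiring a genuine argument — and then reading off the structural compatibilities and the commutation with $i_B$ and $p_B$ directly from the formulas recorded in \cref{lem:oslash}. The whole proof rests on the single observation that a bialgebra map $f$ is \emph{simultaneously} an algebra and a coalgebra map, together with the consequence $f(B^+)\subseteq C^+$ coming from $\varepsilon_C\circ f=\varepsilon_B$.

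For the coalgebra map $f\oslash f$, the crux is that $f\otimes f\colon B\otimes B\to C\otimes C$ should descend to the quotients defining $B\oslash B$ and $C\oslash C$. I would regard $C\otimes C$ as a right $B$-module through $f$ and the codiagonal action and verify that $f\otimes f$ is right $B$-linear: since $f$ is both multiplicative and comultiplicative, for every $b\in B$ one computes $(f\otimes f)(xb_1\otimes yb_2)=f(x)f(b)_1\otimes f(y)f(b)_2$, i.e. $f\otimes f$ carries the action by $b$ to the action by $f(b)$. Combined with $f(B^+)\subseteq C^+$, this gives $(f\otimes f)\big((B\otimes B)B^+\big)\subseteq (C\otimes C)C^+$, so $f\otimes f$ factors through $B\oslash B$ and yields the asserted linear map. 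The coalgebra-map axioms then follow by substituting $f(x)_1\otimes f(x)_2=f(x_1)\otimes f(x_2)$ into the comultiplication $\Delta(x\oslash y)=(x_1\oslash y_2)\otimes(x_2\oslash y_1)$ of \cref{lem:oslash}, and using $\varepsilon_C\circ f=\varepsilon_B$ for the counit. Finally $(f\oslash f)(i_B(x))=f(x)\oslash f(1_B)=f(x)\oslash 1_C=i_C(f(x))$, since $f(1_B)=1_C$, which is the first intertwining identity.

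Dually, for the algebra map $f\boxslash f$ the analogous obstacle is that $f\otimes f$ should send the subspace $B\boxslash B\subseteq B\otimes B$ into $C\boxslash C\subseteq C\otimes C$; that is, it must preserve coinvariance. Here I would apply $f\otimes f\otimes f$ to the coinvariance identity \eqref{eq:coinvs} and invoke that $f$ is a bialgebra map, so that $f\big({x^i}_2{y_i}_2\big)=f(x^i)_2\,f(y_i)_2$ and $f(1_B)=1_C$; this transforms \eqref{eq:coinvs} precisely into the coinvariance identity for $f(x^i)\otimes f(y_i)$ in $C\boxslash C$. Hence $f\boxslash f$ is well-defined, and the multiplication and unit of \cref{lem:oslash} are preserved by a one-line computation using that $f$ is multiplicative and unital: $f\big(u^ix^j\big)\boxslash f\big(y_jv_i\big)=f(u^i)f(x^j)\boxslash f(y_j)f(v_i)$ and $f(1_B)\boxslash f(1_B)=1_C\boxslash 1_C$. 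The identity $p_C\circ(f\boxslash f)=f\circ p_B$ then drops out from $p_C\big(f(x^i)\boxslash f(y_i)\big)=f(x^i)\varepsilon_C(f(y_i))=f\big(x^i\varepsilon_B(y_i)\big)$, again using $\varepsilon_C\circ f=\varepsilon_B$.

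In summary, I expect the well-definedness of the two maps to be the only substantive step, everything else being a mechanical check against \cref{lem:oslash}. The two descent arguments are genuinely dual: the $\oslash$ case uses that $f$ respects the codiagonal module action and the augmentation ideal, while the $\boxslash$ case uses that $f$ respects the comodule structure encoded by coinvariance. No idea beyond the simultaneous algebra/coalgebra compatibility of a bialgebra map is needed.
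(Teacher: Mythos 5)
Your proposal is correct and takes essentially the same route as the paper's own proof: descent of $f\otimes f$ through the quotient $(B\otimes B)B^{+}$ for the $\oslash$ case, restriction to coinvariants via the identity \eqref{eq:coinvs} for the $\boxslash$ case, followed by mechanical verification of the (co)algebra axioms and the intertwining identities against the structures of \cref{lem:oslash}. The only difference is one of detail, not of substance: you spell out the inclusion $(f\otimes f)\bigl((B\otimes B)B^{+}\bigr)\subseteq (C\otimes C)C^{+}$ and the coinvariance-preservation argument explicitly, whereas the paper asserts the former in one line and dismisses the latter with ``Dually.''
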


\begin{proof}
We have $\left( f\otimes f\right) \left( \left( B\otimes B\right)
B^{+}\right) \subseteq \left( C\otimes C\right) C^{+}$ so that $f\otimes f$
induces a map $f\oslash f$ and we have
$\left( f\oslash f\right) i_{B}\left( b\right) =\left( f\oslash f\right)
\left( b\oslash 1_{B}\right) =f\left( b\right) \oslash 1_{C}=i_{C}f\left(
b\right)$ for every $b\in B$. Since $f$ is also a coalgebra map, it follows immediately from the definition of the coalgebra structure \eqref{def:oslashcoalg} that $f \oslash f$ is a morphism of coalgebras, too. Dually for $f \boxslash f$.
\begin{invisible}
\rem{Similarly, \eqref{eq:coinv} entails that
\[f(x^i)_1 \otimes f(y_i)_1 \otimes f(x^i)_2f(y_i)_2 = f({x^i}_1) \otimes f({y_i}_1) \otimes f({x^i}_2{y_i}_2) = f(x^i) \otimes f(y_i) \otimes f(1) = f(x^i) \otimes f(y_i) \otimes 1\]
so that $f \otimes f$ induces also a map $f \boxslash f$ and
\[p_C\left(f(x^i) \boxslash f(y_i)\right) = f(x^i)\varepsilon\left(f(y_i)\right) = f(x^i\varepsilon\left(y_i\right)) = f\left(p_B\left(x^i
 \boxslash y_i\right)\right)\]
 for every $x^i
 \boxslash y_i \in B \boxslash B$. Finally, the definition of the algebra structure \eqref{eq:boxslashalg} makes it clear that $f \boxslash f$ is a morphism of algebras.}
 \end{invisible}
\end{proof}

\begin{remark}
\label{rem:freecom}
Let $B$ be a bialgebra. 
\begin{enumerate}[label=(\alph*), ref=(\alph*), leftmargin=*]
\item\label{item:freecom1} The canonical map $i_B \colon B \to B \oslash B$ is a coalgebra map in view of \cref{lem:oslash} and if $f \colon B\to H$ is a bialgebra map into a right Hopf algebra $H$ with anti-multiplicative and anti-comultiplicative right antipode $S^r$, then we have a coalgebra map $\hat f \colon B\oslash B \to H$ defined by setting $\hat f \coloneqq i_H^{-1} \circ (f \oslash f)$. That is, $\hat f(x\oslash y) = f(x)S^r\big(f(y)\big)$ for all $x,y \in B$ and $\hat f$ satisfies $\hat f\circ i_B =f$.

\item\label{item:freecom2} If $f \colon B \to H$ is a morphism of bialgebras and $B \oslash B$ and $H \oslash H$ are quotient bialgebras of $B \otimes B^\cop$ and $H \otimes H^\cop$, respectively, then $f \oslash f$ is a morphism of bialgebras, too. This happens, for instance, when $B$ and $H$ are commutative. 

\item\label{item:freecom3} The canonical map $p_B \colon B \boxslash B \to B$ is an algebra map by \cref{lem:oslash} again and if $g \colon H \to B$ is a bialgebra map from a right Hopf algebra $H$ with anti-multiplicative and anti-comultiplicative right antipode $S^r$, then we have an algebra map $\tilde g \colon H \to B\boxslash B$ defined by setting $\tilde g \coloneqq (g \boxslash g) \circ p_H^{-1}$. That is, $\tilde g(x) = g\left(x_1\right) \boxslash g\left(S^r\left(x_2\right)\right)$ for every $x \in H$ and $\tilde g$ satisfies $p_B \circ \tilde g = g$.

\item\label{item:freecom4} If $g \colon H \to B$ is a morphism of bialgebras and $B \boxslash B$ and $H \boxslash H$ are sub-bialgebras of $B \otimes B^\op$ and $H \otimes H^\op$, respectively, then $g \boxslash g$ is a morphism of bialgebras, too. This happens, in particular, when $B$ and $H$ are cocommutative.

\item\label{item:freecom5} If $H$ is a right Hopf algebra as in \ref{item:freecom1} or \ref{item:freecom3}, then $H \oslash H$ becomes such a right Hopf algebra by transport of structures, i.e., with unit $1 \oslash 1$, multiplication
\[(x \oslash y) (u \oslash v) = xS^r(y)u \oslash v \qquad \text{for all } x,y,u,v \in H,\]
and right antipode $\mathcal{S}^r(x \oslash y) = 1 \oslash xS^r(y)$.
When $H$ is commutative, it is a commutative Hopf algebra and $H \oslash H$ is a quotient Hopf algebra of $H \otimes H^\cop$. In particular, if $B$ and $H$ are commutative and $f \colon B \to H$ is a bialgebra map, then $\hat f \colon B \oslash B \to H$ is a morphism of bialgebras, too. 
Similarly, $H \boxslash H$ is a right Hopf algebra with counit $\varepsilon \boxslash \varepsilon$, comultiplication
\[\underline{\Delta}(x \boxslash y) = \big(x_1 \boxslash S^r(x_2)\big) \otimes \big(x_3 \boxslash S^r(x_4)\varepsilon(y)\big) \qquad \text{for all } x,y \in H\]
and right antipode $\mathcal{S}^r(x \boxslash y) = S^r(x_2) \boxslash (S^r)^2(x_1)\varepsilon(y)$. When it is cocommutative, it is a cocommutative Hopf algebra and $H \boxslash H$ is a sub-Hopf algebra of $H \otimes H^\op$. In particular, if $B$ and $H$ are cocommutative and $g \colon H \to B$ is a bialgebra map, then $\tilde g \colon H \to B \boxslash B$ is a morphism of bialgebras. 
\end{enumerate}
\end{remark}

\begin{proposition}\label{prop:preserve_right_ants}
    Let $f:B\to C$ be a bialgebra map and assume that both $i_B$ and $i_C$ are bijective (equivalently, both $p_B$ and $p_C$ are bijective). Then $f$ is a right Hopf algebra map.
\end{proposition}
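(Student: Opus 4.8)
The plan is to reduce everything to the explicit description of the right antipodes furnished by \cref{prop:Frobenius}, combined with the naturality of $i$ recorded in \cref{lem:foslf}. Since $i_B$ and $i_C$ are bijective, \cref{prop:Frobenius} tells us that both $B$ and $C$ are right Hopf algebras, with anti-multiplicative and anti-comultiplicative right antipodes $S^r_B$ and $S^r_C$ determined by $i_B^{-1}(x \oslash y) = x S^r_B(y)$ and $i_C^{-1}(x \oslash y) = x S^r_C(y)$. Setting $x = 1$ in these formulae isolates the antipodes as $S^r_B(y) = i_B^{-1}(1_B \oslash y)$ and $S^r_C(z) = i_C^{-1}(1_C \oslash z)$. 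Thus, since $f$ is already a bialgebra map by hypothesis, all that remains to show is that it intertwines these two right antipodes, i.e.\ that $f \circ S^r_B = S^r_C \circ f$.

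First I would invoke \cref{lem:foslf}, which gives the identity $(f \oslash f) \circ i_B = i_C \circ f$. Because both $i_B$ and $i_C$ are invertible, I can rearrange this into $f \circ i_B^{-1} = i_C^{-1} \circ (f \oslash f)$. With this at hand, the verification is a direct computation: for every $y \in B$,
\[
f\big(S^r_B(y)\big) = f\big(i_B^{-1}(1_B \oslash y)\big) = i_C^{-1}\big((f \oslash f)(1_B \oslash y)\big) = i_C^{-1}\big(1_C \oslash f(y)\big) = S^r_C\big(f(y)\big),
\]
where the third equality uses that $f$ preserves the unit, so that $(f \oslash f)(1_B \oslash y) = f(1_B) \oslash f(y) = 1_C \oslash f(y)$. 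This establishes $f \circ S^r_B = S^r_C \circ f$, which is exactly the statement that $f$ is a right Hopf algebra map.

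I do not expect any serious obstacle here: the whole argument is powered by the explicit inverse formula for $i_B$ from \cref{prop:Frobenius} and by the functoriality of the assignment $B \mapsto B \oslash B$ from \cref{lem:foslf}. The only point worth a moment's care is that one-sided antipodes need not be unique in general, so the claim should be understood as the compatibility of $f$ with the \emph{canonical} right antipodes singled out by the bijections $i_B$ and $i_C$; this is precisely what the computation above delivers. The parenthetical equivalence in the statement (bijectivity of $p_B, p_C$ versus that of $i_B, i_C$) requires no extra work, as it is already the content of the equivalence of conditions \ref{item:1sided3} and \ref{item:1sided5} in \cref{prop:Frobenius}.
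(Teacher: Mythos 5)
Your proposal is correct and follows essentially the same route as the paper: both proofs extract the right antipodes from the inverse formula $i_B^{-1}(x \oslash y) = xS^r_B(y)$ of \cref{prop:Frobenius}, then combine the naturality identity $(f \oslash f) \circ i_B = i_C \circ f$ of \cref{lem:foslf} with the bijectivity of $i_B$ and $i_C$ to compute $fS^r_B(y) = i_C^{-1}(1_C \oslash f(y)) = S^r_Cf(y)$. Your closing remarks (on non-uniqueness of one-sided antipodes and on the equivalence with the $p_B$, $p_C$ hypothesis being already contained in \cref{prop:Frobenius}) match the paper's own handling of these points.
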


\begin{proof}
By \cref{prop:Frobenius}, both $B$ and $C$ have a right antipode, say $S_B$ and $S_C$, respectively. Moreover $i_{B}^{-1}\left( x\oslash y\right)
=xS_B\left( y\right) $ for every $x,y\in B$ and similarly for $S_C.$ Thus, \cref{lem:foslf} entails that
\[fS_B(y)=fi_{B}^{-1}\left( 1_B\oslash y\right) = i_{C}^{-1}(f\oslash f)\left( 1_B\oslash y\right) = i_{C}^{-1}\left( 1_C\oslash f(y)\right) = S_Cf(y)\]
and hence $fS_B=S_Cf$.
Similarly, if both $p_B$ and $p_C$ are bijective.
\begin{invisible}
\rem{, then
\[f\left(x_1\right) \boxslash f\left( S_B\left(x_2\right)\right) = \left(f \boxslash f\right)\left(p_B^{-1}(x)\right) = p_C^{-1}\left(f\left(x\right)\right) = f(x)_1 \boxslash S_C\left(f(x)_2\right)\]
and so, by applying $\varepsilon \otimes C$ to both sides, $f S_B = S_C f$.}
\end{invisible}
\end{proof}

\begin{remark}
 Note that a bialgebra map $f \colon B \to C$ between one-sided Hopf algebras needs not to preserve one-sided antipodes in general. Indeed, a genuine one-sided Hopf algebra has infinitely many one-sided antipodes \cite[\S2.2 Exercise 7, page 91]{Jacobson}, hence it suffices to consider the identity morphism of such a genuine one-sided Hopf algebra and two different one-sided antipodes. This does not contradict our result, as one-sided Hopf algebras admit at most one one-sided antipode which is an anti-bialgebra map (see \cite{NT}).
\end{remark}

\cref{prop:Frobenius} characterizes the bijectivity of $i_B$ and $p_B$.
However, often one has weaker conditions on a bialgebra $B$ guaranteeing that they are either injective or surjective. We begin by focusing on $i_B$ in \cref{ssec:i_B} and then we discuss the case of $p_B$ in \cref{ssec:p_B}.

\subsection{Injectivity and surjectivity of \texorpdfstring{$i_B$}{iB}}\label{ssec:i_B}

Let us start to investigate its injectivity.

\begin{proposition}
\label{lem:iB-inj}
Let $B$ be a bialgebra and consider the map $%
i_{B}:B\rightarrow B\oslash B,b\mapsto b\oslash 1_{B}.$

\begin{enumerate}[label=\alph*),ref={\itshape\alph*)},leftmargin=*]

\item\label{ib-inj_item1} If $B$ is a right Hopf algebra with right antipode $S^{r}$ which is an
anti-algebra map, then $i_{B}$ is injective.

\item\label{ib-inj_item2} If $f \colon B\rightarrow C$ is a bialgebra map, then $f(\ker(i_B))\subseteq \ker(i_C)$. In particular, if $i_{C}$ is injective, then $\ker\left( i_{B}\right) \subseteq \ker\left( f\right).$

\item\label{ib-inj_item3} If $B$ embeds into a bialgebra $C$ with $i_{C}$ injective, then so is $%
i_{B}$.

\item\label{ib-inj_item4} 
If $i_B$ is injective, then so is $\can:B\otimes B\to B\otimes B,x\otimes y\mapsto xy_1\otimes y_2$.
\end{enumerate}
\end{proposition}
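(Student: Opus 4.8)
The plan is to exhibit an explicit left inverse of $\can$ "up to $i_B$": that is, a linear map $\rho$ satisfying $\rho\circ\can = i_B\otimes\mathrm{id}_B$. Since over a field tensoring an injective linear map with an identity preserves injectivity, the injectivity of $i_B$ forces $i_B\otimes\mathrm{id}_B$, and hence also the composite $\rho\circ\can$, to be injective; as a left factor of an injective map, $\can$ is then injective. This route deliberately avoids building a genuine inverse of $\can$ of the form $x\otimes y\mapsto xS^r(y_1)\otimes y_2$, which is unavailable here because under the sole hypothesis that $i_B$ is injective the bialgebra $B$ need not admit any one-sided antipode (by \cref{prop:Frobenius}, that would be equivalent to $i_B$ being \emph{bijective}).

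Concretely, I would take $\rho\colon B\otimes B\to (B\oslash B)\otimes B$ to be $\rho(x\otimes y) = (x\oslash y_1)\otimes y_2$, which is manifestly a well-defined linear map on the honest tensor product $B\otimes B$. Conceptually this is nothing but the unit $\eta_{\widehat B}$ of the adjunction $E\dashv F$ evaluated at the Hopf module $\widehat{B}=B_\bullet\otimes B_\bullet^\bullet$ appearing in the proof of \cref{prop:Frobenius}, for which $\can$ is (up to the canonical identification $\widehat{B}^{\mathrm{co}B}\cong B$) the counit $\theta_{\widehat B}$ of $F\dashv G$; one could even deduce the identity $\rho\circ\can = i_B\otimes\mathrm{id}_B$ from the general fact $\eta_M\circ\theta_M=\sigma_M\otimes\mathrm{id}_B$ together with $\sigma_{\widehat B}\cong i_B$. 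For the proof, however, it is cleanest to treat $\rho$ as the displayed formula and verify the identity by hand.

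The heart of the matter is the computation $\rho(\can(x\otimes y)) = \rho(xy_1\otimes y_2) = (xy_1\oslash y_2)\otimes y_3$, which I claim equals $(x\oslash 1)\otimes y = (i_B\otimes\mathrm{id}_B)(x\otimes y)$. To verify it I would reassociate by coassociativity, rewriting $(xy_1\oslash y_2)\otimes y_3$ as $\big(x(y_1)_1\oslash (y_1)_2\big)\otimes y_2$, where now $(y_1)_1\otimes (y_1)_2=\Delta(y_1)$ so that the inner $\oslash$ involves the two legs of a single coproduct. The defining relation $xb_1\oslash ab_2 = x\oslash a\varepsilon(b)$ of \cref{not:oslash} (with $a=1_B$ and $b=y_1$) then collapses this inner expression to $\varepsilon(y_1)(x\oslash 1)$, leaving $(x\oslash 1)\otimes\sum\varepsilon(y_1)y_2 = (x\oslash 1)\otimes y$. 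This establishes $\rho\circ\can = i_B\otimes\mathrm{id}_B$, and the conclusion follows at once from the injectivity of $i_B\otimes\mathrm{id}_B$.

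I expect the only genuine obstacle to be the bookkeeping in this last computation: the single element $y$ is split three ways, while the relation defining $B\oslash B$ is a two-variable identity, so one must first use coassociativity to isolate the correct pair of Sweedler components before applying it. Once this reindexing is carried out correctly, everything else is formal.
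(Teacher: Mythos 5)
Your proposal addresses only part \ref{ib-inj_item4} of the proposition; parts \ref{ib-inj_item1}, \ref{ib-inj_item2} and \ref{ib-inj_item3} are never touched. They are not deep --- for \ref{ib-inj_item1} one checks that $a\oslash b\mapsto aS^{r}(b)$ is well defined (this is where anti-multiplicativity of $S^r$ enters) and retracts $i_B$; part \ref{ib-inj_item2} follows from the naturality relation $(f\oslash f)\circ i_{B}=i_{C}\circ f$ of \cref{lem:foslf}; and \ref{ib-inj_item3} is immediate from \ref{ib-inj_item2} --- but they are part of the statement, so as a proof of the whole proposition your attempt is incomplete.

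For part \ref{ib-inj_item4} itself, your argument is correct and is, in substance, the paper's own proof: your map $\rho(x\otimes y)=(x\oslash y_1)\otimes y_2$ is exactly the composite $(\pi\otimes B)\circ(B\otimes\Delta)$, and your identity $\rho\circ\can=i_B\otimes\id_B$ is precisely the paper's displayed equation \eqref{eq:canpi}, from which the paper draws the same conclusion. The differences are cosmetic and in your favour: the paper asserts \eqref{eq:canpi} without verification, whereas you carry out the Sweedler bookkeeping explicitly --- and correctly: the reassociation $(xy_1\oslash y_2)\otimes y_3=\big(x(y_1)_1\oslash (y_1)_2\big)\otimes y_2$ followed by the defining relation of $B\oslash B$ with $a=1_B$, $b=y_1$ is exactly the right move --- and you add the apt conceptual identification of $\rho$ and $\can$ with the unit $\eta_{\widehat B}$ and the counit $\theta_{\widehat B}$ of the Hopf-module adjunctions, together with the valid general identity $\eta_M\circ\theta_M=\sigma_M\otimes\id_B$. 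One terminological slip worth fixing: in the composite $\rho\circ\can$ the map $\can$ is the \emph{first} map applied (the right-hand factor as written), and the principle you invoke is that the first factor of an injective composite is injective; your phrase ``left factor'' should be read in that sense.
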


\begin{proof}
\ref{ib-inj_item1} If $\mathrm{Id}_{B}$ has a right convolution inverse $S^{r}$ which is an
anti-algebra map, define $f:B\oslash B\rightarrow B,a\oslash b\mapsto
aS^{r}\left( b\right) .$ This is well-defined as $f\left( ah_{1}\oslash
bh_{2}\right) =ah_{1}S^{r}\left( bh_{2}\right) =ah_{1}S^{r}\left(
h_{2}\right) S^{r}\left( b\right) =aS^{r}\left( b\right) \varepsilon \left(
h\right) =f\left( a\oslash b\right) \varepsilon \left( h\right) .$ Moreover $%
fi_{B}\left( b\right) =f\left( b\oslash 1_{B}\right) =bS^r\left( 1_{B}\right)
=b.$

\ref{ib-inj_item2} It follows from $\left( f\oslash f\right) \circ i_{B}=i_{C}\circ f$ (see \cref{lem:foslf}).

\ref{ib-inj_item3} It follows from \ref{ib-inj_item2}.

\ref{ib-inj_item4} It follows from the equality
\begin{equation}
\label{eq:canpi}
(\pi\otimes B)\circ (B\otimes \Delta) \circ \can=i_B\otimes B
\end{equation}
where $\pi:B\otimes B\to B\oslash B$ is the canonical projection.
\end{proof}

Now we turn to the investigation of the surjectivity of $i_B$. Since this is the property that will play a central role in our simplified construction of the Hopf envelope, we dedicate additional attention to it. For the sake of future reference, we start with a few sufficient conditions.

\begin{proposition}
\label{lem:iB-su}Let $B$ be a bialgebra and consider the map $%
i_{B}:B\rightarrow B\oslash B,b\mapsto b\oslash 1_{B}.$

\begin{enumerate}[label = \alph*), ref = {\itshape\alph*)},leftmargin=*]
\item\label{iBsurj.item1} 
If $\can:B\otimes B\to B\otimes B,x\otimes y\mapsto xy_1\otimes y_2,$ is surjective,  then so is  $i_B$.

\item\label{iBsurj.item2} If $B$ is a left Hopf algebra, then $i_{B}$ is surjective.

\item\label{iBsurj.item3} If $B$ is right Hopf algebra with right antipode $S^{r}$ which is an
anti-coalgebra map, then $i_{B}$ is surjective.

\item\label{iBsurj.item4} If $B$ is a quotient of a bialgebra $C$ with $i_{C}$ surjective, then so is $%
i_{B}$.
\end{enumerate}
\end{proposition}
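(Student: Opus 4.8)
The plan is to handle the four items in order, deriving \ref{iBsurj.item2} from \ref{iBsurj.item1}, proving \ref{iBsurj.item1} and \ref{iBsurj.item3} by direct computations inside $B \oslash B$, and obtaining \ref{iBsurj.item4} from the functoriality recorded in \cref{lem:foslf}. For \ref{iBsurj.item1} I would argue directly. Given a generator $a \oslash b$ of $B \oslash B$, surjectivity of $\can$ lets me write $a \otimes b = \can\big(\sum_j u_j \otimes v_j\big) = \sum_j u_j {v_j}_1 \otimes {v_j}_2$ for suitable $u_j, v_j \in B$. Applying the canonical projection $\pi$ and then the defining relation $x b_1 \oslash a b_2 = x \oslash a\,\varepsilon(b)$ of \cref{not:oslash} (with $a = 1_B$), I get
\[
a \oslash b = \sum_j u_j {v_j}_1 \oslash {v_j}_2 = \sum_j \varepsilon(v_j)\, u_j \oslash 1_B = i_B\Big(\sum_j \varepsilon(v_j)\, u_j\Big).
\]
Since such elements $a \oslash b$ span $B \oslash B$, this shows $\im(i_B) = B \oslash B$.

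For \ref{iBsurj.item2} I would reduce to \ref{iBsurj.item1} by checking that a left antipode makes $\can$ surjective. Concretely, $\can$ admits the right inverse $\can'(u \otimes v) = u S^l(v_1) \otimes v_2$: the computation $\can\big(\can'(u\otimes v)\big) = u S^l(v_1) v_2 \otimes v_3 = u \otimes v$ uses only the defining identity $S^l(v_1)v_2 = \varepsilon(v)1_B$ of a left antipode. Hence $\can$ is onto and \ref{iBsurj.item1} applies.

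Item \ref{iBsurj.item3} is the delicate one and, I expect, the main obstacle: an anti-comultiplicative right antipode does \emph{not} render $\can$ surjective (that would amount to the identity having a left convolution inverse), so this case cannot be routed through \ref{iBsurj.item1} and must be treated by hand. The plan is to establish, for all $a, b \in B$, the explicit identity $a \oslash b = a S^r(b) \oslash 1_B = i_B\big(a S^r(b)\big)$, which yields surjectivity immediately. I would obtain it by evaluating the single element $\sum (a \otimes b_1)\,\Delta\big(S^r(b_2)\big) \in B \otimes B$ in two different ways. On the one hand, the relation of \cref{not:oslash}, applied with comultiplied variable $S^r(b_2)$, together with $\varepsilon \circ S^r = \varepsilon$, gives
\[
\pi\Big(\textstyle\sum (a \otimes b_1)\,\Delta(S^r(b_2))\Big) = a \oslash b_1\,\varepsilon\big(S^r(b_2)\big) = a \oslash b.
\]
On the other hand, expanding $\Delta\big(S^r(b_2)\big) = S^r(b_3) \otimes S^r(b_2)$ via anti-comultiplicativity and then invoking the right antipode identity $b_1 S^r(b_2) = \varepsilon(b)1_B$ gives the honest equality $\sum (a \otimes b_1)\,\Delta(S^r(b_2)) = a S^r(b) \otimes 1_B$ in $B \otimes B$, whence $\pi$ of it is $a S^r(b) \oslash 1_B$. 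Comparing the two values of $\pi$ proves the identity. The crucial point --- and the reason anti-comultiplicativity is exactly the right hypothesis --- is that it is precisely what reshuffles the two tensor legs so that the \emph{available} right antipode relation $b_1 S^r(b_2) = \varepsilon$ (rather than the unavailable left one) can be triggered.

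Finally, for \ref{iBsurj.item4}, let $f \colon C \to B$ be a surjective bialgebra map. Then $f \otimes f$ is onto, and so is the induced map $f \oslash f$, since $\pi_B \circ (f \otimes f) = (f \oslash f) \circ \pi_C$ with $\pi_B$ and $f \otimes f$ surjective. By \cref{lem:foslf} we have $(f \oslash f) \circ i_C = i_B \circ f$; the left-hand side is a composite of the surjections $i_C$ and $f \oslash f$, hence surjective. Therefore $i_B \circ f$ is surjective, and a fortiori so is $i_B$.
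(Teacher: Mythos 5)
Your proposal is correct and follows essentially the same route as the paper: item \ref{iBsurj.item1} is the element-wise unwinding of the paper's identity $\pi\circ \can=i_B\circ (B\otimes\varepsilon)$, item \ref{iBsurj.item2} is reduced to \ref{iBsurj.item1} via the same section $u\otimes v\mapsto uS^l(v_1)\otimes v_2$ of $\can$, item \ref{iBsurj.item3} establishes exactly the paper's key identity $a\oslash b=aS^r(b)\oslash 1_B$ using the defining relation of $\oslash$, anti-comultiplicativity and the right-antipode property, and item \ref{iBsurj.item4} is the same functoriality argument from \cref{lem:foslf}. The only differences are presentational (computing one element of $B\otimes B$ in two ways versus the paper's chain of equalities in $B\oslash B$), not mathematical.
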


\begin{proof}
\ref{iBsurj.item1} It follows from the equality $\pi\circ \can=i_B\circ (B\otimes\varepsilon)$, which can be deduced from \eqref{eq:canpi} by applying $B \otimes \varepsilon$ to both sides. 

\ref{iBsurj.item2} Let $S^l$ be a given left antipode. For all $a,b\in B,$ we have $a\otimes b=aS^{l}\left( b_{1}\right)
b_{2}\otimes b_{3}$. Thus, $\can$ is surjective and the claim follows from \ref{iBsurj.item1}.

\ref{iBsurj.item3} For all $a,b\in B,$ we have
\[
a\oslash b=aS^{r}\left( b_{2}\right)
_{1}\oslash b_{1}S^{r}\left( b_{2}\right) _{2}=aS^{r}\left( b_{3}\right)
\oslash b_{1}S^{r}\left( b_{2}\right) =aS^{r}\left( b_{2}\right) \varepsilon
\left( b_{1}\right) \oslash 1_{B}=i_{B}(aS^{r}\left( b\right) ).
\]

\ref{iBsurj.item4} Let $f \colon C\to B$ be the projection making $B$ a quotient of $C$. By \cref{lem:foslf}, the map $f\otimes f$ induces a (necessarily surjective) map $f\oslash f:C\oslash C\to B\oslash B $ such that $(f\oslash f)\circ i_C=i_B\circ f$. Since both $f\oslash f$ and $i_C$ are surjective so is $i_B\circ f$ and so $i_B.$
\end{proof}

\begin{remark}\label{rem:iBsurjnoninj} 
We will see in \cref{prop:semiantip} that $i_B$ is surjective when $B$ is a finite-dimensional bialgebra. However, in the finite-dimensional case $\can$ surjective would imply $\can$ bijective, whence the existence of an antipode, which is not always the case. Thus, the converse implication of \cref{lem:iB-su}\,\ref{iBsurj.item1} is not true in general.
\end{remark}

We now get a more precise characterization of surjectivity of $i_B$.

\begin{proposition}\label{lem:iBsu}
  The following assertions are equivalent for a bialgebra $B$.
  \begin{enumerate}[label=(\alph*),ref={\itshape(\alph*)},leftmargin=*]
      \item\label{item:epi1} The map $i_B:B\to B\oslash B,\, x\mapsto x\oslash 1$, is surjective.
    \item\label{item:epi2} There is $S\in \mathrm{End}_\Bbbk(B)$  such that $1\oslash y= S(y)\oslash 1$, for every $y\in B$.
    \item\label{item:epi3} There is $S\in \mathrm{End}_\Bbbk(B)$  such that $x\oslash y= xS(y)\oslash 1$, for every $x,y\in B$.
     \item\label{item:epi4} For every $y\in B$ there is $y'\in B$ such that $1\oslash y= y'\oslash 1$.
     \item\label{item:epi5} For every $x,y\in B$ there is $y'\in B$ such that $x\oslash y= xy'\oslash 1$.
    \end{enumerate}
\end{proposition}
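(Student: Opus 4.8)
The plan is to prove the cyclic chain of implications
\[
\ref{item:epi1}\Rightarrow\ref{item:epi2}\Rightarrow\ref{item:epi3}\Rightarrow\ref{item:epi5}\Rightarrow\ref{item:epi4}\Rightarrow\ref{item:epi1}.
\]
All the work is concentrated in the very first arrow; every other implication will follow by a one-line manipulation using the left $B\otimes B$-module structure on $B\oslash B$ recorded in \eqref{def:oslashcoalg}, namely $(a\otimes b)\cdot(x\oslash y)=ax\oslash by$, together with the fact that the elements $x\oslash y$ span $B\oslash B$ (being images of the simple tensors spanning $B\otimes B$).

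For the routine arrows I would argue as follows. For \ref{item:epi2}$\Rightarrow$\ref{item:epi3}, apply $(x\otimes 1)\cdot(-)$ to the identity $1\oslash y=S(y)\oslash 1$ to obtain $x\oslash y=xS(y)\oslash 1$ with the very same endomorphism $S$. The implication \ref{item:epi3}$\Rightarrow$\ref{item:epi5} is immediate by reading off $y'=S(y)$, and \ref{item:epi5}$\Rightarrow$\ref{item:epi4} is just the specialisation $x=1_B$. Finally, for \ref{item:epi4}$\Rightarrow$\ref{item:epi1}, given $y\in B$ pick $y'$ with $1\oslash y=y'\oslash 1$ and apply $(x\otimes 1)\cdot(-)$ to get $x\oslash y=xy'\oslash 1\in\im(i_B)$; since the elements $x\oslash y$ span $B\oslash B$ and $\im(i_B)$ is a linear subspace, this forces $i_B$ to be surjective.

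The heart of the matter is \ref{item:epi1}$\Rightarrow$\ref{item:epi2}, where the merely pointwise data of \ref{item:epi4}/\ref{item:epi5} must be upgraded to a single linear endomorphism $S$. Here I would exploit that we work over a field: the surjective $\Bbbk$-linear map $i_B$ admits a $\Bbbk$-linear section $t\colon B\oslash B\to B$ with $i_B\circ t=\mathrm{id}_{B\oslash B}$. Writing $i_B^\dagger\colon B\to B\oslash B$, $y\mapsto 1\oslash y$, I would then set $S\coloneqq t\circ i_B^\dagger\in\mathrm{End}_\Bbbk(B)$, so that
\[
S(y)\oslash 1=i_B\big(S(y)\big)=i_B\big(t(1\oslash y)\big)=(i_B\circ t)(1\oslash y)=1\oslash y
\]
for every $y\in B$, which is precisely \ref{item:epi2}. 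I expect this splitting step to be the only genuine obstacle, and it is exactly the point where the vector space structure is indispensable: without it, the pointwise conditions \ref{item:epi4} and \ref{item:epi5} need not assemble into the linear maps demanded by \ref{item:epi2} and \ref{item:epi3}.
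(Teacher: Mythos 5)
Your proposal is correct and follows essentially the same route as the paper: the crucial implication \ref{item:epi1}$\Rightarrow$\ref{item:epi2} is handled there exactly as you do, by choosing a $\Bbbk$-linear section $s$ of the surjection $i_B$ and setting $S(y)\coloneqq s(1\oslash y)$, and the remaining implications are likewise one-line applications of the left $B\otimes B$-module structure on $B\oslash B$. The only (immaterial) difference is organisational: you arrange all five conditions into a single cyclic chain, whereas the paper proves \ref{item:epi1}$\Rightarrow$\ref{item:epi2}$\Rightarrow$\ref{item:epi3}$\Rightarrow$\ref{item:epi1} and treats \ref{item:epi4}, \ref{item:epi5} analogously.
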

\begin{proof} \ref{item:epi1} $\Rightarrow$ \ref{item:epi2}. If $i_B$ is surjective, then there is a $\Bbbk$-linear map $s:B\oslash B\to B$ such that $i_B\circ s=\id_B$. For $y\in B$, set $S(y):=s(1\oslash y)$. Then $S(y)\oslash 1=i_B(S(y))=i_B(s(1\oslash y))=\id_B(1\oslash y)=1\oslash y$.

\ref{item:epi2} $\Rightarrow$ \ref{item:epi3}.
By employing the left $B\otimes B$-module structure of $B\oslash B$ given in \cref{lem:oslash}, and assuming there is $S\in \mathrm{End}_\Bbbk(B)$  such that $1\oslash y= S(y)\oslash 1$, for every $y\in B$, we get $x\oslash y=(x\otimes 1)\cdot (1\oslash y)=(x\otimes 1)\cdot (S(y)\oslash 1)=xS(y)\oslash 1$.

\ref{item:epi3} $\Rightarrow$ \ref{item:epi1}. It is clear.

The equivalence between \ref{item:epi1}, \ref{item:epi4} and \ref{item:epi5} can be handled in a similar way.
\begin{invisible}
$(5)\Rightarrow(1)\Rightarrow (4)$ are clear.  Let us prove $(4)\Rightarrow (5)$. We have
$x\oslash y=(x\otimes 1)\cdot (1\oslash y)=(x\otimes 1)\cdot (y'\oslash 1)=xy'\oslash 1$.
\end{invisible}
\end{proof}

\begin{example}
\label{ex:regmon}
Let $M$ be a regular monoid (see e.g.\ \cite[\S1.9]{CP}). Then, for every $x \in M$ there exists $x^\dagger \in M$ such that $x \cdot x^\dagger \cdot x = x$.
Let $B$ be the monoid bialgebra $\Bbbk M$. In $B \oslash B$ we have
\[1 \oslash x = x^\dagger x \oslash xx^\dagger x = x^\dagger x \oslash x = x^\dagger \oslash 1\]
and therefore $i_B$ is surjective by \cref{lem:iBsu}.
\begin{invisible}
In AdjMon abbiamo già i regolari (Example 2.21). Abbiamo voluto avere anche qui un esempio in cui si vedesse "concretamente" come può essere fatta la $S$. Il caso di dimensione finita è potente, ma forse poco "concreto".]
\end{invisible}

For a concrete example, let $f \colon H \to G$ be a group homomorphism and let $M \coloneqq H \sqcup G$ be equipped with the multiplication $\cdot \colon M \times M \to M$ given by
\[h \cdot h'= hh', \quad h\cdot g = f(h)g, \quad g \cdot h = gf(h), \quad g\cdot g' = gg'\]
for all $h,h'\in H$, $g,g'\in G$. Then $M$ is a regular (even inverse) monoid with neutral element $1_H$, $h^\dagger = h^{-1}$ and $g^\dagger = g^{-1}$, but it is not a group, in general: the elements of $G$ have no inverse with respect to $\cdot$.
\end{example}

\begin{corollary}
\label{cor:Sprops}
    If $B$ is a bialgebra for which $i_B$ is surjective, then 
    \begin{equation}\label{eq:Sbar}
    \begin{gathered}
    S(ab) - S(b)S(a) ~ \in ~ \ker(i_B), \qquad a_1S(a_2) - \varepsilon(a)1_B ~ \in ~ \ker(i_B) \qquad \text{and} \\
    S(a_2) \otimes S(a_1) - S(a)_1 \otimes S(a)_2 ~ \in ~ \ker(i_B \otimes i_B) \qquad \text{for all } a,b \in B,
    \end{gathered}
    \end{equation}
    where $S$ is a linear endomorphism of $B$ as in \cref{lem:iBsu}.
\end{corollary}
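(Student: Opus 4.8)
The plan is to translate each of the three claimed memberships into an honest identity in $B \oslash B$ (respectively in $(B\oslash B)\otimes(B\oslash B)$) by applying $i_B$ (respectively $i_B\otimes i_B$). Indeed, since $\ker(i_B)=\{b\in B: b\oslash 1=0\}$, the first two assertions amount to the equalities $S(ab)\oslash 1 = S(b)S(a)\oslash 1$ and $a_1S(a_2)\oslash 1 = \varepsilon(a)(1\oslash 1)$, while the third amounts to $(S(a_2)\oslash 1)\otimes(S(a_1)\oslash 1) = (S(a)_1\oslash 1)\otimes(S(a)_2\oslash 1)$. Each of these I would verify by working inside $B\oslash B$, using the defining property of $S$ from \cref{lem:iBsu} (namely $x\oslash y = xS(y)\oslash 1$ and, in particular, $1\oslash y = S(y)\oslash 1$), together with the left $B\otimes B$-module and coalgebra structures recorded in \cref{lem:oslash} and the fundamental relation $xb_1\oslash yb_2 = \varepsilon(b)(x\oslash y)$ from \cref{not:oslash}.

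For the first identity I would compute $S(ab)\oslash 1 = 1\oslash ab = (1\otimes a)\cdot(1\oslash b) = (1\otimes a)\cdot(S(b)\oslash 1) = S(b)\oslash a = S(b)S(a)\oslash 1$, where the first, third and last equalities use \cref{lem:iBsu} and the two remaining ones use the module structure of \cref{lem:oslash}. This gives $S(ab)-S(b)S(a)\in\ker(i_B)$. For the second identity, \cref{lem:iBsu} first yields $a_1S(a_2)\oslash 1 = a_1\oslash a_2$, and then the defining relation of \cref{not:oslash} (applied with $x=y=1$ and $b=a$) collapses $a_1\oslash a_2$ to $\varepsilon(a)(1\oslash 1)$; hence $a_1S(a_2)-\varepsilon(a)1_B\in\ker(i_B)$.

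The third identity I expect to be the one that most needs careful phrasing, even though it requires essentially no computation once set up properly. The key observation is that $1\oslash a$ and $S(a)\oslash 1$ are literally the \emph{same} element of $B\oslash B$, so their image under the well-defined comultiplication of \cref{lem:oslash} must agree. Computing $\Delta(1\oslash a)$ via formula \eqref{def:oslashcoalg} gives $(1\oslash a_2)\otimes(1\oslash a_1) = (S(a_2)\oslash 1)\otimes(S(a_1)\oslash 1)$, whereas computing $\Delta(S(a)\oslash 1)$ via the same formula gives $(S(a)_1\oslash 1)\otimes(S(a)_2\oslash 1)$. Equating these two expressions is exactly the statement that $(i_B\otimes i_B)(S(a_2)\otimes S(a_1)) = (i_B\otimes i_B)(S(a)_1\otimes S(a)_2)$, i.e. the desired membership in $\ker(i_B\otimes i_B)$.

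There is no genuine obstruction here: the three relations are precisely the usual anti-multiplicativity, counit, and anti-comultiplicativity axioms for an antipode, which hold only \emph{modulo} $\ker(i_B)$ because $S$ is merely a linear one-sided inverse of $i_B$ (as produced by \cref{lem:iBsu}) rather than a bona fide bialgebra antipode. The only care required is bookkeeping with the module and coalgebra relations, and in particular recognising in the last part that the well-definedness of $\Delta$ on the quotient does all the work once one exploits the identification $1\oslash a = S(a)\oslash 1$.
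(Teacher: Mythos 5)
Your proposal is correct and follows essentially the same route as the paper's own proof: both translate the three memberships into identities in $B \oslash B$ (resp.\ $(B\oslash B)\otimes(B\oslash B)$) using $1\oslash y = S(y)\oslash 1$ from \cref{lem:iBsu}, the left $B\otimes B$-module structure of \cref{lem:oslash}, and, for the last relation, the fact that $\Delta$ applied to the single element $1\oslash a = S(a)\oslash 1$ yields both expressions. The only cosmetic difference is that you phrase the third identity as a consequence of well-definedness of $\Delta$ on the quotient, whereas the paper writes the same chain of equalities directly; the computations coincide.
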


\begin{proof}
    Since $i_B$ is surjective, \cref{lem:iBsu}\,\ref{item:epi2} entails that there exists $S \colon B \to B$ such that $1 \oslash b = S(b) \oslash 1$ for all $b \in B$. By using the left $B \otimes B$-module structure on $B \oslash B$, one shows that
    \[S(ab) \oslash 1_B = 1_B \oslash ab = (1_B \otimes a)(1_B \oslash b) = (1_B\otimes a)(S(b) \oslash 1_B) = (S(b) \otimes 1_B)(1_B \oslash a) = S(b)S(a) \oslash 1_B\]
    for all $a,b \in B$. Furthermore, for all $a \in B$ we have also
    $a_1S(a_2) \oslash 1 = a_1 \oslash a_2 = \varepsilon(a)1_B \oslash 1_B$
    and
    \begin{align*}
    \left(S(a_2) \oslash 1_B\right) \otimes \left(S(a_1) \oslash 1_B\right) & = \left(1_B \oslash a_2\right) \otimes \left(1_B \oslash a_1\right) \stackrel{\eqref{def:oslashcoalg}}{=} \Delta(1_B \oslash a) \\
    & = \Delta(S(a) \oslash 1_B) = \left(S(a)_1 \oslash 1_B\right) \otimes \left(S(a)_2 \oslash 1_B \right). \qedhere
    \end{align*}
\end{proof}

\begin{invisible}
\begin{remark}
    If $B \oslash B$ is a Hopf algebra with antipode $S_{B \oslash B}$ and if $i_B \colon B\to B\oslash B$ is surjective, then 
    \[S_{B\oslash B}(a\oslash b)=S^2(b)\oslash a,\] 
    where $S$ is as in \cref{lem:iBsu}. Indeed, 
    \eqref{eq:Sbar} entails that $i_B * (i_B \circ S) = u_{B \oslash B} \circ \varepsilon_B$ and so $i_B \circ S = S_{B \oslash B} \circ i_B$ by uniqueness of the convolution inverse. Therefore,
    \begin{align*}
    S_{B\oslash B}(a\oslash b) & = S_{B\oslash B}(aS(b) \oslash 1) = S_{B\oslash B}i_B(aS(b)) = i_B\big(S(aS(b))\big) \\
    & \stackrel{\eqref{eq:Sbar}}{=} i_B(S^2(b)S(a)) = S^2(b)S(a) \oslash 1 = S^2(b) \oslash a.
    \end{align*}
    %
\end{remark}
\end{invisible}


\subsection{Injectivity and surjectivity of \texorpdfstring{$p_B$}{pB}}\label{ssec:p_B}

Symmetrically to what we did with $i_B$, let us start by investigating the surjectivity of $p_B$. Since most of the proofs are dual to those of \S\ref{ssec:i_B}, we often omit them.

\begin{proposition}
\label{prop:pB-surj}
Let $B$ be a bialgebra and $p_B \colon B\boxslash B \to B, x^i \boxslash {y_i} \mapsto x^i\varepsilon({y_i})$.

\begin{enumerate}[label=\alph*),ref={\itshape\alph*)},leftmargin=*]

\item\label{pB-surj_item1} If $B$ is a right Hopf algebra with right antipode $S^{r}$ which is an
anti-coalgebra map, then $p_{B}$ is surjective.

\item\label{pB-surj_item2} Let $f \colon C\rightarrow B$ be a bialgebra map. Then $f(\im(p_C)) \subseteq \im(p_B)$. In particular, if $p_{C}$ is surjective, then $\im(f) \subseteq \im(p_B).$

\item\label{pB-surj_item3} If $B$ is a quotient of a bialgebra $C$ with $p_{C}$ surjective, then so is $%
p_{B}$.

\item\label{pB-surj_item4} 
If $p_B$ is surjective, then so is $\can' \colon B\otimes B\to B\otimes B,x\otimes y\mapsto x_1\otimes x_2y$.
\end{enumerate}
\end{proposition}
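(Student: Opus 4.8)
The plan is to prove the four assertions as the $p_B$-analogues of the injectivity statements for $i_B$ in \cref{lem:iB-inj}, dualising each argument in turn. The two categorical parts, \ref{pB-surj_item2} and \ref{pB-surj_item3}, transpose almost verbatim. For \ref{pB-surj_item2}, a bialgebra map $f \colon C \to B$ yields via \cref{lem:foslf} the identity $p_B \circ (f \boxslash f) = f \circ p_C$; hence for every $w \in C \boxslash C$ we have $f(p_C(w)) = p_B\big((f \boxslash f)(w)\big) \in \im(p_B)$, so $f(\im(p_C)) \subseteq \im(p_B)$, and when $p_C$ is surjective this reads $\im(f) = f(C) = f(\im(p_C)) \subseteq \im(p_B)$. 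For \ref{pB-surj_item3}, I apply \ref{pB-surj_item2} to the projection $f \colon C \twoheadrightarrow B$: since $\im(f) = B$, the inclusion $\im(f) \subseteq \im(p_B)$ forces $p_B$ to be surjective. These mirror \cref{lem:iB-inj}\,\ref{ib-inj_item2} and \ref{ib-inj_item3}.

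For \ref{pB-surj_item1}, the strategy is to exhibit an explicit section of $p_B$, dually to the construction of a retraction of $i_B$ in \cref{lem:iB-inj}\,\ref{ib-inj_item1}. I define $g \colon B \to B \boxslash B$ by $g(a) = a_1 \boxslash S^r(a_2)$, the candidate suggested by the formula for $p_B^{-1}$ in \cref{prop:Frobenius}. The crux is to check that $g$ is well-defined, i.e.\ that $a_1 \otimes S^r(a_2)$ is genuinely coinvariant. Computing the codiagonal coaction and using that $S^r$ is anti-comultiplicative together with the right-antipode identity $a_1 S^r(a_2) = \varepsilon(a)1_B$, the two middle factors collapse and one recovers $a_1 \otimes S^r(a_2) \otimes 1_B$, which is exactly the coinvariance condition \eqref{eq:coinvs}. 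Once $g$ lands in $B \boxslash B$, the equality $p_B(g(a)) = a_1\, \varepsilon\big(S^r(a_2)\big) = a_1 \varepsilon(a_2) = a$ follows from $\varepsilon \circ S^r = \varepsilon$, so $p_B \circ g = \id_B$ and $p_B$ is surjective.

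For \ref{pB-surj_item4}, I dualise the identity \eqref{eq:canpi}. Writing $\iota \colon B \boxslash B \hookrightarrow B \otimes B$ for the inclusion and $m$ for the multiplication of $B$, the plan is to establish
\[\can' \circ (B \otimes m) \circ (\iota \otimes B) = p_B \otimes B.\]
This is verified by applying $\id \otimes \varepsilon \otimes \id$ to the coinvariance relation \eqref{eq:coinvs}, which yields $x^i_1 \otimes x^i_2 y_i = p_B(x^i \boxslash y_i) \otimes 1_B$; multiplying the right-hand tensor factor by an arbitrary $c \in B$ then produces precisely the displayed equation evaluated on $(x^i \boxslash y_i) \otimes c$. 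Since tensoring over the field $\Bbbk$ preserves surjectivity, $p_B$ surjective makes $p_B \otimes B$ surjective, whence the whole composite is surjective; as $\can'$ is the outermost map, its image contains that of the composite, forcing $\can'$ surjective.

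The two genuinely computational points, and thus the main obstacles, are the coinvariance check in \ref{pB-surj_item1} (where the interplay of the anti-comultiplicativity of $S^r$ with the right-antipode identity must be handled carefully) and the derivation of the dual canonical identity in \ref{pB-surj_item4}; the remaining assertions are purely formal consequences of the functoriality recorded in \cref{lem:foslf}.
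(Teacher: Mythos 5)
Your proposal is correct and follows essentially the same route as the paper's own (omitted, dual) proof: the same section $b \mapsto b_1 \boxslash S^r(b_2)$ with the same coinvariance check via anti-comultiplicativity and the right-antipode identity for \ref{pB-surj_item1}, the identity $p_B \circ (f \boxslash f) = f \circ p_C$ from \cref{lem:foslf} for \ref{pB-surj_item2} and \ref{pB-surj_item3}, and the same dual canonical identity $\can' \circ (B \otimes m) \circ (j_B \otimes B) = p_B \otimes B$, derived from \eqref{eq:coinvs}, for \ref{pB-surj_item4}. All steps check out, including the use of $\varepsilon \circ S^r = \varepsilon$, which the paper justifies via \cite[Remark 3.8]{Sar21}.
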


\begin{invisible}
\rem{
\begin{proof}
\ref{pB-surj_item1} If $\mathrm{Id}_{B}$ has a right convolution inverse $S^{r}$ which is an
anti-coalgebra map, define $f \colon B \to B\boxslash B, b \mapsto
b_1 \boxslash S^{r}\left( b_2\right) .$ This lands, in fact, in $B \boxslash B$ as
\[{b_1}_1 \otimes S^{r}\left( b_2\right)_1 \otimes {b_1}_2 S^{r}\left( b_2\right)_2 = {b_1} \otimes S^{r}\left( b_4\right) \otimes {b_2} S^{r}\left( b_3\right) = {b_1} \otimes S^{r}\left( b_2\right) \otimes 1.\]
Moreover, $p_Bf\left( b\right) = {b_1} \varepsilon\left( S^{r}\left( b_2\right) \right)
= b$.

\ref{pB-surj_item2}
It follows from $p_B \circ \left( f\boxslash f\right) = f \circ p_C$ (see \cref{lem:foslf}).

\ref{pB-surj_item3} It follows from \ref{pB-surj_item2}.

\ref{pB-surj_item4}
It follows from \eqref{eq:coinvs} that we have
$x^i_1 \otimes x^i_2{y_i} = x^i \varepsilon\left( y_i \right) \otimes 1$ for every $x^i \boxslash y_i \in B \boxslash B$.
Thus, the claim follows from the equality
\begin{equation}\label{eq:canj}
\can' \circ (B \otimes m) \circ (j_B \otimes B) = p_B\otimes B
\end{equation}
where $j_B \colon B\boxslash B\to B\otimes B$ is the inclusion.
\end{proof}
}
\end{invisible}

Now we turn to the investigation of the injectivity of $p_B$. \cref{prop:Frobenius} ensures that $p_B$ is injective for $B$ a right Hopf algebra with right antipode which is an anti-bialgebra morphism, in particular for $B$ a Hopf algebra. More generally, we have the following analogue of \cref{lem:iB-su}.


\begin{proposition}
\label{prop:pbInj}
    Let $B$ be a bialgebra and $p_B \colon B\boxslash B \to B, x^i \boxslash {y_i} \mapsto x^i\varepsilon({y_i})$.
    \begin{enumerate}[label=\alph*),ref={\itshape\alph*)},leftmargin=*]
        \item\label{item:pbInj1} If $\mathfrak{can}' \colon B \otimes B \to B \otimes B, x \otimes y \mapsto x_1 \otimes x_2y,$ is injective, then so it $p_B$.
        \item\label{item:pbInj2} If $B$ is a left Hopf algebra, then $p_B$ is injective.
        \item\label{item:pbInj3} If $B$ is a right Hopf algebra with anti-multiplicative right antipode, then $p_B$ is injective.
        \item\label{item:pbInj4} If $B$ is a sub-bialgebra of a bialgebra $C$ with $p_C$ injective, then $p_B$ is injective.
    \end{enumerate}
\end{proposition}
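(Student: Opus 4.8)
The plan is to prove the four injectivity criteria for $p_B$ by dualizing the corresponding surjectivity statements for $i_B$ from \cref{lem:iB-su}, since $p_B$ is structurally dual to $i_B$ (the subspace $B \boxslash B$ is dual to the quotient $B \oslash B$, and $p_B$ dualizes $i_B$). The four items should follow, respectively, from the chain of implications: (a) a direct factorization argument using $\mathfrak{can}'$, (b) reduction of the left-Hopf case to (a), (c) an explicit construction of a left inverse of $p_B$ when a right antipode is anti-multiplicative, and (d) a sub-bialgebra inheritance argument.

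For \ref{item:pbInj1}, I would use the dual of equation \eqref{eq:canpi}; explicitly, the identity \eqref{eq:canj} recorded in the (invisible) proof of \cref{prop:pB-surj}\,\ref{pB-surj_item4}, namely $\mathfrak{can}' \circ (B \otimes m) \circ (j_B \otimes B) = p_B \otimes B$, where $j_B \colon B \boxslash B \hookrightarrow B \otimes B$ is the inclusion. Since the right-hand side $p_B \otimes B$ is injective exactly when $p_B$ is, and since it factors through $\mathfrak{can}'$, injectivity of $\mathfrak{can}'$ forces $p_B \otimes B$ to be injective on the image of $(B \otimes m) \circ (j_B \otimes B)$; one then checks this suffices to conclude $p_B$ is injective. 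For \ref{item:pbInj2}, if $B$ has a left antipode $S^l$, the dual of the computation in \cref{lem:iB-su}\,\ref{iBsurj.item2} shows $\mathfrak{can}'$ is injective (indeed bijective, with the left antipode providing an inverse via $x \otimes y \mapsto x_1 \otimes S^l(x_2)y$), so \ref{item:pbInj1} applies.

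For \ref{item:pbInj3}, which is the dual of \cref{lem:iB-su}\,\ref{iBsurj.item3} (note the swap of hypotheses: there the right antipode is anti-comultiplicative, here it is anti-multiplicative, matching the transposition between the quotient and subspace settings), I would construct an explicit retraction. Given an anti-multiplicative right antipode $S^r$, define $g \colon B \to B \boxslash B$ by $g(b) = b_1 \boxslash S^r(b_2)$; the dual calculation to \ref{iBsurj.item3} should show $g \circ p_B = \id_{B \boxslash B}$ on elements $x^i \boxslash y_i$, using the coinvariance relation \eqref{eq:coinvs} together with anti-multiplicativity of $S^r$. First I must verify $g$ lands in $B \boxslash B$, which follows from $S^r$ being anti-multiplicative and unit/counit preserving (as recalled before \cref{prop:Frobenius}). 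Finally, \ref{item:pbInj4} is the dual of \cref{lem:iB-su}\,\ref{iBsurj.item4}: if $f \colon B \hookrightarrow C$ is an inclusion and $p_C$ is injective, then \cref{lem:foslf} gives $p_C \circ (f \boxslash f) = f \circ p_B$, and since $f \boxslash f$ is injective (being the restriction of the injective $f \otimes f$) and $f$ is injective, injectivity of $p_C$ propagates to $p_B$.

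The main obstacle I anticipate is \ref{item:pbInj3}: verifying that the candidate retraction $g(b) = b_1 \boxslash S^r(b_2)$ genuinely computes $g \circ p_B = \id$ requires careful Sweedler bookkeeping with the coinvariance condition \eqref{eq:coinvs}, and the subtlety is that only anti-multiplicativity of $S^r$ (not anti-comultiplicativity) is available, so the argument must differ from the one in \ref{iBsurj.item3} and instead exploit that $x^i \boxslash y_i$ already satisfies the coinvariance relation. The remaining three items are expected to be routine, being either direct factorizations or inheritance arguments parallel to the established $i_B$ results.
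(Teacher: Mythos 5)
The genuine problem is in your item c). You want a retraction $g \colon B \to B \boxslash B$, $g(b) = b_1 \boxslash S^r(b_2)$, and you assert that $g$ lands in $B \boxslash B$ because $S^r$ is anti-multiplicative and preserves unit and counit. That step would fail: membership of $b_1 \otimes S^r(b_2)$ in $B \boxslash B$ is the coinvariance condition
\[ b_1 \otimes S^r(b_3)_1 \otimes b_2\,S^r(b_3)_2 \;=\; b_1 \otimes S^r(b_2) \otimes 1_B, \]
and verifying it requires control of $\Delta\bigl(S^r(b)\bigr)$, i.e.\ \emph{anti-comultiplicativity} of $S^r$ --- exactly the hypothesis of the dual surjectivity statement \cref{prop:pB-surj}\,\ref{pB-surj_item1}, not the hypothesis available here. (You even flag this ``swap of hypotheses'' yourself, and then your well-definedness claim contradicts it.) The paper's proof sidesteps the issue: it defines $\varphi \colon B \to B \otimes B$, $b \mapsto b_1 \otimes S^r(b_2)$, making \emph{no} claim that $\varphi$ lands in $B \boxslash B$, and shows that $\varphi \circ p_B = j_B$, where $j_B \colon B \boxslash B \to B \otimes B$ is the inclusion. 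Concretely, applying $a \otimes b \otimes c \mapsto a \otimes b\,S^r(c)$ to \eqref{eq:coinvs} and using $S^r(1_B)=1_B$, anti-multiplicativity, and the right-antipode identity $y_1S^r(y_2)=\varepsilon(y)1_B$, one gets
\[ x^i \otimes y_i \;=\; {x^i}_1 \otimes {y_i}_1 S^r\bigl({x^i}_2{y_i}_2\bigr) \;=\; {x^i}_1 \otimes {y_i}_1S^r({y_i}_2)\,S^r({x^i}_2) \;=\; {x^i}_1 \otimes S^r({x^i}_2)\,\varepsilon(y_i) \;=\; \varphi\bigl(p_B(x^i \boxslash y_i)\bigr), \]
so $\varphi \circ p_B = j_B$ is injective and hence so is $p_B$. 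This is exactly the computation you planned; the only repair needed is to take $B \otimes B$, not $B \boxslash B$, as the target of the retraction.

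Your items b) and d) coincide with the paper's argument, and your item a) identifies the right identity but elides the decisive step. From $\can' \circ (B \otimes m_B) \circ (j_B \otimes B) = p_B \otimes B$ alone, injectivity of $\can'$ does \emph{not} pass to $p_B \otimes B$, because the middle factor $(B \otimes m_B) \circ (j_B \otimes B)$ is not obviously injective; your ``one then checks this suffices'' is precisely where the content lies. The paper's one-line fix is to precompose both sides with $\id_{B \boxslash B} \otimes u_B$, which collapses the identity to $\can' \circ j_B = (B \otimes u_B) \circ p_B$: the left-hand side is injective ($j_B$ is an inclusion and $\can'$ is injective by hypothesis), hence so is $p_B$. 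Finally, a small overstatement in b): the map $x \otimes y \mapsto x_1 \otimes S^l(x_2)y$ is only a \emph{left} inverse of $\can'$ (a two-sided inverse would force $S^l$ to be a right antipode as well), so $\can'$ is injective but not necessarily bijective; injectivity is all that is needed, so this does not affect the argument.
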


\begin{invisible}
\rem{
\begin{proof}
    \ref{item:pbInj1}
Composing both sides of \eqref{eq:canj} with $\id_{B\boxslash B}\otimes u$ on the right yields $\can' \circ j_B = (B\otimes u)\circ p_B$
    Thus, if $\mathfrak{can}'$ is injective, so is $(B \otimes u) \circ p_B$ and hence $p_B$ is injective.

    \ref{item:pbInj2} If $S^l$ is a left antipode, then $B \otimes B \to B \otimes B, x \otimes y \mapsto x_1 \otimes S^l(x_2)y$ is a left inverse of $\mathfrak{can}'$, which is therefore injective. Thus, it follows from \ref{item:pbInj1}.

    \ref{item:pbInj3} Let $S^r$ be an anti-multiplicative right antipode and consider $\varphi \colon B \to B \otimes B, b \mapsto b_1 \otimes S^r(b_2)$. 
    Recall from \cite[Remark 3.8]{Sar21} that $S^r(1) = 1$. From \eqref{eq:coinvs} it follows that
    \[x^i \otimes {y_i} = x^i \otimes {y_i}S^r(1) = x^i_1 \otimes {y_i}_1 S^r(x^i_2{y_i}_2) = x^i_1 \otimes S^r(x^i_2)\varepsilon({y_i}) = \left(\varphi \circ p_B\right)\left(x^i \otimes {y_i}\right),\]
    for all $x^i \otimes y_i \in B \boxslash B$, i.e. $\varphi \circ p_B$ is the inclusion $j_B \colon B \boxslash B\to B\otimes B,$  and so the claim follows.

    \ref{item:pbInj4}
    By \cref{lem:foslf}, a morphism of bialgebras $f \colon B \to C$ induces a morphism of algebras $f \boxslash f \colon B\boxslash B \to C\boxslash C$ such that $p_C \circ (f \boxslash f) = f \circ p_B$. If $f$ is injective, then $f \otimes f$ is injective and so $f \boxslash f$ is injective as well. In this case, if also $p_C$ is injective, then $f \circ p_B$ has to be injective and hence $p_B$ is injective, too.
\end{proof}
}
\end{invisible}

\begin{remark}
\cref{prop:semiantip} will entail also that a counterpart of \cref{rem:iBsurjnoninj} holds for $p_B$: the converse of \cref{prop:pbInj}\,\ref{item:pbInj1} does not hold, in general.
\end{remark}

Again, in view of the role that $p_B$ will play later, we are interested in getting a more precise characterization of its injectivity as we did for the surjectivity of $i_B$. Let us then prove the following analogue of \cref{lem:iBsu}.

\begin{proposition}
 \label{prop:pBin}
 The following assertions are equivalent for a bialgebra $B$.
 \begin{enumerate}[label=(\alph*),ref={\itshape(\alph*)},leftmargin=*]
    \item\label{item:pBin1} The map $p_B:B\boxslash B\to B,x^i\otimes {y_i}\mapsto x^i\varepsilon({y_i})$ is injective.

    \item\label{item:pBin2} There is $T\in\mathrm{End}_\Bbbk(B) $ such that for every $x^i\otimes {y_i}\in B\boxslash B$ we have
    \begin{equation}\label{eq:Tcond}
    T\left(x^i\right)\varepsilon\left(y_i\right) = \varepsilon\left(x^i\right)y_i.
    \end{equation}

    \item\label{item:pBin3} There is  $T\in\mathrm{End}_\Bbbk(B) $ such that $x^i_1\otimes T(x^i_2)\varepsilon({y_i}) = x^i\otimes {y_i}$, for every $x^i\otimes {y_i}\in B\boxslash B$.

     \item\label{item:pBin4} 
     For every $x^i\otimes {y_i}\in B\boxslash B$, there is a sub-coalgebra $C$ of $B$ with $x^i,y_i\in C$ and a linear map $\tau:C\to B$ such that $\tau(x^i)\varepsilon({y_i})=\varepsilon(x^i) {y_i}$.

     \item\label{item:pBin5} 
     For every $x^i\otimes {y_i}\in B\boxslash B$, there is a sub-coalgebra $C$ of $B$ with $x^i,y_i\in C$ and a linear map $\tau:C\to B$ such that $x^i_1\otimes \tau(x^i_2)\varepsilon({y_i})
     =x^i\otimes {y_i}$.
 \end{enumerate}
 \end{proposition}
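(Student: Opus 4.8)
The plan is to prove \cref{prop:pBin} by establishing the cycle of implications
\ref{item:pBin1} $\Rightarrow$ \ref{item:pBin2} $\Rightarrow$ \ref{item:pBin3} $\Rightarrow$ \ref{item:pBin1},
exactly mirroring the proof of \cref{lem:iBsu}, and then handling the coalgebra-localised versions \ref{item:pBin4} and \ref{item:pBin5} separately, since these have no analogue in \cref{lem:iBsu} and encode the finiteness coming from the fundamental theorem of coalgebras. For \ref{item:pBin1} $\Rightarrow$ \ref{item:pBin2}: if $p_B$ is injective, choose a $\Bbbk$-linear retraction $r \colon B \to B\boxslash B$ with $p_B \circ r = \id_B$; composing with the map $p_B^\dagger \colon B\boxslash B \to B$, $x^i\boxslash y_i \mapsto \varepsilon(x^i)y_i$ from \cref{lem:invosl}, I would set $T \coloneqq p_B^\dagger \circ r$ and verify \eqref{eq:Tcond} using that every element of $B\boxslash B$ lies in the image of $r$ after applying $p_B$. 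Concretely, for $x^i\boxslash y_i \in B\boxslash B$ one has $r(p_B(x^i\boxslash y_i)) = r(x^i\varepsilon(y_i))$, and applying $p_B^\dagger$ should yield the desired identity $T(x^i)\varepsilon(y_i) = \varepsilon(x^i)y_i$; this is the one spot where I would double-check that the retraction interacts correctly with both canonical maps $p_B,p_B^\dagger$.

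For \ref{item:pBin2} $\Rightarrow$ \ref{item:pBin3} I would use the left $B\otimes B^{\op}$-comodule structure on $B\boxslash B$ recorded in \eqref{eq:boxslashalg}, dualising the module-action argument used for \cref{lem:iBsu}\,\ref{item:epi2}$\Rightarrow$\ref{item:epi3}. Writing the coaction as $(x^i\boxslash y_i) \mapsto ({x^i}_1\otimes {y_i}_1)\otimes({x^i}_2\boxslash {y_i}_2)$ and noting that ${x^i}_2\boxslash {y_i}_2$ is again a coinvariant element, condition \eqref{eq:Tcond} applied to it gives $T({x^i}_2)\varepsilon({y_i}_2) = \varepsilon({x^i}_2){y_i}_2$; tensoring on the left with ${x^i}_1$ and using counitality should collapse the right-hand side to $x^i\otimes y_i$, yielding ${x^i}_1\otimes T({x^i}_2)\varepsilon(y_i) = x^i\otimes y_i$. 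The implication \ref{item:pBin3} $\Rightarrow$ \ref{item:pBin1} is then immediate: \ref{item:pBin3} exhibits a linear left inverse to $p_B$, namely $x^i\boxslash y_i \mapsto {x^i}_1\otimes T({x^i}_2)\varepsilon(y_i)$ reconstructing the element from its image, so $p_B$ is injective.

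For the equivalence with \ref{item:pBin4} and \ref{item:pBin5}, the key input is the \emph{fundamental theorem of coalgebras}: every element of the coalgebra $B$ lies in a finite-dimensional sub-coalgebra, and since $B\boxslash B \subseteq B\otimes B$ consists of coinvariants, for a fixed $x^i\boxslash y_i$ one can find a single finite-dimensional sub-coalgebra $C \subseteq B$ containing all the $x^i$ and $y_i$. The implications \ref{item:pBin3} $\Rightarrow$ \ref{item:pBin5} $\Rightarrow$ \ref{item:pBin4} are then formal, taking $\tau \coloneqq T|_C$ (and \ref{item:pBin5} $\Rightarrow$ \ref{item:pBin4} by applying $\varepsilon\otimes\id_B$), while \ref{item:pBin4} $\Rightarrow$ \ref{item:pBin3} and \ref{item:pBin5} $\Rightarrow$ \ref{item:pBin3} require \emph{gluing} the local maps $\tau$ into a single global $T \in \mathrm{End}_\Bbbk(B)$. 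The main obstacle I anticipate is precisely this gluing step: a priori the $\tau$ depend on the chosen element $x^i\boxslash y_i$ and need not agree on overlaps, so one cannot simply patch them. The way around this is to observe that \eqref{eq:Tcond} only constrains $T$ through its action on the \emph{first legs} of coinvariant tensors; I would argue that defining $T$ on a sub-coalgebra $C$ and extending arbitrarily to a linear complement produces a genuine endomorphism of $B$, and that the condition, being checked element-by-element on $B\boxslash B$, is insensitive to this extension — so \ref{item:pBin4} (resp. \ref{item:pBin5}) supplies, for each coinvariant, a witness that the required equation \emph{can} be solved, which is exactly the content of \ref{item:pBin2} (resp. \ref{item:pBin3}) once one extends by linearity. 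Thus the coalgebra-theoretic statements are equivalent to the global ones precisely because finite-dimensional sub-coalgebras exhaust $B$.
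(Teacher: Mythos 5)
Your cycle \ref{item:pBin1} $\Rightarrow$ \ref{item:pBin2} $\Rightarrow$ \ref{item:pBin3} $\Rightarrow$ \ref{item:pBin1} is essentially correct and coincides with the paper's argument: your $T = p_B^\dagger \circ r$ is literally the paper's $T = (\varepsilon \otimes B)\circ t$, and your use of the coaction to pass from \eqref{eq:Tcond} to \ref{item:pBin3} is a (slightly heavier) variant of the paper's use of the left $B$-comodule structure $x^i \otimes y_i \mapsto {x^i}_1 \otimes ({x^i}_2 \otimes y_i)$. One slip to fix: injectivity of $p_B$ furnishes a \emph{left} inverse, $r \circ p_B = \id_{B \boxslash B}$, not $p_B \circ r = \id_B$ as you wrote (the latter would need surjectivity); your subsequent computation tacitly uses the correct identity. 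The implications \ref{item:pBin3} $\Rightarrow$ \ref{item:pBin5} $\Rightarrow$ \ref{item:pBin4} are also fine.

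The genuine gap is the return from \ref{item:pBin4} (or \ref{item:pBin5}) to the global statements. Condition \ref{item:pBin2} demands a \emph{single} $T \in \mathrm{End}_\Bbbk(B)$ verifying \eqref{eq:Tcond} for \emph{all} coinvariants simultaneously, whereas \ref{item:pBin4} only supplies, for each coinvariant, a map $\tau$ depending on that element; two such local witnesses may assign different values to the same vector of $B$, so ``extending by linearity to a complement'' does not produce a well-defined global $T$, and your claim that the condition is ``insensitive to this extension'' begs the question: consistency of the local witnesses amounts to the assignment $x^i\varepsilon(y_i) \mapsto \varepsilon(x^i)y_i$ being well defined on $\im(p_B)$, i.e.\ $\ker(p_B) \subseteq \ker(p_B^\dagger)$, which is essentially what you are trying to prove. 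The paper sidesteps gluing altogether by proving \ref{item:pBin4} $\Rightarrow$ \ref{item:pBin1} \emph{directly}: given $x^i \otimes y_i \in \ker(p_B)$, expand $\Delta(x^i) = u^j \otimes v^i_j$ over a basis $\{u^j\}$ of $B$; each $v^i_j \otimes y_i$ lies again in $B \boxslash B$, so \ref{item:pBin4} yields $\tau_j$ with $\tau_j(v^i_j)\varepsilon(y_i) = \varepsilon(v^i_j)y_i$; from $x^i\varepsilon(y_i) = 0$ and linear independence of the $u^j$ one gets $v^i_j\varepsilon(y_i) = 0$, hence $\varepsilon(v^i_j)y_i = \tau_j\bigl(v^i_j\varepsilon(y_i)\bigr) = 0$ for every $j$, and therefore $x^i \otimes y_i = u^j \otimes \varepsilon(v^i_j)y_i = 0$. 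Here the local maps are only ever evaluated on specific elements, so no compatibility is needed. As written, your proposal does not close the loop from \ref{item:pBin4}/\ref{item:pBin5} back to the other conditions, and repairing the gluing would in effect force you to run this kernel argument anyway.
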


 \begin{proof}
 First of all, note that $B\boxslash B$ is a left $B$-comodule via $\rho \colon B\boxslash B\to B\otimes (B\boxslash B)$ given by $\rho(x^i\otimes {y_i}) = x^i_1\otimes(x^i_2\otimes {y_i})$ for all $x^i \otimes y_i \in B \boxslash B$.

 \ref{item:pBin1} $\Rightarrow$ \ref{item:pBin2}. Assume $p_B$ is injective. Then there is a linear map $t:B\to B\boxslash B$ such that $t\circ p_B=\id.$ Define $T\in\mathrm{End}_\Bbbk(B) $ by setting $T(b):=(\varepsilon\otimes B)(t(b))$, for every $b\in B$. Then, for every $x^i\otimes {y_i}\in B\boxslash B$ \[T(x^i)\varepsilon({y_i})=Tp_B(x^i\otimes {y_i})=(\varepsilon\otimes B)(tp_B(x^i\otimes {y_i}))=(\varepsilon\otimes B)(x^i\otimes {y_i})=\varepsilon(x^i) {y_i}.\]

\ref{item:pBin2} $\Rightarrow$ \ref{item:pBin3}.  Let $T\in\mathrm{End}_\Bbbk(B) $ be such that $T(x^i)\varepsilon({y_i})=\varepsilon(x^i) {y_i}$ for every $x^i\otimes {y_i}\in B\boxslash B$. Then
\[x^i_1\otimes T(x^i_2)\varepsilon({y_i})\overset{(*)}{=}x^i_1\otimes \varepsilon(x^i_2){y_i}
 =x^i\otimes {y_i}\]
where in $(*)$ we used that $x^i_1 \otimes x^i_2\otimes {y_i}\in B \otimes B\boxslash B$.


\ref{item:pBin3} $\Rightarrow$ \ref{item:pBin5}. It is trivial.

\ref{item:pBin5} $\Rightarrow$ \ref{item:pBin4}.
This is a straightforward verification, since under condition \ref{item:pBin5} we have
\[\tau(x^i)\varepsilon(y_i) = \varepsilon(x^i_1)\tau(x^i_2)\varepsilon(y_i) = \varepsilon(x^i)y_i\]
for every $x^i \otimes y_i \in B \boxslash B$.

\ref{item:pBin4} $\Rightarrow$ \ref{item:pBin1}.
Let $\{u^j\mid j \in J\}$ be a basis for $B$ and let $x^i\otimes {y_i} \in \ker(p_B)$. For every $i$, write $\Delta(x^i) = u^{j} \otimes v^i_{j}$.
Since $u^j \otimes \left(v^i_{j} \otimes {y_i}\right) \in B \otimes \left(B\boxslash B\right)$, for every $j \in J$ we have $v^i_{j} \otimes {y_i} \in B \boxslash B$, and so there is a sub-coalgebra $C_j$ of $B$ with $v^i_{j},y_i\in C_j$ and a linear map $\tau_j \colon C_j \to B$ such that $\tau_j(v^i_{j})\varepsilon({y_i}) = \varepsilon(v^i_{j}) {y_i}$. Now, $x^i\varepsilon(y_i) = 0$ entails that $0 = \Delta(x^i)\varepsilon(y_i) = u^j \otimes v^i_j\varepsilon(y_i)$ and hence $v^i_j\varepsilon(y_i) = 0$ for every $j \in J$. Therefore, 
\[0 = \tau_j\left(v^i_j\varepsilon(y_i)\right) = \tau_j(v^i_{j})\varepsilon({y_i}) = \varepsilon(v^i_{j}) {y_i}\]
for every $j \in J$, which implies that
\[x^i \otimes y_i = u^j \otimes \varepsilon(v_j^i)y_i = 0,\]
so that $p_B$ is injective.
\end{proof}

\begin{remark}
    Let $B$ be a bialgebra, $T \in \mathrm{End}_\Bbbk(B)$ a linear endomorphism, and $a \in \im(p_B)$. Then, there exists $x^i \otimes y_i \in B \boxslash B$ such that $a = x^i\varepsilon(y_i)$ and so $a_1 \otimes T(a_2) = x^i_1 \otimes T(x^i_2)\varepsilon(y_i)$. In this setting, $a_1 \otimes T(a_2)$ equals $x^i \otimes y_i$ if and only if $T$ is as in \cref{prop:pBin}\,\ref{item:pBin3} above.
    This observation offers also an elementary way of proving that \cref{prop:pBin}\,\ref{item:pBin3} implies that $p_B$ is injective 
    and it characterizes the operator $T$ by means of the identity
    \[a_1 \otimes T(a_2) = p_B^{-1}(a) \in B \boxslash B \qquad \text{for all } a \in \im(p_B).\]
\end{remark}

Dually with respect to \cref{cor:Sprops}, one obtains the following result.

\begin{corollary}
\label{cor:Tantis}
    If $B$ is a bialgebra for which $p_B$ is injective, then for $a,b \in \im(p_B)$ we have
    \begin{equation}\label{eq:Tprops}
    a_1T(a_2)=\varepsilon(a)1, \qquad T(a_1) \otimes T(a_2) = T(a)_2 \otimes T(a)_1, \qquad T(ab) = T(b)T(a),
    \end{equation}
    where $T$ is a linear endomorphism of $B$ as in \cref{prop:pBin}.
\end{corollary}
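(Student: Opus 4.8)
The plan is to prove \cref{cor:Tantis} by dualizing the argument used in \cref{cor:Sprops}, exploiting the fact that $p_B$ injective gives us, via \cref{prop:pBin}\,\ref{item:pBin3}, a linear endomorphism $T$ satisfying $x^i_1 \otimes T(x^i_2)\varepsilon(y_i) = x^i \otimes y_i$ on $B \boxslash B$. The key observation is the one recorded in the remark preceding the statement: for every $a \in \im(p_B)$ we have $a_1 \otimes T(a_2) = p_B^{-1}(a) \in B \boxslash B$. This identity is the workhorse, since it lets us transport algebraic relations back and forth between $B$ and the algebra $B \boxslash B$, whose structure (multiplication, comultiplication, and the coinvariance relation \eqref{eq:coinvs}) is already fully understood from \cref{lem:oslash}.

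First I would establish the first identity $a_1T(a_2) = \varepsilon(a)1$. Since $a_1 \otimes T(a_2) = p_B^{-1}(a)$ lies in $B \boxslash B$, I can apply the relation $x^iy_i = \varepsilon(x^i)\varepsilon(y_i)1_B$ from \cref{lem:oslash} (equivalently, multiply out using \eqref{eq:coinvs}): writing $p_B^{-1}(a) = a_1 \otimes T(a_2)$ and multiplying the two tensor factors yields $a_1T(a_2) = \varepsilon(a_1)\varepsilon(T(a_2))1$. It then remains to check that $\varepsilon \circ T = \varepsilon$ on $\im(p_B)$, which follows by applying $\varepsilon \otimes \varepsilon$ to $a_1 \otimes T(a_2) = p_B^{-1}(a)$ and using that $p_B$ (hence $p_B^{-1}$) preserves the counit. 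This collapses the right-hand side to $\varepsilon(a)1$.

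Next I would prove the comultiplicativity relation $T(a_1) \otimes T(a_2) = T(a)_2 \otimes T(a)_1$. The natural route is to compare two ways of computing a coproduct inside $B \boxslash B$, using that $p_B^{-1}$ intertwines the coalgebra structures. Applying the comultiplication of $B \boxslash B$ given in \eqref{eq:boxslashalg}, or directly using the colinearity relation $\rho$ from the proof of \cref{prop:pBin}, to $p_B^{-1}(a) = a_1 \otimes T(a_2)$ produces $T(a_1) \otimes T(a_2)$ on one side; on the other side, applying $p_B^{-1}$ to $\Delta(a)$ and using that $p_B^{-1}$ reverses comultiplication (being the inverse of an algebra-and-comodule map $p_B$) yields $T(a)_2 \otimes T(a)_1$. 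For the last identity $T(ab) = T(b)T(a)$, I would use that $p_B \colon B \boxslash B \to B$ is a morphism of algebras where $B \boxslash B$ carries the restricted algebra structure of $B^{\op} \otimes B$ (as noted in the invisible computation in \cref{prop:Frobenius}), so $p_B^{-1}$ is anti-multiplicative; comparing $p_B^{-1}(ab)$ with $p_B^{-1}(a)p_B^{-1}(b)$ in $B \boxslash B$ and reading off the second tensor factor gives the anti-multiplicativity of $T$.

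The main obstacle I anticipate is purely bookkeeping: keeping track of which tensor factor records $T$ and in which order, since $B \boxslash B$ sits inside $B \otimes B^{\op}$ with a reversed multiplication and its comultiplication \eqref{eq:boxslashalg} involves a flip, so the anti-coalgebra and anti-algebra character of $T$ emerges precisely from these reversals rather than from any genuinely new input. One subtlety worth flagging is that all three identities live on $\im(p_B)$ rather than on all of $B$, so throughout I must only ever apply $T$ to elements arising as $p_B$ of a coinvariant element, and I should verify that the products $ab$ and the coproduct components $a_1, a_2$ stay within $\im(p_B)$ when needed — but since $\im(p_B)$ is a sub-bialgebra-like object closed under the relevant operations (being the image of the algebra map $p_B$ and, via the comodule structure, stable under $\Delta$), this causes no real difficulty. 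Otherwise the proof is a faithful mirror of \cref{cor:Sprops}, with every occurrence of $i_B$ surjective and the endomorphism $S$ replaced by $p_B$ injective and the endomorphism $T$.
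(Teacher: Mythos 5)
Your overall strategy—transporting everything through the identity $p_B^{-1}(a) = a_1 \otimes T(a_2)$ for $a \in \im(p_B)$—is legitimate, and two of your three arguments do go through. For $a_1T(a_2)=\varepsilon(a)1$, combining $p_B^{-1}(a) \in B\boxslash B$ with the relation $x^iy_i = \varepsilon(x^i)\varepsilon(y_i)1_B$ of \cref{lem:oslash} and with $\varepsilon\circ T = \varepsilon$ on $\im(p_B)$ (which you justify correctly via $\varepsilon\otimes\varepsilon$) is sound. For $T(ab)=T(b)T(a)$, comparing $p_B^{-1}(ab)$ with $p_B^{-1}(a)p_B^{-1}(b)$ and applying $\varepsilon\otimes\id$ works, because $p_B$ is an injective algebra map into $B\otimes B^{\op}$; note that its inverse on the image is then \emph{multiplicative} (not ``anti-multiplicative'' as you write)—the order reversal of $T$ comes from the $B^{\op}$ factor in the multiplication \eqref{eq:boxslashalg}, not from any anti-multiplicativity of $p_B^{-1}$. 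These two arguments are, in substance, the paper's own computations (which are run directly with \eqref{eq:Tcond} on elements written as $x^i\varepsilon(y_i)$), merely rephrased.

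The anti-comultiplicativity step, however, has a genuine gap. First, $B\boxslash B$ carries no comultiplication in general: \eqref{eq:boxslashalg} provides a multiplication, a unit and a left $B\otimes B$-\emph{coaction}, i.e., $\Delta_{B\otimes B^{\op}}$ maps $B\boxslash B$ into $(B\otimes B)\otimes(B\boxslash B)$; it restricts to $B\boxslash B$ only under the extra hypothesis of \cref{lem:invosl}, which you do not have. Second, your justification that $p_B^{-1}$ ``reverses comultiplication, being the inverse of an algebra-and-comodule map'' is a non sequitur and in fact circular: neither multiplicativity nor left $B$-colinearity of $p_B$ says anything about how $p_B^{-1}$ interacts with $\Delta$, and any precise order-reversing statement of this kind is essentially equivalent to the anti-comultiplicativity of $T$ that you are trying to prove. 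Third, the expression $(p_B^{-1}\otimes p_B^{-1})\Delta(a)$ is not even defined in general, since $\im(p_B)$ is only a \emph{left} coideal, $\Delta(\im(p_B))\subseteq B\otimes\im(p_B)$; it is a sub-coalgebra only when $p_{B^{\op}}$ is also injective, cf.\ \cref{prop:pconvd}\,\ref{item:pconvd4}. The step can be repaired inside your framework, and the repair is exactly the paper's computation: apply $\Delta_{B\otimes B^{\op}}$ to $p_B^{-1}(a)=a_1\otimes T(a_2)$ to get the element $(a_1\otimes T(a_3)_1)\otimes(a_2\otimes T(a_3)_2)$, which lies in $(B\otimes B)\otimes(B\boxslash B)$ by \eqref{eq:boxslashalg}; then evaluate the coinvariant right-hand leg with the two maps $x^j\otimes y_j \mapsto T(x^j)\varepsilon(y_j)$ and $x^j\otimes y_j\mapsto \varepsilon(x^j)y_j$, which agree on $B\boxslash B$ by \eqref{eq:Tcond}. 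This yields $a_1\otimes T(a_3)\otimes T(a_2) = a_1\otimes T(a_2)_1 \otimes T(a_2)_2$, and applying $\varepsilon$ to the first leg gives $T(a_2)\otimes T(a_1) = \Delta(T(a))$. The missing idea, in short, is to comultiply \emph{both} legs of $p_B^{-1}(a)$ and exploit coinvariance of the resulting right-hand factor, rather than trying to commute $p_B^{-1}$ past $\Delta$.
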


\begin{invisible}
\begin{proof}
    Since $p_B$ is injective, there is a linear endomorphism $T$ of $B$ for which \eqref{eq:Tcond} holds.
    In view of the fact that $B \boxslash B$ is a left $B \otimes B^\op$-comodule, we know that for every $x^i \boxslash y_i \in B \boxslash B$ we have
    \[\left(x^i_1 \otimes {y_i}_1\right) \otimes \left(x^i_2 \boxslash {y_i}_2\right) \in (B \otimes B) \otimes (B \boxslash B)\]
    and hence we can compute
    \begin{align*}
        \Delta(T(x^i\varepsilon(y_i))) & \stackrel{\eqref{eq:Tcond}}{=} \Delta(\varepsilon(x^i)y_i) = \varepsilon(x^i) {y_i}_1 \otimes {y_i}_2 = \varepsilon(x^i_1) {y_i}_1 \otimes \varepsilon(x^i_2){y_i}_2 \\
        & \stackrel{\eqref{eq:Tcond}}{=} \varepsilon(x^i_1) {y_i}_1 \otimes T(x^i_2)\varepsilon({y_i}_2) = y_i \otimes T(x^i) = \varepsilon(x^i_2){y_i}_2 \otimes T(x^i_1)\varepsilon({y_i}_1) \\
        & \stackrel{\eqref{eq:Tcond}}{=} T(x^i_2)\varepsilon({y_i}_2) \otimes T(x^i_1)\varepsilon({y_i}_1) = T\Big(\big(x^i\varepsilon({y_i})\big)_2\Big) \otimes T\Big(\big(x^i\varepsilon({y_i})\big)_1\Big) \\
        & \stackrel{\phantom{\eqref{eq:Tcond}}}{=} (T \otimes T)\Delta^\op(x^i\varepsilon(y_i)).
    \end{align*}
    This shows that $T(a_1) \otimes T(a_2) = T(a)_2 \otimes T(a)_1$ for all $a \in \im(p_B)$.
    Furthermore, for $x^i\varepsilon(y_i) \in \im(p_B)$ we have
    \[ \left(x^i\varepsilon(y_i)\right)_1 T\Big(\left(x^i\varepsilon(y_i)\right)_2\Big) = x^i_1T(x^i_2)\varepsilon(y_i) \stackrel{\eqref{eq:Tcond}}{=} x^iy_i \stackrel{(*)}{=} \varepsilon(x^iy_i)1,\]
    where in $(*)$ we used the fact that $x^i_1 \otimes {y_i}_1 \otimes x^i_2{y_i}_2 = x^i \otimes y_i \otimes 1$ implies that $x^iy_i = \varepsilon(x^iy_i)1$.
    Along the same line, for all $u^j\varepsilon(v_j),x^i\varepsilon(y_i) \in \im(p_B)$ we have
    \[
         T\left(u^j\varepsilon(v_j)\right) T\left(x^i\varepsilon(y_i)\right) \stackrel{\eqref{eq:Tcond}}{=} \varepsilon(u^j)v_j\varepsilon(x^i)y_i = \varepsilon(x^iu^j)v_jy_i \stackrel{\eqref{eq:Tcond}}{=} T(x^iu^j)\varepsilon(v_jy_i) = T\left(x^iu^j\varepsilon(v_jy_i)\right)
    \]
    where we used that $B\boxslash B$ is an algebra via \eqref{eq:boxslashalg} and hence $x^iu^j\otimes v_jy_i \in  B\boxslash B$. 
\end{proof}
\end{invisible}

\subsection{One sided \texorpdfstring{$n$}{}-Hopf algebras}\label{ssec:nHopf}

By seeking sufficient conditions for the surjectivity of $i_B$ or the injectivity of $p_B$, the following definition arises. Since it also play a central role in the proof of our main result, we exhibit a number of examples of it.

Given bialgebras $B,B'$ and $n\in\mathbb{N}$, we denote by $f^{* n} \colon B\to B'$ the $n$-th convolution power of a linear map $f \colon B\to B'$ in the convolution algebra $\mathrm{Hom}_\Bbbk(B,B')$. In particular this notation applies to $f=\id \colon B\to B$.

\begin{definition}
A \emph{left $n$-Hopf algebra} is a bialgebra $B$ with a minimal $n\in\N$ such that there is an $S\in \mathrm{End}_\Bbbk(B)$, called a \emph{left $n$-antipode}, such that $S*\id^{* n+1}=\id^{* n}$. Similarly one defines a \emph{right $n$-Hopf algebra} and a \emph{right $n$-antipode}.
We speak of \emph{$n$-Hopf algebra} in case it is both a left and a right $n$-Hopf algebra. If the same $S$ is both a left $n$-antipode and a right $n$-antipode, we call it a \emph{two-sided $n$-antipode}.
\end{definition}

\begin{example}\label{exa:nHopf}
Fix $n\in\N$. Consider the monoid presented by the generator $x$ with relation $x^{n+1}=x^n$ i.e. $M=\langle x\mid x^{n+1}=x^n\rangle$.
Then, the monoid algebra $\Bbbk M$ is a $n$-Hopf algebra. Indeed  one has $\id^{* n+1}=\id^{* n}$ so that both $\id$ and $u \circ \varepsilon$ are two-sided $n$-antipodes. To see this we should check the minimality of $n$. Suppose there is $k<n$ and an  another $S\in \mathrm{End}_\Bbbk(B)$ such that $S*\id^{* k+1}=\id^{* k}$. Evaluation at $x$ yields the equality $S(x)x^{k+1}=x^k$. Write $S(x)=\sum_{t\in\N}h_tx^t$ for some $h_t\in\Bbbk$.  Then  $\sum_{t\in\N}h_tx^{t+k+1}=x^k$. Since $\Bbbk M=\Bbbk[X]/\langle X^{n+1}-X^n\rangle$,
where $\langle X^{n+1}-X^n\rangle$ denotes the two-sided ideal generated by $X^{n+1}-X^n$,
we get $\sum_{t\in\N}h_tX^{t+k+1}-X^k\in \langle X^{n+1}-X^n\rangle$, a contradiction, as $k<n$.
\end{example}

\begin{remark}
\begin{enumerate}[label=\arabic*),leftmargin=*,wide=0pt,itemsep=0.1cm]
\item Clearly a (left, right) $0$-Hopf algebra is just a usual (left, right) Hopf algebra.

\item In an $n$-Hopf algebra, a left and a right $n$-antipode  need not to be equal. For instance, in \cref{exa:nHopf} we can consider $\id$ as a left $n$-antipode and $u \circ \varepsilon$ as a right $n$-antipode.

\item 
Uniqueness of left $n$-antipode is not guaranteed. If $B$ is a left $n$-Hopf algebra and $S,S'$ are two left $n$-antipodes, then $S*\id*S'$ is a left $n$-antipode (by minimality of $n$).
\begin{invisible}
$S*\id*S'*\id^{n+1}
=S*\id*\id^{n}
=S*\id^{n+1}=\id^{n}.$
\end{invisible}
In particular, if $S$ is a left $n$-antipode, then $(S*\id)^{* k}*S$ is a left $n$-antipode for all $k \geq 0$ (to see this, for $k>0$, take $S'=(S*\id)^{* k-1}*S$).
Analogous conclusions hold for right $n$-antipodes. \qedhere
\end{enumerate}
\end{remark}

\begin{example}
Let $M$ be a commutative regular monoid.
Since it is commutative, for every $x \in M$ there exists $x^\dagger$ such that $x^\dagger \cdot x^2 = x$.
If we consider again the monoid bialgebra $B = \Bbbk M$ and we perform a choice of $x^\dagger$ for every $x \in M$, then $B$ is a $1$-Hopf algebra with $1$-antipode $S$ uniquely determined by $S(x) \coloneqq x^\dagger$ for all $x \in M$.
If $M$ is not a group, then $\Bbbk M$ cannot be a Hopf algebra.
For a concrete example, if the groups $H$ and $G$ in \cref{ex:regmon} are abelian, then $M = H \sqcup G$ from there is a commutative regular (even inverse) monoid.

In general, however, since for every $x \in M$ the element $x^\dagger$ such that $x^\dagger \cdot x^2 = x$ is not necessarily unique, the monoid bialgebra $B = \Bbbk M$ may have many $1$-antipodes determined by the rule $S(x) = x^\dagger$. For example, in \cite[Example 3.1.7]{Wehrung} it is exhibited a commutative inverse monoid $M$ with $0$. In this case, every element $m \in M$ satisfies $0\cdot m\cdot 0 = 0$ and hence for every choice of $S(0)$ we have a different 1-antipode.
\end{example}

\begin{lemma}\label{lem:nthS}
Let $m,n\in \N$. If $B$ is both a left $m$-Hopf algebra and a right $n$-Hopf algebra, then $m=n$ i.e.\ it is an $n$-Hopf algebra.
In particular if  a left $m$-Hopf algebra is a right Hopf algebra, then $m=0$ and so it is a Hopf algebra.
\end{lemma}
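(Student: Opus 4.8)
The plan is to exploit the defining convolution identities directly. Suppose $B$ is both a left $m$-Hopf algebra, with left $m$-antipode $S$ satisfying $S*\id^{*m+1}=\id^{*m}$, and a right $n$-Hopf algebra, with right $n$-antipode $T$ satisfying $\id^{*n+1}*T=\id^{*n}$. The key observation is that both defining relations can be iterated to push the exponent of $\id$ arbitrarily high. Indeed, from $S*\id^{*m+1}=\id^{*m}$ one gets, by convolving on the right with $\id$, that $S*\id^{*m+2}=\id^{*m+1}$, and inductively $S*\id^{*m+1+k}=\id^{*m+k}$ for every $k\ge 0$. Symmetrically, $\id^{*n+1+k}*T=\id^{*n+k}$ for all $k\ge 0$.

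Next I would combine the two relations to produce a single identity that is symmetric in the two antipodes. The natural move is to compute the triple convolution $S*\id^{*N}*T$ for a well-chosen large $N$ in two different ways, using associativity and commutativity (the convolution algebra $\mathrm{Hom}_\Bbbk(B,B)$ is associative, and here $S,T$ and the powers of $\id$ mutually commute because they are all polynomials in $\id$ together with the single extra elements $S$, $T$, each appearing once). Taking, say, $N=\max(m,n)+1$ and applying the left relation to collapse $S*\id^{*m+1}$ and the right relation to collapse $\id^{*n+1}*T$, one obtains two expressions for the same element that, after cancelling common powers of $\id$, force the exponents on the two sides to match. This yields $\id^{*m}$-type and $\id^{*n}$-type reductions that can only be consistent if $m=n$; more precisely one shows that if $m<n$ then the left antipode relation already provides a reduction of $\id^{*n}$ of the form $S'*\id^{*n+1}=\id^{*n}$ with $S'=S*\id^{*(n-m)}$ wait---I would instead argue that $S*\id^{*m+1}=\id^{*m}$ witnesses that $B$ is a right Hopf-type reduction at level $m$ as well, contradicting the minimality of $n$ unless $n\le m$, and symmetrically $m\le n$.

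The cleanest route, which I expect to be the actual argument, is the following. Assume without loss of generality $m\le n$. Convolving the left relation $S*\id^{*m+1}=\id^{*m}$ on the right by $T*\id^{*\,n-m}$ and using associativity, one rewrites the left-hand side using the right relation to eliminate $T$; the upshot is an endomorphism $S'$ (an explicit convolution product of $S$, $T$ and powers of $\id$) satisfying $\id^{*n+1}*S'=\id^{*n}$ but also, after regrouping, $S'*\id^{*m+1}=\id^{*m}$, which by minimality of $m$ as the left index is consistent, while the same element exhibits a right $m$-antipode-type relation, contradicting minimality of $n$ as the right index unless $m=n$. Thus $m=n$, and $B$ is an $n$-Hopf algebra.

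The main obstacle will be bookkeeping: making sure that every manipulation stays inside the commutative sub-algebra of $\mathrm{Hom}_\Bbbk(B,B)$ generated by $\id$, $S$ and $T$ so that associativity and reordering are legitimate, and that the exponent arithmetic genuinely triggers the minimality clauses in the definition of left and right $n$-Hopf algebra rather than merely producing a formally weaker identity. Once the correct convolution identity symmetric in $S$ and $T$ is isolated, the conclusion $m=n$ is immediate, and the final sentence follows by taking $n=0$: a left $m$-Hopf algebra that is a right Hopf algebra (i.e.\ a right $0$-Hopf algebra) must have $m=0$, hence is an honest Hopf algebra.
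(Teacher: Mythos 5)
Your overall strategy---combining the two defining relations via associativity of convolution so as to produce a relation of strictly lower degree that violates minimality---is exactly the paper's strategy, but your proposal never actually lands the decisive identity, and the claims you make in its place are wrong. First, the commutativity claim (``$S,T$ and the powers of $\id$ mutually commute because they are all polynomials in $\id$'') is unjustified: a left or right $n$-antipode is an \emph{arbitrary} element of $\mathrm{End}_\Bbbk(B)$ satisfying the stated relation, not a polynomial in $\id$; fortunately, no reordering of factors is needed anywhere. Second, your intermediate assertion that $S*\id^{*m+1}=\id^{*m}$ ``witnesses a right Hopf-type reduction at level $m$'' is a non sequitur: this is a relation with $S$ on the \emph{left}, whereas contradicting the minimality of $n$ requires producing a relation of the form $\id^{*k+1}*T'=\id^{*k}$ with $k<n$. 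Third, in your ``cleanest route'' the claimed upshot $\id^{*n+1}*S'=\id^{*n}$ contradicts nothing (it is just the defining relation at level $n$), and the ``right $m$-antipode-type relation'' you invoke is never derived; you assert what comes out of the computation without performing it, and what you assert is not what comes out.

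The missing computation is three lines. If $m<n$, convolve $S*\id^{*m+1}=\id^{*m}$ on the right by $\id^{*(n-m)}*T$, use $\id^{*n+1}*T=\id^{*n}$, and then convolve $S*\id^{*m+1}=\id^{*m}$ on the right by $\id^{*(n-m-1)}$, to get
\[
\id^{*n}*T \;=\; S*\id^{*n+1}*T \;=\; S*\id^{*n} \;=\; \id^{*n-1},
\]
so $T$ itself satisfies a right $(n-1)$-antipode relation, contradicting the minimality of $n$. The case $n<m$ is the mirror-image computation (note that your ``without loss of generality $m\le n$'' is not available verbatim, since the left and right hypotheses are not symmetric; one must either run the symmetric argument or pass to $B^{\op\cop}$). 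With this identity in place, the rest of your plan is fine: the final assertion of the lemma is the case $n=0$.
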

\begin{proof}
By assumption, there exist $m,n\in\N$ minimal such that there are $S,T\in \mathrm{End}_\Bbbk(B)$ such that $S*\id^{*m+1}=\id^{*m}$
and $\id^{*n+1}*T=\id^{*n}$. If $m<n$, set $t=n-m>0$ so that $S*\id^{*m+1}=\id^{*m}$ and $\id^{*n+1}*T=\id^{*n}$ imply
\[\id^{*m+t}*T = S*\id^{*m+t+1}*T = S*\id^{*m+t} = \id^{*m+t-1},\]
i.e.\ $\id^{*n}*T=\id^{*n-1}$ against the minimality of $n$. Similarly, $n<m$ leads to a contradiction as well, so $m=n$.
\end{proof}

In order to provide further instances of left $n$-Hopf algebras we use  the notion of  \emph{right perfect} ring, which admits several equivalent characterizations, see e.g. \cite[Theorem 28.4]{AndFull}. The one used here requires the subsistence of the descending chain condition on principal left ideals (there is no mistake on side change).

\begin{proposition}\label{lem:perfect}
Let $B$ be a bialgebra. Let $A$ be a sub-algebra of the convolution algebra $\mathrm{End}_\Bbbk(B)$ such that $\id\in A$. If $A$ is right perfect, then there is a left $n$-antipode $S\in A$ for some $n\in\mathbb{N}$. Analogous statement holds with left and right interchanged.
\end{proposition}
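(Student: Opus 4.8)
The plan is to apply directly the characterization of right perfect rings recalled just before the statement — the descending chain condition on principal left ideals of $A$ — to the chain generated by the convolution powers of $\id$. Since $A$ is a unital subalgebra of $\mathrm{End}_\Bbbk(B)$ containing $\id$, it contains the convolution unit $\id^{* 0} = u_B \circ \varepsilon_B$ and all the powers $\id^{* m}$ for $m \geq 0$.

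First I would form the principal left ideals $A\,\id^{* m} = \{\, a * \id^{* m} \mid a \in A \,\}$ and observe that they form a descending chain. Indeed, from $\id^{* (m+1)} = \id * \id^{* m}$ one gets $a * \id^{* (m+1)} = (a * \id) * \id^{* m}$ with $a * \id \in A$, so that $A\,\id^{* (m+1)} \subseteq A\,\id^{* m}$; hence
\[
A = A\,\id^{* 0} \;\supseteq\; A\,\id \;\supseteq\; A\,\id^{* 2} \;\supseteq\; \cdots
\]
is a descending chain of principal left ideals of $A$. By the descending chain condition there is $n \in \N$ with $A\,\id^{* n} = A\,\id^{* (n+1)}$. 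Since $\id^{* n} = \id^{* 0} * \id^{* n} \in A\,\id^{* n} = A\,\id^{* (n+1)}$, there exists $S \in A$ with $S * \id^{* (n+1)} = \id^{* n}$, which is precisely the left $n$-antipode relation; this $S$ is the desired left $n$-antipode lying in $A$, and in particular $B$ is a left $n'$-Hopf algebra for the minimal $n'$ admitting such a relation.

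For the statement with left and right interchanged I would run the mirror-image argument: assuming $A$ left perfect, i.e.\ with the descending chain condition on principal right ideals, I would use $\id^{* (m+1)} = \id^{* m} * \id$ to obtain the descending chain $\id^{* m} A$ of principal right ideals, whose stabilization produces $T \in A$ with $\id^{* (n+1)} * T = \id^{* n}$, a right $n$-antipode in $A$.

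The argument is short, so there is no real obstacle beyond bookkeeping; the one point demanding attention — flagged in the remark on side change preceding the statement — is to keep the sides consistent. The relation $S * \id^{* (n+1)} = \id^{* n}$ asks for membership in a principal \emph{left} ideal, which is exactly what the \emph{right} perfect hypothesis controls, and symmetrically for the right $n$-antipode; getting this pairing backwards is the only thing that could go wrong.
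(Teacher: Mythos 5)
Your proof is correct and follows essentially the same route as the paper's: both form the descending chain of principal left ideals $A*\id \supseteq A*\id^{*2} \supseteq \cdots$ in $A$, invoke the DCC on principal left ideals (the characterization of right perfect recalled before the statement), and read off $S \in A$ with $S*\id^{*(n+1)} = \id^{*n}$ from the stabilization, with the mirror argument for the left perfect case. Your closing remark on pairing the right perfect hypothesis with the \emph{left} ideal chain, and your passage to the minimal exponent, match the paper's "choose $n$ minimal."
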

\begin{proof}
The descending chain of left ideals $A*\id\supseteq A*\id^{* 2}\supseteq A*\id^{*3}\supseteq \cdots$ stabilizes so that there is $n\in\N$ such that $A*\id^{*n}\subseteq A*\id^{*n+1}$. Thus $\id^{*n}\in A*\id^{*n+1}$ i.e.\ there is $S\in A\subseteq \mathrm{End}_\Bbbk(B)$ such that $\id^{*n}=S*\id^{*n+1}$.  Choose $n$ minimal.
\end{proof}

The following result provides a wide family of examples of $n$-Hopf algebras.

\begin{corollary}
\label{coro:fdS}
 Let $B$ be a bialgebra such that  $\id$ is algebraic over $\Bbbk$ in the convolution algebra $\mathrm{End}_\Bbbk(B)$ (this is true, for instance, for any finite-dimensional bialgebra). Then $B$ has a left $n$-antipode $S$ for some $n\in\mathbb{N}$, which additionally satisfies $S*\id=\id*S$. In particular, it is a two-sided $n$-antipode.
\end{corollary}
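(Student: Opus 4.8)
The plan is to run the entire argument inside the commutative subalgebra of the convolution algebra generated by $\id$, which will at once produce an $n$-antipode and guarantee the commutation $S*\id=\id*S$ for free. Concretely, I would set $A \coloneqq \Bbbk[\id]$, the unital $\Bbbk$-subalgebra of $\mathrm{End}_\Bbbk(B)$ generated by $\id$; its unit is $\id^{*0}=u_B\circ\varepsilon_B$ and it obviously contains $\id$. The key structural observation is that $A$ is \emph{commutative}, since any two convolution powers of the single element $\id$ commute, and hence so do any two polynomials in $\id$. In particular, every element of $A$ commutes with $\id$, which is exactly the property $S*\id=\id*S$ that we want --- provided the antipode we find lies in $A$.

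Next I would note that the hypothesis that $\id$ is algebraic over $\Bbbk$ means precisely that $A\cong\Bbbk[X]/(m(X))$, where $m$ is the minimal polynomial of $\id$ in the convolution algebra, so that $A$ is a finite-dimensional commutative $\Bbbk$-algebra; in particular it satisfies the descending chain condition on (principal) left ideals and is therefore right perfect. When $B$ is finite-dimensional, $\mathrm{End}_\Bbbk(B)$ is itself finite-dimensional, so $\id$ is automatically algebraic and the hypothesis holds. Since $\id\in A$, I can then invoke \cref{lem:perfect} for the right perfect algebra $A$ to obtain an element $S\in A$ and a minimal $n\in\N$ with $S*\id^{*n+1}=\id^{*n}$. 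By definition this exhibits $B$ as a left $n$-Hopf algebra with left $n$-antipode $S$, and, crucially, $S\in A$ forces $S*\id=\id*S$.

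It remains to upgrade $S$ to a two-sided $n$-antipode, and this is where the minimality bookkeeping enters. From $S*\id=\id*S$ I get $\id^{*n+1}*S=S*\id^{*n+1}=\id^{*n}$, so the same $S$ witnesses that $B$ is a right $m$-Hopf algebra for its minimal right index $m\le n$. Then \cref{lem:nthS} forces $m=n$, so that $n$ is minimal on both sides and $S$ is simultaneously a left $n$-antipode and a right $n$-antipode, i.e.\ a two-sided $n$-antipode, as claimed.

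I expect the only genuinely delicate point to be this final step: the identity $\id^{*n+1}*S=\id^{*n}$ holds at level $n$, but to call $S$ a \emph{right $n$-antipode} one needs $n$ to be the minimal right index, which is not obvious a priori and is precisely what \cref{lem:nthS} supplies. Everything else is structural; the real content of the argument is the decision to work inside $\Bbbk[\id]$, which turns the commutation requirement into an automatic consequence and reduces the existence of the antipode to the already-established \cref{lem:perfect}.
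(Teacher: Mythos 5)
Your proof is correct and follows essentially the same route as the paper's: both pass to the commutative subalgebra $\Bbbk[\id]$, which is finite-dimensional by algebraicity and hence right perfect, apply \cref{lem:perfect} to obtain a left $n$-antipode $S\in\Bbbk[\id]$ that automatically satisfies $S*\id=\id*S$, and then invoke \cref{lem:nthS} to secure minimality of $n$ on the right side, so that $S$ is a two-sided $n$-antipode. There is nothing to correct.
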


\begin{proof}
Since $\id$ is algebraic over $\Bbbk$ in $\mathrm{End}_\Bbbk(B)$, then the sub-algebra $\Bbbk[\id]$ of $\mathrm{End}_\Bbbk(B)$ generated by $\id$ is finite-dimensional, whence left Artinian and a fortiori right perfect.
By \cref{lem:perfect}, there is a minimal $n\in\N$ such that there is $S\in \Bbbk[\id]$ which is a left $n$-antipode i.e.\ such that  $S*\id^{*n+1}=\id^{*n}$. Since $\Bbbk[\id]$ is commutative, we have $S*\id=\id*S$ so that $\id^{*n+1}*S=\id^{*n}$. In view of \cref{lem:nthS}, the positive integer $n$ is minimal also with respect to the latter equality, so $S$ is also a right $n$-antipode.
\end{proof}

As an immediate consequence of \cref{lem:nthS} and \cref{coro:fdS}, we recover that a finite-dimensional left (resp.~right) Hopf algebra is indeed an ordinary Hopf algebra. However the original proof in \cite[Theorem 3]{GNT} is much easier.

\begin{proposition}
\label{prop:semiantip}
Let $B$ be a left $n$-Hopf algebra with left $n$-antipode $S$  for some $n\in\N$ (e.g., $B$ is a finite-dimensional bialgebra). Then:
\begin{enumerate}[label=\alph*),ref={\itshape\alph*)},leftmargin=*]
\item\label{item:semiantip1} For all $y\in B$, we have $S(y)\oslash 1=1\oslash y$, whence $i_B \colon B\to B\oslash B$ is surjective.
\item\label{item:semiantip2} For all $x^i \boxslash {y_i} \in B \boxslash B$ we have $S(x^i)\varepsilon({y_i}) = \varepsilon(x^i){y_i}$, whence $p_B \colon B \boxslash B \to B$ is injective.
\end{enumerate}
\end{proposition}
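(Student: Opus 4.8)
The plan is to translate the single defining relation $S*\id^{*n+1}=\id^{*n}$ of a left $n$-antipode into statements about the quotient $B\oslash B$ and the subspace $B\boxslash B$, using the criteria already established in \cref{lem:iBsu} and \cref{prop:pBin}. The key observation is that although $S$ is only an $n$-antipode rather than a genuine antipode, the higher convolution powers of $\id$ that appear can be absorbed using the module/comodule structures and the coinvariance relations, so that $S$ itself (or a closely related endomorphism) ends up satisfying exactly the condition that characterises surjectivity of $i_B$ or injectivity of $p_B$.

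For part \ref{item:semiantip1} I would work in $B\oslash B$ and aim to verify condition \ref{item:epi2} of \cref{lem:iBsu}, namely that $1\oslash y=S(y)\oslash 1$ for every $y\in B$. The idea is to start from the identity $1\oslash y$ and rewrite it by inserting the relation $S(y_1)y_2\cdots=\cdots$ coming from $S*\id^{*n+1}=\id^{*n}$. Concretely, since in $B\oslash B$ one has $ab_1\oslash b_2=a\varepsilon(b)\oslash 1$ by the defining quotient relation (equivalently, the left $B\otimes B$-module structure of \cref{lem:oslash}), the $n$ ``extra'' factors of $\id$ on the right of $y$ can be collapsed against the right tensorand and turned into counits. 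I expect the computation to proceed by evaluating $\Delta^{(n)}(y)$, applying $S$ to the first leg, and repeatedly using $xb_1\oslash b_2=x\varepsilon(b)\oslash 1$ to reduce $S(y_1)\oslash y_2y_3\cdots y_{n+1}$ down to $S(y_1)y_2\cdots y_n\oslash 1=i_B\bigl((S*\id^{*(n-1)})(y)\bigr)$, after which the defining relation of the $n$-antipode identifies this with $1\oslash y$. Surjectivity of $i_B$ then follows immediately from the implication \ref{item:epi2}$\Rightarrow$\ref{item:epi1} in \cref{lem:iBsu}.

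For part \ref{item:semiantip2} I would dualise this argument and check condition \ref{item:pBin2} of \cref{prop:pBin}, i.e.\ that $S(x^i)\varepsilon(y_i)=\varepsilon(x^i)y_i$ for every $x^i\boxslash y_i\in B\boxslash B$. Here the essential tool is the coinvariance relation \eqref{eq:coinvs}, which gives $x^i_1\otimes {y_i}_1\otimes x^i_2{y_i}_2=x^i\otimes y_i\otimes 1_B$; iterating it (or applying $\Delta^{(n)}$ and summing) lets me express $\varepsilon(x^i)y_i$ in terms of products $x^i_{(1)}\cdots x^i_{(k)}y_i$ weighted by counits, which is precisely the shape that the relation $S*\id^{*n+1}=\id^{*n}$ controls. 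Applying $S$ to the appropriate leg and collapsing the surplus $\id$-factors via \eqref{eq:coinvs} should yield $S(x^i)\varepsilon(y_i)$ on one side and $\varepsilon(x^i)y_i$ on the other. Injectivity of $p_B$ then follows from \ref{item:pBin2}$\Rightarrow$\ref{item:pBin1} in \cref{prop:pBin}.

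The main obstacle I anticipate is bookkeeping the $n$-fold iteration cleanly: unlike the genuine Hopf case ($n=0$), one cannot collapse everything in a single step, and one must carefully track how the $n$ auxiliary convolution factors of $\id$ interact with the quotient relation in $B\oslash B$ (respectively the coinvariance relation in $B\boxslash B$) so that exactly the right number of them are converted into counits, leaving the single defining equation $S*\id^{*n+1}=\id^{*n}$ to finish the job. Once the correct inductive rewriting is set up, both statements should reduce to a direct application of the characterisations in \cref{lem:iBsu} and \cref{prop:pBin}, with no further structural input required.
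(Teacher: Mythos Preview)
Your plan is correct and matches the paper's approach: both parts reduce to checking the criterion in \cref{lem:iBsu}\,\ref{item:epi2} (resp.\ \cref{prop:pBin}\,\ref{item:pBin2}) by inserting enough Sweedler legs so that the single relation $S*\id^{*n+1}=\id^{*n}$ can be applied once, then absorbing the remaining legs via the defining relation of $B\oslash B$ (resp.\ the iterated coinvariance identity~\eqref{eq:coinvs}).

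Two details will need correcting when you write it out. First, your indexing is off: the relevant expression is $(S*\id^{*n+1})(y_{1})=\id^{*n}(y_{1})$, not $(S*\id^{*(n-1)})(y)$; the extra $\id$-factors you must create are $n+1$ on one side and $n$ on the other. Second, the specific rewrite you sketch for part~\ref{item:semiantip1} does not follow from the relation as written: $xb_1\oslash ab_2=x\oslash a\varepsilon(b)$ adds or removes \emph{paired} factors from the right of \emph{both} tensorands simultaneously, so an expression like $S(y_1)\oslash y_2\cdots y_{n+1}$ cannot be directly reduced to $S(y_1)y_2\cdots y_n\oslash 1$. The paper instead starts from $S(y)\oslash 1$, inserts $n+1$ paired Sweedler legs (which accumulate in \emph{reverse} order on the right tensorand) to produce $(S*\id^{*n+1})(y_1)\oslash y_{2_{n+1}}\cdots y_{2_1}$, applies the $n$-antipode identity, and then removes $n$ paired legs to arrive at $1\oslash y$. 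Once you track the reversed ordering on the right, your anticipated bookkeeping obstacle disappears and the argument goes through exactly as you outline.
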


\begin{proof}
Let us prove \ref{item:semiantip1} first. For $y\in B$, we compute
\begin{align*}
S(y)\oslash 1
 &=S(y_1)y_{2_1}y_{2_2}\cdots y_{2_{n+1}}\oslash  y_{3_{n+1}}\cdots y_{3_{2}}y_{3_{1}}
 =S(y_1)\id^{*n+1}(y_{2}) \oslash  y_{3_{n+1}}\cdots y_{3_{2}}y_{3_{1}}\\
  &=(S*\id^{*n+1})(y_{1})\oslash  y_{2_{n+1}}\cdots y_{2_{2}}y_{2_{1}}
  =\id^{*n}(y_{1}) \oslash  y_{2_{n+1}}\cdots y_{2_{2}}y_{2_{1}}\\
  &=y_{1_1}y_{1_2}\cdots y_{1_{n}} \oslash  y_3y_{2_{n}}\cdots y_{2_{2}}y_{2_{1}}
  =1\oslash y.
\end{align*} We conclude by applying \cref{lem:iBsu}\,\ref{item:epi2}.

To prove \ref{item:semiantip2}, let $x^i \otimes {y_i} \in B \boxslash B$ and recall from \eqref{eq:coinvs} that $x^i \otimes {y_i} \otimes 1 = x^i_1 \otimes {y_i}_1 \otimes x^i_2{y_i}_2$. Then we can observe that
    \[
    x^i \otimes {y_i} \otimes 1 = x^i_1 \otimes {y_i}_1 \otimes x^i_2{y_i}_2 = x^i_1 \otimes {y_i}_1 \otimes x^i_2x^i_3{y_i}_3{y_i}_2 = \cdots = x^i_1 \otimes {y_i}_1 \otimes x^i_2x^i_3\cdots x^i_{k}{y_i}_{k} \cdots {y_i}_3{y_i}_2
    \]
    for any $k \geq 2$, so that
    \begin{gather}
    x^i \otimes {y_i} \otimes 1 = x^i_1 \otimes {y_i}_1 \otimes x^i_2x^i_3\cdots x^i_{n+1}{y_i}_{n+1} \cdots {y_i}_3{y_i}_2 \label{eq:uffa} \\
    x^i \otimes {y_i} \otimes 1 = x^i_1 \otimes {y_i}_1 \otimes x^i_2x^i_3\cdots x^i_{n+2}{y_i}_{n+2} \cdots {y_i}_3{y_i}_2 \label{eq:riuffa}
    \end{gather}
    If now we apply $a \otimes b \otimes c \mapsto S(a)\varepsilon(b)c$ to both sides of \eqref{eq:riuffa}, then we get
    \begin{align*}
    S\left(x^i\right)\varepsilon\left({y_i}\right) & = S\left(x^i_1\right) x^i_2x^i_3\cdots x^i_{n+2}{y_i}_{n+1} \cdots {y_i}_2{y_i}_1 = \left(S*\id^{*n+1}\right)(x^i){y_i}_{n+1} \cdots {y_i}_2{y_i}_1 \\
    & = \id^{*n}(x^i){y_i}_{n+1} \cdots {y_i}_2{y_i}_1 = x^i_1x^i_2\cdots x^i_{n}{y_i}_{n+1} \cdots {y_i}_2{y_i}_1.
    \end{align*}
    If we apply instead $a \otimes b \otimes c \mapsto \varepsilon(a)cb$ to both sides of \eqref{eq:uffa} then we get
    \[x^i_1x^i_2\cdots x^i_{n}{y_i}_{n+1} \cdots {y_i}_2{y_i}_1 = \varepsilon(x^i){y_i}.\]
    Putting together these two pieces of information, we conclude that
    $S\left(x^i\right)\varepsilon\left({y_i}\right) = \varepsilon(x^i){y_i}$
    for all $x^i \boxslash {y_i} \in B \boxslash B$ and the claim follows from \cref{prop:pBin}\,\ref{item:pBin2} with $T = S$.

Finally, if $B$ is finite-dimensional, then it is a left $n$-Hopf algebra (see \cref{coro:fdS}).
\end{proof}




\begin{remark}\label{rmk:not-nHopf}
Let $B$ be a bialgebra such that $i_B$ is injective (or $p_B$ is surjective) and which is not a right Hopf algebra. Then, by \cref{prop:Frobenius}, the map $i_B$ is not surjective (or $p_B$ is not injective) and hence, in view of \cref{prop:semiantip},  $B$ is not a left $n$-Hopf algebra for any $n\in\N$.
By the foregoing, the $n$-Hopf algebra of \cref{exa:nHopf} for $n\neq 0$ has $i_B$ surjective but not injective and $p_B$ injective but not surjective.
\end{remark}

\begin{corollary}
\label{cor:Bfd}
    Let $B$ be a finite-dimensional bialgebra. If either $B$ is a sub-bialgebra of a bialgebra $C$ such that $i_C$ is injective, or $B$ is a quotient bialgebra of a bialgebra $C$ whose $p_C$ is surjective, then $B$ is a Hopf algebra.
\end{corollary}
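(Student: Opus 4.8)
The plan is to reduce both hypotheses to the single statement that $B$ is a finite-dimensional right Hopf algebra, and then to invoke the rigidity of $n$-Hopf algebras to upgrade this to a genuine Hopf algebra. In each case the key is to obtain \emph{bijectivity} of one of the canonical maps $i_B$ or $p_B$, combining a transfer property already established with the finite-dimensionality of $B$.

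First, suppose $B$ is a sub-bialgebra of $C$ with $i_C$ injective. Then the stability of injectivity of $i$ under subobjects, namely \cref{lem:iB-inj}\,\ref{ib-inj_item3}, yields that $i_B$ is injective. On the other hand, since $B$ is finite-dimensional, \cref{prop:semiantip}\,\ref{item:semiantip1} shows that $i_B$ is surjective. Hence $i_B$ is bijective, and by the equivalence \ref{item:1sided3}$\Leftrightarrow$\ref{item:1sided4} in \cref{prop:Frobenius} the bialgebra $B$ is a right Hopf algebra whose right antipode is anti-multiplicative and anti-comultiplicative. Dually, suppose $B$ is a quotient bialgebra of $C$ with $p_C$ surjective. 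Then \cref{prop:pB-surj}\,\ref{pB-surj_item3} gives that $p_B$ is surjective, while finite-dimensionality together with \cref{prop:semiantip}\,\ref{item:semiantip2} gives that $p_B$ is injective; thus $p_B$ is bijective and the equivalence \ref{item:1sided5}$\Leftrightarrow$\ref{item:1sided4} in \cref{prop:Frobenius} again exhibits $B$ as a right Hopf algebra of the same kind. So in either case I have reduced to the situation where $B$ is a finite-dimensional right Hopf algebra.

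Finally, I would close the argument by combining \cref{coro:fdS} with \cref{lem:nthS}. Since $B$ is finite-dimensional, \cref{coro:fdS} ensures that $B$ is a left $n$-Hopf algebra for some $n \in \N$. But $B$ is also a right Hopf algebra, that is, a right $0$-Hopf algebra, so the concluding assertion of \cref{lem:nthS}---a left $m$-Hopf algebra that is a right Hopf algebra must have $m=0$---forces $n=0$. Therefore $B$ is an ordinary Hopf algebra, as desired.

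I do not expect a genuine obstacle here: the statement is essentially a bookkeeping corollary assembling the previously established transfer properties of $i_B$ and $p_B$, the characterization in \cref{prop:Frobenius}, and the fact that a finite-dimensional one-sided Hopf algebra is automatically two-sided. The only point requiring a little care is to match the correct side of each hypothesis: injectivity of $i$ transfers \emph{to sub-bialgebras} whereas surjectivity of $p$ transfers \emph{to quotients}, and it is precisely this compatibility that makes $i_B$ (respectively $p_B$) bijective rather than merely one-sided, which is what \cref{prop:Frobenius} requires.
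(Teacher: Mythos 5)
Your proof is correct and follows essentially the same route as the paper's: transfer injectivity of $i$ (resp.\ surjectivity of $p$) via \cref{lem:iB-inj}\,\ref{ib-inj_item3} (resp.\ \cref{prop:pB-surj}\,\ref{pB-surj_item3}), combine with \cref{prop:semiantip} to get bijectivity, invoke \cref{prop:Frobenius} to obtain a right Hopf algebra, and conclude from finite-dimensionality. The only cosmetic difference is that you unwind the final step explicitly through \cref{coro:fdS} and \cref{lem:nthS}, whereas the paper cites this same combination as a previously recorded remark.
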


\begin{proof}
Since $B$ is a finite-dimensional bialgebra, the map $i_B$ is surjective and the map $p_B$ is injective by \cref{prop:semiantip}. On the one hand, if $B$ embeds into a bialgebra $C$ such that $i_C$ is injective, then $i_B$ is also injective by \cref{lem:iB-inj}\,\ref{ib-inj_item3}, whence bijective. On the other hand, if $B$ is a quotient of a bialgebra $C$ with $p_C$ surjective, then $p_B$ is surjective by \cref{prop:pB-surj}\,\ref{pB-surj_item3}, whence bijective.
In either case, $B$ is a right Hopf algebra by \cref{prop:Frobenius}. As we already recalled, a finite-dimensional right Hopf algebra is an ordinary Hopf algebra.
\end{proof}

As a consequence of \cref{cor:Bfd}, we recover that a finite-dimensional sub-bialgebra of a Hopf algebra is a Hopf sub-algebra, see \cite[Proposition 7.6.1]{Radford-book}, and that a finite-dimensional quotient bialgebra of a Hopf algebra is a Hopf algebra, cf. \cite[Theorem 1]{Nichols-Quot}.

\begin{remark}
\cref{cor:Bfd} is not necessarily true if we drop the assumption of finite dimension, as there are bialgebras $B$ without antipode that embed into (or are quotients of) a Hopf algebra. 
\end{remark}


We conclude the subsection with the following result, which will prove to be genuinely useful in the subsequent sections.

\begin{proposition}
\label{lem:n-Hopfconv}
    Let $f \colon B\to B'$ be a bialgebra map which is right  (resp.~left) convolution invertible.
\begin{enumerate}[label=\alph*),ref={\itshape \alph*)},leftmargin=*]
    \item\label{item:fHopf1} If $f$ is surjective and  $B'$ is a left (resp.~right) $n$-Hopf algebra, then $B'$ is a Hopf algebra.
    \item\label{item:fHopf2} If $f$ is injective and  $B$ is a left (resp.~right)  $n$-Hopf algebra, then $B$ is a Hopf algebra.
\end{enumerate}
 \end{proposition}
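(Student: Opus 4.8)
The plan is to reduce both statements to a single computation inside a convolution algebra and then to descend the resulting antipode along an injective algebra homomorphism; I treat the first (non-parenthetical) version throughout, the parenthetical one being entirely symmetric. For part \ref{item:fHopf1} I would work in the convolution algebra $\mathrm{Hom}_\Bbbk(B,B')$, whose unit is $u_{B'}\circ\varepsilon_B$, and introduce the map $F\colon \mathrm{End}_\Bbbk(B')\to \mathrm{Hom}_\Bbbk(B,B')$, $\phi\mapsto \phi\circ f$. Because $f$ is a coalgebra map, $F$ is an algebra homomorphism (it carries the convolution of $\phi,\psi$ to that of $\phi\circ f,\psi\circ f$, and the unit $u_{B'}\varepsilon_{B'}$ to $u_{B'}\varepsilon_B$ since $\varepsilon_{B'}\circ f=\varepsilon_B$), and because $f$ is surjective, $F$ is injective. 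Note $F(\id_{B'})=f$, whence $F(\id_{B'})^{*k}=f^{*k}$ for all $k$. For part \ref{item:fHopf2} the same argument runs with the injective algebra homomorphism $F'\colon \mathrm{End}_\Bbbk(B)\to \mathrm{Hom}_\Bbbk(B,B')$, $\phi\mapsto f\circ\phi$, which is an algebra map and injective precisely because $f$ is an injective algebra map.

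Next I would pull the $n$-antipode relation back along $F$. Writing $S$ for the left $n$-antipode of $B'$, so that $S*\id_{B'}^{*n+1}=\id_{B'}^{*n}$, applying $F$ gives $(S\circ f)*f^{*n+1}=f^{*n}$ in $\mathrm{Hom}_\Bbbk(B,B')$. Now I exploit right convolution invertibility: choosing $g$ with $f*g=u_{B'}\circ\varepsilon_B$, associativity yields $f^{*m+1}*g=f^{*m}$ for every $m\ge 0$, so right-multiplying the pulled-back relation by $g$ exactly $n$ times lowers every power and leaves
\[
(S\circ f)*f = u_{B'}\circ\varepsilon_B .
\]
Thus $S\circ f=F(S)$ is a left convolution inverse of $f=F(\id_{B'})$, while $g$ is a right convolution inverse; hence $f$ is two-sided invertible in $\mathrm{Hom}_\Bbbk(B,B')$ and its two one-sided inverses coincide, giving $g=S\circ f=F(S)$.

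Finally I would descend both one-sided identities through $F$. Substituting $g=F(S)$ into $f*g=u_{B'}\circ\varepsilon_B$ gives $F(\id_{B'})*F(S)=u_{B'}\circ\varepsilon_B=F(u_{B'}\varepsilon_{B'})$, while the displayed identity reads $F(S)*F(\id_{B'})=F(u_{B'}\varepsilon_{B'})$. Since $F$ is an injective algebra homomorphism, these descend to $\id_{B'}*S=u_{B'}\varepsilon_{B'}=S*\id_{B'}$ in $\mathrm{End}_\Bbbk(B')$; that is, the left $n$-antipode $S$ is in fact a genuine two-sided antipode, so $B'$ is a Hopf algebra. Part \ref{item:fHopf2} is identical after replacing $F$ by $F'$, $B'$ by $B$ and $\id_{B'}$ by $\id_B$. (Alternatively, once a two-sided antipode has been produced one could invoke \cref{lem:nthS} to force $n=0$, but the descent already exhibits the antipode directly.)

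I expect the main obstacle to be the bookkeeping in the cancellation step, and in particular the observation that the derived left inverse $F(S)$ must coincide with the given right inverse $g$: this is exactly what places $g$ in the image of $F$ and thereby lets the \emph{right} antipode identity be pulled back, not merely the left one. Keeping track of which convolution algebra each identity lives in, and verifying that $F$ and $F'$ respect both the convolution product and its unit, is the only delicate point; the remainder is formal.
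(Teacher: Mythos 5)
Your proof is correct and follows essentially the same route as the paper's: compose the left $n$-antipode identity with $f$, cancel powers of $f$ using the right convolution inverse to get $(S\circ f)*f = u_{B'}\circ\varepsilon_B$, identify this left inverse with the given right inverse $g$, and then descend both one-sided identities to $\mathrm{End}_\Bbbk(B')$ (resp.\ $\mathrm{End}_\Bbbk(B)$). Your injective convolution-algebra homomorphism $F$ (resp.\ $F'$) is simply a structural repackaging of the paper's direct appeal to surjectivity (resp.\ injectivity) of $f$ to cancel it after composition.
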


 \begin{proof}
We just prove \ref{item:fHopf1}, as \ref{item:fHopf2} is similar. Let $S'$ be a left $n$-antipode  for $B'$. Then $S'f*f^{*n+1}=(S'*\id^{*n+1}) f=\id^{*n} f=f^{*n}$ so that $S'f*f^{*n+1}=f^{*n}$. Since $f$ is right convolution invertible, we can cancel $f^{*n}$ on the right and conclude that $S'f$ is a left convolution inverse of $f$. As $f$ is then left and right convolution invertible, $S'f$ has to be a two-sided convolution inverse of $f$.
  Thus $(\id*S') f=f*S'f=u_{B'}\varepsilon_B=u_{B'}\varepsilon_{B'}f$ and hence $\id*S'=u_{B'}\varepsilon_{B'}$, as $f$ is surjective. Similarly $S'*\id=u_{B'}\varepsilon_{B'}$ so $B'$ is a Hopf algebra.
The argument is similar when switching right and left.
 \end{proof}

\begin{corollary}
\label{prop:iBsusub}
Let $B$ be a bialgebra. Then $B$ is a Hopf algebra in any of the following cases:
\begin{enumerate}[label=\alph*),ref={\itshape \alph*)},leftmargin=*]
\item\label{item:iBsusub0} $i_B$ is injective and $B$ is a quotient of a Hopf algebra;
\item\label{item:iBsusub1} $i_B$ is surjective and $B$ embeds into a Hopf algebra;
\item\label{item:iBsusub2} $p_B$ is injective and $B$ is a quotient of a Hopf algebra;
\item\label{item:iBsusub3} $p_B$ is surjective and $B$ embeds into a Hopf algebra.
\end{enumerate}
\end{corollary}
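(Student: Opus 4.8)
The plan is to treat the four cases uniformly. In each of them I will first upgrade the given \emph{one-sided} information about $i_B$ or $p_B$ to \emph{bijectivity}, by propagating the complementary property from the ambient Hopf algebra; this makes $B$ into a right Hopf algebra, hence a right $0$-Hopf algebra. Then, observing that the structural map (the quotient projection, resp.\ the embedding) is convolution invertible, I will invoke \cref{lem:n-Hopfconv} to convert the resulting one-sided antipode into a genuine two-sided one.

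For the first step, note that any Hopf algebra $H$ has both $i_H$ and $p_H$ bijective: its antipode is an anti-bialgebra map, so \cref{prop:Frobenius} applies. In case \ref{item:iBsusub0}, a surjection $f \colon H \twoheadrightarrow B$ with $H$ a Hopf algebra forces $i_B$ surjective by \cref{lem:iB-su}\,\ref{iBsurj.item4}; combined with the hypothesis that $i_B$ is injective, this yields $i_B$ bijective. In case \ref{item:iBsusub1}, an embedding $j \colon B \hookrightarrow H$ forces $i_B$ injective by \cref{lem:iB-inj}\,\ref{ib-inj_item3}, which together with surjectivity gives bijectivity. Dually, in case \ref{item:iBsusub2} the surjection $f$ forces $p_B$ surjective by \cref{prop:pB-surj}\,\ref{pB-surj_item3}, and with injectivity one gets bijectivity; in case \ref{item:iBsusub3} the embedding $j$ forces $p_B$ injective by \cref{prop:pbInj}\,\ref{item:pbInj4}, and with surjectivity one gets bijectivity. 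In all four cases \cref{prop:Frobenius} then certifies that $B$ is a right Hopf algebra, that is, a right $0$-Hopf algebra.

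It remains to record the convolution invertibility. If $H$ is a Hopf algebra with antipode $S$, then a bialgebra map $f \colon H \to B$ has $f \circ S$ as a two-sided convolution inverse, and a bialgebra map $j \colon B \to H$ has $S \circ j$ as a two-sided convolution inverse; both verifications are immediate from the antipode axioms together with the multiplicativity and comultiplicativity of $f$ (resp.\ $j$). In particular the maps in all four cases are left convolution invertible. Applying the ``resp.'' version of \cref{lem:n-Hopfconv}\,\ref{item:fHopf1} to the surjections of \ref{item:iBsusub0} and \ref{item:iBsusub2}, and the ``resp.'' version of \cref{lem:n-Hopfconv}\,\ref{item:fHopf2} to the embeddings of \ref{item:iBsusub1} and \ref{item:iBsusub3}, each time with $B$ a right $0$-Hopf algebra, we conclude that $B$ is a Hopf algebra. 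The conceptual heart of the argument is the passage from a one-sided canonical map to a bijective one, which is what makes $B$ a right Hopf algebra in the first place; once this is secured, \cref{lem:n-Hopfconv} does all the remaining work, and the only genuine computation is the routine check of convolution invertibility.
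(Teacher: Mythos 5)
Your proof is correct and follows essentially the same route as the paper: upgrade the given one-sided property of $i_B$ or $p_B$ to bijectivity via \cref{lem:iB-su}\,\ref{iBsurj.item4}, \cref{lem:iB-inj}\,\ref{ib-inj_item3}, \cref{prop:pB-surj}\,\ref{pB-surj_item3}, or \cref{prop:pbInj}\,\ref{item:pbInj4}, conclude via \cref{prop:Frobenius} that $B$ is a right ($0$-)Hopf algebra, and then use the convolution invertibility of the quotient map (inverse $f \circ S_H$) or the embedding (inverse $S_H \circ f$) together with \cref{lem:n-Hopfconv} to obtain a genuine antipode. The paper writes out only cases \ref{item:iBsusub0} and \ref{item:iBsusub2} and declares the other two similar; your treatment of \ref{item:iBsusub1} and \ref{item:iBsusub3} matches exactly the intended symmetric argument.
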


\begin{proof}
We just prove \ref{item:iBsusub0} and \ref{item:iBsusub2}, as \ref{item:iBsusub1} and \ref{item:iBsusub3} are similar.

Suppose \ref{item:iBsusub0}. By \cref{lem:iB-su}\,\ref{iBsurj.item4}, we get that $i_B$ is also surjective, whence bijective. Thus $B$ is a right Hopf algebra.
Let $f\colon H \to B$ be the quotient map from a Hopf algebra $H$. Then $f$ has $f \circ S_H$ as a convolution inverse. By \cref{lem:n-Hopfconv}\,\ref{item:fHopf1}, we conclude.

\begin{invisible}
\rem{
Suppose \ref{item:iBsusub1}. By \cref{lem:iB-inj}\,\ref{ib-inj_item3}, we get that $i_B$ is also injective whence bijective. Thus $B$ is a right Hopf algebra, again.
Let $f \colon B\to H$ be the embedding into a Hopf algebra. Then $f$ has $S_H \circ f$ as a convolution inverse and we conclude by \cref{lem:n-Hopfconv}\,\ref{item:fHopf2}.
}
\end{invisible}

Suppose \ref{item:iBsusub2}. \cref{prop:pB-surj}\,\ref{pB-surj_item3} entails that $p_B$ is also surjective and hence bijective, so that $B$ is a right Hopf algebra. Let $f \colon H\to B$ be the projection from a Hopf algebra. Then $f$ has $f \circ S_H$ as a convolution inverse. By \cref{lem:n-Hopfconv}\;\ref{item:fHopf1}, we conclude.
\begin{invisible}
\rem{
Suppose \ref{item:iBsusub3}. Then \cref{prop:pbInj}\,\ref{item:pbInj4} implies that $p_B$ is also injective and so $B$ is a right Hopf algebra. If $f \colon B\to H$ is the embedding into a Hopf algebra, then $f$ has $S_H \circ f$ as a convolution inverse and we conclude by \cref{lem:n-Hopfconv}\,\ref{item:fHopf2}.
}
\end{invisible}
\end{proof}

\subsection{The right perfect case}\label{ssec:Artin}

From now on, when we write \emph{left Artinian bialgebra} or \emph{left Artinian Hopf algebra}, we mean that the underlying algebra is left Artinian. We specify this since the notion of left Artinian coalgebra exists in the literature. In a similar sense, we will refer to right perfect bialgebras or Hopf algebras.

It was proved in \cite{LiuZhang} that a left Artinian Hopf algebra is necessarily finite-dimensional.
Similarly, a monoid algebra $\Bbbk M$ is left Artinian if and only if $M$ is finite, see \cite[Corollary of Theorem 3]{Zelmanov} (note that the statement in loc.~cit.~is in the right Artinian case). Although it is likely known, we could not find an instance of a left Artinian bialgebra which is not finite-dimensional in the literature, so we will provide one in \cref{exa:Radual}, by dualizing an example due to Radford.

Now we are going to prove that several conclusions that we have reached in the finite-dimensional case, remain valid for right perfect bialgebras, but first we need the following lemma.

Originally most of the results in the present subsection where stated only in the Artinian case.  The referee enabled us to extend them to the right perfect case by suggesting improvements to the proofs of \cref{lem:leftArtlocHop} and \cref{lem:Skry}.

\begin{lemma}
\label{lem:leftArtlocHop}
Let $B$ be a right perfect (e.g., left Artinian) bialgebra and  $C$ a finite-dimensional sub-coalgebra of $B$ with inclusion $\iota_C \colon C\to B$. Then there are $n\in\mathbb{N}$ and a linear map $\sigma_C\colon C\to B$ such that
$\iota_C^{*n}=\sigma_C*\iota_C^{*n+1}$.
\end{lemma}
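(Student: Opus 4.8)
The plan is to reinterpret the convolution powers $\iota_C^{*k}$ as ordinary powers of a single element of an auxiliary associative algebra, and then to run verbatim the descending-chain argument of \cref{lem:perfect}. Since $C$ is finite-dimensional, the convolution algebra $\mathrm{Hom}_\Bbbk(C,B)$ is isomorphic, as a unital associative algebra, to the tensor product algebra $C^*\otimes B$ (the multiplication on $C^*$ being the convolution dual to $\Delta_C$, and the unit being $u_B\varepsilon_C$); under this identification the convolution product becomes the tensor-product multiplication and $\iota_C$ becomes a distinguished element $\omega\in C^*\otimes B$. The desired relation $\iota_C^{*n}=\sigma_C*\iota_C^{*n+1}$ is then exactly the assertion $\omega^{n}\in (C^*\otimes B)\,\omega^{n+1}$, which follows once the descending chain of principal left ideals
\[(C^*\otimes B)\,\omega\supseteq (C^*\otimes B)\,\omega^{2}\supseteq(C^*\otimes B)\,\omega^{3}\supseteq\cdots\]
stabilizes. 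Thus the whole statement reduces to showing that $C^*\otimes B$ is right perfect, after which \cref{lem:perfect}'s chain argument applies directly.

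I would therefore prove the purely ring-theoretic fact that, for a finite-dimensional algebra $F$ (to be taken $F=C^*$) and a right perfect algebra $B$, the tensor product $F\otimes B$ is right perfect. Write $J\coloneqq J(B)$ and consider the ideal $I\coloneqq F\otimes J$ of $F\otimes B$. On the quotient side one has $(F\otimes B)/I\cong F\otimes (B/J)$, and since $B$ is right perfect, $B/J$ is semisimple Artinian, hence a \emph{finite} product of matrix rings $M_{n_\alpha}(D_\alpha)$ over division rings $D_\alpha$. Consequently $F\otimes(B/J)\cong\prod_\alpha M_{n_\alpha}\!\left(F\otimes_\Bbbk D_\alpha\right)$ is a finite product of rings each of finite dimension over the division ring $D_\alpha$, hence right Artinian; a finite product of right Artinian rings is right Artinian, and right Artinian rings are right perfect. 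So $(F\otimes B)/I$ is right perfect.

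Next I would show that $I=F\otimes J$ is left $T$-nilpotent, using the module-theoretic characterization (an ideal $K$ is left $T$-nilpotent if and only if $KM\neq M$ for every nonzero left module $M$). Given a left $F\otimes B$-module $M$ with $IM=M$, the factorization $F\otimes J=(1_F\otimes J)(F\otimes 1_B)$ together with $(F\otimes 1_B)M=M$ yields the identity $IM=(1_F\otimes J)M=JM$, where $M$ is viewed as a left $B$-module along $b\mapsto 1_F\otimes b$. Since $B$ is right perfect, $J$ is left $T$-nilpotent, so $JM=M$ forces $M=0$; hence $I$ is left $T$-nilpotent. Finally, $I$ is nil and thus $I\subseteq J(F\otimes B)$, so $(F\otimes B)/J(F\otimes B)\cong\big((F\otimes B)/I\big)/J\big((F\otimes B)/I\big)$ is semisimple, and $J(F\otimes B)$ is an extension of the left $T$-nilpotent ideal $I$ by the left $T$-nilpotent ideal $J\big((F\otimes B)/I\big)$; as an extension of left $T$-nilpotent ideals is again left $T$-nilpotent, $J(F\otimes B)$ is left $T$-nilpotent and $F\otimes B$ is right perfect.

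The main obstacle I anticipate lies entirely in the passage through the ideal $I=F\otimes J$: both the left $T$-nilpotency of $F\otimes J$ (a naive expansion of a product of elements mixes the finite-dimensional factor $F$ with arbitrarily long products drawn from $J$, making a term-by-term estimate awkward) and the lifting of right perfectness along it. The module characterization of $T$-nilpotency bypasses the combinatorics of the former, reducing it cleanly to the $T$-nilpotency of $J$ over $B$; the latter is handled by a sequence-splicing argument (group any sequence in $J(F\otimes B)$ into consecutive blocks whose products lie in $I$, then apply $T$-nilpotency of $I$ to those block products). It is worth noting that in the left Artinian case the argument collapses: there $F\otimes B\cong B^{\dim C}$ is a left Artinian $B$-module, so $F\otimes B$ is itself left Artinian and the chain stabilizes at once; the $T$-nilpotency machinery is precisely what the referee's suggestion replaces the Artinian hypothesis with.
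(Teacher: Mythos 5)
Your proposal is correct, and the final step (the descending chain of principal left ideals on the convolution powers of $\iota_C$, exactly as in \cref{lem:perfect}) coincides with the paper's; but you reach the crucial intermediate fact --- that the convolution algebra $\mathrm{Hom}_\Bbbk(C,B)$ is right perfect --- by a genuinely different route. The paper regards $A_C = \mathrm{Hom}_\Bbbk(C,B)$ as a left $B$-module via the algebra map $\varphi \colon B \to A_C$, $b \mapsto [c \mapsto b\varepsilon_C(c)]$, observes that $A_C \cong B^{\dim C}$ as left $B$-modules so that every cyclic left ideal of $A_C$ is a finitely generated left $B$-submodule, and then invokes Bj\"{o}rk's theorem ($B$ is right perfect if and only if every left $B$-module satisfies DCC on finitely generated submodules) to get DCC on principal left ideals of $A_C$ at once; the radical of $B$ never enters. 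You instead make the algebra identification $\mathrm{Hom}_\Bbbk(C,B) \cong C^* \otimes B$ explicit and prove the portable ring-theoretic statement that $F \otimes B$ is right perfect for any finite-dimensional algebra $F$, via Bass's Theorem P: you verify that $(F\otimes B)/(F \otimes J(B)) \cong F \otimes (B/J(B))$ is right Artinian through the matrix-ring decomposition, that $F \otimes J(B)$ is left $T$-nilpotent by the Nakayama-type module characterization (a clean way to dodge the combinatorics you rightly flag as awkward), and that $T$-nilpotency is stable under extensions of ideals --- all standard facts, correctly deployed. What each approach buys: the paper's argument is essentially three lines once Bj\"{o}rk's characterization is quoted, and is the minimal path to the lemma; yours is longer and leans on more classical structure theory, but it isolates a reusable fact of independent interest (right perfectness passes to tensor products with finite-dimensional algebras), of which the lemma is then an immediate corollary, and your closing remark that the whole machinery collapses to a one-line Artinian-module argument when $B$ is left Artinian matches precisely the history of the paper's proof (originally Artinian, extended to right perfect at the referee's suggestion).
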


\begin{proof}
Let us first show that the convolution algebra $A_C \coloneqq \mathrm{Hom}_\Bbbk(C,B)$ is right perfect, because $C$ is finite-dimensional and $B$ is right perfect.
To this aim, regard $A_C$ as a left $B$-module via the algebra map $\varphi \colon B\to A_C, b\mapsto [c\mapsto b\varepsilon_C(c)]$.
In this way, $A_C \cong B^{\dim(C)}$ as left $B$-modules and therefore any cyclic left ideal of $A_C$ is finitely generated as a left $B$-module: if $\{\alpha_1,\ldots,\alpha_n\}$ is a basis of $A_C$ as a free left $B$-module, then $\alpha * \chi = \sum_{i} \varphi(b_i) * \alpha_i * \chi \in \sum_{i} B(\alpha_i*\chi)$ for every $\alpha \in A_C$.
Now, by a result due to Bj\"{o}rk (see e.g. \cite[page 355]{Lam}), $B$ right perfect is equivalent to the fact that any left $B$-module satisfies the DCC on finitely generated submodules. Thus any descending chain of cyclic left ideals of $A_C$ is stationary and hence $A_C$ is right perfect as claimed.

Consider the convolution powers $\{\iota_C^{*n} \mid n \in \N\}$ of $\iota_C$ in $A_C$. Since $A_C$ is right perfect, the descending chain of left ideals $A_C*\iota_C\supseteq A_C*\iota_C^{*2}\supseteq A_C*\iota_C^{*3}\supseteq \cdots$ stabilizes so that there is $n\in\N$ such that $A_C*\iota_C^{*n}\subseteq A_C*\iota_C^{*n+1}$. Thus $\iota_C^{*n}\in A_C*\iota_C^{*n+1}$ and hence there exists $\sigma_C\in A_C$ such that $\iota_C^{*n}=\sigma_C*\iota_C^{*n+1}$.
\end{proof}

\begin{remark}
     The condition $\iota_C^{*n}=\sigma_C*\iota_C^{*n+1}$ in \cref{lem:leftArtlocHop} can be regarded as a ``local version'' (note that $n$ can change based on the given sub-coalgebra) of the equality $\id^{*n}=S*\id^{*n+1}$ appearing in the definition of a left $n$-antipode. Clearly, not every left $n$-Hopf algebra is right perfect: $\Bbbk[X] \cong U(\Bbbk X)$ is an example of a left $0$-Hopf algebra which is not right perfect. It would be interesting to know whether a right perfect bialgebra is always a $n$-Hopf for a certain $n$ or not. \cref{lem:leftArtlocHop} suggests that the answer may be in the negative.
\end{remark}

We have the following result obtained by slightly modifying the proofs of \cref{lem:perfect} and \cref{prop:semiantip}.

\begin{proposition}
\label{pro:Artinian}
If $B$ is a right perfect bialgebra, then $i_B$ is surjective and $p_B$ is injective.
\end{proposition}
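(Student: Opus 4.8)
The plan is to run the finite-dimensional argument of \cref{prop:semiantip} ``locally'', feeding it the approximate antipodes produced by \cref{lem:leftArtlocHop} instead of a genuine left $n$-antipode. The bridge between the two settings is the fundamental theorem of coalgebras: every element of $B$, and more generally every finite set of elements, is contained in a finite-dimensional sub-coalgebra, to which \cref{lem:leftArtlocHop} applies.

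For the surjectivity of $i_B$, I would use the characterisation in \cref{lem:iBsu}\,\ref{item:epi4}: it suffices to show that for each $y\in B$ there is $y'\in B$ with $1\oslash y = y'\oslash 1$. Given $y$, choose a finite-dimensional sub-coalgebra $C\subseteq B$ with $y\in C$, and let $\iota_C\colon C\to B$ be the inclusion. \cref{lem:leftArtlocHop} yields $n\in\N$ and $\sigma_C\colon C\to B$ with $\iota_C^{*n}=\sigma_C*\iota_C^{*n+1}$. Since $C$ is a sub-coalgebra, every Sweedler component appearing in the iterated comultiplications of $y$ again lies in $C$, so $\sigma_C$ may be applied to them and $\iota_C$ acts there as the identity of $B$. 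Repeating verbatim the chain of equalities in the proof of \cref{prop:semiantip}\,\ref{item:semiantip1}, with $\sigma_C$ in the role of $S$ and using $\iota_C^{*n}=\sigma_C*\iota_C^{*n+1}$ exactly where $\id^{*n}=S*\id^{*n+1}$ was invoked, gives $\sigma_C(y)\oslash 1 = 1\oslash y$; hence $y'\coloneqq\sigma_C(y)$ does the job.

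For the injectivity of $p_B$, I would likewise invoke \cref{prop:pBin}\,\ref{item:pBin4}: given $x^i\otimes y_i\in B\boxslash B$ (a finite sum), pick a finite-dimensional sub-coalgebra $C$ containing all the $x^i$ and $y_i$, and again extract $n$ and $\tau\coloneqq\sigma_C\colon C\to B$ from \cref{lem:leftArtlocHop}. Starting from the coinvariance relation \eqref{eq:coinvs} and telescoping as in \cref{prop:semiantip}\,\ref{item:semiantip2} (the identities \eqref{eq:uffa} and \eqref{eq:riuffa}), one applies the maps $a\otimes b\otimes c\mapsto\tau(a)\varepsilon(b)c$ and $a\otimes b\otimes c\mapsto\varepsilon(a)cb$ and substitutes $\iota_C^{*n}=\tau*\iota_C^{*n+1}$; all components of $x^i$ stay in $C$, so $\tau$ is applicable throughout. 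This yields $\tau(x^i)\varepsilon(y_i)=\varepsilon(x^i)y_i$, which is precisely the content of \cref{prop:pBin}\,\ref{item:pBin4} and forces $p_B$ to be injective.

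The only genuine subtlety---and the step I would check most carefully---is the localisation itself: one must be sure that throughout both computations every argument to which $\sigma_C$ (resp.\ $\tau$) is applied is a Sweedler component of an element of $C$, so that the local identity of \cref{lem:leftArtlocHop} can legitimately replace the global identity used for an $n$-antipode. This is guaranteed by $\Delta(C)\subseteq C\otimes C$, but it is worth tracking the indices explicitly, since the right-hand factors of $\oslash$ and $\boxslash$ involve products of many Sweedler components and a single miscounted leg would break the cancellation.
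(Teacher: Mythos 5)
Your proposal is correct and follows essentially the same route as the paper's own proof: localise via the fundamental theorem of coalgebras, invoke \cref{lem:leftArtlocHop} to obtain the approximate antipode $\sigma_C$, rerun the computations of \cref{prop:semiantip} with $\sigma_C$ and $\iota_C$ in place of $S$ and $\id$, and conclude via \cref{lem:iBsu} and \cref{prop:pBin}\,\ref{item:pBin4}. The subtlety you flag is precisely what legitimises the localisation in the paper's argument, and it holds because $\Delta(C)\subseteq C\otimes C$ keeps every Sweedler component inside $C$.
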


\begin{proof}
Let $y\in B$. By the fundamental theorem of coalgebras (see e.g. \cite[Theorem 2.2.3]{Radford-book}) there is a finite-dimensional sub-coalgebra $C$ of $B$ such that $y\in C$.
Then we can apply \cref{lem:leftArtlocHop} to $C$ and compute
\begin{align*}
\sigma_C(y)\oslash 1
 &=\sigma_C(y_1)y_{2_1}y_{2_2}\cdots y_{2_{n+1}}\oslash  y_{3_{n+1}}\cdots y_{3_{2}}y_{3_{1}}
 =\sigma_C(y_1)\iota_C^{*n+1}(y_{2}) \oslash  y_{3_{n+1}}\cdots y_{3_{2}}y_{3_{1}}\\
  &=(\sigma_C*\iota_C^{*n+1})(y_{1})\oslash  y_{2_{n+1}}\cdots y_{2_{2}}y_{2_{1}}
  =\iota_C^{*n}(y_{1}) \oslash  y_{2_{n+1}}\cdots y_{2_{2}}y_{2_{1}}\\
  &=y_{1_1}y_{1_2}\cdots y_{1_{n}} \oslash  y_3y_{2_{n}}\cdots y_{2_{2}}y_{2_{1}}
  =1\oslash y.
\end{align*}
We conclude that $i_B$ is surjective by applying \cref{lem:iBsu}.

Let now $x^i\otimes {y_i}\in B\boxslash B$. Then, by \cite[Theorem 2.2.3]{Radford-book} again, there is a finite-dimensional sub-coalgebra $C$ of $B$ with $x^i, {y_i}\in C$. As in the proof of \cref{prop:semiantip}, we can write
\begin{align*}    \sigma_C\left(x^i\right)\varepsilon\left({y_i}\right) & = \sigma_C\left(x^i_1\right) x^i_2x^i_3\cdots x^i_{n+2}{y_i}_{n+1} \cdots {y_i}_2{y_i}_1 = \left(\sigma_C*\iota_C^{*n+1}\right)(x^i){y_i}_{n+1} \cdots {y_i}_2{y_i}_1 \\
    & = \iota_C^{*n}(x^i){y_i}_{n+1} \cdots {y_i}_2{y_i}_1 = x^i_1x^i_2\cdots x^i_{n}{y_i}_{n+1} \cdots {y_i}_2{y_i}_1=\varepsilon(x^i) {y_i}.
    \end{align*}
Therefore $\tau(x^i)\varepsilon({y_i})=\varepsilon(x^i) {y_i}$ for $\tau=\sigma_C$ and hence $p_B$ is injective in view of \cref{prop:pBin}.
\end{proof}

We conclude this subsection with a variant of \cref{lem:n-Hopfconv}\,\ref{item:fHopf1} and of \cref{prop:iBsusub}\,\ref{item:iBsusub2} for left Artinian bialgebras (\cref{prop:quoArtisHopf}) and with a ``local'' test for the surjectivity of $i_B$ (\cref{prop:localiB}) that takes advantage of what we discussed so far and which will prove useful in what follows. To this aim, recall (see e.g. \cite[Definition 1.3.30]{Rowen-RT1}) that a ring $R$ is \emph{weakly $n$-finite} if for all matrices $A,B$ in $\mathrm{M}_n(R)$ one has that $AB=1$ implies $BA=1$. Moreover $R$ is \emph{weakly finite} if $R$ is weakly $n$-finite for all $n$.

 \begin{lemma}
 \label{lem:Skry}
   Let $B$ be a right perfect (e.g., left Artinian) bialgebra. Then $B$ is weakly finite. As a consequence, if $C$ is a coalgebra and $f \colon C\to B$ a right (or left) convolution invertible coalgebra map, then $f$ is two-sided convolution invertible. Moreover
   the inclusion $f(C)\hookrightarrow B$ is two-sided convolution invertible.
 \end{lemma}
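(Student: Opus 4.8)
\emph{Plan.} The strategy is to establish the three assertions in turn: weak finiteness by a purely ring-theoretic argument, the second assertion by reducing to finite-dimensional subcoalgebras via the fundamental theorem of coalgebras, and the third---which is the genuinely delicate point---by combining that reduction with the local relation provided by \cref{lem:leftArtlocHop}.

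First I would show that $B$ is weakly finite. The key observation is that any ring $R$ satisfying the DCC on principal left ideals is Dedekind finite: if $ab=1$, the chain $Ra\supseteq Ra^2\supseteq\cdots$ stabilises, say $Ra^n=Ra^{n+1}$, so $a^n=ra^{n+1}$ for some $r\in R$; right-multiplying by $b^n$ and using $a^kb^k=1$ yields $1=a^nb^n=ra^{n+1}b^n=ra$, whence $a$ is two-sided invertible and $ba=1$. Since $B$ is right perfect, so is $\mathrm{M}_n(B)$ for every $n$ (right perfectness is Morita invariant; equivalently, by Bj\"ork's characterisation used in \cref{lem:leftArtlocHop}, every left $\mathrm{M}_n(B)$-module has the DCC on finitely generated submodules, this being a property of the module category). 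In particular each $\mathrm{M}_n(B)$ has the DCC on principal left ideals and is therefore Dedekind finite, which is precisely the weak finiteness of $B$.

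For the second assertion, let $f\colon C\to B$ be, say, right convolution invertible with right inverse $g$, so $f*g=u_B\varepsilon_C$ in $\mathrm{Hom}_\Bbbk(C,B)$. By the fundamental theorem of coalgebras every element of $C$ lies in a finite-dimensional subcoalgebra $D$, and for each such $D$ the convolution algebra $\mathrm{Hom}_\Bbbk(D,B)$ is right perfect (as shown in the proof of \cref{lem:leftArtlocHop}), hence Dedekind finite by the first part. Therefore the restriction $f|_D$, which has right inverse $g|_D$, is two-sided convolution invertible with inverse $g|_D$, giving $g|_D*f|_D=u_B\varepsilon_D$. As the finite-dimensional subcoalgebras cover $C$, this globalises to $g*f=u_B\varepsilon_C$, so $g$ is a two-sided convolution inverse of $f$. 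The left convolution invertible case is symmetric.

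Finally, for the inclusion $\iota\colon f(C)\hookrightarrow B$, the main obstacle is that the convolution inverse $g$ of $f$ need \emph{not} factor through the corestriction $\bar f\colon C\to f(C)$, so two-sided invertibility of $\iota$ does not follow formally from that of $f$. I would circumvent this at the finite level. Note that $f(C)$ is a subcoalgebra and that, for any finite-dimensional subcoalgebra $D_0\subseteq C$, its image $E_0:=\bar f(D_0)$ is a finite-dimensional subcoalgebra of $B$ with $f|_{D_0}=\iota_{E_0}\circ\bar f|_{D_0}$. Precomposition with the surjective coalgebra map $\bar f|_{D_0}$ defines an \emph{injective} unital algebra homomorphism $\theta\colon\mathrm{Hom}_\Bbbk(E_0,B)\to\mathrm{Hom}_\Bbbk(D_0,B)$ sending $\iota_{E_0}$ to $f|_{D_0}$, the latter being convolution invertible by the previous step. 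Feeding the local relation $\iota_{E_0}^{*n}=\sigma_{E_0}*\iota_{E_0}^{*n+1}$ from \cref{lem:leftArtlocHop} through $\theta$ and right-cancelling the invertible element $f|_{D_0}^{*n}$ gives $\theta(\sigma_{E_0}*\iota_{E_0})=\theta(u_B\varepsilon_{E_0})$, so by injectivity $\sigma_{E_0}$ is a left convolution inverse of $\iota_{E_0}$, hence its two-sided inverse by Dedekind finiteness of $\mathrm{Hom}_\Bbbk(E_0,B)$. These inverses are unique, hence compatible under restriction, and the subcoalgebras $E_0=\bar f(D_0)$ form a directed family covering $f(C)$; gluing them produces a two-sided convolution inverse of $\iota$. (Alternatively, gluing the $\sigma_{E_0}$ gives a left inverse of $\iota$ and one concludes by the second assertion.)
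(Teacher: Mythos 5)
Your proposal is correct, but it takes a genuinely different route from the paper. The paper's proof is essentially a sequence of citations: right perfect implies semilocal ($B/J$ semisimple), and semilocal rings are weakly finite by a result of Skryabin; weak finiteness of $B$ then transfers to the convolution algebra $\mathrm{Hom}_\Bbbk(C,B)$ by another lemma of Skryabin, giving the second assertion; and the statement about the inclusion $f(C)\hookrightarrow B$ is quoted directly from a lemma of Eryashkin--Skryabin. You instead prove all three assertions from scratch: weak finiteness via Morita invariance of right perfectness, Bass's characterisation by DCC on principal left ideals, and the elementary stabilisation argument $a^n=ra^{n+1}\Rightarrow ra=1$; the second assertion by localising to finite-dimensional subcoalgebras $D\subseteq C$, where $\mathrm{Hom}_\Bbbk(D,B)$ is right perfect (reusing the computation in the proof of \cref{lem:leftArtlocHop}) and hence Dedekind finite, then gluing; and the third assertion by the injective convolution-algebra map $\theta$ induced by precomposition with the surjective corestriction, through which the local relation $\iota_{E_0}^{*n}=\sigma_{E_0}*\iota_{E_0}^{*n+1}$ of \cref{lem:leftArtlocHop} can be pushed and cancelled, followed by Dedekind finiteness and a gluing of the (unique, hence compatible) two-sided inverses. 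What the paper's approach buys is brevity and reliance on established literature; what yours buys is self-containedness -- you in effect re-prove the exact special cases of Skryabin's transfer lemma and of the Eryashkin--Skryabin lemma that are needed, using only the paper's own local machinery, and your treatment of the inclusion $f(C)\hookrightarrow B$ correctly identifies and resolves the real subtlety (that the inverse of $f$ need not factor through the corestriction), which the paper hides inside the citation. One cosmetic caveat: your parenthetical alternative at the end (gluing the $\sigma_{E_0}$ into a left inverse of $\iota$) is only licit because you have already shown each $\sigma_{E_0}$ is the \emph{unique two-sided} inverse of $\iota_{E_0}$; left inverses alone would not be known to be compatible under restriction.
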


 \begin{proof} 
Since $B$ is right perfect, then $B/J$ is semisimple, where $J$ is the Jacobson radical of $B$, see \cite[28.4]{AndFull}, i.e., $B$ is a semilocal ring. 
 By \cite[Proposition 2.2]{Skryabin-proj}, this implies that $B$ is weakly finite. 
When $B$ is weakly finite, so is the convolution algebra $\mathrm{Hom}_\Bbbk(C,B)$ by \cite[Lemma 7.1]{Skryabin-proj}. In particular, it is weakly $1$-finite and so the  map $f$ is two-sided convolution invertible.
Now the last part of the statement follows by \cite[Lemma 2.2]{Ery-Skry}.
 \end{proof}

\begin{proposition}
\label{prop:quoArtisHopf}
 Let $f \colon B\to B'$ be a bialgebra map which is right (or left) convolution invertible. If $f$ is surjective and $B'$ is right perfect, then $B'$ is a Hopf algebra.

 In particular, a right perfect bialgebra which is a quotient of a Hopf algebra is a Hopf algebra.
\end{proposition}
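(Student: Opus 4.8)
The plan is to derive the result almost immediately from \cref{lem:Skry}, whose ``moreover'' clause is tailored precisely for this situation. Since $B'$ is right perfect and $f \colon B \to B'$ is a bialgebra map, hence a coalgebra map, which is right (or left) convolution invertible, I would apply \cref{lem:Skry} with the right perfect bialgebra being $B'$ and the source coalgebra being $B$. This guarantees not only that $f$ is two-sided convolution invertible, but, crucially, that the inclusion $f(B) \hookrightarrow B'$ of the subcoalgebra $f(B)$ is two-sided convolution invertible in $\mathrm{Hom}_\Bbbk(f(B), B')$.

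Next I would invoke the surjectivity of $f$. Since $f$ is onto, $f(B) = B'$ and the inclusion $f(B) \hookrightarrow B'$ is nothing but $\id_{B'}$. Therefore $\id_{B'}$ is two-sided convolution invertible in $\mathrm{End}_\Bbbk(B')$, and a two-sided convolution inverse of the identity is by definition an antipode. Hence $B'$ is a Hopf algebra. The argument is unchanged in the left convolution invertible case, since \cref{lem:Skry} treats both sides on an equal footing.

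For the ``In particular'' clause, suppose $B'$ is a right perfect bialgebra realised as a quotient $\pi \colon H \to B'$ of a Hopf algebra $H$ with antipode $S_H$. I would observe that $\pi$ is a surjective bialgebra map which is two-sided convolution invertible, with inverse $\pi \circ S_H$: indeed $(\pi * (\pi \circ S_H))(h) = \pi(h_1)\pi(S_H(h_2)) = \pi(h_1 S_H(h_2)) = \varepsilon(h)1_{B'}$, and symmetrically on the other side. The general statement then applies verbatim to $\pi$ and yields that $B'$ is a Hopf algebra.

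I do not foresee a serious obstacle: the entire weight of the argument has been front-loaded into \cref{lem:Skry} (weak finiteness of right perfect bialgebras and the convolution-invertibility of subcoalgebra inclusions, via \cite{Skryabin-proj,Ery-Skry}). The only point requiring a modicum of care is the identification of the convolution inverse of the inclusion $f(B) \hookrightarrow B'$ with the antipode of $B'$; this hinges on reading off that, under surjectivity, the said inclusion is literally $\id_{B'}$, so that its two-sided convolution inverse is an element of $\mathrm{End}_\Bbbk(B')$ satisfying $\id_{B'} * S = S * \id_{B'} = u_{B'}\varepsilon_{B'}$, i.e.\ precisely an antipode.
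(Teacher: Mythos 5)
Your proof is correct and follows essentially the same route as the paper's: the paper also applies \cref{lem:Skry} to the surjective map $f$, notes that the inclusion $f(B)\hookrightarrow B'$ is precisely $\id_{B'}$ and hence two-sided convolution invertible (i.e.\ $B'$ has an antipode), and proves the ``in particular'' clause by observing that $\pi \circ S_H$ is the convolution inverse of the projection $\pi$. The only difference is that you spell out explicitly the identification of the convolution inverse of $\id_{B'}$ with an antipode, which the paper leaves implicit.
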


\begin{proof}
By \cref{lem:Skry}, the inclusion $f(B)\hookrightarrow B'$, i.e. $\id_{B'}$, is two-sided convolution invertible.

Concerning the second claim, if $f \colon H \to B$ is a surjective bialgebra morphism from a Hopf algebra $H$ to a right perfect bialgebra $B$, then $f \circ S_H$ is the convolution inverse of $f$.
\end{proof}

Recall also that an algebra $A$ is said to be \emph{locally finite} (or locally finite-dimensional) if every finitely generated sub-algebra is finite-dimensional (equivalently the Gelfand-Kirillov dimension $\mathrm{GKdim}(A)$ of $A$ is zero, see e.g.~\cite[page 14]{Krause-Lenagan}).

\begin{proposition}
\label{prop:localiB}
    Let $B$ be a bialgebra which is generated as an algebra by a subset $\{x_i\mid i\in I\}$. If, for every $i\in I$, one has $1\oslash x_i=x'_i\oslash 1$ for some $x'_i\in B$, then $i_B$ is surjective.
    In particular, this happens in any of the following cases:
    \begin{enumerate}[label={\alph*)},ref={\itshape \alph*)},leftmargin=*]
    \item\label{item:localiB2} for every $i\in I$, $x_i$ belongs to a sub-bialgebra $B_i$ of $B$ with $i_{B_i}$ surjective;
    \item\label{item:localiB3} for every $i \in I$, $x_i$ belongs to a right perfect (e.g., left Artinian) sub-bialgebra $B_i$ of $B$;
    \item\label{item:localiB4} $B$ is locally finite.
    \end{enumerate}
\end{proposition}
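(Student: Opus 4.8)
The plan is to reduce the first assertion to the characterization of surjectivity of $i_B$ given in \cref{lem:iBsu}: by the equivalence \ref{item:epi1} $\Leftrightarrow$ \ref{item:epi4} there, it suffices to show that for every $y \in B$ there is some $y' \in B$ with $1 \oslash y = y' \oslash 1$. To this end I would introduce the set
\[
B' \coloneqq \{y \in B \mid 1 \oslash y = y' \oslash 1 \text{ for some } y' \in B\},
\]
which is exactly the preimage of $\operatorname{im}(i_B)$ under the linear map $B \to B \oslash B$, $y \mapsto 1 \oslash y$, hence a linear subspace of $B$. It contains $1_B$ (take $y' = 1$) and, by hypothesis, every generator $x_i$. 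The whole first part then rests on showing that $B'$ is closed under multiplication: once this is done, $B'$ is a sub-algebra containing a generating set, so $B' = B$, and \cref{lem:iBsu} yields the surjectivity of $i_B$.

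The key computation, and the only genuinely non-formal step, is the verification that $B'$ is multiplicatively closed, for which I would exploit the left $B \otimes B$-module structure on $B \oslash B$ recorded in \cref{lem:oslash}, namely $(a \otimes b)\cdot (x \oslash y) = ax \oslash by$. Given $y, z \in B'$ with witnesses $y', z'$, the trick is to transfer factors between the two tensor slots through the module action:
\begin{align*}
1 \oslash yz &= (1 \otimes y)\cdot(1 \oslash z) = (1 \otimes y)\cdot(z' \oslash 1) = z' \oslash y \\
&= (z' \otimes 1)\cdot(1 \oslash y) = (z' \otimes 1)\cdot(y' \oslash 1) = z'y' \oslash 1,
\end{align*}
so that $yz \in B'$ with witness $z'y'$. (Note the order reversal, consistent with the anti-multiplicativity expected of the putative operator $S$ of \cref{lem:iBsu}\,\ref{item:epi2}.)

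For the three particular cases I would argue that each generator $x_i$ satisfies the hypothesis $1 \oslash x_i = x_i' \oslash 1$. In case \ref{item:localiB2}, surjectivity of $i_{B_i}$ provides, via \cref{lem:iBsu}, an element $x_i' \in B_i$ with $1 \oslash x_i = x_i' \oslash 1$ \emph{inside} $B_i \oslash B_i$; applying the coalgebra map $\iota \oslash \iota \colon B_i \oslash B_i \to B \oslash B$ induced by the inclusion $\iota \colon B_i \hookrightarrow B$ (see \cref{lem:foslf}), which sends $x \oslash y \mapsto x \oslash y$, transports this equality to $B \oslash B$, as required. Case \ref{item:localiB3} then reduces to \ref{item:localiB2}, since a right perfect bialgebra $B_i$ has $i_{B_i}$ surjective by \cref{pro:Artinian}.

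Finally, for case \ref{item:localiB4} I would place each $x_i$ inside a finite-dimensional sub-coalgebra $C$ of $B$ by the fundamental theorem of coalgebras, and then observe that the sub-algebra $A$ generated by $C$ is automatically a sub-bialgebra: it is $\Delta$-stable because $\Delta$ is multiplicative and $\Delta(C) \subseteq C \otimes C \subseteq A \otimes A$. Local finiteness makes $A$ finite-dimensional, and a finite-dimensional bialgebra has $i$ surjective by \cref{prop:semiantip}\,\ref{item:semiantip1} (through \cref{coro:fdS}); thus $x_i$ lies in a sub-bialgebra as in \ref{item:localiB2}, and we conclude. I expect no real difficulty beyond spotting the module-action chain displayed above; the three corollaries are then routine transfers along the induced maps $f \oslash f$.
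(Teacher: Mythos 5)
Your proof is correct and follows essentially the same route as the paper: the same module-action computation $(a\otimes b)\cdot(x\oslash y)=ax\oslash by$ used to move factors from the right slot to the left, the same appeal to \cref{lem:iBsu}, and identical treatments of the three special cases (transport along $f_i\oslash f_i$ for \ref{item:localiB2}, reduction via \cref{pro:Artinian} for \ref{item:localiB3}, and the fundamental theorem of coalgebras plus local finiteness for \ref{item:localiB4}). The only difference is one of packaging: you formalize the induction as ``the set of $y$ with $1\oslash y=y'\oslash 1$ is a subalgebra containing the generators,'' while the paper phrases the same idea as iteratively moving each generator appearing on the far right across the $\oslash$.
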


\begin{proof}
By employing the left $B\otimes B$-module structure of $B\oslash B$ from \cref{lem:oslash}, we have
\[ a\oslash bx_i=(a\otimes b)(1\oslash x_i) = (a\otimes b)(x'_i\oslash 1) = ax'_i\oslash b\]
for every $a,b\in B$ and every $i \in I$. By using this equality, we can move to the left every $x_i$ that appears on the far right of an element in $B\oslash B$. Since $B$ is generated as an algebra by the set $\{x_i\mid i\in I\}$, we can take advantage of \cref{lem:iBsu}\,\ref{item:epi4} to conclude that $i_B$ is surjective.


If we are in case \ref{item:localiB2}, then for every $i \in I$ \cref{lem:iBsu}\,\ref{item:epi2} ensures the existence of $S_i \colon B_i\to B_i$ such that $1\oslash x_i=S_i(x_i)\oslash 1$ in $B_i\oslash B_i$. If we denote by $f_i:B_i\to B$ the canonical inclusion, we get $1\oslash x_i=(f_i\oslash f_i)(1\oslash x_i)=(f_i\oslash f_i)(S_i(x_i)\oslash 1) =S_i(x_i)\oslash 1$ in $B\oslash B$.

Case \ref{item:localiB3} follows from case \ref{item:localiB2} and \cref{pro:Artinian}.

Let then $B$ be a locally finite bialgebra as in \ref{item:localiB4}. By the fundamental theorem for coalgebras, every $x\in B$ belongs to a finite-dimensional sub-coalgebra $C_x$ of $B$. Let $B_x$ be the sub-algebra of $B$ generated by $C_x$. Since $C_x$ is finite-dimensional, then $B_x$ is finitely generated as an algebra whence finite-dimensional as $B$ is locally finite. Since $B_x$ is indeed a sub-bialgebra of $B$, we conclude.
\end{proof}

\noindent It may be interesting to know if there is an analogue of \cref{prop:localiB} for the injectivity of $p_B$.

\section{Free and cofree constructions for finite-dimensional and perfect bialgebras}

\subsection{Hopf envelopes, aka free Hopf algebras}\label{ssec:HopfEnv}

In the present section, we give an alternative realisation of the Hopf envelope of a finite-dimensional bialgebra and of an Artinian bialgebra as a quotient of the bialgebra itself.

Given a bialgebra $B$, denote by $\mathrm{H}(B)$ the free Hopf algebra generated by $B$ and by $\eta_B \colon B \to \mathrm{H}(B)$ the associated universal bialgebra morphism. Recall that, in general, this canonical map $\eta_B$ is just an epimorphism of bialgebras, see \cite[Theorem 3.2 (a)]{Chirv}, which is not necessarily surjective, see \cite[Corollary 3.4 (a)]{Chirv}. Thus, our first aim is to determine sufficient conditions under which $\eta_B$ is surjective. First we need the following characterization.



\begin{lemma}
\label{lem:HopfimiB}
For a bialgebra $B$, $\eta_B \colon B\to \mathrm{H}(B)$ is surjective if and only if $\im(\eta_B)$ is a Hopf algebra. 
\end{lemma}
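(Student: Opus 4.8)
The plan is to prove the non-trivial implication — that $\im(\eta_B)$ being a Hopf algebra forces $\eta_B$ to be surjective — since the converse is immediate: if $\eta_B$ is surjective, then $\im(\eta_B)=\mathrm{H}(B)$, which is a Hopf algebra by construction.

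First I would factor $\eta_B$ as $\eta_B = \iota \circ \pi$, where $\pi \colon B \to \im(\eta_B)$ is the corestriction of $\eta_B$ and $\iota \colon \im(\eta_B) \hookrightarrow \mathrm{H}(B)$ is the inclusion. Since $\im(\eta_B)$ is a sub-bialgebra of $\mathrm{H}(B)$, the map $\pi$ is a morphism of bialgebras. Assuming that $\im(\eta_B)$ carries an antipode, it is a Hopf algebra, so I may feed $\pi$ into the universal property of the Hopf envelope applied to $H = \im(\eta_B)$: there is a unique Hopf algebra morphism $g \colon \mathrm{H}(B) \to \im(\eta_B)$ with $g \circ \eta_B = \pi$.

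Next I would compose with the inclusion. Because both $\im(\eta_B)$ and $\mathrm{H}(B)$ are Hopf algebras, the bialgebra inclusion $\iota$ automatically commutes with the antipodes and is therefore a morphism of Hopf algebras; hence $\iota \circ g \colon \mathrm{H}(B) \to \mathrm{H}(B)$ is a Hopf algebra endomorphism satisfying $(\iota \circ g) \circ \eta_B = \iota \circ \pi = \eta_B$. The identity of $\mathrm{H}(B)$ satisfies the same equation, so the uniqueness clause in the universal property of $\mathrm{H}(B)$ (equivalently, the fact that $\eta_B$ is an epimorphism of bialgebras, see \cite[Theorem 3.2 (a)]{Chirv}) yields $\iota \circ g = \mathrm{Id}_{\mathrm{H}(B)}$. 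In particular $\iota$ is surjective, and being the inclusion of $\im(\eta_B)$, this forces $\im(\eta_B)=\mathrm{H}(B)$, i.e.\ $\eta_B$ is surjective.

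The only real subtlety — and the step I would double-check — is the claim that $\iota$ is a Hopf algebra morphism, which rests on the standard fact that a bialgebra map between Hopf algebras preserves antipodes, together with the observation that the sub-bialgebra structure on $\im(\eta_B)$ is precisely the one making $\pi$ and $\iota$ bialgebra maps. Everything else is a purely formal consequence of the universal property of $\mathrm{H}(B)$, so I expect no computational obstacle.
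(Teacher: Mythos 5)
Your proposal is correct and follows essentially the same route as the paper: factor $\eta_B$ through its image, invoke the universal property of $\mathrm{H}(B)$ applied to the corestriction $\pi$ to obtain $g \colon \mathrm{H}(B) \to \im(\eta_B)$, and then cancel $\eta_B$ on the right in $(\iota \circ g) \circ \eta_B = \eta_B$ to get $\iota \circ g = \mathrm{Id}_{\mathrm{H}(B)}$, forcing $\iota$ to be surjective. The paper performs the cancellation directly (using that $\eta_B$ is an epimorphism of bialgebras), which sidesteps the one subtlety you flagged — it suffices that $\iota \circ g$ and $\mathrm{Id}$ be bialgebra maps, so you do not actually need $\iota$ to preserve antipodes, though that standard fact does hold and your version of the argument is equally valid.
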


\begin{proof}
Consider the bialgebra $H=\im(\eta_B)$ and denote by $\pi \colon B\to H$ the corestriction of $\eta_B$ to its image and by $\sigma \colon H\to \mathrm{H}(B)$ the canonical inclusion.
If $H$ is a Hopf algebra, then there is a unique Hopf algebra map $\gamma \colon \mathrm{H}(B)\to H$ such that $\gamma\circ \eta_B=\pi$ by the universal property of $\mathrm{H}(B)$. Hence $\sigma\circ \gamma\circ \eta_B= \sigma \circ\pi=\eta_B$ and so $\sigma\circ\gamma=\id_{\mathrm{H}(B)}$, so that $\sigma$ is surjective. Thus $H=\mathrm{H}(B).$    
\end{proof}

\begin{proposition}
\label{prop:etaSurj}
Let $B$ be a bialgebra. Then, the map $\eta_B:B\to \mathrm{H}(B)$ is surjective in any of the following cases:
\begin{enumerate}[label=\alph*),ref={\itshape \alph*)}, leftmargin=*]
\item\label{item:etaSurj1} $i_B$ is surjective (e.g., when $B$ is finite-dimensional or, more generally, right perfect);
\item\label{item:etaSurj2} $p_B$ is surjective.
\end{enumerate}
\end{proposition}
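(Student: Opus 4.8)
The plan is to combine Lemma~\ref{lem:HopfimiB} with the sufficient conditions for a bialgebra to be a Hopf algebra that were established in Corollary~\ref{prop:iBsusub}. By Lemma~\ref{lem:HopfimiB}, the map $\eta_B$ is surjective precisely when $H \coloneqq \im(\eta_B)$ is a Hopf algebra, so the whole proposition reduces to verifying that, in each of the two cases, this image bialgebra admits an antipode. The key structural observation is that $H$ is simultaneously a quotient of $B$ (via the corestriction $\pi \colon B \to H$ of $\eta_B$) and a sub-bialgebra of a Hopf algebra (namely $\mathrm{H}(B)$ itself, via the inclusion $\sigma \colon H \hookrightarrow \mathrm{H}(B)$). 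This dual nature is exactly what is needed to feed $H$ into Corollary~\ref{prop:iBsusub}.

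First I would treat case~\ref{item:etaSurj1}. Assume $i_B$ is surjective. Since $\pi \colon B \to H$ is a surjective bialgebra map, Proposition~\ref{lem:iB-su}\,\ref{iBsurj.item4} transfers surjectivity of $i_B$ to surjectivity of $i_H$. On the other hand, $H$ embeds into the Hopf algebra $\mathrm{H}(B)$ through $\sigma$. Thus $H$ is a bialgebra with $i_H$ surjective that embeds into a Hopf algebra, and Corollary~\ref{prop:iBsusub}\,\ref{item:iBsusub1} immediately yields that $H$ is a Hopf algebra. By Lemma~\ref{lem:HopfimiB}, $\eta_B$ is surjective. The parenthetical claim that this covers the finite-dimensional and right perfect cases follows from Proposition~\ref{prop:semiantip}\,\ref{item:semiantip1} and Proposition~\ref{pro:Artinian}, respectively, both of which guarantee $i_B$ surjective.

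For case~\ref{item:etaSurj2}, the argument is the symmetric one using $p_B$. Assuming $p_B$ surjective, Proposition~\ref{prop:pB-surj}\,\ref{pB-surj_item3} propagates surjectivity along the quotient map $\pi$ to give $p_H$ surjective. Since $H$ again embeds into the Hopf algebra $\mathrm{H}(B)$, Corollary~\ref{prop:iBsusub}\,\ref{item:iBsusub3} shows that $H$ is a Hopf algebra, and Lemma~\ref{lem:HopfimiB} finishes the proof. I do not anticipate a genuine obstacle here: the real content has already been packaged into Corollary~\ref{prop:iBsusub} and Lemma~\ref{lem:HopfimiB}, and the only point requiring care is to identify correctly that $H$ plays the role of ``a bialgebra that is both a quotient of $B$ and a sub-bialgebra of a Hopf algebra,'' so that the hypotheses of the relevant parts of Corollary~\ref{prop:iBsusub} are met.
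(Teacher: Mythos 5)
Your proposal is correct and follows essentially the same route as the paper's own proof: both set $H = \im(\eta_B)$, transfer surjectivity of $i_B$ (resp.\ $p_B$) to $i_H$ (resp.\ $p_H$) via \cref{lem:iB-su}\,\ref{iBsurj.item4} (resp.\ \cref{prop:pB-surj}\,\ref{pB-surj_item3}), invoke \cref{prop:iBsusub} using the embedding $H \hookrightarrow \mathrm{H}(B)$, and conclude by \cref{lem:HopfimiB}. Your identification of the precise sub-items \ref{item:iBsusub1} and \ref{item:iBsusub3} of \cref{prop:iBsusub}, and of \cref{prop:semiantip} and \cref{pro:Artinian} for the parenthetical claim, matches the paper exactly.
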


\begin{proof}
Consider the bialgebra $H=\im(\eta_B)$.
Since $H$ is a quotient of $B$, either \cref{lem:iB-su}\,\ref{iBsurj.item4} entails that $i_H$ is surjective or \cref{prop:pB-surj}\,\ref{pB-surj_item3} entails that $p_H$ is surjective, according to whether we are in case \ref{item:etaSurj1} or \ref{item:etaSurj2}, respectively.
In either case, since $H$ embeds into the Hopf algebra $\mathrm{H}(B)$, \cref{prop:iBsusub} entails that $H$ is in fact a Hopf algebra. We conclude by \cref{lem:HopfimiB}. 
\end{proof}

\begin{corollary}
\label{cor:EnvArtfd}
Let $B$ be a left Artinian bialgebra. Then 
$\mathrm{H}(B)$ is finite-dimensional.
\end{corollary}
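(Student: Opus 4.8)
The plan is to combine the surjectivity of the universal map $\eta_B$ in the right perfect case with the quotient-stability of the left Artinian condition, and then invoke the result of \cite{LiuZhang} that a left Artinian Hopf algebra is finite-dimensional.

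First I would recall that a left Artinian ring is right perfect (this is precisely the instance of ``right perfect (e.g., left Artinian)'' already used throughout the subsection), so that \cref{prop:etaSurj}\,\ref{item:etaSurj1} applies: since $i_B$ is surjective for a right perfect bialgebra by \cref{pro:Artinian}, the canonical map $\eta_B \colon B \to \mathrm{H}(B)$ is surjective. In particular, $\mathrm{H}(B)$ is a quotient of $B$ as a bialgebra, and hence as an algebra: writing $\pi \coloneqq \eta_B$ for the corestriction to its image, $\ker(\eta_B)$ is a two-sided ideal of $B$ and $\mathrm{H}(B) \cong B/\ker(\eta_B)$ as algebras.

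Next I would use the elementary fact that a quotient of a left Artinian ring by a two-sided ideal is again left Artinian, the descending chain condition on left ideals passing to the quotient. Therefore $\mathrm{H}(B)$, being a Hopf algebra whose underlying algebra is a quotient of the left Artinian algebra $B$, is itself left Artinian. Finally, by \cite{LiuZhang} (recalled at the beginning of \cref{ssec:Artin}), a left Artinian Hopf algebra is necessarily finite-dimensional, so $\mathrm{H}(B)$ is finite-dimensional, as claimed.

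I do not expect any genuine obstacle here: the argument is a direct assembly of previously established facts, with the only mild point to verify being the stability of the left Artinian property under algebra quotients, which is standard. The whole content of the corollary is already packed into \cref{prop:etaSurj} together with the cited theorem on Artinian Hopf algebras.
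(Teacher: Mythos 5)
Your proof is correct and follows exactly the paper's own argument: surjectivity of $i_B$ via \cref{pro:Artinian}, hence surjectivity of $\eta_B$ via \cref{prop:etaSurj}, so that $\mathrm{H}(B)$ is a quotient of the left Artinian ring $B$, hence left Artinian, hence finite-dimensional by \cite{LiuZhang}. No gaps; the intermediate observations you flag (left Artinian implies right perfect, and the Artinian property passing to quotients) are exactly the points the paper relies on as well.
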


\begin{proof}
    It follows from \cref{pro:Artinian} that $i_B$ is surjective, hence we can apply \cref{prop:etaSurj} to conclude that $\eta_B$ is surjective. Thus, $\mathrm{H}(B)$ becomes a quotient ring of the left Artinian ring $B$ and so it is left Artinian itself and we already noticed that a left Artinian Hopf algebra is finite-dimensional (see \cite{LiuZhang}).
\end{proof}
We could not adapt \cref{cor:EnvArtfd} to the right perfect case since we do not know whether a right perfect Hopf algebra is necessarily finite-dimensional.

 We can now realise $\mathrm{H}(B)$ as an explicit quotient of $B$ in case $B$ is a finite-dimensional  or, more generally, right perfect bialgebra. To this aim, consider the following distinguished quotient of it.

 \begin{definition}\label{def:QB}
     Let $B$ be a bialgebra. Set $\qB \coloneqq B/\ker (i_B)B$ and denote by $\qq \colon B\to \qB$ the canonical projection.
 \end{definition}%


\begin{proposition}
\label{prop:pconv}
Let $B$ be a bialgebra.
\begin{enumerate}[label=\alph*),ref={\itshape \alph*)},leftmargin=*]
    \item\label{item:pconv1} $\qB$ is a bialgebra and $\qq \colon B \to \qB$ is a bialgebra map.
    \item\label{item:pconv2} For any bialgebra map $f \colon B\to C$ into a bialgebra $C$ with $i_C$ injective, there is a (necessarily unique) bialgebra map $\hat f \colon \qB\to C$ such that $\hat f \circ \qq = f$.
    \item\label{item:pconv3} If $i_B$ is surjective, then $\qq$ is right convolution invertible.
	\item\label{item:pconv4} If $i_{B^\cop}$ is surjective, then $\ker(i_B)$ is a bi-ideal of $B$ so that $Q(B)=B/\mathrm{ker}(i_B)\cong \mathrm{im}(i_B)$.
\end{enumerate}
 \end{proposition}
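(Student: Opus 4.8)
The plan is to extract first a single structural fact about $i_B$ that drives all four items. By \cref{lem:oslash} (see also \cref{rem:freecom}\,\ref{item:freecom1}) the map $i_B$ is a morphism of coalgebras, so $\ker(i_B)$ is a coideal. Moreover, reading off the left $B\otimes B$-module structure in \eqref{def:oslashcoalg}, one has $i_B(ab)=ab\oslash 1=(a\otimes 1)\cdot(b\oslash 1)=(a\otimes 1)\cdot i_B(b)$, so $i_B$ is also left $B$-linear for the action $a\cdot z=(a\otimes 1)\cdot z$ on $B\oslash B$; hence $\ker(i_B)$ is a left ideal. I would establish these two facts at the outset, since both are used repeatedly below.

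For \ref{item:pconv1}: since $\ker(i_B)$ is a left ideal, $B\cdot(\ker(i_B)B)=(B\ker(i_B))B\subseteq \ker(i_B)B$, so $\ker(i_B)B$ is a two-sided ideal; and it is a coideal because $\varepsilon$ vanishes on $\ker(i_B)$ while $\Delta(kb)=\Delta(k)\Delta(b)$ lands in $\ker(i_B)B\otimes B+B\otimes\ker(i_B)B$ (using $\Delta(\ker(i_B))\subseteq \ker(i_B)\otimes B+B\otimes\ker(i_B)$). Thus $\ker(i_B)B$ is a bi-ideal, whence $\qB$ is a quotient bialgebra and $\qq$ a bialgebra map. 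For \ref{item:pconv2}, \cref{lem:iB-inj}\,\ref{ib-inj_item2} gives $\ker(i_B)\subseteq\ker(f)$ when $i_C$ is injective, so $f(\ker(i_B)B)=f(\ker(i_B))f(B)=0$ and $f$ factors as $\hat f\circ\qq$; the factor $\hat f$ is a bialgebra map because $\qq$ is a surjective bialgebra map, and it is unique since $\qq$ is epic.

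For \ref{item:pconv3}, the candidate right convolution inverse of $\qq$ is $\qq\circ S$, where $S$ is the endomorphism provided by \cref{lem:iBsu}\,\ref{item:epi2} from the surjectivity of $i_B$. One then computes
\[(\qq*(\qq\circ S))(b)=\qq(b_1)\qq(S(b_2))=\qq\big(b_1S(b_2)\big)=\varepsilon(b)1_{\qB}=(u_{\qB}\circ\varepsilon_B)(b),\]
where the middle equality uses that $\qq$ is an algebra map and the third uses $b_1S(b_2)-\varepsilon(b)1_B\in\ker(i_B)\subseteq\ker(\qq)$ from \cref{cor:Sprops}.

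The substantive item is \ref{item:pconv4}, and the main obstacle is turning the hypothesis on $i_{B^\cop}$ into workable data inside $B\oslash B$. I would apply \cref{lem:iBsu}\,\ref{item:epi4} to $B^\cop$ and transport the resulting condition along the flip isomorphism $B\oslash B\cong B^\cop\oslash B^\cop$ induced by $x\otimes y\mapsto y\otimes x$ (which identifies $(B\otimes B)B^+$ with $(B^\cop\otimes B^\cop)(B^\cop)^+$), obtaining $S'\in\mathrm{End}_\Bbbk(B)$ with $y\oslash 1=1\oslash S'(y)$ for all $y\in B$. The point is that this upgrades $\ker(i_B)$ to a right ideal: for $k\in\ker(i_B)$ and $b\in B$, using only the module structure \eqref{def:oslashcoalg},
\[kb\oslash 1=(k\otimes 1)\cdot(b\oslash 1)=(k\otimes 1)\cdot(1\oslash S'(b))=k\oslash S'(b)=(1\otimes S'(b))\cdot(k\oslash 1)=0.\]
Since $\ker(i_B)$ is already a left ideal and a coideal by the first paragraph, it is now a bi-ideal, so $\ker(i_B)B=\ker(i_B)$ and $\qB=B/\ker(i_B)$. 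Finally, the coalgebra first-isomorphism theorem applied to the coalgebra map $i_B$ gives $\qB=B/\ker(i_B)\cong\mathrm{im}(i_B)$, which is a bialgebra isomorphism once $\mathrm{im}(i_B)$ is given the structure transported from the quotient bialgebra $\qB$ of \ref{item:pconv1}.
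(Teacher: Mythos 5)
Your proof is correct and follows essentially the same route as the paper's: the same structural facts about $\ker(i_B)$ (coideal plus left ideal via the $B\otimes B$-module structure), the same factorization argument via \cref{lem:iB-inj}\,\ref{ib-inj_item2}, the same right convolution inverse $\qq \circ S$ justified by \cref{cor:Sprops}, and the same flip-to-$B^{\cop}$ computation showing that $\ker(i_B)$ is also a right ideal. The only slip is cosmetic: in part \ref{item:pconv4} you cite \cref{lem:iBsu}\,\ref{item:epi4} yet claim a linear $S'$ --- either cite \cref{lem:iBsu}\,\ref{item:epi2} instead, or note that pointwise elements $S'(b)$ suffice for the displayed computation; nothing else changes.
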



\begin{proof}
    \ref{item:pconv1} Since, in view of \cref{rem:freecom}, $i_B:B\to B\oslash B$ is a coalgebra map, we have that $K \coloneqq \ker (i_B)$ is a coideal of $B$ and hence we can consider the quotient bialgebra $\qB \coloneqq B/\langle K \rangle$ and the canonical projection $\qq \colon B\to \qB$. Note that $K$ is in fact a left ideal as for all $x\in K$ and for all $b\in B$ we have $i_B(bx)=bx\oslash1=(b\otimes 1)(x\oslash 1)=(b\otimes 1)i_B(x)=0$.
    Thus $\langle K \rangle=KB.$
    
    \ref{item:pconv2} Let $f:B\to C$ be a bialgebra map with $i_C$ injective. By \cref{lem:iB-inj}\,\ref{ib-inj_item2}, we have that $K=\ker (i_B)\subseteq \ker (f)$ so that we get a (necessarily unique) bialgebra map $\hat f \colon \qB \to C$ such that $\hat f \circ \qq = f$.
    
    \ref{item:pconv3} 
    Since $i_B$ is surjective, we can apply \cref{lem:iBsu} to obtain an $S\in \mathrm{End}_\Bbbk(B)$ such that $x\oslash y= xS(y)\oslash 1$, for every $x,y\in B$. 
    Moreover, for every $b \in B$, we have $b_1S(b_2)-\varepsilon_B(b)1\in\ker (i_B) \subseteq \ker (\qq)$ by \cref{cor:Sprops}.
    Therefore, $( \qq * \qq S)(b) = \qq(b_1)\qq S(b_2)= \qq (b_1S(b_2)) = \varepsilon_B(b)\qq(1) = \varepsilon_B(b)1$.
    Hence $\qq*\qq S = u_{\qB} \circ \varepsilon_B$, so that $\qq$ is right convolution invertible.
    
    \ref{item:pconv4} By applying \cref{lem:iBsu} to $B^\cop$, we obtain a linear endomorphism $\bar{S}$ of $B$ such that $1\otimes b-\bar{S}(b)\otimes 1\in (B\otimes B)\Delta^\cop(B^+)$. By applying the flip map, we get $b\otimes 1-1\otimes \bar{S}(b)\in (B\otimes B)\Delta(B^+)$ so that $b\oslash 1=1\oslash \bar{S}(b)$ for every $b\in B$. Now, for $b\in B$ and $x\in \ker(i_B)$, we get
    \[xb\oslash 1=(x\otimes 1)(b\oslash 1)=(x\otimes 1)(1\oslash \bar{S}(b))=x\oslash \bar{S}(b)=(1\otimes \bar{S}(b))(x\oslash 1)=(1\otimes \bar{S}(b))i_B(x)=0\]
    and hence $xb\in \ker(i_B)$, i.e., $\ker(i_B)$ is a right ideal. Since we already know it is also a left ideal and a two-sided coideal, we conclude.
\end{proof}

\begin{corollary}
\label{lem:HopfidealiB}
Let $I$ be a  bi-ideal of a bialgebra $B$ such that $B/I$ has an antipode. If $I\subseteq \ker(i_B)$, then $\ker(i_B)=I$ and $\mathrm{H}(B) = Q(B) = B/I \cong \im(i_B).$   
\end{corollary}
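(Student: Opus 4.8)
The plan is to use the hypothesis that $B/I$ is a Hopf algebra to pin down $\ker(i_B)$ exactly, and then to show that the quotient map $q_B \colon B \to Q(B)$ already realises the free Hopf algebra.

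First I would prove $\ker(i_B) = I$. Since $B/I$ is a Hopf algebra, its antipode is an anti-algebra map, so $i_{B/I}$ is injective by \cref{lem:iB-inj}\,\ref{ib-inj_item1} (equivalently by \cref{prop:Frobenius}). Applying \cref{lem:iB-inj}\,\ref{ib-inj_item2} to the bialgebra projection $\pi \colon B \to B/I$ yields $\ker(i_B) \subseteq \ker(\pi) = I$, and together with the standing assumption $I \subseteq \ker(i_B)$ this gives $\ker(i_B) = I$. Because $I$ is in particular a two-sided ideal and $1_B \in B$, we get $\ker(i_B)B = IB = I$, whence $Q(B) = B/\ker(i_B)B = B/I$; in particular $Q(B)$ carries the antipode of $B/I$ and is a Hopf algebra.

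Next I would verify that $(Q(B), q_B)$ enjoys the universal property characterising $(\mathrm{H}(B), \eta_B)$. Given any Hopf algebra $H$ and bialgebra map $f \colon B \to H$, the map $i_H$ is injective (again by \cref{lem:iB-inj}\,\ref{ib-inj_item1}), so \cref{prop:pconv}\,\ref{item:pconv2} furnishes a unique bialgebra map $\hat f \colon Q(B) \to H$ with $\hat f \circ q_B = f$. As $Q(B)$ and $H$ are both Hopf algebras, $\hat f$ is automatically a morphism of Hopf algebras, and its uniqueness among bialgebra maps forces uniqueness among Hopf algebra maps. Thus $(Q(B), q_B)$ satisfies the universal property of the free Hopf algebra, and since that property determines the Hopf envelope up to canonical isomorphism, we may identify $\mathrm{H}(B) = Q(B) = B/I$ (with $\eta_B$ corresponding to $q_B$).

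Finally, $i_B$ is a coalgebra map whose kernel is the coideal $I = \ker(i_B)$, so the first isomorphism theorem for coalgebras produces a coalgebra isomorphism $B/I = B/\ker(i_B) \cong \im(i_B)$, completing the chain $\mathrm{H}(B) = Q(B) = B/I \cong \im(i_B)$. The argument is essentially a repackaging of results already in hand, so I do not anticipate a genuine obstacle; the single point deserving care is the opening inclusion $\ker(i_B) \subseteq I$, which hinges on reading the injectivity of $i_{B/I}$ through the functoriality statement \cref{lem:iB-inj}\,\ref{ib-inj_item2}.
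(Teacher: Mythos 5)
Your proof is correct and follows essentially the same route as the paper's: both deduce $\ker(i_B)\subseteq I$ by applying \cref{lem:iB-inj}\,\ref{ib-inj_item2} to the projection $B\to B/I$ (using that $i_{B/I}$ is injective since $B/I$ is a Hopf algebra), identify $Q(B)=B/\ker(i_B)B=B/I\cong\im(i_B)$, and obtain $\mathrm{H}(B)=Q(B)$ from the universal property in \cref{prop:pconv}\,\ref{item:pconv2}. Your write-up merely makes explicit two points the paper leaves implicit, namely $IB=I$ and the verification that the bialgebra map $\hat f$ from \cref{prop:pconv}\,\ref{item:pconv2} is automatically a Hopf algebra map.
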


\begin{proof}
Since $B/I$ has an antipode, we can apply \cref{lem:iB-inj}\,\ref{ib-inj_item2} to the projection $\pi:B\to B/I$ to conclude that $\mathrm{ker}(i_B) \subseteq \mathrm{ker}(\pi)=I$. Thus, $Q(B)=B/\mathrm{ker}(i_B)B=B/IB=B/I\cong \mathrm{im(i_B)}.$ In particular $Q(B)$ is a Hopf algebra so that, by \cref{prop:pconv}\;\ref{item:pconv2}, we conclude $\mathrm{H}(B)=Q(B)$. 
\end{proof}

Remark that, in view of \cref{prop:pconv}\,\ref{item:pconv2}, when $\eta_B \colon B \to \mathrm{H}(B)$ is surjective, then $\mathrm{H}(B)$ is in fact a quotient of $\qB$.



\begin{corollary}
\label{coro:pconv}
Let $B$ be a left $n$-Hopf algebra or a right perfect bialgebra. Then the canonical projection $\qq : B\to \qB$ is a (two-sided) convolution invertible bialgebra map.
\end{corollary}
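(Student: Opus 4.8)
The plan is to establish the two cases in parallel up to the point where $\qq$ is known to be \emph{right} convolution invertible, and then to upgrade this to two-sided invertibility by two different devices. In both situations $i_B$ is surjective: this is \cref{prop:semiantip}\,\ref{item:semiantip1} when $B$ is a left $n$-Hopf algebra and \cref{pro:Artinian} when $B$ is right perfect. Hence \cref{prop:pconv}\,\ref{item:pconv1} guarantees that $\qq\colon B\to\qB$ is a bialgebra map, while \cref{prop:pconv}\,\ref{item:pconv3} produces a right convolution inverse of the form $\rho\coloneqq \qq\circ S$, where $S\in\mathrm{End}_\Bbbk(B)$ is the endomorphism furnished by \cref{lem:iBsu} (so that $x\oslash y=xS(y)\oslash 1$) and $\qq*\rho=u_{\qB}\circ\varepsilon_B$ follows from $b_1S(b_2)-\varepsilon(b)1\in\ker(i_B)\subseteq\ker(\qq)$ of \cref{cor:Sprops}. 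It thus remains only to show that $\rho$ is also a \emph{left} convolution inverse.

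For the left $n$-Hopf case I would choose $S$ to be the left $n$-antipode itself: by \cref{prop:semiantip}\,\ref{item:semiantip1} it satisfies $S(y)\oslash 1=1\oslash y$, so it is a legitimate choice in \cref{lem:iBsu} and $\rho=\qq\circ S$ is the right inverse just constructed. Composing the defining identity $S*\id^{*n+1}=\id^{*n}$ on the left with the algebra map $\qq$ and using $\qq\circ\id^{*k}=\qq^{*k}$ yields
\[\rho*\qq^{*n+1}=\qq^{*n}\qquad\text{in }\mathrm{Hom}_\Bbbk(B,\qB).\]
A short telescoping argument shows that $\rho^{*n}$ is a right convolution inverse of $\qq^{*n}$, that is $\qq^{*n}*\rho^{*n}=u_{\qB}\circ\varepsilon_B$, which follows by induction from $\qq*\rho=u_{\qB}\circ\varepsilon_B$ and associativity of convolution. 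Convolving the displayed identity on the right by $\rho^{*n}$ and simplifying $\qq^{*n+1}*\rho^{*n}=\qq$ then collapses it to $\rho*\qq=u_{\qB}\circ\varepsilon_B$. Hence $\rho$ is a two-sided convolution inverse of $\qq$.

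For the right perfect case I would instead invoke weak finiteness. The two-sided ideal generated by $\ker(i_B)$ equals $\ker(i_B)B$ (\cref{prop:pconv}\,\ref{item:pconv1}), so $\qB=B/\ker(i_B)B$ is a quotient bialgebra of the right perfect bialgebra $B$, hence itself right perfect since right perfectness passes to homomorphic images. Applying \cref{lem:Skry} with $\qB$ in the role of the weakly finite bialgebra and $\qq\colon B\to\qB$ as the right convolution invertible coalgebra map, I conclude that $\qq$ is two-sided convolution invertible.

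The main obstacle in the $n$-Hopf case is purely organisational: one must ensure that the \emph{same} endomorphism $S$ simultaneously serves as the map in \cref{lem:iBsu} that yields the right inverse $\rho$ and as the left $n$-antipode that drives the telescoping identity $\rho*\qq^{*n+1}=\qq^{*n}$; this compatibility is exactly what \cref{prop:semiantip}\,\ref{item:semiantip1} secures. In the right perfect case the only non-formal point is the stability of right perfectness under passage to the quotient $\qB$, after which \cref{lem:Skry} does all the work; note that here no global relation $\rho*\qq^{*n+1}=\qq^{*n}$ is available, since the analogous control (\cref{lem:leftArtlocHop}) is only local, which is precisely why the two cases require different treatments.
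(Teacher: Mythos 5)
Your proof is correct and follows essentially the same route as the paper: right convolution invertibility of $\qq$ from surjectivity of $i_B$ via \cref{prop:pconv}\,\ref{item:pconv3}, then upgrading to a two-sided inverse by pushing the left $n$-antipode identity through $\qq$ and cancelling $\qq^{*n}$ in the $n$-Hopf case, and by applying \cref{lem:Skry} to the right perfect quotient $\qB$ (a detail the paper leaves implicit but that you correctly fill in) in the perfect case. The only cosmetic difference is that your insistence that the \emph{same} $S$ serve both as the endomorphism of \cref{lem:iBsu} and as the left $n$-antipode is not actually needed — the cancellation works with any right convolution inverse of $\qq$, which is how the paper argues — so what you call the main obstacle is in fact no obstacle at all.
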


\begin{proof}
    Either by \cref{prop:semiantip} or by \cref{pro:Artinian}, the map $i_B$ is surjective, so that $\qq$ is a right convolution invertible bialgebra map by \cref{prop:pconv}\,\ref{item:pconv3}. If $B$ is a right perfect bialgebra, then we conclude by \cref{lem:Skry}. If $B$ is a left $n$-Hopf algebra instead, then let $S$ be a left $n$-antipode. 
    Since $\qq$ is a bialgebra map, it induces a morphism of convolution algebras $\mathrm{Hom}_\Bbbk(B,B) \to \mathrm{Hom}_\Bbbk(B,\qB)$. Therefore, the identity $S*\id^{* n+1} = \id^{*n}$ in $\mathrm{Hom}_\Bbbk(B,B)$ implies the identity $\qq S*\qq^{* n+1} = \qq^{*n}$ in $\mathrm{Hom}_\Bbbk(B,\qB)$, from which one deduces that $\qq S*\qq = u_{Q(B)} \circ \varepsilon_B$ because $\qq$ is right convolution invertible (see \cref{prop:pconv}\ref{item:pconv3}\,), so that $\qq$ is also left convolution invertible.
\end{proof}


\begin{theorem}
\label{prop:HBfd}
Let $B$ be a finite-dimensional bialgebra. Then $\mathrm{H}(B)= \qB$.
\end{theorem}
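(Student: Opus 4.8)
The plan is to show that $\qB$, equipped with the canonical projection $\qq \colon B \to \qB$, satisfies the universal property defining the Hopf envelope; since the universal object is determined up to a unique compatible isomorphism, this yields the asserted identification $\mathrm{H}(B) = \qB$. The argument naturally splits into two parts: first showing that $\qB$ is a Hopf algebra, and then verifying the factorisation property.

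First I would prove that $\qB$ is a Hopf algebra. Since $B$ is finite-dimensional, \cref{coro:fdS} shows that $B$ is a (two-sided) $n$-Hopf algebra, so in particular $i_B$ is surjective by \cref{prop:semiantip}\,\ref{item:semiantip1}; hence, by \cref{prop:pconv}\,\ref{item:pconv3}, the canonical projection $\qq \colon B \to \qB$ is a surjective, right convolution invertible bialgebra map. Moreover $\qB$, being a quotient of the finite-dimensional bialgebra $B$, is itself finite-dimensional, hence a left $n$-Hopf algebra by \cref{coro:fdS} again. I would then apply \cref{lem:n-Hopfconv}\,\ref{item:fHopf1} to $\qq$: a surjective, right convolution invertible bialgebra map onto a left $n$-Hopf algebra forces the target to be a Hopf algebra, so $\qB$ is a Hopf algebra. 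This is the technical heart of the proof, and the point where the surjectivity of $i_B$ (equivalently, the $n$-Hopf structure of $B$) is converted into a genuine antipode on $\qB$.

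With $\qB$ known to be a Hopf algebra, I would verify the universal property as follows. Let $H$ be any Hopf algebra and $f \colon B \to H$ a bialgebra map. As $H$ is a Hopf algebra, its antipode is anti-multiplicative and anti-comultiplicative, so $i_H$ is bijective by \cref{prop:Frobenius} and in particular injective. Thus \cref{prop:pconv}\,\ref{item:pconv2} provides a unique bialgebra map $\hat f \colon \qB \to H$ with $\hat f \circ \qq = f$. Being a bialgebra map between two Hopf algebras, $\hat f$ is automatically a Hopf algebra map: indeed $\hat f \circ S_{\qB}$ and $S_H \circ \hat f$ are, respectively, a right and a left convolution inverse of $\hat f$ in $\mathrm{Hom}_\Bbbk(\qB,H)$, and hence coincide, giving $\hat f \circ S_{\qB} = S_H \circ \hat f$. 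Uniqueness of $\hat f$ even among Hopf algebra maps follows from the surjectivity—hence epimorphicity—of $\qq$.

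Therefore $(\qB,\qq)$ enjoys exactly the defining universal property of $(\mathrm{H}(B),\eta_B)$, whence $\mathrm{H}(B) = \qB$. Concretely, the identifying isomorphism is the unique bialgebra map $\qB \to \mathrm{H}(B)$ obtained from \cref{prop:pconv}\,\ref{item:pconv2} applied to $\eta_B$ (legitimate since $i_{\mathrm{H}(B)}$ is injective, $\mathrm{H}(B)$ being a Hopf algebra), whose inverse comes from the universal property of $\mathrm{H}(B)$ applied to the bialgebra map $\qq$. The only point demanding a little care is that these two factorisation maps compose to the respective identities; this is immediate once one cancels the epimorphisms $\qq$ and $\eta_B$ on the right.
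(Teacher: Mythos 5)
Your proof is correct and follows essentially the same route as the paper's: finite-dimensionality gives a left $n$-Hopf structure on both $B$ and $\qB$ via \cref{coro:fdS}, surjectivity of $i_B$ makes $\qq$ right convolution invertible, \cref{lem:n-Hopfconv}\,\ref{item:fHopf1} then yields the antipode on $\qB$, and \cref{prop:pconv}\,\ref{item:pconv2} gives the universal property. The only (immaterial) difference is that you invoke \cref{prop:pconv}\,\ref{item:pconv3} directly for right convolution invertibility, while the paper routes through \cref{coro:pconv} to get two-sided invertibility — which is more than \cref{lem:n-Hopfconv} actually needs.
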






\begin{proof} Set $K \coloneqq \ker (i_B)$. Since $B$ is finite-dimensional, $\qB = B/KB $ is finite-dimensional, too, and so both of them are left $n$-Hopf algebras by \cref{coro:fdS} (possibly for different $n$'s). Thus, \cref{coro:pconv} entails that $\qq \colon B\to \qB$ is a surjective, convolution invertible, bialgebra map to a left $n$-Hopf algebra and hence, by \cref{lem:n-Hopfconv}\,\ref{item:fHopf1}, $\qB$ is a Hopf algebra.
%
%
 Moreover, $\qB$ has the desired universal property in view of \cref{prop:pconv}\,\ref{item:pconv2}.
 \end{proof}


\begin{theorem}
\label{prop:HBArt}
Let $B$ be a right perfect bialgebra. Then $\mathrm{H}(B)= \qB$.
\end{theorem}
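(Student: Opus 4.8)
The plan is to mimic the strategy of \cref{prop:HBfd}, replacing the finite-dimensional input by the right perfect hypothesis. Set $K \coloneqq \ker(i_B)$, so that $\qB = B/\langle K\rangle = B/KB$ is a quotient bialgebra of $B$ and $\qq \colon B \to \qB$ is the canonical (surjective) bialgebra map by \cref{prop:pconv}\,\ref{item:pconv1}. Two facts will drive the argument. First, since $B$ is right perfect, \cref{pro:Artinian} gives that $i_B$ is surjective, whence \cref{coro:pconv} ensures that $\qq$ is a (two-sided) convolution invertible bialgebra map. Second, $\qB$ is a quotient ring of the right perfect ring $B$, and right perfectness passes to quotient rings: for instance via Björk's characterization invoked in \cref{lem:leftArtlocHop}, the descending chain condition on finitely generated submodules of left $B$-modules is inherited by left $\qB$-modules, because a left $\qB$-module is a left $B$-module whose lattice of finitely generated submodules coincides with the $B$-module one. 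Hence $\qB$ is again right perfect.

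Granting these, I would first prove that $\qB$ is a Hopf algebra. This is exactly the situation covered by \cref{prop:quoArtisHopf}: the map $\qq \colon B \to \qB$ is surjective and right convolution invertible, and its target $\qB$ is right perfect; therefore $\qB$ carries an antipode and is a Hopf algebra. This step is where the finite-dimensional proof used the $n$-Hopf machinery together with \cref{lem:n-Hopfconv}; here the same role is played by the weak finiteness of right perfect bialgebras established through \cref{lem:Skry}, which underlies \cref{prop:quoArtisHopf}.

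It then remains to check that $(\qB, \qq)$ satisfies the universal property defining the Hopf envelope. Let $H$ be any Hopf algebra and $f \colon B \to H$ a bialgebra map. Since the antipode of a Hopf algebra is anti-multiplicative and anti-comultiplicative, \cref{prop:Frobenius} shows that $i_H$ is bijective, in particular injective; hence \cref{prop:pconv}\,\ref{item:pconv2} provides a unique bialgebra map $\hat f \colon \qB \to H$ with $\hat f \circ \qq = f$. As $\qB$ and $H$ are both Hopf algebras, $\hat f$ is automatically a morphism of Hopf algebras, and its uniqueness as such follows from its uniqueness as a bialgebra map. Thus $(\qB, \qq)$ satisfies the universal property of $\mathrm{H}(B)$, giving $\mathrm{H}(B) = \qB$.

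The main obstacle I anticipate is the verification that $\qB$ is a Hopf algebra, and specifically ensuring that all hypotheses of \cref{prop:quoArtisHopf} genuinely hold for the quotient $\qB$ — above all that right perfectness descends from $B$ to $\qB = B/KB$. Everything else is formal bookkeeping built on the already-established surjectivity of $i_B$ and the factorization property of $\qq$ from \cref{prop:pconv}.
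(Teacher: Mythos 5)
Your proof is correct and follows essentially the same route as the paper's own: surjectivity of $i_B$ via \cref{pro:Artinian}, convolution invertibility of $\qq$ via \cref{coro:pconv}, descent of right perfectness to the quotient $\qB$ (which the paper handles by citing \cite[Corollary 24.19]{Lam}, while you rederive it from Bj\"{o}rk's characterization), the Hopf property via \cref{prop:quoArtisHopf}, and the universal property via \cref{prop:pconv}\,\ref{item:pconv2}. Your only additions are the self-contained justification of the quotient's perfectness and the explicit unwinding of the universal property (using \cref{prop:Frobenius} to get $i_H$ injective), both of which are correct but match what the paper leaves to citation or to the reader.
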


\begin{proof}
Set $K \coloneqq \ker (i_B)$. Since $B$ is right perfect $i_B$ is surjective (cf.\ \cref{pro:Artinian}).
Thus, by \cref{prop:pconv}\,\ref{item:pconv1} and  \cref{coro:pconv},
the quotient $\qB = B/KB $ is a bialgebra and  the canonical projection $\qq \colon B\to \qB$ is a surjective, convolution invertible, bialgebra map.

 Since $\qB$ is a quotient of $B$, it is right perfect, too, see e.g. \cite[Corollary 24.19]{Lam}. As a consequence, we have a surjective, convolution invertible, bialgebra morphism $\qq \colon B \to \qB$ to a right perfect bialgebra $\qB$. In light of \cref{prop:quoArtisHopf}, $\qB$ is a Hopf algebra.
 Moreover, $\qB$ has the desired universal property in view of \cref{prop:pconv}\,\ref{item:pconv2}.
 \end{proof}

The following result shows that the Hopf envelope of $B$ is in fact $B\oslash B$ in the relevant cases.

\begin{proposition}
\label{prop:HBfdArt}
Let $B$ be a finite-dimensional or, more generally, a right perfect bialgebra. Then $B\oslash B\cong \mathrm{H}(B) = B/\ker(i_B)$ via the isomorphism $\widehat{q_B}:B\oslash B\to Q(B),x\oslash y\mapsto q_B(x)Sq_B(y)$.
\end{proposition}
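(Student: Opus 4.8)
The plan is to combine the equalities $\mathrm{H}(B)=\qB$ coming from \cref{prop:HBfd} and \cref{prop:HBArt} with \cref{prop:pconv}\,\ref{item:pconv4}, which identifies $\qB$ with $\im(i_B)$ as soon as $\ker(i_B)$ is a bi-ideal of $B$. To invoke \cref{prop:pconv}\,\ref{item:pconv4} I first need $i_{B^\cop}$ to be surjective, and this is immediate: being finite-dimensional or right perfect are properties of the underlying algebra, which $B$ and $B^\cop$ share, so $B^\cop$ is again finite-dimensional (resp.\ right perfect) and hence $i_{B^\cop}$ is surjective by \cref{prop:semiantip}\,\ref{item:semiantip1} (resp.\ \cref{pro:Artinian}). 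Thus $\ker(i_B)$ is a bi-ideal, $\ker(i_B)B=\ker(i_B)$, and $\qB=B/\ker(i_B)\cong\im(i_B)$. Since $B$ itself is finite-dimensional (resp.\ right perfect), $i_B$ is moreover surjective by \cref{prop:semiantip}\,\ref{item:semiantip1} (resp.\ \cref{pro:Artinian}), so $\im(i_B)=B\oslash B$. Chaining these identifications gives $B\oslash B=\im(i_B)\cong\qB=\mathrm{H}(B)=B/\ker(i_B)$.

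It then remains to check that this isomorphism is the stated map $\widehat{q_B}$. The isomorphism $\qB\to\im(i_B)=B\oslash B$ produced by \cref{prop:pconv}\,\ref{item:pconv4} is the corestriction $\overline{i_B}$ of $i_B$, that is $q_B(b)\mapsto b\oslash 1$. Let $S$ be the antipode of the Hopf algebra $\qB=\mathrm{H}(B)$, and let $S_0\in\mathrm{End}_\Bbbk(B)$ be the endomorphism from \cref{lem:iBsu} satisfying $x\oslash y=xS_0(y)\oslash 1$ (available since $i_B$ is surjective, see \cref{lem:iBsu}\,\ref{item:epi3}). Then $x\oslash y=i_B(xS_0(y))=\overline{i_B}\big(q_B(xS_0(y))\big)$, whence $\overline{i_B}^{-1}(x\oslash y)=q_B(xS_0(y))=q_B(x)q_B(S_0(y))$. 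Therefore I only have to prove $q_B\circ S_0=S\circ q_B$, for then $\overline{i_B}^{-1}=\widehat{q_B}$ and $\widehat{q_B}(x\oslash y)=q_B(x)S\big(q_B(y)\big)$, as claimed.

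To establish $q_B\circ S_0=S\circ q_B$ I work in the convolution algebra $\mathrm{Hom}_\Bbbk(B,\qB)$. By \cref{cor:Sprops}, $a_1S_0(a_2)-\varepsilon(a)1_B\in\ker(i_B)=\ker(q_B)$ for every $a\in B$, so that $q_B*(q_B\circ S_0)=u_{\qB}\circ\varepsilon_B$. On the other hand, precomposing the antipode identity $\id_{\qB}*S=u_{\qB}\circ\varepsilon_{\qB}$ with the bialgebra map $q_B$ yields $q_B*(S\circ q_B)=u_{\qB}\circ\varepsilon_B$ as well. Since $q_B$ is two-sided convolution invertible by \cref{coro:pconv}, cancelling it on the left gives $q_B\circ S_0=S\circ q_B$.

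The main obstacle is exactly this last identification: it does not follow formally from the mere existence of an isomorphism, and genuinely requires both \cref{cor:Sprops} (to recognise $q_B\circ S_0$ as a convolution inverse of $q_B$) and the convolution-invertibility of $q_B$ from \cref{coro:pconv} (to cancel and conclude). By contrast, the reduction in the first paragraph is routine once one observes that the hypotheses on $B$ pass to $B^\cop$, allowing the application of \cref{prop:pconv}\,\ref{item:pconv4}.
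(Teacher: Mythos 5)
Your proof is correct. Its first paragraph coincides with the paper's own argument: pass to $B^\cop$ (finite-dimensionality and right perfectness concern only the underlying algebra), get surjectivity of $i_B$ and $i_{B^\cop}$ from \cref{prop:semiantip} and \cref{pro:Artinian}, and combine \cref{prop:pconv}\,\ref{item:pconv4} with \cref{prop:HBfd} and \cref{prop:HBArt} to obtain $\mathrm{H}(B)=Q(B)=B/\ker(i_B)\cong \im(i_B)=B\oslash B$ via the map $\widetilde{\imath}_B$ (your $\overline{i_B}$) determined by $\widetilde{\imath}_B\circ q_B=i_B$. Where you genuinely differ is in identifying $\widetilde{\imath}_B^{\,-1}$ with $\widehat{q_B}$. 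The paper gets this in one line from \cref{rem:freecom}\,\ref{item:freecom1}: applied to the bialgebra map $q_B$ into the Hopf algebra $Q(B)$, it provides $\widehat{q_B}$ as a well-defined map satisfying $\widehat{q_B}\circ i_B=q_B$, whence $\widehat{q_B}\circ\widetilde{\imath}_B\circ q_B=\widehat{q_B}\circ i_B=q_B$ and, by surjectivity of $q_B$, $\widehat{q_B}\circ\widetilde{\imath}_B=\id$, so $\widehat{q_B}=\widetilde{\imath}_B^{\,-1}$. You instead compute $\widetilde{\imath}_B^{\,-1}(x\oslash y)=q_B(xS_0(y))$ directly from \cref{lem:iBsu} and then prove $q_B\circ S_0=S\circ q_B$ by exhibiting both as right convolution inverses of $q_B$ (\cref{cor:Sprops} for the former, the antipode axiom precomposed with $q_B$ for the latter) and cancelling, which requires the two-sided convolution invertibility of $q_B$ from \cref{coro:pconv}. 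Both routes are sound: the paper's buys brevity by outsourcing the well-definedness of $\widehat{q_B}$ and the relation $\widehat{q_B}\circ i_B=q_B$ to \cref{rem:freecom}, while yours is more hands-on at that point and yields as a by-product the explicit compatibility $q_B\circ S_0=S\circ q_B$, i.e., that the antipode of $\mathrm{H}(B)$ is induced by the endomorphism $S_0$ of $B$ — a fact the paper's argument never makes explicit.
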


\begin{proof}
If $B$ is finite-dimensional, so is $B^\cop$. If $B$ is right perfect, so is $B^\cop$, as the underlying algebra is the same. In both cases, we get that both $i_{B}$ and $i_{B^\cop}$ are surjective by \cref{prop:semiantip} and \cref{pro:Artinian}. Now, $\ker(i_B)$ is two-sided ideal of $B$ in view of \cref{prop:pconv}\,\ref{item:pconv4}. Hence, by \cref{prop:HBfd} and \cref{prop:HBArt}, we have that $\mathrm{H}(B) = Q(B) = {B}/{\ker(i_B)} \cong \im(i_B) = B\oslash B$, where the isomorphism $\widetilde{\imath}_B \colon Q(B)\to B\oslash B$ is the unique morphism satisfying $\widetilde{\imath}_B \circ q_B = i_B$.
Since $\widehat{q_B}\circ \widetilde{\imath}_B \circ q_B=\widehat{q_B}\circ i_B=q_B$, we get that $\widehat{q_B}\circ \widetilde{\imath}_B = \id$ and so $\widehat{q_B} = {\widetilde{\imath}_B}^{\,-1}$ is invertible.
\end{proof}

Note that both in the finite-dimensional and in the left Artinian case, $\mathrm{H}(B)$ is finite-dimensional (recall \cref{cor:EnvArtfd}).

Now, we continue by studying $\mathrm{H}(B)$ for a cocommutative bialgebra $B$ with $i_B$ surjective. To this aim we first need the following lemma.

\begin{lemma}
\label{lem:S(K)}
Let $B$ be a bialgebra. If $i_B \colon B \to B \oslash B$ is surjective and
a resulting endomorphism $S$ of $B$ as in \cref{lem:iBsu}\,\ref{item:epi2} obeys
$S(\ker(i_B)) \subseteq \ker(i_B)$, then $\qB$ is a right Hopf algebra with anti-multiplicative and anti-comultiplicative right antipode $\overline{S}$ given by the equality $\overline{S}\circ q_B=q_B\circ S$.
\end{lemma}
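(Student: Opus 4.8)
The plan is to work throughout with the surjective bialgebra map $\qq\colon B\to\qB$ and to transport the three identities of \cref{cor:Sprops} along it. Recall from \cref{prop:pconv}\,\ref{item:pconv1} that $\qB=B/\ker(i_B)B$ is a bialgebra and $\qq$ is a surjective bialgebra map, and from \cref{prop:pconv}\,\ref{item:pconv3} that $\qq$ is right convolution invertible with $\qq S$ as a right convolution inverse, i.e.\ $\qq*(\qq S)=u_{\qB}\circ\varepsilon_B$. Writing $K\coloneqq\ker(i_B)$, so that $\ker(\qq)=KB$, the crux is to check that $\overline{S}$ is well defined, that is, $S(KB)\subseteq KB$; once this is done, the right-antipode property and the two anti-homomorphism properties follow almost formally.

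For well-definedness I would take $\kappa\in K$ and $b\in B$ and show $S(\kappa b)\in KB$. By the first relation in \eqref{eq:Sbar} we have $S(\kappa b)-S(b)S(\kappa)\in K\subseteq KB$, so it remains to treat $S(b)S(\kappa)$. The hypothesis $S(K)\subseteq K$ gives $S(\kappa)\in K$, and since $K$ is a left ideal of $B$ (as recalled in the proof of \cref{prop:pconv}\,\ref{item:pconv1}) we get $S(b)S(\kappa)\in BK\subseteq K\subseteq KB$. Hence $S(\kappa b)\in KB=\ker(\qq)$, and as $\ker(\qq)$ is spanned by such elements, linearity of $S$ yields $S(\ker(\qq))\subseteq\ker(\qq)$. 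Therefore there is a unique linear map $\overline{S}\colon\qB\to\qB$ with $\overline{S}\circ\qq=\qq\circ S$.

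Next I would verify that $\overline{S}$ is a right antipode. Since $\qq$ is a surjective bialgebra map, precomposition with $\qq$ is an injective morphism of convolution algebras $\mathrm{Hom}_\Bbbk(\qB,\qB)\to\mathrm{Hom}_\Bbbk(B,\qB)$; using that $\qq$ is both an algebra and a coalgebra map one computes
\[(\id_{\qB}*\overline{S})\circ\qq=(\id_{\qB}\circ\qq)*(\overline{S}\circ\qq)=\qq*(\qq S)=u_{\qB}\circ\varepsilon_B=(u_{\qB}\circ\varepsilon_{\qB})\circ\qq.\]
By injectivity, $\id_{\qB}*\overline{S}=u_{\qB}\circ\varepsilon_{\qB}$, so $\qB$ is a right Hopf algebra with right antipode $\overline{S}$. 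For anti-multiplicativity, the inclusion $S(ab)-S(b)S(a)\in K\subseteq\ker(\qq)$ gives $\overline{S}(\qq(a)\qq(b))=\qq(S(ab))=\qq(S(b)S(a))=\overline{S}(\qq(b))\,\overline{S}(\qq(a))$ for all $a,b\in B$, and surjectivity of the algebra map $\qq$ concludes. For anti-comultiplicativity, observe that $\ker(i_B\otimes i_B)=K\otimes B+B\otimes K\subseteq\ker(\qq)\otimes B+B\otimes\ker(\qq)=\ker(\qq\otimes\qq)$ since $K\subseteq\ker(\qq)$; hence the third relation in \eqref{eq:Sbar}, after applying $\qq\otimes\qq$, yields
\[(\qq\otimes\qq)\bigl(S(a)_1\otimes S(a)_2\bigr)=(\qq\otimes\qq)\bigl(S(a_2)\otimes S(a_1)\bigr),\]
which, as $\qq$ is a surjective coalgebra map and $\overline{S}\circ\qq=\qq\circ S$, says precisely that $\Delta_{\qB}(\overline{S}(\qq(a)))=\overline{S}(\qq(a_2))\otimes\overline{S}(\qq(a_1))$, i.e.\ that $\overline{S}$ is anti-comultiplicative.

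The main obstacle is the well-definedness step: the hypothesis only controls $S$ on $K=\ker(i_B)$, whereas $\ker(\qq)$ is the larger ideal $KB$, so one genuinely needs the almost-anti-multiplicativity of $S$ from \cref{cor:Sprops} together with the left-ideal property of $K$ to promote $S(K)\subseteq K$ to $S(KB)\subseteq KB$. Everything after that is a formal transport of the identities of \cref{cor:Sprops} through the surjective bialgebra map $\qq$.
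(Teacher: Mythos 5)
Your proposal is correct and follows essentially the same route as the paper: the well-definedness step (promoting $S(K)\subseteq K$ to $S(KB)\subseteq KB$ via the first relation of \eqref{eq:Sbar} together with the left-ideal property of $K=\ker(i_B)$) is exactly the paper's argument, and the remaining verifications are just the explicit transport of the relations \eqref{eq:Sbar} through the surjective bialgebra map $\qq$, which the paper leaves implicit in the sentence ``Relations \eqref{eq:Sbar} imply that $\overline{S}$ is an anti-multiplicative and anti-comultiplicative right antipode.'' The only difference is that you spell out those formal details (injectivity of precomposition with $\qq$ on convolution algebras, and $\ker(i_B\otimes i_B)\subseteq\ker(\qq\otimes\qq)$), which the paper omits.
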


\begin{proof}
    Set $K \coloneqq \ker(i_B) \subseteq \ker(q_B)$. Since $i_B$ is surjective,  \cref{lem:iBsu}\,\ref{item:epi2} and \cref{cor:Sprops} entail that there exists $S \colon B \to B$ such that $1 \oslash b = S(b) \oslash 1$ for all $b \in B$ and for which \eqref{eq:Sbar} hold. 
Moreover, for all $x \in K$ and $b \in B$ we have
    \[S(xb) + KB = S(b)S(x)+KB = KB,\]
    because $S(x) \in K$ by hypothesis, which is a left ideal. Therefore, $S \colon B \to B$ factors uniquely through a linear map $\overline{S} \colon \qB \to \qB$ such that $\overline{S}\circ q_B=q_B\circ S$, i.e.\ $\overline{S}(b + KB) = S(b) + KB$ for all $b \in B$.
    Relations \eqref{eq:Sbar} imply that $\overline{S}$ is an anti-multiplicative and anti-comultiplicative right antipode for $\qB$.
\end{proof}


 \begin{proposition}
 \label{pro:cocom}
 Let $B$ be a cocommutative bialgebra with $i_B$ surjective. Then $\qB$ is a cocommutative Hopf algebra 
 and, moreover, $\mathrm{H}(B)=Q(B)\cong B\oslash B$. Thus, $B\oslash B$ carries a unique bialgebra structure such that $i_B:B\to B\oslash B$ is a bialgebra map and it is, in fact, a Hopf algebra with 
 antipode $x\oslash y\mapsto y\oslash x$, for every $x,y\in B$.
  \end{proposition}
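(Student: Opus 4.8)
The plan is to reduce everything to proving that $\qB=B/\ker(i_B)$ is a Hopf algebra, and then to transport this structure to $B\oslash B$ along the surjection $i_B$. Since $B$ is cocommutative we have $B=B^\cop$ as coalgebras, so $i_{B^\cop}=i_B$ is surjective and \cref{prop:pconv}\,\ref{item:pconv4} applies: $\ker(i_B)$ is a bi-ideal, whence $\qB=B/\ker(i_B)\cong\im(i_B)=B\oslash B$ (the last equality because $i_B$ is surjective), and $\qB$ is cocommutative as a quotient of the cocommutative bialgebra $B$. Moreover, once $\qB$ is known to be a Hopf algebra, the universal property comes for free: by \cref{prop:Frobenius} every Hopf algebra $H$ has $i_H$ injective, so by \cref{prop:pconv}\,\ref{item:pconv2} every bialgebra map $B\to H$ factors uniquely through $\qq$, which is exactly the defining property of $\mathrm{H}(B)$; hence $\mathrm{H}(B)=\qB$.

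Next I would produce a right antipode on $\qB$ by means of \cref{lem:S(K)}, whose only nontrivial hypothesis is $S(\ker(i_B))\subseteq\ker(i_B)$ for an endomorphism $S$ as in \cref{lem:iBsu}\,\ref{item:epi2}. This is where cocommutativity enters decisively. The flip $\tau\colon B\otimes B\to B\otimes B$, $x\otimes y\mapsto y\otimes x$, carries the defining subspace $(B\otimes B)\Delta(B^+)$ of $B\oslash B$ onto $(B\otimes B)\Delta^\cop(B^+)$, and the two coincide because $\Delta=\Delta^\cop$. Thus $\tau$ descends to a linear automorphism $\overline{\tau}\colon B\oslash B\to B\oslash B$, $x\oslash y\mapsto y\oslash x$. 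Since $1\oslash b=S(b)\oslash 1$ yields $\overline{\tau}\circ i_B=i_B\circ S$, bijectivity of $\overline{\tau}$ immediately forces $S(\ker(i_B))\subseteq\ker(i_B)$. Applying \cref{lem:S(K)} then gives that $\qB$ is a right Hopf algebra whose right antipode $\overline{S}$ (determined by $\overline{S}\circ\qq=\qq\circ S$) is anti-multiplicative and anti-comultiplicative; the relevant relations are those recorded in \cref{cor:Sprops}.

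The core of the argument, and the step I expect to be the main obstacle, is upgrading this one-sided antipode to a two-sided one. Because $\qB$ is cocommutative and $\overline{S}$ is anti-comultiplicative (and preserves the counit), $\overline{S}$ is in fact a coalgebra endomorphism of $\qB$. I would then exploit that, for a cocommutative bialgebra, the convolution of two coalgebra maps is again a coalgebra map, so that $\mathrm{Coalg}(\qB,\qB)$ is a monoid under convolution with unit $u\varepsilon$. In this monoid every element $g$ admits the right inverse $\overline{S}\circ g$: indeed $g*(\overline{S}\circ g)=u\varepsilon$, since $g$ is a coalgebra map and $\overline{S}$ is a right antipode. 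A monoid in which every element has a right inverse is a group, so $\id_{\qB}$ has a two-sided convolution inverse, which is necessarily $\overline{S}$; hence $\overline{S}$ is a genuine antipode and $\qB$ is a cocommutative Hopf algebra.

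Finally, combining these facts, $\mathrm{H}(B)=\qB\cong B\oslash B$. Transporting the bialgebra (indeed Hopf) structure of $\qB$ along the isomorphism $B\oslash B\cong\qB$ turns $i_B$ into a surjective bialgebra map, and surjectivity of $i_B$ forces this to be the \emph{unique} structure making $i_B$ a bialgebra map; its coalgebra part agrees with the one of \cref{lem:oslash}. Under this identification the antipode $\overline{S}$ of $\qB$ corresponds exactly to $\overline{\tau}$, that is, to $x\oslash y\mapsto y\oslash x$, which is the asserted formula.
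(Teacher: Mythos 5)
Your proof is correct, and it runs along the same skeleton as the paper's own: the endomorphism $S$ from \cref{lem:iBsu}, the observation that cocommutativity makes the flip $x\oslash y\mapsto y\oslash x$ well defined on $B\oslash B$, the resulting stability $S(\ker(i_B))\subseteq\ker(i_B)$, the right Hopf structure on $\qB$ via \cref{lem:S(K)}, the universal property via \cref{prop:pconv}\,\ref{item:pconv2}, and the identification $\qB\cong B\oslash B$ via \cref{prop:pconv}\,\ref{item:pconv4} applied to $B^\cop=B$. The one step where you genuinely diverge is the passage from the one-sided antipode $\overline{S}$ to a two-sided one: the paper disposes of it by citing the right-hand analogue of \cite[Theorem 3]{GNT} (a cocommutative right Hopf algebra is a Hopf algebra), whereas you argue directly that, $\overline{S}$ being anti-comultiplicative and counit-preserving and $\qB$ cocommutative, $\overline{S}$ is a coalgebra endomorphism; that the coalgebra endomorphisms of $\qB$ form a monoid under convolution (closure under convolution uses cocommutativity of the source); that every $g$ in this monoid has $\overline{S}\circ g$ as a right convolution inverse; and that a monoid in which every element is right invertible is a group, so $\overline{S}$ is a two-sided convolution inverse of $\id_{\qB}$. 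This argument is sound and makes the proof self-contained; the trade-off is that it leans on the anti-comultiplicativity of $\overline{S}$, which \cref{lem:S(K)} supplies here but which the GNT theorem does not require, so the paper's citation is the more general tool while yours is the more elementary one. One cosmetic remark: to get $S(\ker(i_B))\subseteq\ker(i_B)$ you do not need bijectivity of $\overline{\tau}$; linearity together with $\overline{\tau}\circ i_B=i_B\circ S$ already gives $i_B(S(x))=\overline{\tau}(i_B(x))=0$ for $x\in\ker(i_B)$, which is exactly the computation appearing in the paper.
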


\begin{proof}
By \cref{lem:iBsu}\,\ref{item:epi3}, there is an endomorphism $S$ of $B$ such that $x\oslash y=xS(y)\oslash 1$ for every $x,y\in B$.
Moreover, since $B$ is cocommutative,
the endomorphism $\sigma \colon B\oslash B\to B\oslash B,x\oslash y\mapsto y\oslash x$ is well-defined.
Set $K\coloneqq \ker(i_B)$.
Given $x\in K$, we have $i_B(S(x))=S(x)\oslash 1
=1\oslash x
=\sigma (x\oslash 1)=\sigma i_B(x)=0$ so that $S(K)\subseteq K$.
By \cref{lem:S(K)}, $Q(B)$ is a right Hopf algebra with anti-multiplicative and anti-comultiplicative right antipode $\overline{S}$ such that $\overline{S}\circ q_B=q_B\circ S$.
Therefore, since $\qB$ is cocommutative as well, the right-hand analogue of \cite[Theorem 3]{GNT} entails that $\qB$ is a Hopf algebra and, since it enjoys the required universal property in view of \cref{prop:pconv}\,\ref{item:pconv2}, we have $\mathrm{H}(B) = Q(B)$.
Furthermore,  since $B^\cop = B$, \cref{prop:pconv}\,\ref{item:pconv4} entails that
\begin{equation}\label{eq:QBisBoslashB}
Q(B) = B/\ker(i_B) \cong \im(i_B)=B\oslash B.
\end{equation}
The isomorphism $\varphi:Q(B)\to B\oslash B,b+K\mapsto b\oslash 1$ resulting from \eqref{eq:QBisBoslashB} satisfies $\varphi\circ q_B=i_B.$  Note that $\varphi^{-1}(x\oslash y)=\varphi^{-1}(xS(y)\oslash 1)=\varphi^{-1}i_B(xS(y))=q_B(xS(y))=xS(y)+K$.
Of course $B\oslash B$ inherits a unique bialgebra structure such that $\varphi$ is a bialgebra isomorphism and hence so that $i_B$ is a bialgebra map as $q_B$ is surjective.

Note that
$\varphi(\overline{S}(q_B(b)))
=\varphi(q_B(S(b)))
=i_B(S(b))
=S(b)\oslash 1
=1\oslash b
=\sigma(b\oslash 1)
=\sigma i_B(b)
=\sigma(\varphi(q_B(b)))$ so that $\varphi \circ \overline{S}=\sigma\circ \varphi$ so that $\sigma$ is the 
antipode transported on $B\oslash B$. The multiplication is
\begin{align*}
(x\oslash y)(x'\oslash y')
& =(xS(y)\oslash 1)(x'S(y')\oslash 1)
=i_B(xS(y))i_B(x'S(y'))
=i_B(xS(y)x'S(y')) \\
& =xS(y)x'S(y')\oslash 1
=xS(y)x'\oslash y'
\end{align*}
while the identity is $i_B(1)=1\oslash 1$.
\end{proof}





Finally, we conclude this section with the study of $\mathrm{H}(B)$ for a commutative bialgebra $B$, because in this case we can provide an alternative description of it based on the constructions developed so far, even when it is not necessarily a quotient of $B$.

To this aim, recall that the \emph{free commutative Hopf algebra  generated by a commutative bialgebra} $B$ is a commutative Hopf algebra $\mathrm{HP}_c(B)$ together with a bialgebra map $\eta^c_B \colon B\to \mathrm{HP}_c(B)$ such that for any commutative Hopf algebra $H$ and for every bialgebra map $f \colon B\to H$, there is a unique Hopf algebra map $g \colon \mathrm{HP}_c(B)\to H$ such that $g\circ\eta^c_B=f$, cf. \cite[Definition 66]{Takeuchi}.

\begin{remark}\label{rem:HBcomm}
    Let $B$ be a commutative bialgebra, let $X \coloneqq \mathrm{G}(B)$ be the set of grouplike elements in $B$ and let us consider the localisation $B[X^{-1}]$. In view of \cite[Theorem 65]{Takeuchi}, $B[X^{-1}]$ is a Hopf algebra and it is, in fact, the free commutative Hopf algebra $\mathrm{HP}_c(B)$ generated by the commutative bialgebra $B$. \begin{invisible}In \cite[Theorem 65]{Takeuchi}, only a subset $X$ of $\mathrm{G}(B)$ is involved in the construction of the free commutative Hopf algebra. However, as observed in \cite[Proposition 3.3]{Hayashi} in the more general braided context: if $B[X^{-1}]$ has an antipode, then $B[X^{-1}] \cong B[\mathrm{G}(B)^{-1}]$. \end{invisible}%
    We denote by $\eta_B \colon B \to B[X^{-1}]$ the universal map associated with the localisation, which coincides with the structure map of the free commutative Hopf algebra generated by $B$ and with that of the Hopf envelope, in view of the next result.
\end{remark}

\begin{proposition}\label{prop:HPcB}
 Let $B$ be a commutative bialgebra. Then $\mathrm{H}(B)=\mathrm{HP}_c(B)$.
\end{proposition}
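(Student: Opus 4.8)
The plan is to produce mutually inverse Hopf algebra morphisms between $\mathrm{H}(B)$ and $\mathrm{HP}_c(B)$ by playing the two universal properties against each other, while exploiting the concrete localisation description of $\mathrm{HP}_c(B)$ recorded in \cref{rem:HBcomm} in order to avoid proving directly that $\mathrm{H}(B)$ is commutative (which is the genuinely delicate point). Throughout, write $X \coloneqq \mathrm{G}(B)$, let $\eta_B \colon B \to \mathrm{H}(B)$ be the structure map of the Hopf envelope and let $\eta_B^c \colon B \to \mathrm{HP}_c(B) = B[X^{-1}]$ be the structure map of the free commutative Hopf algebra, which by \cref{rem:HBcomm} coincides with the localisation map.

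First I would obtain one of the two maps for free: since $\mathrm{HP}_c(B)$ is in particular a Hopf algebra and $\eta_B^c$ is a bialgebra morphism, the universal property of $\mathrm{H}(B)$ yields a unique Hopf algebra morphism $g \colon \mathrm{H}(B) \to \mathrm{HP}_c(B)$ such that $g \circ \eta_B = \eta_B^c$.

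The crucial step is to build a morphism in the opposite direction, and here I would \emph{not} invoke the universal property of $\mathrm{HP}_c(B)$ directly (that would require knowing $\mathrm{H}(B)$ to be commutative), but rather the universal property of the localisation $B[X^{-1}]$ as an algebra. Indeed, $\eta_B$ is an algebra map and, being also a coalgebra map, it sends each grouplike $x \in X$ to a grouplike element $\eta_B(x)$ of the Hopf algebra $\mathrm{H}(B)$, which is therefore invertible with inverse $S\big(\eta_B(x)\big)$. Since $B$ is commutative, $\eta_B$ then factors uniquely through an algebra map $h \colon B[X^{-1}] \to \mathrm{H}(B)$ with $h \circ \eta_B^c = \eta_B$. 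To see that $h$ is in fact a morphism of bialgebras, I would verify that the set of elements of $B[X^{-1}]$ on which $h$ is comultiplicative and counital is a subalgebra containing $\eta_B^c(B)$ together with the inverses $\eta_B^c(x)^{-1}$, $x \in X$: on $\eta_B^c(B)$ this holds because there $h$ restricts to the coalgebra map $\eta_B$, while on each $\eta_B^c(x)^{-1}$ it holds because the inverse of a grouplike is grouplike and $h\big(\eta_B^c(x)^{-1}\big) = \eta_B(x)^{-1}$ is again grouplike. As these elements generate $B[X^{-1}]$ as an algebra, $h$ is a bialgebra map, hence automatically a Hopf algebra map.

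Finally I would conclude by uniqueness. The composite $g \circ h \colon \mathrm{HP}_c(B) \to \mathrm{HP}_c(B)$ is a Hopf algebra endomorphism with $(g \circ h) \circ \eta_B^c = g \circ \eta_B = \eta_B^c$; since $\mathrm{HP}_c(B)$ is a commutative Hopf algebra, its universal property forces $g \circ h = \id$. Dually, $h \circ g$ is a Hopf algebra endomorphism of $\mathrm{H}(B)$ with $(h \circ g) \circ \eta_B = h \circ \eta_B^c = \eta_B$, so the universal property of $\mathrm{H}(B)$ forces $h \circ g = \id$. Thus $g$ is an isomorphism compatible with the structure maps, whence $\mathrm{H}(B) = \mathrm{HP}_c(B)$. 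The main obstacle that this argument circumvents is exactly that a naive proof would require showing $\mathrm{H}(B)$ commutative before applying the universal property of $\mathrm{HP}_c(B)$; the localisation description of $\mathrm{HP}_c(B)$ is precisely what allows $h$ to be constructed without that knowledge.
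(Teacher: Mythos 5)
Your proof is correct, but it is self-contained where the paper's is not, so it is worth comparing the two. The published argument is two lines: it observes that for a commutative algebra the commutative ring of fractions coincides with the universal (non-commutative) one, and then cites \cite[Proposition 3.2]{Hayashi} for the fact that the localisation $B[\mathrm{G}(B)^{-1}]$ satisfies the universal property of $\mathrm{H}(B)$; combined with \cref{rem:HBcomm} this gives $\mathrm{H}(B)=\mathrm{HP}_c(B)$. What you have done is, in effect, re-prove that cited proposition in the commutative case: your construction of $h$ via the universal property of the localisation with respect to arbitrary algebra targets, the subalgebra argument showing $h$ is a morphism of bialgebras, and the two uniqueness arguments constitute exactly the content the paper outsources. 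Both routes pivot on the same key fact, namely that because the inverted grouplikes are \emph{central}, $B[X^{-1}]$ receives algebra maps from $B$ into arbitrary, not necessarily commutative, algebras inverting $X$ — this is what lets one map into $\mathrm{H}(B)$ before knowing it is commutative. One presentational caveat: your sentence ``Since $B$ is commutative, $\eta_B$ then factors uniquely through an algebra map $h$'' is carrying precisely this weight and deserves to be made explicit (as the paper does), since the naive universal property of a commutative localisation is usually stated only for commutative targets; the central-localisation version needs the (easy) verification that $f(x)^{-1}$ commutes with the image of $B$. The trade-off is clear: the paper's proof is short but leans on an external result proved in the braided setting, while yours is longer but elementary and stays entirely within the paper's own toolkit (\cref{rem:HBcomm}, invertibility of grouplikes in a Hopf algebra, and the two universal properties).
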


\begin{proof}
For a commutative algebra $A$ and a multiplicatively closed subset $X$ of $A$, the commutative ring of fractions $A[X^{-1}]$ (see, e.g., \cite[Chapter 3]{AtiyahMacdonald}) and the non-commutative one (see, e.g., \cite[Chapter II]{St}) coincide. Therefore, $B[\mathrm{G}(B)^{-1}]$ satisfies the universal property of $\mathrm{H}(B)$ in view of, for instance, \cite[Proposition 3.2]{Hayashi}.
\end{proof}


\begin{remark}
    It is noteworthy that the statement of \cref{prop:HPcB} already appears in \cite[Remark 4.5]{Porst-subcats}, but supported by an incorrect argument. In fact, the bialgebra coproduct of commutative bialgebras is not necessarily commutative any more (e.g., $\Bbbk[X] \sqcup \Bbbk[Y] = \Bbbk \langle X,Y \rangle$, the free noncommutative algebra generated by $X$ and $Y$). Hence, the argument in \cite{Porst-subcats} stating that $\mathrm{H}(B)$ is commutative because image of the bialgebra $\coprod B_n$, which is commutative as any $B_n$ is commutative, is misleading. Indeed $(\coprod B_n)^\op\cong \coprod (B_n^\op)=\coprod B_n$ but this bialgebra isomorphism does not imply that $\coprod B_n$ is commutative. For instance $\Bbbk \langle X,Y \rangle^\op=(\Bbbk[X] \sqcup \Bbbk[Y])^\op\cong\Bbbk[X]^\op \sqcup \Bbbk[Y]^\op=\Bbbk[X] \sqcup \Bbbk[Y]= \Bbbk \langle X,Y \rangle$ but the latter is not commutative.
\end{remark}

\begin{theorem}\label{thm:intcomm}
Let $B$ be a commutative bialgebra.
Then $B \oslash B \cong \mathrm{HP}_c(B) = \mathrm{H}(B)$, the free (commutative) Hopf algebra generated by $B$, via $\widehat{\eta_B}$.
\end{theorem}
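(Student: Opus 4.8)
The plan is to realise $\widehat{\eta_B}$ as one half of a pair of mutually inverse bialgebra maps between $B \oslash B$ and $\mathrm{H}(B)$. First I record the structure. Since $B$ is commutative, $\mathrm{H}(B) = \mathrm{HP}_c(B)$ is commutative by \cref{prop:HPcB} and \cref{rem:HBcomm}, and $B \oslash B$ is a (commutative) quotient bialgebra of $B \otimes B^\cop$ by \cref{lem:invosl}, the condition $\Delta(B^+)(B \otimes B) \subseteq (B \otimes B)\Delta(B^+)$ being automatic from commutativity. In particular $i_B \colon B \to B \oslash B$ is a bialgebra map and, as in the proof of \cref{lem:invosl}, the map $i_B^\dagger \colon B \to B \oslash B$, $y \mapsto 1 \oslash y$, satisfies $i_B * i_B^\dagger = u_{B\oslash B}\circ \varepsilon_B$. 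Moreover, \cref{rem:freecom}\,\ref{item:freecom5} (commutative case) provides the bialgebra map $\widehat{\eta_B} \colon B \oslash B \to \mathrm{H}(B)$, $x \oslash y \mapsto \eta_B(x)S(\eta_B(y))$, satisfying $\widehat{\eta_B} \circ i_B = \eta_B$, where $S$ denotes the antipode of $\mathrm{H}(B)$.

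Next I would construct a candidate inverse from the localisation description $\mathrm{H}(B) = B[\mathrm{G}(B)^{-1}]$ of \cref{rem:HBcomm}. Each grouplike $g \in \mathrm{G}(B)$ becomes invertible under $i_B$: indeed $g \oslash 1$ is grouplike and $(g \oslash 1)(1 \oslash g) = g \oslash g = (i_B * i_B^\dagger)(g) = 1 \oslash 1$, so $(g \oslash 1)^{-1} = 1 \oslash g$ by commutativity. The universal property of the localisation then yields a unique algebra map $\psi \colon \mathrm{H}(B) \to B \oslash B$ with $\psi \circ \eta_B = i_B$; checking on the algebra generators $\eta_B(B) \cup \{\eta_B(g)^{-1} \mid g \in \mathrm{G}(B)\}$, on which $\psi$ agrees with the coalgebra map $i_B$ and sends the grouplikes $\eta_B(g)^{-1}$ to the grouplikes $1 \oslash g$, shows that $\psi$ is in fact a bialgebra map. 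Surjectivity of $\widehat{\eta_B}$ is then immediate, since its image is a subalgebra containing $\widehat{\eta_B}(b \oslash 1) = \eta_B(b)$ and $\widehat{\eta_B}(1 \oslash g) = \eta_B(g)^{-1}$, hence all of $B[\mathrm{G}(B)^{-1}]$.

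It remains to prove $\psi \circ \widehat{\eta_B} = \mathrm{id}_{B\oslash B}$, which gives injectivity and completes the argument. This composite is an algebra endomorphism fixing $x \oslash 1 = i_B(x)$, so since $x \oslash y = (x \oslash 1)(1 \oslash y)$ generate $B \oslash B$ as an algebra, it suffices to check it fixes $1 \oslash y$. The crux is that $\psi \circ (S \circ \eta_B)$ is a \emph{two-sided} convolution inverse of $i_B$ in $\mathrm{Hom}_\Bbbk(B, B \oslash B)$: post-composition with the algebra map $\psi$ is a homomorphism of convolution algebras, so applying it to the antipode identities $\eta_B * (S\eta_B) = u_{\mathrm{H}(B)}\varepsilon_B = (S\eta_B) * \eta_B$ yields $i_B * (\psi S \eta_B) = u_{B\oslash B}\varepsilon_B = (\psi S\eta_B) * i_B$. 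As $i_B^\dagger$ is already a right convolution inverse of $i_B$, uniqueness of two-sided inverses forces $i_B^\dagger = \psi \circ S \circ \eta_B$, whence $\psi \widehat{\eta_B}(1 \oslash y) = \psi(S\eta_B(y)) = i_B^\dagger(y) = 1 \oslash y$. Thus $\psi \circ \widehat{\eta_B} = \mathrm{id}$, and $\widehat{\eta_B}$ is a bialgebra isomorphism. The main obstacle is exactly that $B \oslash B$ is not manifestly a Hopf algebra — the naive formula $x \oslash y \mapsto y \oslash x$ for an antipode is not well defined for a commutative, non-cocommutative $B$ — so one cannot simply appeal to a universal property of $B \oslash B$; the convolution-inverse identification of $i_B^\dagger$ with $\psi \circ S \circ \eta_B$ is what produces the inverse map by hand.
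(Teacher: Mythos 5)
Your proof is correct and follows essentially the same route as the paper's: the paper also constructs the inverse map (there called $\tilde{\imath}_B$, your $\psi$) from the universal property of the localisation $B[\mathrm{G}(B)^{-1}]$ using that $(g\oslash 1)^{-1}=1\oslash g$, and it identifies $i_B^\dagger$ with $\tilde{\imath}_B\circ S\circ \eta_B$ by exactly your convolution-inverse uniqueness argument before computing the composite on $x\oslash y=(x\oslash 1)(1\oslash y)$. The only cosmetic differences are that the paper verifies both composites are identities directly rather than combining a one-sided inverse with surjectivity, while you spell out the check that $\psi$ is a coalgebra map on algebra generators, which the paper leaves implicit.
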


\begin{proof}
    Since $B$ is commutative, it follows from \cref{lem:invosl} that $B \oslash B$ is a commutative bialgebra and that $i_B$ is a morphism of bialgebras. Since for every $x \in X$ we have
    \[(x \oslash 1_B)(1_B \oslash x) = x \oslash x = 1_B \oslash 1_B = (1_B \oslash x)(x \oslash 1_B),\]
    because $x$ is grouplike, there exists a unique bialgebra morphism $\tilde{\imath}_B \colon B[X^{-1}] \to B \oslash B$ extending $i_B \colon B \to B \oslash B$, namely $\tilde{\imath}_B(b/x) = b \oslash x$ for all $b \in B$, $x \in X$. That is, $\tilde{\imath}_B \circ \eta_B = i_B$. 
    In the opposite direction, \cref{rem:freecom}\,\ref{item:freecom5} entails the existence of a bialgebra map $\widehat{\eta_B} \colon B \oslash B \to B[X^{-1}]$ such that $\widehat{\eta_B} \circ i_B = \eta_B$, explicitly given by $\widehat{\eta_B}(a \oslash b) = \eta_B(a)S\eta_B(b)$. On the one hand, we have
    \[(\widehat{\eta_B} \circ \tilde \imath_B)(b/x) = \widehat{\eta_B}(b \oslash x) = \eta_B(b)S\eta_B(x) = \eta_B(b)\eta_B(x)^{-1} = b/x\]
    for every $b/x \in B[X^{-1}]$ and so $\widehat{\eta_B} \circ \tilde{\imath}_B = \id_{B[X^{-1}]}$. On the other hand, we want to show that $ \tilde{\imath}_B \circ \widehat{\eta_B} = \id_{B \oslash B}$, too. To this aim, remark that 
    $i_B = \tilde\imath_B \circ \eta_B$ is convolution invertible with convolution inverse $\tilde\imath_B \circ S \circ \eta_B$. However, as we already observed in the proof of \cref{lem:invosl}, the morphism $i_B^{\dagger} \colon B \to B \oslash B, b \mapsto 1 \oslash b$, is a right convolution inverse of $i_B$ and so $i_B^{\dagger} = \tilde{\imath}_B \circ S \circ \eta_B$.
    %
    Now, for every $a,b\in B$, we have
    \[\left(\tilde{\imath}_B \circ \widehat{\eta_B}\right)(a \oslash b)=\tilde{\imath}_B(\eta_B(a)S\eta_B(b))    =\tilde{\imath}_B\eta_B(a)\tilde{\imath}_BS\eta_B(b)=i_B(a)i_B^{\dagger}(b) = a \oslash  b,\] so that $\tilde{\imath}_B\circ\widehat{\eta_B}=\id_{B\oslash B}.$ Hence $\tilde{\imath}_B$  and $\widehat{\eta_B}$ are mutual inverses and $B \oslash B \cong B[X^{-1}] = \mathrm{H}(B)$.
\end{proof}

\subsection{The dual setting: cofree Hopf algebras}

For the sake of completeness, we address the dual construction of the cofree Hopf algebra as well. For a given bialgebra $B$, the \emph{cofree Hopf algebra} on $B$ is a Hopf algebra $\mathrm{C}(B)$ together with a bialgebra morphism $\epsilon_B \colon \mathrm{C}(B) \to B$ such that for every Hopf algebra $H$ and any bialgebra morphism $f \colon H \to B$, there exists a unique Hopf algebra morphism $\tilde f \colon H \to \mathrm{C}(B)$ such that $\epsilon_B \circ \tilde f = f$. We know, for categorical reasons \cite{Agore}, that it exists. We can dualise many of the results of the previous section in order to obtain a realisation of the cofree Hopf algebra on $B$ as a sub-bialgebra of $B$ itself.

As before, notice that the canonical map $\epsilon_B \colon \mathrm{C}(B)\to B$ is not injective in general, in view of \cite[Corollary 3.4(a)]{Chirv} again. Therefore, we begin by determining conditions under which it is. To this aim, let us state without proof the duals of \cref{lem:HopfimiB} and \cref{prop:etaSurj}.

\begin{lemma}
\label{lem:HopfimEB}
For a bialgebra $B$, $\epsilon_B \colon \mathrm{C}(B) \to B$ is injective if and only if $\im(\epsilon_B)$ is a Hopf algebra.
\end{lemma}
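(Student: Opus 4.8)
The plan is to mirror the proof of \cref{lem:HopfimiB}, reversing all arrows. Write $H \coloneqq \im(\epsilon_B)$, which is a sub-bialgebra of $B$, and factor $\epsilon_B = \sigma \circ \pi$, where $\pi \colon \mathrm{C}(B) \to H$ is the corestriction of $\epsilon_B$ to its image (a surjective bialgebra map) and $\sigma \colon H \to B$ is the canonical inclusion.

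For the easy implication, suppose $\epsilon_B$ is injective. Then $\pi$ is injective as well and, being surjective by construction, it is an isomorphism of bialgebras. Since $\mathrm{C}(B)$ is a Hopf algebra, transporting its antipode along $\pi$ endows $H$ with an antipode, so $H$ is a Hopf algebra.

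For the converse, which I expect to be the substantive direction, suppose $H$ is a Hopf algebra. Then the inclusion $\sigma \colon H \to B$ is a bialgebra map out of a Hopf algebra, so the universal property of the cofree Hopf algebra \cite{Agore} furnishes a unique Hopf algebra map $\gamma \colon H \to \mathrm{C}(B)$ with $\epsilon_B \circ \gamma = \sigma$. I would then consider the composite $\gamma \circ \pi \colon \mathrm{C}(B) \to \mathrm{C}(B)$. As both $\mathrm{C}(B)$ and $H$ are Hopf algebras, the bialgebra map $\pi$ is automatically a morphism of Hopf algebras, whence $\gamma \circ \pi$ is a Hopf algebra endomorphism of $\mathrm{C}(B)$; moreover $\epsilon_B \circ (\gamma \circ \pi) = \sigma \circ \pi = \epsilon_B$. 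Applying the uniqueness clause of the universal property of $\mathrm{C}(B)$ to the bialgebra map $f = \epsilon_B \colon \mathrm{C}(B) \to B$, for which $\id_{\mathrm{C}(B)}$ is the unique Hopf algebra lift, forces $\gamma \circ \pi = \id_{\mathrm{C}(B)}$. Thus $\pi$ is a split monomorphism, hence injective, and being also surjective it is an isomorphism; therefore $\epsilon_B = \sigma \circ \pi$ is injective.

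The only delicate point, and the main obstacle, is the bookkeeping that keeps all the relevant maps inside the category of Hopf algebras so that the uniqueness part of the universal property can legitimately be invoked: concretely, one must observe that $\pi$ is a Hopf algebra map (a bialgebra map between Hopf algebras automatically preserves the antipode) and that $\id_{\mathrm{C}(B)}$ is the unique Hopf algebra endomorphism of $\mathrm{C}(B)$ covering $\epsilon_B$. Everything else is a formal diagram chase dual to that of \cref{lem:HopfimiB}.
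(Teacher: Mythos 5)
Your proof is correct and follows essentially the same route as the paper's (the paper states this lemma without proof as the formal dual of \cref{lem:HopfimiB}, and its intended argument is precisely yours: factor $\epsilon_B = \sigma \circ \pi$, use the universal property of $\mathrm{C}(B)$ to produce $\gamma$ with $\epsilon_B \circ \gamma = \sigma$, and deduce $\gamma \circ \pi = \id_{\mathrm{C}(B)}$ from the uniqueness clause, so that $\pi$, hence $\epsilon_B$, is injective). Your explicit remark that $\pi$ is automatically a Hopf algebra map, so that the uniqueness clause may legitimately be applied to $\gamma \circ \pi$, is exactly the right bookkeeping.
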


\begin{invisible}
\begin{proof}
Consider the bialgebra $H \coloneqq \im(\epsilon_B)$ and denote by $\pi \colon \mathrm{C}(B)\to H$ the corestriction of $\epsilon_B$ to its image and by $\sigma \colon H\to B$ the canonical inclusion. If $H$ is a Hopf algebra then, by the universal property of $\mathrm{C}(B)$, there is a unique Hopf algebra map $\gamma:H\to\mathrm{C}(B)$ such that $\epsilon_B\circ \gamma=\sigma.$ Then $\epsilon_B\circ \gamma\circ \pi= \sigma \circ\pi=\epsilon_B$ and hence $\gamma\circ \pi=\id_{\mathrm{C}(B)}$ so that $\pi$ is injective. Thus $\epsilon_B=\sigma \circ\pi$ is injective too.
\end{proof}
\end{invisible}

\begin{proposition}
\label{prop:epsiInj}
Let $B$ be a bialgebra. Then, the map $\epsilon_B:\mathrm{C}(B)\to B$ is injective in any of the following cases:
\begin{enumerate}[label=\alph*),ref={\itshape \alph*)},leftmargin=*]
    \item\label{item:epsiInj1} $i_B$ is injective;
    \item\label{item:epsiInj2} $p_B$ is injective (e.g., when $B$ is finite-dimensional or, more generally, right perfect).
\end{enumerate}
\end{proposition}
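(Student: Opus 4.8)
The plan is to mirror the proof of \cref{prop:etaSurj}, with \cref{lem:HopfimEB} playing the role that \cref{lem:HopfimiB} played there. First I would set $H \coloneqq \im(\epsilon_B)$ and record two structural facts. On the one hand, $H$ is a sub-bialgebra of $B$: it is the image of the bialgebra map $\epsilon_B$, so the inclusion $H \hookrightarrow B$ is a bialgebra embedding. On the other hand, $H$ is a quotient bialgebra of the Hopf algebra $\mathrm{C}(B)$, namely via the corestriction of $\epsilon_B$ to its image. These two roles — being a sub-object of $B$ and being a quotient of a Hopf algebra — are precisely what the next two steps consume, and by \cref{lem:HopfimEB} it will suffice to prove that $H$ is a Hopf algebra.

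Second, I would transfer the hypothesis from $B$ to $H$ along the embedding $H \hookrightarrow B$. In case \ref{item:epsiInj1}, since $i_B$ is injective and $H$ embeds into $B$, \cref{lem:iB-inj}\,\ref{ib-inj_item3} gives that $i_H$ is injective; in case \ref{item:epsiInj2}, since $p_B$ is injective and $H$ is a sub-bialgebra of $B$, \cref{prop:pbInj}\,\ref{item:pbInj4} gives that $p_H$ is injective. Then, exploiting that $H$ is a quotient of the Hopf algebra $\mathrm{C}(B)$, I would invoke \cref{prop:iBsusub}: item \ref{item:iBsusub0} in case \ref{item:epsiInj1} (``$i_H$ injective and $H$ a quotient of a Hopf algebra'') or item \ref{item:iBsusub2} in case \ref{item:epsiInj2} (``$p_H$ injective and $H$ a quotient of a Hopf algebra''). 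In either case one concludes that $H$ is itself a Hopf algebra, and \cref{lem:HopfimEB} then yields the injectivity of $\epsilon_B$. The parenthetical examples follow because a finite-dimensional or right perfect bialgebra has $p_B$ injective by \cref{prop:semiantip}\,\ref{item:semiantip2} and \cref{pro:Artinian}.

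This is a clean dualisation, so I do not expect a genuine obstacle; the one point demanding care — and the likeliest place to slip — is the variance. In the free (surjective) setting the image $\im(\eta_B)$ is a \emph{quotient} of $B$ while $\mathrm{H}(B)$ receives an \emph{embedding}, whereas here $\im(\epsilon_B)$ is a \emph{sub}-bialgebra of $B$ while $\mathrm{C}(B)$ \emph{surjects} onto it. Consequently the correct transfer lemmas to cite are \cref{lem:iB-inj}\,\ref{ib-inj_item3} and \cref{prop:pbInj}\,\ref{item:pbInj4} (the sub-bialgebra statements), rather than \cref{lem:iB-su}\,\ref{iBsurj.item4} and \cref{prop:pB-surj}\,\ref{pB-surj_item3} (the quotient statements) used for $\eta_B$; dually, the applicable branches of \cref{prop:iBsusub} are the ``quotient of a Hopf algebra'' ones. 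Beyond this bookkeeping no new difficulty arises.
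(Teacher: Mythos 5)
Your proposal is correct and follows exactly the paper's own argument: consider $H \coloneqq \im(\epsilon_B)$, transfer the hypothesis along the embedding $H \hookrightarrow B$ via \cref{lem:iB-inj}\,\ref{ib-inj_item3} or \cref{prop:pbInj}\,\ref{item:pbInj4}, then use that $H$ is a quotient of the Hopf algebra $\mathrm{C}(B)$ together with \cref{prop:iBsusub} to conclude $H$ is a Hopf algebra, and finish with \cref{lem:HopfimEB}. Your explicit identification of the relevant branches \ref{item:iBsusub0} and \ref{item:iBsusub2} of \cref{prop:iBsusub}, and your remark on the variance bookkeeping, are exactly the right points of care.
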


\begin{invisible}
\begin{proof}
Consider the bialgebra $H \coloneqq \im(\epsilon_B)$. 
Since $H$ embeds into $B$, either \cref{lem:iB-inj}\,\ref{ib-inj_item3} entails that $i_H$ is injective or \cref{prop:pbInj}\,\ref{item:pbInj4} entails that $p_H$ is injective, according to whether we are in case \ref{item:epsiInj1} or \ref{item:epsiInj2}, respectively.
In either case, since $H$ is a quotient of the Hopf algebra $\mathrm{C}(B)$, \cref{prop:iBsusub} entails that $H$ is in fact a Hopf algebra.
We conclude by \cref{lem:HopfimEB}.
\end{proof}
\end{invisible}

The following partially answers the question left open at the end of \cite{Agore}.

 \begin{corollary}\label{cor:CBsub}
     Let $B$ be a bialgebra whose $i_B$ or $p_B$ is injective. Then $\mathrm{C}(B)$ is the largest sub-bialgebra of $B$ that admits an antipode.
 \end{corollary}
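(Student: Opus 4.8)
The plan is to leverage the injectivity of $\epsilon_B$ together with the universal property of the cofree Hopf algebra. First I would invoke \cref{prop:epsiInj}: in either case---that is, whether $i_B$ or $p_B$ is injective---the structural map $\epsilon_B \colon \mathrm{C}(B) \to B$ is injective. Consequently $\epsilon_B$ identifies $\mathrm{C}(B)$ with the sub-bialgebra $\im(\epsilon_B)$ of $B$, and since $\mathrm{C}(B)$ is by definition a Hopf algebra, this sub-bialgebra admits an antipode. Thus $\im(\epsilon_B)$ is already a sub-bialgebra of $B$ that admits an antipode, which takes care of the existence half of the assertion.

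It remains to show that $\im(\epsilon_B)$ is the largest such sub-bialgebra, i.e.\ that it contains every sub-bialgebra of $B$ that admits an antipode. To this end I would take an arbitrary sub-bialgebra $H \subseteq B$ carrying an antipode, so that $H$ is a Hopf algebra, and let $\iota \colon H \hookrightarrow B$ denote the inclusion, which is a morphism of bialgebras. The universal property of $\mathrm{C}(B)$ applied to $\iota$ yields a (unique) Hopf algebra morphism $\tilde\iota \colon H \to \mathrm{C}(B)$ with $\epsilon_B \circ \tilde\iota = \iota$. Reading off images, $H = \im(\iota) = \epsilon_B(\im(\tilde\iota)) \subseteq \im(\epsilon_B)$, which is exactly the desired containment.

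Putting the two parts together, $\im(\epsilon_B)$ is a sub-bialgebra of $B$ admitting an antipode and containing every other sub-bialgebra of $B$ with that property; hence it is the largest one, and under the identification $\mathrm{C}(B) \cong \im(\epsilon_B)$ afforded by the injective map $\epsilon_B$ we conclude that $\mathrm{C}(B)$ is the largest sub-bialgebra of $B$ admitting an antipode.

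As for difficulties, there is essentially no computational obstacle here: the entire argument is a formal consequence of injectivity plus the universal property. The only points requiring a little care are conceptual. One must be sure that ``admits an antipode'' is precisely the condition ``is a Hopf algebra'' needed to trigger the universal property, so that the antipode on $H$ need not be related a priori to any structure on $B$; and ``largest'' must be correctly interpreted as ``contains every other such sub-bialgebra'', for which the containment $H \subseteq \im(\epsilon_B)$ is exactly what is verified above. I would also note that this maximal sub-bialgebra is thereby automatically the unique one with the stated property.
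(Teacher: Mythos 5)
Your proof is correct and is precisely the argument the paper intends: the corollary is stated without proof as an immediate consequence of \cref{prop:epsiInj} (giving injectivity of $\epsilon_B$, hence $\mathrm{C}(B) \cong \im(\epsilon_B)$ as a sub-bialgebra with antipode) combined with the universal property of $\mathrm{C}(B)$ (giving that any sub-Hopf algebra $H \subseteq B$ satisfies $H = \im(\iota) \subseteq \im(\epsilon_B)$). Your handling of the two conceptual points --- that ``admits an antipode'' is exactly what triggers the universal property, and that ``largest'' means containing all others --- matches the intended reading.
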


\cref{prop:semiantip} and \cref{pro:Artinian} together with \cref{prop:epsiInj}, entail that $\mathrm{C}(B)$ is a sub-bialgebra of $B$ when the latter bialgebra is finite-dimensional or, more generally, right perfect. This also follows from \cite[Theorem 2.3]{Ery-Skry}, as right perfect implies weakly finite (see \cref{lem:Skry}). In \cref{exa:notweakpBinj} we will provide an instance of a bialgebra $B$ with $p_B$ injective which is not weakly finite.
In the finite-dimensional case, we are going to obtain a more explicit description of $\mathrm{C}(B)$. To this aim, we now provide a dual version of the bialgebra Q(B) and of \cref{prop:pconv}.

\begin{definition}
\label{def:KB}
    Let $B$ be a bialgebra. Set $K(B) \coloneqq \{b \in B \mid \Delta(b) \in \im(p_B) \otimes B\}$ and denote by $k_B \colon K(B)\to B$ the canonical injection.
\end{definition}%

\begin{lemma}
\label{lem:KBcoalg}
For $B$ a bialgebra, $K(B)$ is a sub-coalgebra. Namely, it is the sum of all sub-coalgebras of $B$ contained in $\im(p_B)$.
\end{lemma}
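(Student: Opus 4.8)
The plan is to prove that $K(B)$ is exactly the largest sub-coalgebra of $B$ contained in $\im(p_B)$; since a sum of sub-coalgebras is again a sub-coalgebra, the largest one coincides with the sum of them all, which is the asserted description. Throughout I would write $V \coloneqq \im(p_B)$. The key structural input to record first is that $V$ is a left $B$-subcomodule of $B$, that is $\Delta(V) \subseteq B \otimes V$. Indeed, recall from the proof of \cref{prop:pBin} that $B \boxslash B$ is a left $B$-comodule via $\rho(x^i \boxslash y_i) = x^i_1 \otimes (x^i_2 \boxslash y_i)$; since $\Delta(p_B(x^i \boxslash y_i)) = x^i_1 \otimes x^i_2\varepsilon(y_i) = x^i_1 \otimes p_B(x^i_2 \boxslash y_i)$, the map $p_B$ is left $B$-colinear, and the image of a comodule map is a subcomodule.

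Next I would clear away the two easy inclusions. Applying $\id \otimes \varepsilon$ to $\Delta(b) \in V \otimes B$ gives $b = (\id \otimes \varepsilon)\Delta(b) \in V$, so $K(B) \subseteq V$. Moreover, every sub-coalgebra $D \subseteq V$ satisfies $\Delta(D) \subseteq D \otimes D \subseteq V \otimes B$, hence $D \subseteq K(B)$. Thus, as soon as $K(B)$ is shown to be a sub-coalgebra, it is a sub-coalgebra contained in $V$ which contains every sub-coalgebra contained in $V$, and is therefore the sum of all of them.

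The crux is to prove $\Delta(K(B)) \subseteq K(B) \otimes K(B)$. Given $b \in K(B)$, I would take a minimal (tensor-rank) representation $\Delta(b) = \sum_j a_j \otimes c_j$, so that $\{a_j\}$ and $\{c_j\}$ are each linearly independent; picking $\phi_j \in B^*$ dual to the $c_j$, the formula $a_j = (\id \otimes \phi_j)\Delta(b)$ together with $\Delta(b) \in V \otimes B$ shows $a_j \in V$. Coassociativity yields $\sum_j \Delta(a_j) \otimes c_j = \sum_j a_j \otimes \Delta(c_j)$ in $B \otimes B \otimes B$. On one hand, $a_j \in V$ and the subcomodule property give $\Delta(a_j) \in B \otimes V$, so the left-hand side lies in $B \otimes V \otimes B$; equating with the right-hand side and using linear independence of $\{a_j\}$ forces $\Delta(c_j) \in V \otimes B$, i.e.\ $c_j \in K(B)$. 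On the other hand, $a_j \in V$ makes the right-hand side lie in $V \otimes B \otimes B$; equating with the left-hand side and using linear independence of $\{c_j\}$ forces $\Delta(a_j) \in V \otimes B$, i.e.\ $a_j \in K(B)$. Hence $\Delta(b) \in K(B) \otimes K(B)$, so $K(B)$ is a sub-coalgebra.

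I expect the main obstacle to lie in two bookkeeping points rather than in any deep idea: first, identifying $\im(p_B)$ as a left subcomodule, which rests on the colinearity of $p_B$ extracted from \cref{prop:pBin}; and second, the passage from the membership of a three-fold tensor in $B \otimes V \otimes B$ (respectively $V \otimes B \otimes B$) to the factorwise conclusions $\Delta(c_j) \in V \otimes B$ (respectively $\Delta(a_j) \in V \otimes B$). The latter is justified by splitting $B = V \oplus V'$ and invoking the linear independence of the $a_j$ (respectively the $c_j$), and I would take care that these manipulations make no use of finite-dimensionality, so that the statement holds for an arbitrary bialgebra $B$.
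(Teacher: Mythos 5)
Your proof is correct and follows essentially the same route as the paper: both hinge on the key observation that $\im(p_B)$ is a left coideal of $B$ because $p_B$ is a morphism of left $B$-comodules (with the comodule structure from the proof of \cref{prop:pBin}), and both then identify $K(B)$ as the largest sub-coalgebra of $B$ contained in $\im(p_B)$, which equals the sum of all such sub-coalgebras. The only difference is one of detail: the paper invokes the general principle (that $\{c\in C\mid \Delta(c)\in V\otimes C\}$ is the largest right coideal contained in $V$, and a sub-coalgebra whenever $V$ is a left coideal) as something ``one easily checks'', whereas you supply that verification explicitly via the minimal tensor-rank and linear-independence argument.
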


\begin{proof}
Given a subspace $V$ of a coalgebra $C$, then one easily checks that $V'\coloneqq \{c\in C\mid \Delta(c)\in V\otimes C\}$ is the largest right coideal of $C$ contained in $V$. If, moreover, $V$ is a left coideal, then so is $V'$ and then it is the largest subcoalgebra of $C$ contained in $V$. We conclude by applying this general principle to the case $C=B$ and $V=\im(p_B)$, as $V'=K(B)$. Indeed $\im(p_B)$ is a left coideal as $p_B:B \boxslash B\to B$ is a morphism of left $B$-comodule where the domain is a left $B$-comodule via $\Delta$ while the domain via the structure mentioned at the beginning of the proof of \cref{prop:pBin}. 
\end{proof}

\begin{proposition}
\label{prop:pconvd}
   Let $B$ be a bialgebra.
   \begin{enumerate}[label=\alph*),ref={\itshape \alph*)},leftmargin=*]
    \item\label{item:pconvd1} $K(B)$ is a sub-bialgebra of $B$, i.e., $k_B \colon K(B) \to B$ is a bialgebra map.
    \item\label{item:pconvd2} If $f \colon A\to B$ is a morphism of bialgebras from a bialgebra $A$ with $p_A$ surjective, then one has $\im(f)\subseteq K(B)$, i.e., there exists a unique bialgebra map $\tilde f \colon A \to K(B)$ such that $f = k_B \circ \tilde f$.
    \item\label{item:pconvd3} If $p_B:B \boxslash B\to B$ is injective, then $k_B$ is right convolution invertible.
    \item\label{item:pconvd4} If $p_{B^\op}$ is injective, then $\im(p_B)$ is a sub-coalgebra of $B$ so that $K(B)=\im(p_B)$.
\end{enumerate}
\end{proposition}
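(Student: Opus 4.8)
The plan is to dualize the proof of \cref{prop:pconv}, exploiting throughout that $\im(p_B)$ is always a subalgebra of $B$ (since $p_B$ is an algebra map, see \cref{rem:freecom}) and always a left coideal, so that by \cref{lem:KBcoalg} the space $K(B)$ is the largest subcoalgebra of $B$ contained in $\im(p_B)$.

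For \ref{item:pconvd1}, \cref{lem:KBcoalg} already guarantees that $K(B)$ is a subcoalgebra, so it remains only to check it is a subalgebra. I would verify $1_B\in K(B)$ (using $1_B=p_B(1_B\boxslash 1_B)\in\im(p_B)$ and $\Delta(1_B)=1_B\otimes 1_B\in\im(p_B)\otimes B$) and multiplicative closure: for $a,b\in K(B)$ one has $\Delta(ab)=\Delta(a)\Delta(b)\in(\im(p_B)\otimes B)(\im(p_B)\otimes B)\subseteq\im(p_B)\otimes B$, since $\im(p_B)$ is a subalgebra. This makes $k_B$ a bialgebra map. For \ref{item:pconvd2}, \cref{prop:pB-surj}\,\ref{pB-surj_item2} gives $\im(f)\subseteq\im(p_B)$ whenever $p_A$ is surjective; as $f$ is a coalgebra map, $\im(f)$ is a subcoalgebra of $B$ contained in $\im(p_B)$, hence $\im(f)\subseteq K(B)$ by the maximality in \cref{lem:KBcoalg}, and corestriction produces the (unique, since $k_B$ is injective) bialgebra map $\tilde f$.

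For \ref{item:pconvd3} I would dualize \cref{prop:pconv}\,\ref{item:pconv3}: injectivity of $p_B$ supplies, via \cref{prop:pBin}, an operator $T\in\mathrm{End}_\Bbbk(B)$, and \cref{cor:Tantis} yields $a_1T(a_2)=\varepsilon(a)1_B$ for all $a\in\im(p_B)$. Since $K(B)\subseteq\im(p_B)$, the restriction $g\coloneqq T\circ k_B$ satisfies $(k_B*g)(c)=c_1T(c_2)=\varepsilon(c)1_B$ for every $c\in K(B)$, i.e.\ $k_B*g=u_B\circ\varepsilon$, so $k_B$ is right convolution invertible.

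The genuinely delicate point, which I expect to be the main obstacle (just as \ref{item:pconv4} is the subtle part of \cref{prop:pconv}), is \ref{item:pconvd4}. We already know $\im(p_B)$ is a left coideal; what is missing—and what the hypothesis ``$p_{B^\op}$ injective'' should provide—is that $\im(p_B)$ is a \emph{right} coideal, whence a subcoalgebra and therefore, by maximality, equal to $K(B)$. My approach is as follows. The flip $\tau\colon x^i\otimes y_i\mapsto y_i\otimes x^i$ should restrict to an isomorphism $B\boxslash B\xrightarrow{\cong}B^\op\boxslash B^\op$ with $p_{B^\op}\circ\tau=p_B^\dagger$, where $p_B^\dagger(x^i\boxslash y_i)=\varepsilon(x^i)y_i$; thus $p_{B^\op}$ injective is equivalent to $p_B^\dagger$ injective and $\im(p_B^\dagger)=\im(p_{B^\op})$. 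Applying \cref{prop:pBin}\,\ref{item:pBin2} to $B^\op$ then yields $\overline{T}\in\mathrm{End}_\Bbbk(B)$ with $\overline{T}\circ p_B^\dagger=p_B$, so that $\im(p_B)=\overline{T}(\im(p_B^\dagger))$; moreover \cref{cor:Tantis} applied to $B^\op$ shows $\overline{T}$ is anti-comultiplicative on $\im(p_{B^\op})=\im(p_B^\dagger)$, which is a left coideal by the argument of \cref{lem:KBcoalg} applied to $B^\op$. For $a\in\im(p_B^\dagger)$ I would then compute
\[\Delta(\overline{T}(a))=\overline{T}(a)_1\otimes\overline{T}(a)_2=\overline{T}(a_2)\otimes\overline{T}(a_1),\]
and since $a_2\in\im(p_B^\dagger)$ (left coideal), the first leg $\overline{T}(a_2)$ lies in $\overline{T}(\im(p_B^\dagger))=\im(p_B)$, giving $\Delta(\im(p_B))\subseteq\im(p_B)\otimes B$. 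Combined with the left coideal property this shows $\im(p_B)$ is a subcoalgebra, hence $K(B)=\im(p_B)$. The main care will lie in setting up the flip identification cleanly (checking $\tau$ maps $B\boxslash B$ onto $B^\op\boxslash B^\op$) and in transporting the endomorphism and comodule identities of \cref{prop:pBin} and \cref{cor:Tantis} correctly to $B^\op$.
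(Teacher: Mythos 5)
Your proposal is correct, and for parts \emph{a)}, \emph{b)}, \emph{c)} it coincides with the paper's argument essentially verbatim: \emph{a)} via $\im(p_B)$ being a subalgebra (because $p_B$ is an algebra map) together with the maximality statement of \cref{lem:KBcoalg}; \emph{b)} via \cref{prop:pB-surj} and that same maximality; \emph{c)} via the identity $a_1T(a_2)=\varepsilon(a)1_B$ from \cref{cor:Tantis}, restricted along $K(B)\subseteq\im(p_B)$. For part \emph{d)}, your key idea is also the paper's: pass to $B^\op$ through the flip $\tau$, which carries $B\boxslash B$ bijectively onto $B^\op\boxslash B^\op$ with $p_{B^\op}\circ\tau=p_B^\dagger$, and use the endomorphism $\overline{T}$ furnished by \cref{prop:pBin} applied to $B^\op$ to show that $\im(p_B)$ is also a \emph{right} coideal, whence a subcoalgebra, whence equal to $K(B)$ by \cref{lem:KBcoalg}. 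The execution differs mildly: the paper performs a direct Sweedler computation of $\Delta^\cop\bigl(p_B(x^i\otimes y_i)\bigr)$, exploiting the left $B\otimes B^\op$-coaction of $B\boxslash B$, to land in $B\otimes\im(p_B)$; you instead factor $p_B=\overline{T}\circ p_B^\dagger$, invoke the anti-comultiplicativity of $\overline{T}$ on $\im(p_{B^\op})$ from \cref{cor:Tantis} applied to $B^\op$, and combine it with the left coideal property of $\im(p_B^\dagger)$. Your route is more modular (it outsources the computation to \cref{cor:Tantis}, whose proof is exactly this kind of calculation), while the paper's is self-contained inside the proof; both are valid, and every step you sketch goes through, including the two points you flag as delicate (the flip lands in $B^\op\boxslash B^\op$ because the coinvariance condition \eqref{eq:coinvs} is symmetric under exchanging the two legs and reversing the product, and the identities of \cref{prop:pBin} and \cref{cor:Tantis} transport to $B^\op$ verbatim since $B^\op$ has the same coalgebra structure).
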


\begin{proof}
We only prove \ref{item:pconvd3} and \ref{item:pconvd4}, for the convenience of the reader.
\begin{invisible}
\rem{
\ref{item:pconvd1} We already know that $K(B)$ is a sub-coalgebra, in view of \cref{lem:KBcoalg}. It remains to check that it is also a sub-algebra. Note that $B \boxslash B$ is a sub-algebra of $B\otimes B^{\mathrm{op}}$ and that $p_B:B \boxslash B\to B$ is an algebra map. Thus $\im(p_B)$ is a sub-algebra of $B$ and hence also $K(B)$ is a sub-algebra of $B$.

\ref{item:pconvd2} By \cref{prop:pB-surj}\,\ref{pB-surj_item2}, $\im(f) \subseteq \im(p_B).$ Thus, for every $a\in A$, we have $\Delta (f(a))=f(a_1)\otimes f(a_2)\in \im(f)\otimes B\subseteq \im(p_B)\otimes B$ that is $f(a)\in K(B)$.
}
\end{invisible}

\ref{item:pconvd3} Assume $p_B$ injective. \cref{cor:Tantis} entails that
$(\id_B*T-u_B\circ\varepsilon_B)\circ p_B=0.$
Thus, since $K(B) \subseteq \im(p_B)$, for every $b'\in K(B)$ we have $(\id_B*T - u_B\circ \varepsilon_B)(b') = 0$, i.e.\ $b'_1T(b'_2) = \varepsilon_B(b')1$.
This means that the canonical injection $k_B:K(B)\to B$ is right convolution invertible with right inverse $T\circ k_B$.

\ref{item:pconvd4} For every $x^i\otimes {y_i} \in B \boxslash B$ one has $y_i\otimes x^i \in B^\op \boxslash B^\op$. Thus, by \cref{prop:pBin} applied to $B^\op$, there is a linear endomorphism $\bar T$ of $B$ such that $\varepsilon(x^i)\bar{T}(y_i)=x^i\varepsilon(y_i).$ Thus, by employing the left $B\otimes B^\op$-coaction of $B \boxslash B$, for every $x^i\otimes {y_i} \in B \boxslash B$ we get
\begin{align*}
\Delta^\cop(p_B(x^i\otimes {y_i})) & = x^i_2\varepsilon(y_i)\otimes x^i_1 \stackrel{(\star)}{=} \varepsilon(x^i_2)\bar{T}(y_i)\otimes x^i_1 = \bar{T}(y_i)\otimes x^i = \varepsilon(x^i_1)\bar{T}({y_i}_1)\otimes x^i_2\varepsilon\left({y_i}_2\right) \\
& =  (\varepsilon\otimes\bar{T})((x^i\otimes {y_i})_{-1})\otimes p_B((x^i\otimes {y_i})_{0}) \in B\otimes \im(p_B)
\end{align*}
where in $(\star)$ we used that $x^i_1\otimes x^i_2\otimes y_i\in B\otimes (B \boxslash B)$. 
Thus $\Delta(\im(p_B))\subseteq \im(p_B)\otimes B$. Since we already know that  $\Delta(\im(p_B))\subseteq B\otimes\im(p_B) $, we conclude that $\Delta(\im(p_B))\subseteq \im(p_B)\otimes\im(p_B) $ and hence it is a sub-coalgebra. By \cref{lem:KBcoalg}, we get that $K(B) = \im(p_B)$.
\end{proof}

In \cref{lem:KBcoalg} we claimed that $K(B)$ is the sum of all sub-coalgebras of $B$ contained in the algebra $\im(p_B)$ (whence its largest sub-coalgebra). The fact that this sum is a sub-bialgebra could have also being deduced from \cite[Lemma 6]{Porst}.

\begin{corollary}
\label{cor:kconv}
Let $B$ be a left $n$-Hopf algebra or a right perfect bialgebra. Then the canonical inclusion $k_B : K(B)\to B$ is a (two-sided) convolution invertible bialgebra map.
\end{corollary}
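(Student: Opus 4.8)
The plan is to mirror the proof of \cref{coro:pconv}, exploiting the duality between the projection $q_B \colon B \to \qB$ treated there and the inclusion $k_B \colon K(B) \to B$ here. First, in both cases I would establish that $p_B$ is injective: this is \cref{prop:semiantip}\,\ref{item:semiantip2} when $B$ is a left $n$-Hopf algebra, and \cref{pro:Artinian} when $B$ is right perfect. Once $p_B$ is injective, \cref{prop:pconvd}\,\ref{item:pconvd3} immediately yields that $k_B$ is right convolution invertible (with right inverse $T \circ k_B$). It then remains to upgrade right invertibility to two-sided invertibility, and here the two cases diverge.

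For the right perfect case the upgrade is essentially automatic: since $k_B$ is a bialgebra map by \cref{prop:pconvd}\,\ref{item:pconvd1}, it is in particular a coalgebra map $K(B) \to B$, and \cref{lem:Skry} guarantees that any right (or left) convolution invertible coalgebra map into a right perfect bialgebra is two-sided convolution invertible. This disposes of this case in one line.

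For the left $n$-Hopf case I would dualise the argument of \cref{coro:pconv}, swapping the role of the bialgebra surjection $q_B$ for that of the bialgebra injection $k_B$. Let $S$ be a left $n$-antipode of $B$, so that $S * \id^{*n+1} = \id^{*n}$ in $\mathrm{Hom}_\Bbbk(B,B)$. Because $k_B$ is a coalgebra map, precomposition $(-) \circ k_B \colon \mathrm{Hom}_\Bbbk(B,B) \to \mathrm{Hom}_\Bbbk(K(B),B)$ is a morphism of convolution algebras; applying it to the defining identity yields $(S \circ k_B) * k_B^{*n+1} = k_B^{*n}$, since $\id_B \circ k_B = k_B$. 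Writing $k_B^{*n+1} = k_B * k_B^{*n}$ and using that $k_B^{*n}$ is right convolution invertible (being a convolution power of the right invertible $k_B$), I can cancel $k_B^{*n}$ on the right to conclude $(S \circ k_B) * k_B = u_B \circ \varepsilon_{K(B)}$. Thus $S \circ k_B$ is a left convolution inverse of $k_B$; combined with its right invertibility, this forces $k_B$ to be two-sided convolution invertible.

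The only delicate points I anticipate are verifying that precomposition with $k_B$ is genuinely a homomorphism of convolution algebras --- this is exactly where it matters that $K(B)$ is a sub-coalgebra (\cref{lem:KBcoalg}), so that $\Delta_B$ restricts to the comultiplication of $K(B)$ and the counit is preserved --- and the bookkeeping ensuring that $k_B^{*n}$ is cancelled on the correct (right) side, respecting the non-commutativity of convolution. Neither constitutes a genuine obstacle, but both must be handled carefully rather than by appeal to commutativity.
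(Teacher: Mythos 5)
Your proposal is correct and follows essentially the same route as the paper's own proof: injectivity of $p_B$ via \cref{prop:semiantip} or \cref{pro:Artinian}, right invertibility of $k_B$ from \cref{prop:pconvd}\,\ref{item:pconvd3}, \cref{lem:Skry} for the right perfect case, and precomposing the identity $S*\id^{*n+1}=\id^{*n}$ with $k_B$ followed by right cancellation of $k_B^{*n}$ in the left $n$-Hopf case. Your extra care about precomposition being a convolution-algebra morphism and about the side of cancellation matches exactly what the paper does implicitly.
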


\begin{proof}
Either by \cref{prop:semiantip}\,\ref{item:semiantip2} or by \cref{pro:Artinian}, $p_B$ is injective and hence $k_B$ is right convolution invertible by \cref{prop:pconvd}\,\ref{item:pconvd3}.
If $B$ is right perfect, then we conclude by \cref{lem:Skry}. If instead it is a left $n$-Hopf algebra, then let us prove that $k_B$ is left convolution invertible as well. If $S$ is a left $n$-antipode for $B$, then we get $Sk_B*k_B^{*n+1} = (S*\id^{*n+1})\circ k_B =\id^{*n} \circ k_B = k_B^{*n}$. Since $k_B$ is right convolution invertible, we may cancel $k_B^{*n}$ on the right to arrive at $Sk_B*k_B = u_B \circ \varepsilon_{K(B)}$, that is $k_B$ is also left convolution invertible.
\end{proof}

We are now ready to provide the dual version of \cref{prop:HBfd} and of the finite-dimensional case of \cref{prop:HBfdArt}.

\begin{theorem}
\label{prop:CBfd}
Let $B$ be a finite-dimensional bialgebra. Then $\mathrm{C}(B)= K(B) = \im(p_B) \cong B \boxslash B$.
\end{theorem}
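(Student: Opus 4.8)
The plan is to dualise, almost verbatim, the treatment of the Hopf envelope in \cref{prop:HBfd} and \cref{prop:HBfdArt}, replacing $i_B$ by $p_B$, $\ker(i_B)$ by $\im(p_B)$, surjectivity by injectivity, and the quotient $\qB$ by the sub-bialgebra $K(B)$. Concretely, I would establish the three claimed identifications in turn: that $B\boxslash B \cong \im(p_B)$, that $\im(p_B) = K(B)$, and that $K(B) = \mathrm{C}(B)$.

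First I would exploit finite-dimensionality. Since $B$ is finite-dimensional, so is $B^{\mathrm{op}}$, and by \cref{coro:fdS} both are left $n$-Hopf algebras (for possibly different $n$'s); hence both $p_B$ and $p_{B^{\mathrm{op}}}$ are injective by \cref{prop:semiantip}\,\ref{item:semiantip2}. From the injectivity of $p_{B^{\mathrm{op}}}$, \cref{prop:pconvd}\,\ref{item:pconvd4} gives $K(B) = \im(p_B)$. From the injectivity of $p_B$ and the fact (\cref{rem:freecom}\,\ref{item:freecom3}) that $p_B$ is an algebra map, corestriction to its image yields an algebra isomorphism $B\boxslash B \cong \im(p_B) = K(B)$; the coalgebra (indeed Hopf) structure is then transported from $K(B)$ to $B\boxslash B$, so that $p_B$ becomes an isomorphism of Hopf algebras once $K(B)$ is shown to be one.

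Next I would upgrade $K(B)$ to a Hopf algebra. By \cref{prop:pconvd}\,\ref{item:pconvd1}, $K(B)$ is a sub-bialgebra of $B$; being a subspace of the finite-dimensional $B$, it is itself finite-dimensional, hence a left $n$-Hopf algebra by \cref{coro:fdS}. Since $B$ is a left $n$-Hopf algebra, \cref{cor:kconv} tells us that the inclusion $k_B\colon K(B)\to B$ is a (two-sided) convolution invertible bialgebra map. As $k_B$ is injective, \cref{lem:n-Hopfconv}\,\ref{item:fHopf2} (applied with domain $K(B)$ a left $n$-Hopf algebra) forces $K(B)$ to be an honest Hopf algebra.

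It then remains to verify the universal property identifying $K(B)$ with $\mathrm{C}(B)$, with $k_B$ playing the role of $\epsilon_B$. Given a Hopf algebra $H$ and a bialgebra map $f\colon H\to B$, the map $p_H$ is bijective by \cref{prop:Frobenius} (since a Hopf algebra is a right Hopf algebra with anti-bialgebra antipode), in particular surjective; hence \cref{prop:pconvd}\,\ref{item:pconvd2} yields a unique bialgebra map $\tilde f\colon H\to K(B)$ with $k_B\circ\tilde f = f$. Because $H$ and $K(B)$ are both Hopf algebras, $\tilde f$ is automatically a Hopf algebra map, and its uniqueness (as a bialgebra map, hence as a Hopf algebra map) is already granted by \cref{prop:pconvd}\,\ref{item:pconvd2}. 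This gives $\mathrm{C}(B) = K(B)$, completing the chain $\mathrm{C}(B) = K(B) = \im(p_B) \cong B\boxslash B$. The only genuinely delicate point I anticipate is bookkeeping the two a priori different structures --- the algebra structure that $B\boxslash B$ carries intrinsically versus the coalgebra structure it inherits only through $p_B$ --- and making sure the transported structure is compatible, so that the final isomorphism is one of Hopf algebras rather than merely of algebras; everything else is a faithful dualisation of the $\mathrm{H}(B)$ case.
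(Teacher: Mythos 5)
Your proposal is correct and follows essentially the same route as the paper's proof: injectivity of $p_B$ and $p_{B^{\mathrm{op}}}$ from \cref{prop:semiantip}, the identification $K(B)=\im(p_B)\cong B\boxslash B$ from \cref{prop:pconvd}, finite-dimensionality of $K(B)$ making it a left $n$-Hopf algebra via \cref{coro:fdS}, the Hopf property from \cref{lem:n-Hopfconv}\,\ref{item:fHopf2}, and the universal property from \cref{prop:pconvd}\,\ref{item:pconvd2}. The only cosmetic differences are your use of \cref{cor:kconv} (two-sided convolution invertibility of $k_B$) where the paper only needs the right convolution invertibility from \cref{prop:pconvd}\,\ref{item:pconvd3}, and your more explicit verification of the universal property, neither of which changes the argument.
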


\begin{proof} Since $B$ is finite-dimensional, $p_B$ and $p_{B^{\mathrm{op}}}$ are injective (cf.\ \cref{prop:semiantip}). Thus, by \cref{prop:pconvd}, $K(B)= \im(p_B) \cong B \boxslash B$ is a sub-bialgebra of $B$ and the canonical injection $k_B:K(B)\to B$ is a right convolution invertible, bialgebra map.

 Since $K(B)$ is a subspace of $B$, it is finite-dimensional too and hence it is a left $n$-Hopf algebra for some $n\in\N$, cf.\ \cref{coro:fdS}.
 By \cref{lem:n-Hopfconv}\,\ref{item:fHopf2}, we get that $K(B)$ is a Hopf algebra.
 Moreover, $K(B)$ has the desired universal property in view of \cref{prop:pconvd}\,\ref{item:pconvd2}.
 \end{proof}

 \begin{remark}
 Concerning the right perfect case in \cref{prop:HBfdArt} instead, we can observe what follows. If $B$ is left perfect, then $B^\op$ is  right perfect so that $p_{B^\op}$ is injective by \cref{pro:Artinian}. Hence, $K(B)=\im(p_B)$ by \cref{prop:pconvd}. Therefore, if $B$ is perfect (i.e., both left and right perfect), then also $p_B \colon B \boxslash B\to B$ is injective so that it induces the bijection $B \boxslash B\to K(B),\,x^i\otimes y_i\mapsto x^i\varepsilon(y_i)$.
 \end{remark}

 Although we are not able to prove that $\mathrm{C}(B)= K(B)$ in case $B$ is a right perfect bialgebra, still in \cref{prop:CBleftArt} we will provide an iterated construction of $\mathrm{C}(B)$ in this latter case.

\begin{remark}
\label{rem:suminb}
Let $B$ be a bialgebra. if $C$ is any sub-coalgebra of $B$ such that the inclusion $\iota:C\to B$, is two-sided convolution invertible, then, by \cite[Proposition 6.1.3]{Radford-book}, its convolution inverse, say $\sigma:C\to B$ is an anti-coalgebra map
and hence we can define $\sigma':C\to B\boxslash B,c\mapsto c_1\boxslash\sigma (c_2).$
Then $p_B\sigma'(c)=c_1\varepsilon_B\sigma (c_2) = c_1\varepsilon_C (c_2)=c$ for every $c\in C$ and hence $C\subseteq \im(p_B)$. Since, by \cref{lem:KBcoalg}, $K(B)$ is the sum of all sub-coalgebras of $B$ contained in $\im(p_B)$, we deduce that $C\subseteq K(B)$.  In particular, when $\epsilon_B$ is injective we have $\mathrm{C}(B) \subseteq K(B)$.

Assume now that $B$ is finite-dimensional or, more generally, right perfect. In this case, $p_B$ is injective and hence $\mathrm{C}(B) \subseteq K(B)$, because $\epsilon_B$ is injective (see \cref{prop:epsiInj}), and the canonical injection $k_B:K(B)\to B$ is right convolution invertible by \cref{prop:pconvd}\,\ref{item:pconvd3}. By \cref{lem:Skry}, $k_B$ is two-sided convolution invertible.  Summing up, when $B$ is finite-dimensional or, more generally, right perfect, then $K(B)$ is the largest sub-coalgebra of $B$ whose inclusion in $B$ is two-sided convolution invertible.
This should be compared with the proof of \cite[Theorem 2.3]{Ery-Skry} where $\mathrm{C}(B)$ results to be the the largest sub-coalgebra ``invertible in $B$'', i.e.\ for which the convolution inverse of the inclusion takes values in the sub-coalgebra itself.
\end{remark}

As we did in \cref{ssec:HopfEnv}, we continue this section by studying the particular case of $\mathrm{C}(B)$ for a commutative bialgebra $B$ with $p_B$ injective. To this aim, we begin with a counterpart of \cref{lem:S(K)}.


\begin{lemma}\label{lem:Tim}
Let $B$ be a bialgebra. If $p_B \colon B \boxslash B \to B$ is injective and a resulting endomorphism $T$ of $B$ as in \cref{prop:pBin} obeys $T\left(\im(p_B)\right) \subseteq \im(p_B)$, then $K(B)$ is a right Hopf algebra with anti-multiplicative and anti-comultiplicative right antipode $\overline{T}$ uniquely determined by the equality $k_B \circ \overline{T} = T \circ k_B$.
\end{lemma}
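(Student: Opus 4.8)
The plan is to dualise the proof of \cref{lem:S(K)}, with the injectivity of $p_B$ and the operator $T$ of \cref{prop:pBin} playing the roles formerly taken by the surjectivity of $i_B$ and the operator $S$ of \cref{lem:iBsu}. Since $p_B$ is injective, \cref{prop:pBin} supplies $T \in \mathrm{End}_\Bbbk(B)$ satisfying \eqref{eq:Tcond}, and \cref{cor:Tantis} records that the identities in \eqref{eq:Tprops} hold for all $a,b \in \im(p_B)$. I will also use that $K(B)$ is a sub-bialgebra of $B$ (\cref{prop:pconvd}\,\ref{item:pconvd1}) and, by \cref{lem:KBcoalg}, the largest sub-coalgebra of $B$ contained in $\im(p_B)$; in particular $K(B) \subseteq \im(p_B)$, so \eqref{eq:Tprops} applies to all elements of $K(B)$.

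The key step is to prove that $T$ corestricts to a map $\overline{T} \colon K(B) \to K(B)$, which is the dual of the fact that $S$ preserves the left ideal $\ker(i_B)B$ in \cref{lem:S(K)}; here the dual role of ``left ideal'' is played by the left coideal $\im(p_B)$. Fix $c \in K(B)$. By hypothesis $T(\im(p_B)) \subseteq \im(p_B)$, so $T(c) \in \im(p_B)$; moreover, since $K(B)$ is a sub-coalgebra, $\Delta(c) = c_1 \otimes c_2 \in K(B) \otimes K(B) \subseteq \im(p_B) \otimes \im(p_B)$. Using the anti-comultiplicativity of $T$ on $\im(p_B)$ from \eqref{eq:Tprops}, I would compute
\[\Delta(T(c)) = T(c)_1 \otimes T(c)_2 = T(c_2) \otimes T(c_1) \in T(\im(p_B)) \otimes T(\im(p_B)) \subseteq \im(p_B) \otimes B,\]
so that $T(c) \in K(B)$ by \cref{def:KB}. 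Hence $T(K(B)) \subseteq K(B)$ and $T$ corestricts to $\overline{T} \colon K(B) \to K(B)$, the unique map with $k_B \circ \overline{T} = T \circ k_B$, uniqueness following from the injectivity of $k_B$.

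It then remains to verify that $\overline{T}$ is an anti-multiplicative and anti-comultiplicative right antipode for $K(B)$. Because $K(B)$ is a sub-bialgebra and $K(B) \subseteq \im(p_B)$, the three identities in \eqref{eq:Tprops}, read inside $K(B)$, give exactly these properties: $c_1\overline{T}(c_2) = \varepsilon(c)1$ says that $\id_{K(B)} * \overline{T} = u_{K(B)} \circ \varepsilon_{K(B)}$, i.e.\ $\overline{T}$ is a right antipode; $\overline{T}(ab) = \overline{T}(b)\overline{T}(a)$ is anti-multiplicativity; and $\overline{T}(c_1) \otimes \overline{T}(c_2) = \overline{T}(c)_2 \otimes \overline{T}(c)_1$ is anti-comultiplicativity. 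The compatibility with unit and counit is then automatic, as one-sided antipodes always preserve them (as recalled before \cref{prop:Frobenius}), so $\overline{T}$ is in fact an anti-bialgebra map and $K(B)$ is a right Hopf algebra.

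The main obstacle I anticipate is precisely the first step, namely the well-definedness of $\overline{T}$ as a self-map of $K(B)$: the hypothesis only grants $T(\im(p_B)) \subseteq \im(p_B)$, and one has to upgrade this to $T(K(B)) \subseteq K(B)$. The point is that $K(B)$ is in general a proper sub-coalgebra of the algebra $\im(p_B)$, so the containment cannot be read off from the image condition alone; the anti-comultiplicativity of $T$ is exactly what transfers the containment from $\im(p_B)$ to the largest sub-coalgebra $K(B)$ sitting inside it. Everything else is a direct restriction of \eqref{eq:Tprops} to the sub-bialgebra $K(B)$.
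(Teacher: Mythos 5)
Your proof is correct and follows essentially the same route as the paper's: the crux in both is to upgrade the hypothesis $T(\im(p_B)) \subseteq \im(p_B)$ to the containment $T(K(B)) \subseteq K(B)$ by applying the anti-comultiplicativity identity from \eqref{eq:Tprops} to $\Delta(c)$, after which $K(B)$ inherits the anti-multiplicative and anti-comultiplicative right antipode by (co)restriction of $T$, with uniqueness from the injectivity of $k_B$. The only cosmetic difference is that you place $\Delta(c)$ in $K(B) \otimes K(B)$ using that $K(B)$ is a sub-coalgebra, while the paper uses the slightly weaker fact that $\im(p_B)$ is a left coideal, so that $\Delta(c) \in B \otimes \im(p_B)$; both yield the needed conclusion $\Delta(T(c)) \in \im(p_B) \otimes B$.
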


\begin{proof}
Since $p_B$ is injective, \cref{prop:pBin} and \cref{cor:Tantis} entail that there is a linear endomorphism $T$ of $B$ such that 
$
T\left(x^i\right)\varepsilon\left(y_i\right) = \varepsilon\left(x^i\right)y_i
$ 
for all $x^i \boxslash y_i \in B \boxslash B$ and which satisfies \eqref{eq:Tprops}. 
    Now, suppose that $T\left(\im(p_B)\right) \subseteq \im(p_B)$. For every $a \in K(B)$, we have $a \in \im(p_B)$. 
    Moreover, $a_1 \otimes a_2 \in B \otimes \im(p_B)$
    because $\im(p_B)$ is a left coideal, so that
    \[\Delta(T(a)) = T(a)_1 \otimes T(a)_2 = T\left(a_2\right) \otimes T\left(a_1 \right) \in T(\im(p_B)) \otimes T(B) \subseteq \im(p_B) \otimes B.\]
   This implies that $T(K(B)) \subseteq K(B)$. As a consequence, $K(B)$ is a right Hopf algebra with anti-multiplicative and anti-comultiplicative right antipode given by the (co)restriction of $T$.
\end{proof}

\begin{proposition}
    Let $B$ be a commutative bialgebra with $p_B$ injective. Then $K(B)$ is a commutative Hopf algebra and $K(B) = \mathrm{C}(B) \cong B \boxslash B$.
\end{proposition}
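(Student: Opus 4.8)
The plan is to mirror the proof of \cref{pro:cocom}, dualising each step from the $i_B$/$\oslash$ side to the $p_B$/$\boxslash$ side. First I would exploit commutativity to pin down $K(B)$ together with its identification with $B \boxslash B$: since $B$ is commutative we have $B^{\mathrm{op}} = B$, so $p_{B^{\mathrm{op}}} = p_B$ is injective and \cref{prop:pconvd}\,\ref{item:pconvd4} yields $K(B) = \im(p_B)$. As $p_B$ is injective, its corestriction $B \boxslash B \to \im(p_B) = K(B)$ is a bijective algebra map, which furnishes $K(B) \cong B \boxslash B$ after transporting the coalgebra structure of $K(B)$ along it, exactly as in the finite-dimensional case of \cref{prop:CBfd}. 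Moreover $K(B)$ is a commutative sub-bialgebra of $B$ by \cref{prop:pconvd}\,\ref{item:pconvd1}.

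The core step is to equip $K(B)$ with an antipode. By \cref{prop:pBin}\,\ref{item:pBin2}, injectivity of $p_B$ provides $T \in \mathrm{End}_\Bbbk(B)$ with $T(x^i)\varepsilon(y_i) = \varepsilon(x^i)y_i$ for all $x^i \boxslash y_i \in B \boxslash B$. Using commutativity, the flip $\tau \colon B \boxslash B \to B \boxslash B$, $x^i \boxslash y_i \mapsto y_i \boxslash x^i$, is well defined: applying the flip on the first two legs of the coinvariance identity \eqref{eq:coinvs} and replacing ${x^i}_2{y_i}_2$ by ${y_i}_2{x^i}_2$ shows that $y_i \boxslash x^i$ again lies in $B \boxslash B$. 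A direct computation then gives the intertwining relation
\[p_B \circ \tau = T \circ p_B,\]
because $p_B(\tau(x^i \boxslash y_i)) = y_i\varepsilon(x^i) = \varepsilon(x^i)y_i = T(x^i)\varepsilon(y_i) = T(p_B(x^i \boxslash y_i))$. Consequently $T(\im(p_B)) = p_B(\tau(B \boxslash B)) \subseteq \im(p_B)$, which is precisely the hypothesis of \cref{lem:Tim}. That lemma then makes $K(B)$ a right Hopf algebra with anti-multiplicative and anti-comultiplicative right antipode $\overline{T}$; since $K(B)$ is commutative, the right-hand analogue of \cite[Theorem 3]{GNT} upgrades $\overline{T}$ to a genuine two-sided antipode, so $K(B)$ is a commutative Hopf algebra.

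It remains to identify $K(B)$ with $\mathrm{C}(B)$ through the universal property. Given any Hopf algebra $H$ and any bialgebra map $f \colon H \to B$, \cref{prop:Frobenius} guarantees that $p_H$ is bijective, in particular surjective; hence \cref{prop:pconvd}\,\ref{item:pconvd2} produces a unique bialgebra map $\tilde f \colon H \to K(B)$ with $k_B \circ \tilde f = f$. As $H$ and $K(B)$ are both Hopf algebras, $\tilde f$ automatically preserves antipodes and is therefore the unique Hopf algebra map over $B$. Thus $(K(B), k_B)$ satisfies the universal property defining $(\mathrm{C}(B), \epsilon_B)$, whence $\mathrm{C}(B) = K(B) \cong B \boxslash B$.

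The main obstacle I anticipate is the core step, namely verifying $T(\im(p_B)) \subseteq \im(p_B)$. Unlike the cocommutative case, where one argues with the \emph{kernel} $\ker(i_B)$, here one must control the \emph{image} of $p_B$, and the argument genuinely depends on commutativity twice: once to make the flip $\tau$ land back in $B \boxslash B$, and once (through the right-hand \cite{GNT} analogue) to turn the one-sided antipode into a two-sided one. Some care is also needed to confirm that the coalgebra structure transported onto $B \boxslash B$ along $p_B$ is compatible with its algebra structure, so that the isomorphism $K(B) \cong B \boxslash B$ is one of bialgebras.
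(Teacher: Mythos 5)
Your proof is correct and follows essentially the same route as the paper: the published proof simply reads ``Dual to \cref{pro:cocom}'', and the intended dualisation is exactly your argument --- use commutativity so that the flip restricts to $B \boxslash B$, deduce $T(\im(p_B)) \subseteq \im(p_B)$ and invoke \cref{lem:Tim}, upgrade the right antipode via the right-hand analogue of \cite[Theorem 3]{GNT}, verify the universal property through \cref{prop:pconvd}\,\ref{item:pconvd2}, and obtain $K(B) = \im(p_B) \cong B \boxslash B$ from \cref{prop:pconvd}\,\ref{item:pconvd4} since $B^{\op} = B$.
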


\begin{proof}
Dual to \cref{pro:cocom}.
\begin{invisible}
\rem{Since $p_B$ is injective, we have an endomorphism $T$ of $B$ satisfying \eqref{eq:Tcond} above. Since $B$ is commutative, the flip $\sigma \colon B \otimes B \to B \otimes B, x \otimes y \mapsto y \otimes x,$ restricts to $B \boxslash B$.
Then, for $x^i \boxslash y_i \in B \boxslash B$,
\[T\left(x^i\varepsilon(y_i)\right) = T(x^i)\varepsilon(y_i) \stackrel{\eqref{eq:Tcond}}{=} \varepsilon(x^i)y_i = p_B(\sigma (x^i \otimes y_i)) \in \im(p_B).\]
Thus, it follows from \cref{lem:Tim} that $K(B)$ is a commutative right Hopf algebra, whence a Hopf algebra by the right-hand analogue of \cite[Theorem 3]{GNT}. Since it satisfies the desired universal property in view of \cref{prop:pconvd}\,\ref{item:pconvd2}, we conclude that $K(B) = \mathrm{C}(B)$. Finally, since $B^\op = B$, \cref{prop:pconvd}\,\ref{item:pconvd4} entails that $\im(p_B) = K(B)$,
implying that $p_B$ induces an isomorphism $B \boxslash B \cong K(B)$.}
\end{invisible}
\end{proof}


Finally, we provide an explicit and handy description of the \emph{cofree cocommutative Hopf algebra}. Let $B$ be a cocommutative bialgebra and let us denote by $\mathrm{C}^c(B)$ the cofree cocommutative Hopf algebra on the cocommutative bialgebra $B$, together with the canonical morphism $\epsilon_B^c \colon \mathrm{C}^c(B) \to B$.

\begin{theorem}\label{thm:CcB} 
Let $B$ be a cocommutative bialgebra. Then 
\[\mathrm{C}^c(B) = \left\{x^i \boxslash y_i \in B \boxslash B \mid y_i \otimes x^i \in B \boxslash B\right\} \subseteq B \boxslash B\]
with canonical map $\epsilon_B^c \colon \mathrm{C}^c(B) \to B$ provided by the restriction of $p_B \colon B \boxslash B \to B$.
\end{theorem}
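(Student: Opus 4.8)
The plan is to prove that the right‑hand side, which I abbreviate
\[
W \coloneqq \bigl\{x^i \boxslash y_i \in B \boxslash B \mid y_i \otimes x^i \in B \boxslash B\bigr\},
\]
together with the restriction of $p_B$, is a cocommutative Hopf algebra enjoying the universal property of $\mathrm{C}^c(B)$. Throughout write $\tau$ for the flip $x^i \boxslash y_i \mapsto y_i \boxslash x^i$, which is exactly the candidate antipode of \cref{lem:invosl}; thus $W$ is precisely the largest subspace of $B \boxslash B$ on which $\tau$ is internally defined. Since $B$ is cocommutative, \cref{rem:freecom}\,\ref{item:freecom4} shows $B \boxslash B$ is a sub‑bialgebra of $B \otimes B^{\mathrm{op}}$, it is cocommutative because $B \otimes B^{\mathrm{op}}$ is (op affects only the multiplication), and $p_B = (\id_B \otimes \varepsilon)|_{B \boxslash B}$ is then a bialgebra map $B \boxslash B \to B$.

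First I would show $W$ is a cocommutative sub‑Hopf‑algebra. It is a subalgebra because $\tau$ is anti‑multiplicative on $B \otimes B^{\mathrm{op}}$, so $\tau(zw) = \tau(w)\tau(z) \in B \boxslash B$ whenever $z,w \in W$, and $\tau(1_B \boxslash 1_B) = 1_B \boxslash 1_B$. For coalgebra closure I use that $\tau$ is a coalgebra automorphism of $B \otimes B$, whence $\Delta\tau = (\tau \otimes \tau)\Delta$; for $z \in W$, writing $\Delta(z) = \sum_k a_k \otimes b_k$ with $\{b_k\}$ (resp.\ $\{a_k\}$) linearly independent in the subcoalgebra $B \boxslash B$ and combining $(\tau\otimes\tau)\Delta(z) = \Delta(\tau z) \in (B\boxslash B)^{\otimes 2}$ with the standard linear‑independence argument forces each $\tau(a_k)$ (resp.\ $\tau(b_k)$) into $B \boxslash B$, so $\Delta(z) \in (W \otimes B\boxslash B) \cap (B\boxslash B \otimes W) = W \otimes W$. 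Next, $\tau|_W$ is a two‑sided antipode: computing in $B \otimes B^{\mathrm{op}}$ and using cocommutativity of $B$,
\[
(\id * \tau)(x^i \boxslash y_i) = {x^i}_1{y_i}_2 \boxslash {x^i}_2{y_i}_1 = \varepsilon(x^i)\varepsilon(y_i)\,1_B \boxslash 1_B,
\]
the last equality because $x^iy_i = \varepsilon(x^i)\varepsilon(y_i)1_B$ by \cref{lem:oslash}; symmetrically $(\tau * \id)(x^i \boxslash y_i) = {y_i}_1{x^i}_2 \boxslash {y_i}_2{x^i}_1$, which equals $\varepsilon(x^i)\varepsilon(y_i)\,1_B \boxslash 1_B$ precisely because $y_i x^i = \varepsilon(x^i)\varepsilon(y_i)1_B$, and this identity is available exactly when $y_i \otimes x^i \in B \boxslash B$, i.e.\ when $z \in W$. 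Thus $W$ is a cocommutative Hopf algebra with antipode $\tau$ and $\epsilon_B^c \coloneqq p_B|_W$ is a bialgebra map.

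For the universal property, let $H$ be a cocommutative Hopf algebra and $f \colon H \to B$ a bialgebra map. By \cref{rem:freecom}\,\ref{item:freecom3} and \ref{item:freecom5}, the map $\tilde f \colon H \to B \boxslash B$, $x \mapsto f(x_1) \boxslash f(S_H x_2)$, is a bialgebra map with $p_B \circ \tilde f = f$. I must check $\operatorname{im}(\tilde f) \subseteq W$, i.e.\ $\tau\tilde f(x) = f(S_H x_2) \otimes f(x_1) \in B \boxslash B$; rewriting it (cocommutativity of $H$) as $f(S_H x_1) \otimes f(x_2)$ and verifying the coinvariance condition \eqref{eq:coinv} is a direct Sweedler computation, in which $S_H$ is an algebra anti‑map but a coalgebra map (cocommutative $H$) and the relevant leg collapses via $\sum S_H(w_1)w_2 = \varepsilon(w)1$ after a cocommutative reordering. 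As $\tilde f$ then corestricts to a bialgebra map $H \to W$ between Hopf algebras it automatically preserves antipodes, and $\epsilon_B^c \circ \tilde f = f$. For uniqueness, any bialgebra map $g \colon H \to B \boxslash B$ with $p_B \circ g = f$ satisfies $f * (p_B^\dagger \circ g) = u_B\varepsilon_H$, since $p_B * p_B^\dagger = u_B\varepsilon_{B\boxslash B}$ (\cref{lem:invosl}) and $g$ is a coalgebra map; because $f$ has convolution inverse $f \circ S_H$ in $\mathrm{Hom}_\Bbbk(H,B)$, uniqueness of convolution inverses gives $p_B^\dagger \circ g = f \circ S_H$, whence $g(x) = (p_B \otimes p_B^\dagger)\Delta(g(x)) = f(x_1) \boxslash f(S_H x_2) = \tilde f(x)$ by \eqref{eq:pBpBdag}. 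So $\tilde f$ is the unique Hopf algebra map $H \to W$ over $f$.

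The main obstacle is the antipode step, and it is exactly where the hypotheses bite. The identity $\id * \tau = u\varepsilon$ holds on all of $B \boxslash B$, but $\tau * \id = u\varepsilon$ needs $y_i x^i = \varepsilon(x^i)\varepsilon(y_i)1_B$, which is guaranteed only on $W$; this is the precise reason why $\mathrm{C}^c(B)$ is cut out by the flip condition instead of being all of $B \boxslash B$, and it reflects the asymmetry with the free/commutative case flagged in the introduction. The second delicate point is the Sweedler bookkeeping proving $\operatorname{im}(\tilde f) \subseteq W$: cocommutativity of both $H$ and $B$ is essential there, both to make $\tilde f$ a bialgebra (not merely an algebra) map and to permute the Sweedler legs into the form on which the antipode axiom can be applied.
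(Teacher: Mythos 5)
Your proposal is correct and follows essentially the same route as the paper: realise $W$ as a sub-bialgebra of $B \otimes B^{\op}$, show that the flip $\tau$ is a two-sided antipode on it via the same two convolution computations (the second being exactly where $y_i \otimes x^i \in B \boxslash B$ is needed, since it invokes $y_ix^i = \varepsilon(x^i)\varepsilon(y_i)1_B$), and verify the universal property with the same map $\tilde f(x) = f(x_1) \boxslash f(S_H(x_2))$ and the same $p_B^\dagger$-based uniqueness argument. The only local differences are that the paper gets sub-bialgebra closure more slickly, writing $W = (B \boxslash B) \cap \tau^{-1}\big((B \boxslash B)^{\op}\big)$ as an intersection of sub-bialgebras (using that $\tau \colon B \otimes B^{\op} \to B^{\op} \otimes B$ is a bialgebra isomorphism) where you argue by hand with anti-multiplicativity and a linear-independence argument, and that for $\operatorname{im}(\tilde f) \subseteq W$ the paper bypasses your (correct, but only sketched) Sweedler computation by observing $\tau \circ \tilde f = \tilde f \circ S_H$, which follows from $S_H^2 = \id$ and the anti-coalgebra property of $S_H$ in the cocommutative setting.
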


\begin{proof}
    Since $B$ is cocommutative, \cref{rem:freecom} 
    entails that $B \boxslash B$ is a sub-bialgebra of $B \otimes B^{\mathrm{op}}$ and $p_B$ is a bialgebra map. We also have that $(B \boxslash B)^\op$ is a sub-bialgebra of $(B \otimes B^\op)^\op = B^\op \otimes B$ and so, since the flip map $\tau \colon B \otimes B^\op \to B^\op \otimes B$ is an isomorphism of bialgebras, 
    \[B ~\widetilde{\boxslash}~ B \coloneqq (B \boxslash B) \cap \tau^{-1}\big((B \boxslash B)^\op\big) = \left\{x^i \boxslash y_i \in B \boxslash B \mid y_i \otimes x^i \in B \boxslash B\right\}\]
    is still a sub-bialgebra of $B \otimes B^\op$.
    It is, in addition, a Hopf algebra because now the (co)restriction $\tau \colon B ~\widetilde{\boxslash}~ B \to B ~\widetilde{\boxslash}~ B$ of the flip map $\tau$ defines an antipode for it. Indeed, let $x^i \boxslash y_i \in B ~\widetilde{\boxslash}~ B$ (i.e., $y_i \boxslash x^i \in B \boxslash B$, too). On the one hand
    \begin{align*}
    m_{B \widetilde{\boxslash} B}\left(\id_{B \widetilde{\boxslash} B} \otimes \tau\right) & \Delta_{B \widetilde{\boxslash} B}\left(x^i \boxslash y_i\right) = ({x^i}_1 \boxslash {y_i}_1)({y_i}_2 \boxslash {x^i}_2) = {x^i}_1{y_i}_2 \boxslash {x^i}_2{y_i}_1 \\
    & = {x^i}_1{y_i}_1 \boxslash {x^i}_2{y_i}_2 = \Delta\left(x^iy_i\right) = \varepsilon\left(x^iy_i\right)1_B \boxslash 1_B,
    \end{align*}
    where the last equality follows from \cref{lem:oslash}, and on the other hand
    \begin{align*}
    m_{B \widetilde{\boxslash} B}\left(\tau \otimes \id_{B \widetilde{\boxslash} B}\right) & \Delta_{B \widetilde{\boxslash} B}\left(x^i \boxslash y_i\right) = ({y_i}_1 \boxslash {x^i}_1)({x^i}_2 \boxslash {y_i}_2) = {y_i}_1{x^i}_2 \boxslash {y_i}_2{x^i}_1 \\
    & = {y_i}_1{x^i}_1 \boxslash {y_i}_2{x^i}_2 = \Delta\left(y_ix^i\right) = \varepsilon\left(y_ix^i\right)1_B \boxslash 1_B,
    \end{align*}
    where the last equality follows from \cref{lem:oslash} again, but applied to $y_i \boxslash x^i$. Summing up, $B ~\widetilde{\boxslash}~ B$ is a cocommutative Hopf algebra and the restriction of $p_B \colon B \boxslash B \to B$ to $B ~\widetilde{\boxslash}~ B$ is a bialgebra map. We only need to prove that it is universal with respect to this property.

    Let $H$ be a cocommutative Hopf algebra with a bialgebra morphism $f \colon H \to B$ and consider the corresponding bialgebra morphism $\tilde f \colon H \to B \boxslash B, x \mapsto f(x_1) \boxslash f(S(x_2)),$ from \cref{rem:freecom}. 
    The latter lands into $B ~\widetilde{\boxslash}~ B$ because
    \[\tau\tilde f(x) = f\big(S(x_2)\big) \boxslash f(x_1) = f\big(S(x_2)\big) \boxslash f\big(S^2(x_1)\big) = f\big(S(x)_1\big) \boxslash f\big(S\big(S(x)_2\big)\big) = \tilde f\big(S(x)\big)\]%
    Now, suppose that $f' \colon H \to B ~\widetilde{\boxslash}~ B$ is a bialgebra map such that $p_B \circ f' = f$. As we observed in the proof of \cref{lem:invosl}, 
	\[f' = (p_B \otimes p_B^\dagger) \circ \Delta \circ f' = (p_B \otimes p_B^\dagger) \circ (f' \otimes f') \circ \Delta = (f \otimes p_B^\dagger f') \circ \Delta\]
    and, since $p_B^\dagger$ is right convolution inverse of $p_B$, then $p_B^\dagger \circ f'$ is right convolution inverse of $p_B \circ f'= f$
	and therefore $p_B^\dagger \circ f' = f \circ S$, because the latter is already the convolution inverse of $f$. Hence, $f' = \tilde f$ and the proof is finished.
\end{proof}

At the present level of generality, we cannot expect $\mathrm{C}^c(B)$ to be $B \boxslash B$: \cref{exa:notweakpBinj} below will be about $B = \Bbbk M$ with $M = \langle a,b \mid ab=1\rangle$, for which $B \boxslash B = \mathsf{span}_\Bbbk\left\{a^n \otimes b^n \mid n \in \mathbb{N}\right\}$ (see the forthcoming \cref{lem:KkM}), but $\mathrm{C}^c(B) = \mathrm{C}(B) = \Bbbk M^\times \cong \Bbbk$, where the first equality follows from the fact that $\mathrm{C}(B)$ is cocommutative and the second one from \cref{thm:monoidbialgebra}.
\begin{invisible}
In fact, an arbitrary element of $M$ has the form $b^pa^q$ for $p,q \in \mathbb{N}$. If $b^pa^q$ and $b^ra^s$ in $M$ are such that
\[1 = b^pa^qb^ra^s = \begin{cases} b^{p+r-q}a^s & r \geq q \\ b^pa^{q-r+s} & r < q \end{cases}\]
then either $p+r-q=0=s$ or $p=0=q-r+s$, according to which case we are in, and in both cases we conclude that $p = 0 = |r-q| = 0 = s$, i.e., $p=0=s$ and $r=q$. Furthermore, the very same argument shows that if an element $b^pa^q$ is invertible, then necessarily $p=0$ (in order to have a right inverse) and $q=0$ (in order to have a left inverse), that is, $b^pa^q=1$. In addition, concerning the equality $\mathrm{C}^c(B) = \mathrm{C}(B) \cong \Bbbk$, notice that the canonical map $\epsilon^c_B \colon \mathrm{C}^c(B) \to B$ has to factor uniquely through the canonical map $\epsilon_B \colon \mathrm{C}(B) \to B$ and vice versa, since $\mathrm{C}(B)$ is cocommutative. By the two universal properties, the two factorisations are each other inverses.
\end{invisible}


It would be interesting to know whether an analogue of \cref{prop:HPcB} holds in the cocommutative case and under which conditions we may have an analogue of \cref{thm:intcomm}.

\subsection{The free bialgebra with \texorpdfstring{$i_B$}{iB} injective and the Hopf envelope of a \texorpdfstring{$n$}{n}-Hopf algebra} \hfill


\begin{invisible}\marginpar{\tiny\ps{In AdjMon or in AdjYD?}}
[\ps{DA SPOSTARE IN AdjMon:}] The aforementioned construction could be understood as an Hopf analogue of the  "right cancellative congruence" on a monoid (see AdjYD), that is the smallest congruence $\mathcal{R}$ on a monoid $G$ such that $G/\mathcal{R}$ is a right cancellative monoid.
Indeed if we apply the construction to a monoid algebra $B=\Bbbk G$, then $\tilde{B}$ will be a quotient of it, whence a monoid algebra of the form $\tilde{B}=\Bbbk L$ with $L=\pi_0(G)$. Moreover, we will see that, for a monoid algebra $\Bbbk C$, the map $i_{\Bbbk C}$ is injective if and only if $C$ is right cancellative (see AdjMon). Thus, given a monoid map $f:G\to C$  into a right cancellative monoid, then $\Bbbk f:\Bbbk G\to \Bbbk H$ factors through $\Bbbk L$ and hence $f$ factors through $L$. [From cf. AdjYD, Prop 0.8, it seems that $G$ is left adjustable iff $B_\infty=B_1$, with notations as in the following remark.]
\end{invisible}

Let $B$ be a left $n$-Hopf algebra. \cref{prop:semiantip} ensures that the morphism $i_B$ is surjective and so \cref{prop:etaSurj} entails that $\mathrm{H}(B)$ can be realised as a quotient of $B$ itself. However, the constructions and results from \S\ref{ssec:HopfEnv} do not seem sufficient to explicitly construct $\mathrm{H}(B)$ in this case. Here we address this difficulty and, to this aim, we include the construction of the \emph{free bialgebra $Q^\infty(B)$, with $i_{Q^\infty(B)}:Q^\infty(B)\to Q^\infty(B)\oslash Q^\infty(B)$ injective, generated by a bialgebra} $B$.
Our approach draws heavily on Kelly's work \cite{Kelly1}. Explicitly recall  that a \emph{well-pointed endofunctor} $(F,\sigma)$ on a category $\Cc$ is an endofunctor $F:\Cc\to\Cc$ together with a natural transformation $\sigma:\id\to F$ such that $F\sigma=\sigma F:F\to F^2$. An \emph{$F$-algebra} is an object $C\in\Cc$ together with an action $\mu:F(C)\to C$ satisfying $\mu\circ\sigma_C=\id_C$. With an \emph{$F$-algebra morphism} $f:(C,\mu)\to (C',\mu')$ defined as a map $f:C\to C'$ for which $f\circ \mu=\mu'\circ F(f)$, the $F$-algebras and their morphisms form a category $F\text{-}\Alg$. Kelly investigates conditions under which the obvious forgetful functor $U:F\text{-}\Alg\to \Cc$ has a left adjoint. Our aim is to apply this machinery to a suitable well-pointed endofunctor $(Q,q)$ arising from the bialgebra introduced in \cref{def:QB}.

Recall that a subcategory $\mathcal{D}$ of a category $\mathcal{C}$ is a \emph{replete} subcategory if for any object $D$ of $\mathcal{D}$ and any isomorphism $f \colon D \to C$ in $\mathcal{C}$, both $C$ and $f$ are also in $\mathcal{D}$.

\begin{proposition}
\label{prop:wpe}
With notations as in \cref{def:QB}, the assignment $B\mapsto Q(B)$ defines a well-pointed endofunctor $(Q,q)$ on $\Bialg$. Moreover $Q\text{-}\Alg$ may be identified with the full replete subcategory of $\Bialg$ consisting of those bialgebras $B$ for which $i_B$ is injective.
\end{proposition}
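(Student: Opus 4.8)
The plan is to establish, in order, that $Q$ is an endofunctor of $\Bialg$ equipped with the natural transformation $q$, that $(Q,q)$ is well-pointed, and that the $Q$-algebras form exactly the claimed subcategory. On objects $Q(B)$ is a bialgebra by \cref{prop:pconv}\,\ref{item:pconv1}. To define $Q$ on a bialgebra map $f\colon B\to C$ I would use \cref{lem:iB-inj}\,\ref{ib-inj_item2}, which gives $f(\ker(i_B))\subseteq\ker(i_C)$ and hence $f(\ker(i_B)B)\subseteq\ker(i_C)C=\ker(q_C)$; thus $q_C\circ f$ annihilates the bi-ideal $\ker(i_B)B=\ker(q_B)$ and factors uniquely through the quotient bialgebra map $q_B$ as a bialgebra map $Q(f)\colon Q(B)\to Q(C)$ determined by $Q(f)\circ q_B=q_C\circ f$. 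Functoriality ($Q(\id_B)=\id_{Q(B)}$ and $Q(gf)=Q(g)Q(f)$) then follows by cancelling the surjection $q_B$ in the defining identities, and that same identity $Q(f)\circ q_B=q_C\circ f$ is precisely the naturality square for $q\colon\id_{\Bialg}\Rightarrow Q$.

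Next I would check well-pointedness, i.e. $Qq=qQ$, which amounts to $Q(q_B)=q_{Q(B)}$ for each $B$. Instantiating the naturality of $q$ at the morphism $q_B\colon B\to Q(B)$ gives $Q(q_B)\circ q_B=q_{Q(B)}\circ q_B$, and since $q_B$ is surjective, hence an epimorphism in $\Bialg$, it cancels on the right to yield $Q(q_B)=q_{Q(B)}$. This is the only step where surjectivity of $q_B$ is used, and the argument is entirely formal.

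Finally I would invoke the standard description of the algebras of a well-pointed endofunctor $(F,\sigma)$: an object $C$ admits an $F$-algebra structure iff $\sigma_C$ is invertible, in which case $\sigma_C^{-1}$ is the unique such structure, and every morphism between two $F$-algebras is automatically an $F$-algebra morphism. Both assertions follow from naturality of $\sigma$ together with $F\sigma=\sigma F$: if $\mu\circ\sigma_C=\id_C$ then $\sigma_C\circ\mu=F(\mu)\circ\sigma_{FC}=F(\mu)\circ F(\sigma_C)=F(\mu\circ\sigma_C)=\id_{FC}$, while naturality at $f$ turns $F(f)\circ\sigma_C=\sigma_{C'}\circ f$ into $f\circ\sigma_C^{-1}=\sigma_{C'}^{-1}\circ F(f)$. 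Hence the forgetful functor $Q\text{-}\Alg\to\Bialg$ is fully faithful with replete image $\{B\mid q_B\text{ invertible}\}$. Since $q_B\colon B\to B/\ker(i_B)B$ is an isomorphism iff it is bijective iff $\ker(i_B)B=0$, which (as $1_B\in B$) holds iff $\ker(i_B)=0$, this image is exactly the full replete subcategory of bialgebras with $i_B$ injective. The one step demanding care is the well-definedness of $Q$ on morphisms, which rests squarely on \cref{lem:iB-inj}\,\ref{ib-inj_item2}; everything else is either formal or the elementary equivalence $\ker(i_B)B=0\Leftrightarrow\ker(i_B)=0$.
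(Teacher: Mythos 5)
Your proof is correct and follows essentially the same route as the paper: it constructs $Q$ on morphisms via \cref{lem:iB-inj}\,\ref{ib-inj_item2}, obtains well-pointedness by cancelling the surjection $q_B$ in the naturality square at $q_B$, and then reduces the identification of $Q\text{-}\Alg$ to the bijectivity of $q_B$, which equals injectivity of $i_B$ since $\ker(q_B)=\ker(i_B)B$. The only cosmetic difference is that you prove the general fact about algebras over a well-pointed endofunctor inline (existence of a structure map $\mu$ forces $\sigma_C\circ\mu=\id$, and morphisms are automatically $F$-algebra maps), whereas the paper cites \cite[Proposition 5.2]{Kelly1} for exactly this statement.
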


\begin{proof}
By \cref{lem:iB-inj}\,\ref{ib-inj_item2}, given a bialgebra map $f \colon B\to C$
we have that $f(\ker(i_B))\subseteq \ker(i_C)$ and hence $f(\ker(i_B)B)=f(\ker(i_B))f(B)\subseteq \ker(i_C) C$. As a consequence there is a (necessarily unique) bialgebra map $Q(f):Q(B)\to Q(C)$ such that $Q(f)\circ q_B=q_C\circ f$. This defines an endofunctor $Q:\Bialg\to\Bialg$ and a natural transformation $q:\id\to Q$ with component the projection $q_B$.
By naturality of $q$, we have $Q(q_B)\circ q_B=q_{Q(B)}\circ q_B$ so that, since $q_B$ is surjective, we get $Q(q_B)=q_{Q(B)}$ i.e. $Qq=qQ$. We have so proved that $(Q,q)$ is a well-pointed endofunctor on $\Bialg$.
By \cite[Proposition 5.2]{Kelly1}, the category $Q\text{-}\Alg$ may be identified with the full replete subcategory of $\Bialg$ determined by those bialgebras $B$ such that $q_B$ is bijective.
\begin{invisible}
We report here Kelly's idea. A $Q$-algebra is a bialgebra $B$ together with an action $\mu:Q(B)\to B$ satisfying $\mu\circ q_B=\id_B$.
Moreover, by naturality of $q$, we have
$q_B\circ \mu=Q(\mu)\circ q_{Q(B)}
=Q(\mu)\circ Q(q_{B})
=Q(\mu\circ q_{B})=\id_{Q(B)}.$ Thus the existence of $\mu$ is equivalent to  require that $q_B$ is bijective. Moreover, the condition $f\circ \mu=\mu'\circ Q(f)$ defining a $Q$-algebra map $f:(B,\mu)\to (B',\mu')$ is equivalent to  $f\circ q_B^{-1}=q_{B'}^{-1}\circ Q(f)$ which is always true by naturality of $q.$
\end{invisible}
Moreover, since $q_B$ is surjective and $\ker(q_B)=\ker(i_B)B$, $q_B$ is bijective if, and only if, it is injective if, and only if, $i_B$ is injective.
\end{proof}

Now a reflector for the forgetful functor $U:Q\text{-}\Alg\to \Bialg$ is constructed as the colimit of the so-called \emph{free-algebra sequence for $B$}, if it exists (\cite[\S5.2]{Kelly1}).  This sequence is defined recursively as follows,
\begin{equation}\label{eq:dirsystQ}
\xymatrix@C=1.5cm{B\ar[r]^{q_B}&Q(B)\ar[r]^{q_{Q(B)}}&Q^2(B)\ar[r]^{q_{Q^2(B)}}&\cdots}
\end{equation}
where $Q^0=\id$ and $Q^{n+1}=QQ^n$ with connecting-map equal to $qQ^n:Q^n\to QQ^n$.

For the limit-ordinal $\infty$, we define $Q^\infty \coloneqq \underrightarrow{\lim }_{n}Q^n$ with, as connecting-maps $\pi^\infty_n \colon Q^n\to Q^\infty$, the generators of the colimit-cone.

Recall that a full subcategory $\mathcal{D}$ of a category $\mathcal{C}$ is \emph{reflective} if the inclusion functor has a left adjoint and it is \emph{epi-reflective} when, in addition, the unit of the adjunction is an epimorphism.

\begin{theorem} 
\label{thm:Kellyfree}
The natural transformation $q Q^\infty:Q^\infty\to QQ^\infty$ is a natural isomorphism. As a consequence
the free-algebra sequence for $B$ converges (in the sense of \cite[\S5.2]{Kelly1}) at $\infty$ and $Q^\infty(B)$ is the free $Q$-algebra on $\Bialg$, the reflection of $B$ into $Q\text{-}\Alg$ being the connecting-map component $\pi_0^\infty:B\to Q^\infty(B)$. Thus $Q\text{-}\Alg$ results to be an epi-reflective subcategory of $\Bialg$.
\end{theorem}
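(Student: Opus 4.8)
The plan is to derive the whole statement from a single structural fact---that the endofunctor $Q$ preserves filtered colimits---together with Kelly's general convergence criterion for well-pointed endofunctors \cite[\S5.2]{Kelly1}. As a preliminary I would record that the forgetful functor $\Bialg\to\Vec$ creates filtered colimits: the (co)multiplication of a filtered colimit of bialgebras is assembled from the componentwise structure maps, using that $-\otimes-$ commutes with filtered colimits because the diagonal $J\to J\times J$ is cofinal when $J$ is filtered. In particular $Q^\infty(B)=\varinjlim_n Q^n(B)$ and all the $Q^n(B)$ are computed on underlying vector spaces, and the $\pi^\infty_n\colon Q^n(B)\to Q^\infty(B)$ are jointly epimorphic.

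The technical heart, and the step I expect to demand the most care, is verifying that $Q$ preserves filtered colimits. Writing $Q(B)=B/\ker(i_B)B$, I would treat the construction piece by piece. The assignment $B\mapsto B\oslash B=(B\otimes B)/(B\otimes B)B^+$ preserves filtered colimits since $-\otimes-$ does and $(B\otimes B)B^+$ is the image of a natural multiplication map, images being preserved by the exactness of filtered colimits in $\Vec$; as $i$ is natural by \cref{lem:foslf} and filtered colimits are exact, one obtains $\ker\big(i_{\varinjlim_j B_j}\big)=\varinjlim_j\ker(i_{B_j})$ and then $\ker\big(i_{\varinjlim_j B_j}\big)\cdot\big(\varinjlim_j B_j\big)=\varinjlim_j\big(\ker(i_{B_j})B_j\big)$ (again an image); finally the quotient is a cokernel, hence preserved. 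This yields a natural isomorphism $Q(\varinjlim_j B_j)\cong\varinjlim_j Q(B_j)$.

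Granting this, I would conclude that $qQ^\infty$ is invertible as follows. Applying $Q$ to the colimit cocone and using $Qq=qQ$ (\cref{prop:wpe}) exhibits $Q(Q^\infty(B))$ as the colimit $\varinjlim_n Q^{n+1}(B)$ of the shifted chain $Q^1(B)\to Q^2(B)\to\cdots$, with cocone $Q(\pi^\infty_n)$. Since this shifted chain is cofinal in the original sequence, $\{\pi^\infty_{n+1}\}_n$ is another colimit cocone on it, so there is a unique isomorphism $\phi\colon Q(Q^\infty(B))\to Q^\infty(B)$ with $\phi\circ Q(\pi^\infty_n)=\pi^\infty_{n+1}$ for all $n$. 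Naturality of $q$ along $\pi^\infty_n$ gives $q_{Q^\infty(B)}\circ\pi^\infty_n=Q(\pi^\infty_n)\circ q_{Q^n(B)}$, whence
\[\phi\circ q_{Q^\infty(B)}\circ\pi^\infty_n=\pi^\infty_{n+1}\circ q_{Q^n(B)}=\pi^\infty_n\]
for every $n$, the last equality holding because the maps $\pi^\infty_n$ form a cocone over the connecting maps $q_{Q^n(B)}$. As the $\pi^\infty_n$ are jointly epimorphic, this forces $\phi\circ q_{Q^\infty(B)}=\id$, so $q_{Q^\infty(B)}=\phi^{-1}$ is invertible; naturality in $B$ then makes $qQ^\infty$ a natural isomorphism, with $\phi$ supplying the $Q$-algebra structure on $Q^\infty(B)$.

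The remaining assertions are then formal. The invertibility of $qQ^\infty$ is exactly the convergence of the free-algebra sequence at $\infty$ in the sense of \cite[\S5.2]{Kelly1}, and that reference identifies $Q^\infty(B)$, with the action $q_{Q^\infty(B)}^{-1}$, as the reflection of $B$ into $Q\text{-}\Alg$ with unit $\pi^\infty_0$. To see the reflection is epi-reflective it suffices to check that $\pi^\infty_0$ is an epimorphism: each connecting map $q_{Q^n(B)}$ is a quotient projection, hence surjective, so every composite $B=Q^0(B)\to Q^n(B)$ is surjective; since each element of the filtered colimit $Q^\infty(B)$ lifts to some $Q^n(B)$ and thence to $B$, the map $\pi^\infty_0\colon B\to Q^\infty(B)$ is surjective, and therefore an epimorphism because $\Bialg\to\Vec$ is faithful.
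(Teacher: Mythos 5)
Your proof is correct, and it reaches the conclusion by a genuinely different route than the paper. The paper never establishes that $Q$ preserves filtered colimits. Instead it argues directly on the free-algebra sequence: writing $K^n \coloneqq \ker(i_{Q^n(B)})$, it uses exactness of direct limits in $\Vec$, together with the identification $\varinjlim_n \big(Q^n(B) \oslash Q^n(B)\big) \cong Q^\infty(B) \oslash Q^\infty(B)$ (obtained from $X \oslash X \cong (X \otimes X) \otimes_X \Bbbk$ and compatibility of $\otimes$ with direct limits), to conclude that $\ker\big(i_{Q^\infty(B)}\big) \cong \varinjlim_n K^n$; this colimit vanishes for the concrete reason that every connecting map $K^n \to K^{n+1}$ is zero, since each stage of the sequence kills the previous kernel. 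Injectivity of $i_{Q^\infty(B)}$ then gives bijectivity of $q_{Q^\infty(B)}$, because $q_{Q^\infty(B)}$ is surjective with kernel $\ker\big(i_{Q^\infty(B)}\big)Q^\infty(B)$ (as in \cref{prop:wpe}). Your argument instead proves the stronger and reusable statement that $Q$ is finitary --- via the same exactness and tensor computations, plus the image argument for $\ker(i_B)B$ and preservation of cokernels --- and then deduces the invertibility of $q_{Q^\infty(B)}$ by the purely formal cofinality-plus-well-pointedness argument, without ever inspecting the kernel system; this is precisely Kelly's ``constructive'' convergence mechanism for well-pointed endofunctors preserving chain colimits. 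What each buys: your version isolates a property of $Q$ that yields convergence at $\omega$ uniformly in the input (and could be cited for other purposes), while the paper's version is tailored to this particular sequence but exhibits the concrete reason the obstruction dies, namely that the kernels vanish already at the very next stage. The final steps --- invoking \cite[Proposition 5.3]{Kelly1} for the reflection, and proving that $\pi_0^\infty$ is surjective, hence an epimorphism, to get epi-reflectivity --- coincide in both proofs.
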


\begin{proof}
Recall, first of all, that the direct limit in vector spaces of a system of algebras or bialgebras is again an algebra or a bialgebra, respectively. In particular, the bialgebra $Q^{\infty}(B)$ can be realized as the colimit of \eqref{eq:dirsystQ} computed in vector spaces. To prove the theorem,
we just have to prove that the component $q_{Q^\infty(B)}:Q^\infty(B)\to QQ^\infty(B)$ of the natural transformation
$q Q^\infty$ is an isomorphism so that
the free-algebra sequence for $B$ converges at $\infty$. Indeed, if this is the case, then \cite[Proposition 5.3]{Kelly1} states that $Q^\infty(B)$ results to be the free $Q$-algebra on $\Bialg$, the reflection of $B$ into $Q\text{-}\Alg$ being the connecting-map component $\pi_0^\infty:B\to Q^\infty(B)$. Thus $Q\text{-}\Alg$ results to be a reflective subcategory of $\Bialg$, in fact an epi-reflective one as we are going to show that $\pi_0^\infty$ is  surjective.
Now, let us simplify the notation by setting $B^n\coloneq Q^n(B)$ and $\pi_n^{n+1}\coloneq q_{Q^n(B)}$ so that the free-algebra sequence rewrites more compactly as
\[\xymatrix@C=1.5cm{B^0\ar[r]^{\pi_0^1}&B^1\ar[r]^{\pi_1^2}&B^2\ar[r]^{\pi_2^3}&\cdots}\]
We also set $B^\infty:=Q^\infty(B)=\underrightarrow{\lim }_{n}Q^n(B)=\underrightarrow{\lim }_{n}B^n$, computed in vector spaces, we denote by
$\pi_n^\infty:B^n\to B^\infty$ the component of the connecting-map and by $i_{B^\infty} \colon B^\infty \to B^\infty \oslash B^\infty$ the corresponding canonical morphism.
By definition of $Q$, if we set $K^n:=\ker(i_{B^n})$, then we have $B^{n+1}=B^n/\langle K^n\rangle$ for every $n\geq 0$ and $K^n \subseteq \ker(\pi_n^{n+1})$. Set $K^\infty \coloneqq \ker (i_{B^\infty})$.
Since each $\pi_n^{n+1}$ is a bialgebra map by \cref{prop:pconv}\,\ref{item:pconv1}, and so $i_{B^{n+1}} \circ \pi_n^{n+1} = (\pi_n^{n+1}\oslash \pi_n^{n+1}) \circ i_{B^n}$ by \cref{lem:foslf}, we have a direct system of vector spaces $K^0 \xrightarrow{0} K^1 \xrightarrow{0} K^2 \xrightarrow{0} \cdots$ which fits the forthcoming diagram \eqref{eq:bigcomplex}. We will check that $K^\infty \cong \underrightarrow{\lim }_{n} K^n$, the latter being $0$ as all the maps of the system are $0$.
\begin{equation}\label{eq:bigcomplex}
\begin{gathered}
\xymatrix@R=15pt@C=40pt{K^0\ar@{^(->}[d]\ar[r]^{0}&K^1\ar@{^(->}[d]\ar[r]^{0}&K^2\ar@{^(->}[d]\ar[r]^{0}&\cdots&K^\infty\ar@{^(->}[d]\\
B^0\ar[d]^{i_{B^0}}\ar[r]^{\pi_0^1}&B^1\ar[d]^{i_{B^1}}\ar[r]^{\pi_1^2}&B^2\ar[d]^{i_{B^2}}\ar[r]^{\pi_2^3}&\cdots&B^\infty\ar[d]^{i_{B^\infty}}\\
B^0\oslash B^0\ar[r]^{\pi_0^1\oslash\pi_0^1}&B^1\oslash B^1\ar[r]^{\pi_1^2\oslash\pi_1^2}&B^2\oslash B^2\ar[r]^-{\pi_2^3\oslash\pi_2^3}&\cdots&B^\infty\oslash B^\infty
}
\end{gathered}
\end{equation}

Since passing to the direct limit preserves exactness (see e.g. \cite[Proposition 3, Page 287]{BourbakiI}), from the exact sequences $0\to K^n\to B^n\overset{i_{B^n}}{\to} B^n\oslash B^n$ for $n \geq 0$, we get the exact sequence
\begin{equation}\label{eq:dirlim}
0\to \underrightarrow{\lim }_{n} K^n\to \underrightarrow{\lim }_{n}B^n\xrightarrow{\underrightarrow{\lim }_{n}i_{B^n}} \underrightarrow{\lim }_{n} (B^n\oslash B^n).
\end{equation}
In view of \cite[Proposition 7, Page 290]{BourbakiI},
\begin{align*}
\underrightarrow{\lim }_{n} (B^n\oslash B^n)
 & \cong \underrightarrow{\lim }_{n} ((B^n\otimes B^n)\otimes_{B^n}\Bbbk)
\cong
( \underrightarrow{\lim }_{n}(B^n\otimes B^n))\otimes_{\underrightarrow{\lim }_{n}B^n}\Bbbk \\
& \cong
( (\underrightarrow{\lim }_{n} B^n)\otimes (\underrightarrow{\lim }_{n} B^n))\otimes_{\underrightarrow{\lim }_{n}B^n}\Bbbk =
(B^\infty\otimes B^\infty)\otimes_{B^\infty}\Bbbk
\cong
B^\infty\oslash B^\infty
\end{align*}
Hence, \eqref{eq:dirlim} rereads as
\[0\to \underrightarrow{\lim }_{n} K^n\to B^\infty \xrightarrow{i_{B^\infty}} B^\infty\oslash B^\infty.\]
Thus, $K^\infty \cong \underrightarrow{\lim }_{n} K^n$
and we have so proved that $i_{B^\infty}$ is injective.
Equivalently, 
$q_{B^\infty}$ is bijective.

Finally, note that $\pi_0^\infty:B\to B^\infty$ is surjective. Indeed, since for every $n \geq 1$, $B^n = \pi_0^n(B)$, one gets that $B^\infty = \bigcup_{n \in \mathbb{N}} \pi^\infty_n(B^n) = \bigcup_{n\in\mathbb{N}} \pi_n^\infty(\pi_0^n(B))=\pi_0^\infty(B)$ (see e.g.\ \cite[\S I.10.3, Page 120]{BourbakiI}).
\end{proof}

\begin{corollary}
\label{thm:Kellyisu}
Let $B$ be a bialgebra with $i_B$ surjective. Then the map $i_{Q^\infty(B)}$ is bijective i.e.\ $Q^\infty(B)$ is a right Hopf algebra with anti-multiplicative and anti-comultiplicative right antipode.
\end{corollary}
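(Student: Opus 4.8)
The plan is to split the claim into the injectivity and surjectivity of $i_{Q^\infty(B)}$, observe that the former has already been secured, and derive the latter from the fact that $Q^\infty(B)$ is a quotient of $B$. Indeed, \cref{thm:Kellyfree} shows that $q_{Q^\infty(B)}$ is bijective, and by the closing sentence of the proof of \cref{prop:wpe} this is equivalent to $i_{Q^\infty(B)}$ being injective. Hence the only missing ingredient is the surjectivity of $i_{Q^\infty(B)}$.

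For surjectivity I would exploit the stability of this property under passing to quotients. The connecting-map component $\pi_0^\infty \colon B \to Q^\infty(B)$ is a surjective bialgebra map, which is precisely the final assertion in the proof of \cref{thm:Kellyfree}; thus $Q^\infty(B)$ is a quotient bialgebra of $B$. Since $i_B$ is surjective by hypothesis, \cref{lem:iB-su}\,\ref{iBsurj.item4} applies directly with $C = B$ and yields that $i_{Q^\infty(B)}$ is surjective as well. (Alternatively, one could first note that every $Q^n(B)$ is a quotient of $B$, so that $i_{Q^n(B)}$ is surjective for all $n$ by the same result, and then push surjectivity through the colimit; but routing the argument through $\pi_0^\infty$ is more economical.)

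Putting the two halves together, $i_{Q^\infty(B)}$ is bijective. To obtain the Hopf-theoretic formulation I would then invoke the equivalence \cref{prop:Frobenius}\,\ref{item:1sided3}$\,\Leftrightarrow\,$\ref{item:1sided4}: the invertibility of $i_{Q^\infty(B)}$ is exactly the statement that $Q^\infty(B)$ is a right Hopf algebra whose right antipode is anti-multiplicative and anti-comultiplicative, as desired. I do not expect a genuine obstacle at this stage: all the substantive work lives in \cref{thm:Kellyfree} (the convergence of the free-algebra sequence and the direct-limit/exactness computation that delivers injectivity of $i_{Q^\infty(B)}$), while the present corollary merely transports the hypothesis ``$i_B$ surjective'' along the quotient $\pi_0^\infty$ via the quotient-stability of surjectivity and finally converts bijectivity of $i$ into the language of one-sided antipodes through \cref{prop:Frobenius}.
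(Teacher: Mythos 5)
Your proposal is correct and follows essentially the same route as the paper: injectivity of $i_{Q^\infty(B)}$ from \cref{thm:Kellyfree} (the paper's proof of that theorem in fact establishes injectivity of $i_{B^\infty}$ directly, which is why your detour through $q_{Q^\infty(B)}$ and \cref{prop:wpe} is equivalent), surjectivity from \cref{lem:iB-su}\,\ref{iBsurj.item4} applied to the surjection $\pi_0^\infty$, and the conclusion via \cref{prop:Frobenius}. No gaps.
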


\begin{proof}
 By \cref{thm:Kellyfree},  the map $i_{Q^\infty(B)}$ is injective. Moreover, since $i_B$ surjective so is $i_{Q^\infty(B)}$ in view of \cref{lem:iB-su}\,\ref{iBsurj.item4}. Thus $i_{Q^\infty(B)}$ is bijective and we conclude by \cref{prop:Frobenius}.
\end{proof}

\begin{proposition}
\label{prop:envleftn}
Let $B$ be a left $n$-Hopf algebra. Then $\mathrm{H}(B)=Q^\infty(B)$.
\end{proposition}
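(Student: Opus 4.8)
The plan is to show that $Q^\infty(B)$ is already the Hopf envelope of $B$, by upgrading it from a right Hopf algebra to a genuine Hopf algebra and then reading off the universal property from \cref{thm:Kellyfree}. Since $B$ is a left $n$-Hopf algebra, \cref{prop:semiantip}\,\ref{item:semiantip1} gives that $i_B$ is surjective, whence \cref{thm:Kellyisu} ensures that $Q^\infty(B)$ is a right Hopf algebra with an anti-multiplicative and anti-comultiplicative right antipode $\mathcal{S}$, while \cref{thm:Kellyfree} provides a surjective bialgebra map $g \coloneqq \pi_0^\infty \colon B \to Q^\infty(B)$. Fix a left $n$-antipode $S$ of $B$, so that $S * \id_B^{*n+1} = \id_B^{*n}$.

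The heart of the argument is to prove that $\mathcal{S}$ is in fact a two-sided antipode for $Q^\infty(B)$. Because $g$ is a bialgebra map, postcomposition $\phi \mapsto g \circ \phi$ is a homomorphism of convolution algebras $\mathrm{End}_\Bbbk(B) \to \mathrm{Hom}_\Bbbk(B,Q^\infty(B))$ sending $\id_B^{*k}$ to the convolution power $g^{*k} = \id_{Q^\infty(B)}^{*k} \circ g$. Applying it to $S * \id_B^{*n+1} = \id_B^{*n}$ yields $(g \circ S) * g^{*n+1} = g^{*n}$. Since $\id_{Q^\infty(B)} * \mathcal{S} = u \circ \varepsilon$, a telescoping argument gives $\id_{Q^\infty(B)}^{*k} * \mathcal{S}^{*k} = u \circ \varepsilon$, so that $\mathcal{S}^{*k} \circ g$ is a right convolution inverse of $g^{*k}$ (using that $(\alpha \circ g) * (\beta \circ g) = (\alpha * \beta) \circ g$ for $\alpha,\beta \in \mathrm{End}_\Bbbk(Q^\infty(B))$, as $g$ is a coalgebra map). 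Convolving the identity $(g \circ S) * g^{*n+1} = g^{*n}$ on the right with $\mathcal{S}^{*n} \circ g$ and using $g^{*n+1} = g * g^{*n}$ together with $g^{*n} * (\mathcal{S}^{*n} \circ g) = u \circ \varepsilon$, the equation reduces to $(g \circ S) * g = u \circ \varepsilon$; hence $g \circ S$ is a left convolution inverse of $g$. On the other hand $g * (\mathcal{S} \circ g) = (\id_{Q^\infty(B)} * \mathcal{S}) \circ g = u \circ \varepsilon$ shows that $\mathcal{S} \circ g$ is a right convolution inverse of $g$, so the two inverses coincide and $g$ is two-sided convolution invertible with inverse $\mathcal{S} \circ g$. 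Finally, $(\mathcal{S} * \id_{Q^\infty(B)}) \circ g = (\mathcal{S} \circ g) * g = u \circ \varepsilon = (u \circ \varepsilon) \circ g$, and the surjectivity of $g$ forces $\mathcal{S} * \id_{Q^\infty(B)} = u \circ \varepsilon$. Together with $\id_{Q^\infty(B)} * \mathcal{S} = u \circ \varepsilon$, this proves that $Q^\infty(B)$ is a Hopf algebra with antipode $\mathcal{S}$.

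It remains to verify the universal property. Every Hopf algebra $H$ has $i_H$ bijective by \cref{prop:Frobenius}, hence in particular injective, so $H$ belongs to the full replete subcategory $Q\text{-}\Alg$ of \cref{prop:wpe}. Thus, for any bialgebra map $f \colon B \to H$, the reflection property of $\pi_0^\infty \colon B \to Q^\infty(B)$ established in \cref{thm:Kellyfree} yields a unique bialgebra map $Q^\infty(B) \to H$ factoring $f$ through $\pi_0^\infty$; as both $Q^\infty(B)$ and $H$ are Hopf algebras, this factorization is automatically a morphism of Hopf algebras, and its uniqueness as such follows from its uniqueness as a bialgebra map. This is precisely the universal property defining $\mathrm{H}(B)$, so $\mathrm{H}(B) = Q^\infty(B)$.

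The main obstacle is the middle paragraph: one cannot directly transport the left $n$-antipode $S$ along the surjection $g$, since a priori $S$ need not preserve $\ker(g)$, and \cref{lem:n-Hopfconv}\,\ref{item:fHopf1} does not apply verbatim because it is $B$, not $Q^\infty(B)$, that is assumed to be $n$-Hopf. The convolution cancellation above circumvents this; as a byproduct it shows $g \circ S = \mathcal{S} \circ g$, so that $S$ does descend and $\mathcal{S}$ realises it on $Q^\infty(B)$, which combined with \cref{lem:nthS} (a left $m$-Hopf algebra that is also a right Hopf algebra must have $m = 0$) gives an alternative route to the same conclusion.
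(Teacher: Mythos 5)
Your proof is correct, and its skeleton matches the paper's: both first combine \cref{prop:semiantip} with \cref{thm:Kellyisu} to make $Q^\infty(B)$ a right Hopf algebra, then upgrade it to a genuine Hopf algebra by a convolution-cancellation argument on the surjection $\pi_0^\infty$, and finally read off the universal property from \cref{thm:Kellyfree} together with \cref{prop:Frobenius}. The difference is in how the upgrade is carried out. The paper proceeds modularly: it gets left convolution invertibility of $\pi_0^\infty$ by factoring it as $\pi_1^\infty \circ \pi_0^1$ and citing \cref{coro:pconv} (whose proof rests on \cref{prop:pconv}\,\ref{item:pconv3}, hence on the operator $S$ of \cref{lem:iBsu} and \cref{cor:Sprops}), and then invokes \cref{lem:n-Hopfconv}\,\ref{item:fHopf1} as a black box. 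You instead work directly at the level of $Q^\infty(B)$: you push the $n$-antipode identity through $g = \pi_0^\infty$ and cancel against $\mathcal{S}^{*n} \circ g$, a right convolution inverse of $g^{*n}$ manufactured from the right antipode $\mathcal{S}$ of the target rather than from any structure on $B$ itself, thereby re-deriving inline both the two-sided convolution invertibility of $g$ and the content of \cref{lem:n-Hopfconv}. Your route bypasses \cref{coro:pconv} and the intermediate stage $Q(B)$ entirely, at the cost of redoing the cancellation computations; as a bonus it makes explicit the identity $g \circ S = \mathcal{S} \circ g$, i.e., that the $n$-antipode of $B$ descends to the right antipode of $Q^\infty(B)$, a fact the paper's proof never records. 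Both arguments are sound; the paper's is shorter because it reuses its lemmas, while yours is more self-contained.
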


\begin{proof}
We keep some notation from the proof of \cref{thm:Kellyfree}. By \cref{coro:pconv}, the map $\pi_0^1:B\to Q(B)$ is convolution invertible thus, in particular, left convolution invertible. Since the map $\pi_0^\infty:B\to Q^\infty(B)$ can be written as $\pi_0^\infty=\pi_1^\infty\circ \pi_0^1$, we get that $\pi_0^\infty$ is left convolution invertible, too.
Since $B$ is a left $n$-Hopf algebra, $i_B$ is surjective by \cref{prop:semiantip} and so \cref{thm:Kellyisu} entails that $Q^\infty(B)$ is a right Hopf algebra.
Therefore, we get that $Q^\infty(B)$ is a Hopf algebra by \cref{lem:n-Hopfconv}. The universal property of $Q^\infty(B)$ now entails that $\mathrm{H}(B)=Q^\infty(B)$.
\end{proof}

\begin{remark}\label{rem:symmconstruction}
Let $B$ be a bialgebra with $i_B$ surjective. In \cref{prop:etaSurj} we proved that the map $\eta_B:B\to \mathrm{H}(B)$ is surjective by using the existence of $\mathrm{H}(B)$. In fact, the bialgebra  $\mathrm{H}(B)$ can be iteratively constructed by means of the machinery we have developed so far. Indeed, by \cref{thm:Kellyisu} the iteratively constructed bialgebra $Q^\infty(B)$ is a right Hopf algebra i.e. a right $0$-Hopf algebra. Thus we can apply the symmetric version of \cref{prop:envleftn} to conclude that $\mathrm{H}(Q^\infty(B))=\tilde{Q}^\infty Q^\infty(B)$ where $\tilde{Q}(B)=B/B\ker(i'_B)$. Here $i'_B: B \to  B\obackslash B, b\mapsto 1_B \obackslash b$ and $B\obackslash B =(_\bullet B\otimes _\bullet B)/B^+(_\bullet B\otimes _\bullet B)$. Now the universal property of $Q^\infty(B)$ entails that, for any bialgebra map $f:B\to H$ with $H$ a Hopf algebra, one has a unique bialgebra map $f':Q^\infty(B)\to B$ such that $f'\circ \pi_0^\infty =f$. Therefore the similar property of $\tilde{Q}$ implies there is a unique bialgebra map $f'':\tilde{Q}^\infty Q^\infty(B)\to H$
such that $f''\circ (\pi')_0^\infty=f'$ where $(\pi')_0^\infty:Q^\infty(B)\to \tilde{Q}^\infty Q^\infty(B)$ is the canonical projection.
Thus $\mathrm{H}(B)=\mathrm{H}(Q^\infty(B))=\tilde{Q}^\infty Q^\infty(B)$.
\end{remark}

\begin{remark}
Note that $Q^\infty(B)$ and $\mathrm{H}(B)$ are not always isomorphic for an arbitrary bialgebra $B$. For instance, if we start from a bialgebra $B$ with $i_B$ is injective, then $Q(B)=B/\ker(i_B)B\cong B$ so that $Q^\infty(B)\cong B$. Thus, if $B$ is not a Hopf algebra, we get $Q^\infty(B)\ncong \mathrm{H}(B)$ in this case. For instance, the monoid algebra $B=\Bbbk \mathbb{N}$, which is not a Hopf algebra as $\mathbb{N}$ is not a group, embeds into the group algebra $H=\Bbbk \mathbb{Z}$ so that $i_B$ is injective, by \cref{lem:iB-inj}. Thus $Q^\infty(\Bbbk \mathbb{N})\ncong \mathrm{H}(\Bbbk \mathbb{N}).$
\end{remark}

We are not aware of any example of a left $n$-Hopf for which $Q^\infty(B)\neq Q(B)$.

\begin{invisible}
\rd{[Can we find an example of a left $n$-Hopf where $Q^\infty(B)\neq Q(B)$? L'algebra di monoide $B=\Bbbk G$ è fuori gioco perchè $i_B$ suriettiva implica left adjustable (cf. prop:eureka in AdjMon) e questo sembra implicare (da scrivere bene) $i_{Q(B)}$ is injective whence $Q^\infty(B) = Q(B)$  (cf. pro:ideaFerri in AdjMon) \ps{[Non la trovo]}. \rd{[Era AdjYD. A occhio dovrebbe funzionare così: Sappiamo che $i_{B}:B=\Bbbk G\to B\otimes B=\Bbbk S$ dove $S=\{x\otimes y\mid x,y\in G\}$ (AdjMon:prop:oslashKG). Quindi $i_{B}=\Bbbk i_G$ dove $i_G:G\to S,g\mapsto g\oslash 1$ è quella in [prop:eureka]. La relazione d'equivalenza associata a questa funzione è $\{(x,y)\in G\times G \mid i_G(x)=i_G(y)\}=\mathcal{W}$. Quindi $\overline{i_G}:G/\mathcal{W}\to S,[x]\mapsto i_G(x),$ è una biiezione. Per [pro:ideaFerri] $\mathcal{W}=\eta$ e dunque $G/\mathcal{W}$ è un monoide cancellativo a destra e quindi lo è anche $S$. Ora vorrei dedurre che $i_B$ è morfismo di bialgebre (così il ker è un ideale destro) e che $Q(B)=B/ker(i_B)B=B/ker(i_B)\cong Im(i_B)=\Bbbk S$ con $S$ monoide cancellativo e quindi usare [AdjMon:prop:iBinjKG]. ]}\medskip\newline
Probably the answer is yes: we should consider the finite dual $(\Bbbk M)^\circ$. We have to check that $\mathrm{Q}((\Bbbk M)^\circ)=(\mathrm{K}(\Bbbk M))^\circ =(\Bbbk M^\ell)^\circ$ and $\mathrm{H}((\Bbbk M)^\circ)=(\mathrm{C}(\Bbbk M))^\circ =(\Bbbk M^\times)^\circ.$
]}\medskip\newline
\end{invisible}

\subsection{The cofree Hopf algebra of a perfect bialgebra and of a \texorpdfstring{$n$}{n}-Hopf algebra}\label{ssec:itercofree}
The following result is a straightforward counterpart of \cref{prop:wpe} in the dual setting.

\begin{proposition}
\label{prop:wpedual}
With notations as in \cref{def:KB}, the assignment $B\mapsto K(B)$ defines a well-copointed endofunctor $(K,k)$ on $\Bialg$. Moreover the category $K\text{-}\Coalg$ of
$K$-coalgebras and their morphisms may be identified with the full replete subcategory of $\Bialg$ consisting of those bialgebras $B$ for which $K(B)=B$ or, equivalently,  for which $p_B$ is surjective.
\end{proposition}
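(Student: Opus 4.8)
The plan is to dualize verbatim the proof of \cref{prop:wpe}, trading ``quotient'', ``kernel'' and ``epimorphism $q_B$'' for ``sub-object'', ``image'' and ``monomorphism $k_B$'', and invoking the results of \cref{ssec:p_B} in place of those of \cref{ssec:i_B}. First I would check functoriality of the assignment $B \mapsto K(B)$, which by \cref{prop:pconvd}\,\ref{item:pconvd1} lands in $\Bialg$. Given a bialgebra map $f \colon B \to C$, the space $K(B)$ is a sub-coalgebra of $B$ contained in $\im(p_B)$ by \cref{lem:KBcoalg}, so its image $f(K(B))$ is a sub-coalgebra of $C$ (the image of a sub-coalgebra under a coalgebra map); moreover \cref{prop:pB-surj}\,\ref{pB-surj_item2} applied to $f$ gives $f(\im(p_B)) \subseteq \im(p_C)$, whence $f(K(B)) \subseteq \im(p_C)$. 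Since $K(C)$ is the \emph{sum of all} sub-coalgebras of $C$ contained in $\im(p_C)$, again by \cref{lem:KBcoalg}, I conclude $f(K(B)) \subseteq K(C)$. Thus $f$ corestricts to a (necessarily unique, since $k_C$ is injective) bialgebra map $K(f) \colon K(B) \to K(C)$ satisfying $k_C \circ K(f) = f \circ k_B$; as $K(f)$ is merely the restriction of $f$, functoriality and the naturality of $k \colon K \to \id$ are immediate.

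For the well-copointedness condition $Kk = kK$, i.e.\ $K(k_B) = k_{K(B)} \colon K(K(B)) \to K(B)$, I would feed the morphism $k_B \colon K(B) \to B$ into the naturality square of $k$, obtaining $k_B \circ K(k_B) = k_B \circ k_{K(B)}$. This is precisely dual to the identity $Q(q_B) = q_{Q(B)}$ of \cref{prop:wpe}: there one cancels the \emph{epimorphism} $q_B$ on the right, whereas here one cancels the \emph{monomorphism} $k_B$ (injective, being an inclusion) on the left, concluding $K(k_B) = k_{K(B)}$. Hence $(K,k)$ is a well-copointed endofunctor on $\Bialg$.

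For the second assertion, by the dual of \cite[Proposition 5.2]{Kelly1} the category $K\text{-}\Coalg$ is identified with the full replete subcategory of $\Bialg$ consisting of those bialgebras $B$ for which $k_B$ is an isomorphism. Since $k_B$ is always injective, it is bijective if and only if it is surjective, i.e.\ if and only if $K(B) = B$. It remains to match this with the surjectivity of $p_B$: from $K(B) \subseteq \im(p_B) \subseteq B$ (\cref{lem:KBcoalg}) the equality $K(B) = B$ forces $\im(p_B) = B$; conversely, if $p_B$ is surjective then $\im(p_B) = B$ is itself a sub-coalgebra of $B$ contained in $\im(p_B)$, so the maximality clause of \cref{lem:KBcoalg} yields $K(B) = B$.

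The only genuine point of care, mild as it is, lies in the functoriality step. In the $Q$-case, \cref{lem:iB-inj}\,\ref{ib-inj_item2} directly carries $\ker(i_B)$ into $\ker(i_C)$ and $f$ descends along the quotient projection with no further thought; here, by contrast, one must verify that $f$ \emph{corestricts} to $K$, and this is not simply the inclusion $f(\im(p_B)) \subseteq \im(p_C)$ but its combination with the intrinsic coalgebraic description of $K$ supplied by \cref{lem:KBcoalg}. Every remaining step is a formal dualization.
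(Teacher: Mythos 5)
Your proof is correct and follows essentially the same route as the paper's: corestriction of $f$ to $K$, cancellation of the monomorphism $k_B$ in the naturality square to obtain $K(k_B)=k_{K(B)}$, and the identification of $K$-coalgebras via injectivity of $k_B$ together with the sandwich $K(B)\subseteq \im(p_B)\subseteq B$ all match the paper's argument. The only (cosmetic) divergence is the corestriction step, which the paper verifies by a direct computation $\Delta_C(f(a))=(f\otimes f)\Delta_B(a)\in \im(p_C)\otimes C$ using \cref{lem:foslf}, whereas you deduce it from the maximality clause of \cref{lem:KBcoalg} combined with \cref{prop:pB-surj}\,\ref{pB-surj_item2}; both are equally valid and of comparable length.
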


\begin{invisible} We include here the proof of what is claimed above.
\begin{proof}
Given a bialgebra map $f:A\to B$ and an element $a\in K(A)$, we have $\Delta_B(f(a))
=(f\otimes f)\Delta_A(a)
\in (f\otimes f)(\im(p_A)\otimes A)
\subseteq \im(f\circ p_A)\otimes B
=\im(p_B\circ (f\boxslash f))\otimes B
\subseteq \im(p_B)\otimes B
$ so that $f(a)\in K(B)$ and hence the restriction of $f$ yields a (necessarily unique) bialgebra map $K(f):K(A)\to K(B)$ such that $k_B\circ K(f)=f\circ k_A$. Thus we get a functor $K:\Bialg\to \Bialg$ and a natural transformation $k:K\to \id$ which is injective on components. The naturality yields $k_B\circ K(k_B)=k_B\circ k_{K(B)}$ and hence, by injectivity of $k_B$, we arrive at $K(k_B)=k_{K(B)}$ so that $Kk=kK$. We have so proved that $(K,k)$ is a well-copointed endofunctor. A $K$-coalgebra is then an object $A$ together with a coaction $\rho: A\to K(A)$ satisfying $k_A\circ \rho =\id_A$ but, since $k_A$ is injective, this is equivalent to ask that $k_A$ is surjective i.e. $K(A)=A$. Since $K(A)\subseteq \im(p_A)\subseteq A$,
the latter equality is equivalent to surjectivity of $p_A$. Therefore the category $K\textbf{-}\Coalg$ of $K$-coalgebras and their morphisms may be identified with the full replete subcategory of $\Bialg$ consisting of those bialgebras $B$ for which $p_B$ is surjective.
\end{proof}
\end{invisible}

Because of a lack of symmetry due to the fact that the inverse limit is not right exact (see below), we are led to restrict our attention to the full subcategory $\Bialg^{\mathsf{in}}$ of $\Bialg$ consisting of those bialgebras $B$ whose associated $p_B \colon B \boxslash B \to B$ is injective.

\begin{corollary}
    The well-copointed endofunctor $(K,k)$ from \cref{prop:wpedual} induces, by restriction and corestriction, a well-copointed endofunctor $(K,k)$ on the full subcategory $\Bialg^{\mathsf{in}}$ of bialgebras $B$ with $p_B$ injective. The corresponding category of $K$-coalgebras may be identified with the full replete subcategory of right Hopf algebras whose right antipode is an anti-bialgebra map.
\end{corollary}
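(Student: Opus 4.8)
The plan is to treat the two assertions in turn, relying on the analysis of $p_B$ carried out in \cref{ssec:p_B} together with \cref{prop:wpedual}. Throughout I would exploit the fact that, by \cref{lem:KBcoalg}, one always has $K(B)\subseteq\im(p_B)\subseteq B$, so that the behaviour of the endofunctor is controlled by the image and the kernel of $p_B$.

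For the first assertion, the only substantive point is that $K$ preserves the subcategory $\Bialg^{\mathsf{in}}$; everything else (the action on morphisms, the components of $k$, and the well-copointedness identity $Kk=kK$) is inherited verbatim from \cref{prop:wpedual}, since $\Bialg^{\mathsf{in}}$ is full and the canonical injections $k_B$ remain morphisms within it. So I would start from a bialgebra $B$ with $p_B$ injective. By \cref{prop:pconvd}\,\ref{item:pconvd1}, $K(B)$ is a sub-bialgebra of $B$; applying \cref{prop:pbInj}\,\ref{item:pbInj4} to the inclusion $K(B)\hookrightarrow B$ then shows that $p_{K(B)}$ is injective, i.e.\ $K(B)\in\Bialg^{\mathsf{in}}$. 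Hence $(K,k)$ restricts to a well-copointed endofunctor on $\Bialg^{\mathsf{in}}$.

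For the second assertion, I would recall the (dual of the) identification used in the proof of \cref{prop:wpe}: for a well-copointed endofunctor whose unit components are monomorphisms, a $K$-coalgebra structure on $B$ is a section $\rho$ of the mono $k_B$, and injectivity of $k_B$ forces $\rho=k_B^{-1}$. Thus the $K$-coalgebras are exactly the objects of $\Bialg^{\mathsf{in}}$ with $k_B$ invertible, equivalently with $K(B)=B$. Since $K(B)\subseteq\im(p_B)\subseteq B$, the equality $K(B)=B$ amounts to surjectivity of $p_B$. Inside $\Bialg^{\mathsf{in}}$, where $p_B$ is already injective, this means precisely that $p_B$ is bijective.

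It then remains to translate ``$p_B$ bijective'' into the stated Hopf-theoretic language. By \cref{prop:Frobenius}, bijectivity of $p_B$ is equivalent to $B$ being a right Hopf algebra with an anti-multiplicative and anti-comultiplicative right antipode; recalling that a one-sided antipode automatically preserves unit and counit, such an antipode is precisely an anti-bialgebra map. Repleteness is formal, as the property ``$p_B$ bijective'' is stable under bialgebra isomorphism. I do not expect a genuine obstacle here: the argument merely assembles \cref{prop:pconvd}, \cref{prop:pbInj}\,\ref{item:pbInj4}, \cref{lem:KBcoalg} and \cref{prop:Frobenius}, and the one place deserving a moment's care is the stability $K(\Bialg^{\mathsf{in}})\subseteq\Bialg^{\mathsf{in}}$, handled by the sub-bialgebra argument above.
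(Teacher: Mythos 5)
Your proposal is correct and follows essentially the same route as the paper: stability of $\Bialg^{\mathsf{in}}$ under $K$ via \cref{prop:pconvd}\,\ref{item:pconvd1} together with \cref{prop:pbInj}\,\ref{item:pbInj4}, identification of the $K$-coalgebras with the bialgebras whose $p_B$ is simultaneously injective (by hypothesis) and surjective (by \cref{prop:wpedual}), hence bijective, and translation into Hopf-theoretic terms via \cref{prop:Frobenius}.

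One point you gloss over and the paper does not: the statement identifies the $K$-coalgebras with a subcategory \emph{of right Hopf algebras}, whose natural morphisms are those preserving the right antipode. Identifying the objects (which is what \cref{prop:Frobenius} gives you) is not quite enough; you also need that every bialgebra map between two such objects automatically commutes with the right antipodes, so that the full subcategory of $\Bialg^{\mathsf{in}}$ on these objects really is a category of right Hopf algebras and right Hopf algebra maps. This is exactly \cref{prop:preserve_right_ants}, which the paper cites at the end of its proof and which your argument omits. It is a one-line repair, not a failure of the approach, but without it you have matched objects rather than categories.
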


\begin{proof}
    In view of \cref{prop:pbInj}\,\ref{item:pbInj4} and of \cref{prop:pconvd}\,\ref{item:pconvd1}, if $B$ is a bialgebra whose associated $p_B$ is injective, then the same holds for $K(B)$. A $K$-coalgebra is then a bialgebra $B$ whose $p_B$ is at the same time injective (by hypothesis) and surjective (by \cref{prop:wpedual}), whence bijective. We conclude by \cref{prop:Frobenius} and \cref{prop:preserve_right_ants}.
\end{proof}

In order to provide a right adjoint for the forgetful functor from $\Bialg^{\mathsf{in}}$ to the category of right Hopf algebras whose right antipode is an anti-bialgebra map, we consider what we may call the cofree-coalgebra sequence for $B$, namely
\[\xymatrix@C=1.5cm{\cdots\ar[r]^-{k_{K^2(B)}}&K^2(B)\ar[r]^-{k_{K(B)}}&K(B)\ar[r]^-{k_B}& B.}\]
Even if the inverse limit of an inverse system of surjective maps is not necessarily surjective (see \cite[Remark (2), page 285]{BourbakiI}), we can still adapt the proof we used in the dual setting to check that the above sequence converges at $\infty$.

As above, we define $K^\infty \coloneqq \underleftarrow{\lim }_{n}K^n$ with, as connecting-maps $\sigma_\infty^n \colon K^\infty\to K^n$, the generators of the limit-cone and recall that a full subcategory $\mathcal{D}$ of a category $\mathcal{C}$ is \emph{coreflective} if the inclusion functor has a right adjoint and it is \emph{mono-coreflective} when, in addition, the unit of the adjunction is a monomorphism (see e.g. \cite{Herrlich-Strecker}).

\begin{theorem}
\label{thm:Kellycofree}
Consider the well-copointed endofunctor $(K,k)$ on the category $\Bialg^{\mathsf{in}}$. The natural transformation $k K^\infty \colon K^\infty\to KK^\infty$ is a natural isomorphism. As a consequence, the cofree-coalgebra sequence for $B$ converges at $\infty$ and $K^\infty(B)$ is the cofree $K$-coalgebra on $\Bialg^{\mathsf{in}}$, the coreflection being the connecting-map component $\sigma^0_\infty \colon K^\infty(B)\to B$. Thus, right Hopf algebras whose right antipode is an anti-bialgebra morphisms form a mono-coreflective subcategory of $\Bialg^{\mathsf{in}}$.
\end{theorem}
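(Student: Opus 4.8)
The plan is to mirror the proof of \cref{thm:Kellyfree}, replacing direct limits by inverse limits, but with the crucial simplification afforded by the injectivity of $p_B$ throughout $\Bialg^{\mathsf{in}}$. First I would record the elementary shape of the cofree-coalgebra sequence. Writing $B^n \coloneqq K^n(B)$, each connecting map $k_{B^n}\colon B^{n+1}=K(B^n)\hookrightarrow B^n$ is the inclusion of a sub-bialgebra (\cref{prop:pconvd}\,\ref{item:pconvd1}), so the whole tower consists of monomorphisms and $B^\infty \coloneqq K^\infty(B)=\varprojlim_n B^n$, computed in vector spaces, is simply the intersection $\bigcap_n B^n$ inside $B$, again a sub-bialgebra of $B$. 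Since $p_B$ is injective, \cref{prop:pbInj}\,\ref{item:pbInj4} shows that $p_{B^\infty}$ is injective as well, so $B^\infty$ lies in $\Bialg^{\mathsf{in}}$.

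The heart of the matter is to prove that $k_{B^\infty}\colon K(B^\infty)\to B^\infty$ is an isomorphism, i.e.\ that $K(B^\infty)=B^\infty$; since $K(B^\infty)\subseteq \im(p_{B^\infty})\subseteq B^\infty$, this is equivalent to the surjectivity of $p_{B^\infty}$. Here the inverse-limit analogue of the forward argument fails superficially, because $\varprojlim$ is not right exact and the naive identification $\varprojlim(B^n\boxslash B^n)\cong B^\infty\boxslash B^\infty$ cannot be fed into a right-exactness statement; the key idea is to bypass this via \emph{uniqueness of preimages}. Concretely, I would first check the two identities $\bigcap_n (B^n\otimes B^n)=B^\infty\otimes B^\infty$ and hence $\bigcap_n (B^n\boxslash B^n)=B^\infty\boxslash B^\infty$, both obtained by writing an element with linearly independent representatives in one tensor leg and applying functionals to the other. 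Then, given $c\in B^\infty$, for every $n$ one has $c\in B^{n+1}=K(B^n)\subseteq \im(p_{B^n})$, so there is $\xi_n\in B^n\boxslash B^n$ with $p_{B^n}(\xi_n)=c$; viewing $B^n\boxslash B^n\subseteq B\boxslash B$ through the inclusion $\iota\colon B^n\hookrightarrow B$ and using the naturality $p_B\circ(\iota\boxslash\iota)=\iota\circ p_{B^n}$ of \cref{lem:foslf} yields $p_B(\xi_n)=c$ for all $n$. Because $p_B$ is injective, all the $\xi_n$ coincide with a single element $\xi\in\bigcap_n (B^n\boxslash B^n)=B^\infty\boxslash B^\infty$, whence $c=p_{B^\infty}(\xi)\in\im(p_{B^\infty})$. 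As $c$ was arbitrary, $p_{B^\infty}$ is surjective, hence bijective, and \cref{prop:Frobenius} upgrades $B^\infty$ to a right Hopf algebra with anti-multiplicative and anti-comultiplicative right antipode, that is, $K(B^\infty)=B^\infty$.

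With $kK^\infty$ established as a natural isomorphism, the remaining assertions are formal. The convergence of the cofree-coalgebra sequence at $\infty$ and the identification of $K^\infty(B)$ as the cofree $K$-coalgebra, with coreflection $\sigma^0_\infty$, follow from the dual of \cite[Proposition 5.3]{Kelly1}, exactly as $\pi^\infty_0$ was handled in \cref{thm:Kellyfree}. Finally, invoking the preceding corollary, which identifies the $K$-coalgebras in $\Bialg^{\mathsf{in}}$ with the right Hopf algebras whose right antipode is an anti-bialgebra map, turns this coreflection into the claimed coreflector; and since $\sigma^0_\infty\colon K^\infty(B)=\bigcap_n B^n\hookrightarrow B$ is the inclusion, it is a monomorphism, so the subcategory is mono-coreflective. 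The main obstacle is precisely the surjectivity of $p_{B^\infty}$, and the point I would emphasise is that it is the injectivity of $p_B$ — the defining feature of $\Bialg^{\mathsf{in}}$ — that forces the local preimages $\xi_n$ to be mutually compatible for free, thereby sidestepping the failure of right-exactness of $\varprojlim$ that the paragraph preceding the theorem warns about.
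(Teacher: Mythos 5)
Your proposal is correct, and its core step is genuinely different from the paper's. Both proofs share the same skeleton: $B^\infty=\bigcap_n K^n(B)$ is a sub-bialgebra of $B$ lying in $\Bialg^{\mathsf{in}}$ (via \cref{prop:pbInj}\,\ref{item:pbInj4}), the identification $\bigcap_n(B^n\boxslash B^n)=B^\infty\boxslash B^\infty$, and the reduction of the whole theorem to the surjectivity of $p_{B^\infty}$. Where you diverge is precisely there. The paper forms, for each $n$, the cokernel $q_n\colon B^n\to\overline{B^n}=B^n/\im(p_{B^n})$, observes that the inclusion $B^{n+1}=K(B^n)\subseteq\im(p_{B^n})$ forces all transition maps of the inverse system $(\overline{B^n})_n$ to be zero (so $\varprojlim_n\overline{B^n}=0$), and then applies left exactness of $\varprojlim$ (\cite[Proposition 1, page 285]{BourbakiI}) to the short exact sequences $0\to B^n\boxslash B^n\to B^n\to\overline{B^n}\to 0$ — which are short exact exactly because $p_{B^n}$ is injective — to conclude that $\im(p_{B^\infty})=\ker(B^\infty\to 0)=B^\infty$. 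You instead bypass the homological machinery entirely: given $c\in B^\infty$, each $c\in K(B^n)\subseteq\im(p_{B^n})$ yields a preimage $\xi_n\in B^n\boxslash B^n$, and the naturality $p_B\circ(\iota\boxslash\iota)=\iota\circ p_{B^n}$ of \cref{lem:foslf} together with the injectivity of $p_B$ forces all the $\xi_n$ to coincide as a single element of $\bigcap_n(B^n\boxslash B^n)=B^\infty\boxslash B^\infty$, whence $c\in\im(p_{B^\infty})$. This gluing-by-uniqueness argument is more elementary (no exactness of limits, no cokernel system) and makes transparent that the injectivity of $p_B$ is what repairs the failure of right exactness of $\varprojlim$; the paper's argument buys a formulation that stays at the level of exact sequences and parallels the diagrammatic proof of \cref{thm:Kellyfree} more literally. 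The only other difference is cosmetic: for the universal property you invoke the dual of \cite[Proposition 5.3]{Kelly1}, whereas the paper writes out the iterative factorisation $f_n\colon C\to B^n$ and passes to the limit by hand (using \cref{prop:pconvd}\,\ref{item:pconvd2} at each stage); the content is the same, and your prior verification that $k_{K^\infty(B)}$ is an isomorphism is exactly the convergence hypothesis that Kelly's dual result needs.
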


\begin{proof}
As we did in the dual setting, we simplify the notation by defining $B^n\coloneqq K^n(B)$ and $\sigma_{n+1}^n \coloneqq k_{B^n}$.
Note that the maps $\sigma^n_{n+1}$ are indeed inclusions, so that the inverse limit of this inverse system is $B^\infty \coloneqq K^\infty(B) = \underleftarrow{\lim }_{n}K^n(B) = \bigcap_{n\in\mathbb{N}} K^n(B)$. As observed in \cite[page 167]{Radford-book}, the intersection of a family of sub-bialgebras is a sub-bialgebra so that $K^\infty(B)$ is a sub-bialgebra of $B$. Denote by  $\sigma^n_\infty \colon B^\infty \to B^n$ the component of the connecting-map and by $p_{B^\infty} \colon B^\infty \boxslash B^\infty \to B^\infty$ the corresponding canonical morphism. Set also $\overline{B^\infty} \coloneqq \mathrm{coker} (p_{B^\infty})$.

For every $n\geq 0$, denote by $q_n \colon B^n\to \overline{B^n} = B^n/\im(p_{B^n})$ the cokernel of the map $p_{B^n}$.
As claimed in \cref{lem:KBcoalg}, $B^{n+1} = K(B^n)$ is the biggest sub-coalgebra of $B^n$ contained in $\im(p_{B^n})$. In particular, $q_n \circ \sigma^n_{n+1} = 0$ for every $n \geq 0$.

Since each $\sigma^n_{n+1}$ is a bialgebra map by \cref{prop:pconvd}\,\ref{item:pconvd1}, and so $p_{B^{n}} \circ (\sigma^n_{n+1}\boxslash \sigma^n_{n+1}) = \sigma^n_{n+1} \circ p_{B^{n+1}}$ by \cref{lem:foslf}, we have the inverse system of vector spaces $\cdots \xrightarrow{0} \overline{B^2} \xrightarrow{0} \overline{B^1} \xrightarrow{0} \overline{B^0}$ which fits the following diagram
\[
\begin{gathered}
\xymatrix@R=15pt@C=40pt{
 & & 0 \ar[d] & 0 \ar[d] & 0 \ar[d] \\
B^\infty\boxslash B^\infty\ar[d]^{p_{B^\infty}}&
\cdots\ar[r]^-{\sigma_3^2\boxslash \sigma_3^2}&B^2\boxslash B^2\ar[d]^{p_{B^2}}\ar[r]^-{\sigma_2^1\boxslash \sigma_2^1}&B^1\boxslash B^1\ar[d]^{p_{B^1}}\ar[r]^-{\sigma_1^0\boxslash \sigma_1^0}& B^0\boxslash B^0\ar[d]^{p_{B^0}}\\
B^\infty\ar@{->>}[d]^{q_\infty}&\cdots\ar[r]^-{\sigma_3^2}&B^2\ar@{->>}[d]^{q_2}\ar[r]^-{\sigma_2^1}&B^1\ar@{->>}[d]^{q_1}\ar[r]^-{\sigma_1^0}& B^0\ar@{->>}[d]^{q_0}\\
\overline{B^\infty}&\cdots\ar[r]^-{0}&\overline{B^2}\ar[r]^-{0}&\overline{B^1}\ar[r]^-{0}& \overline{B^0} .
}
\end{gathered}
\]
Since for every $n \geq 0$ we have an exact sequence
\[0\to B^n\boxslash B^n \to B^n\otimes B^n \xrightarrow{\gamma_n} B^n\otimes B^n\otimes B^n,\]
where $\gamma_n(x\otimes y) = x_1\otimes y_1\otimes x_2y_2-x\otimes y\otimes 1$, by \cite[Proposition 1, page 285]{BourbakiI} we conclude that
\[0\to \underleftarrow{\lim }_{n}(B^n\boxslash B^n)
\to \underleftarrow{\lim }_{n}(B^n\otimes B^n) \xrightarrow{\underleftarrow{\lim }_{n}\gamma_n} \underleftarrow{\lim }_{n}(B^n\otimes B^n\otimes B^n)\]
is exact. In addition, since the $\sigma_{n+1}^n \otimes \sigma_{n+1}^n$ are still injective, \cite[Exercise 1.2.8]{Radford-book} allows us to deduce that
\[\underleftarrow{\lim }_{n}(B^n\otimes B^n) = \bigcap_{n \in \mathbb{N}}(B^n\otimes B^n) = \left(\bigcap_{n \in \mathbb{N}}B^n\right)\otimes \left(\bigcap_{n \in \mathbb{N}}B^n\right) = B^\infty \otimes B^\infty\]
and hence that
\[0\to \bigcap_{n\in\mathbb{N}}(B^n\boxslash B^n)
\to B^\infty\otimes B^\infty \xrightarrow{\gamma_\infty} B^\infty\otimes B^\infty\otimes B^\infty\]
is exact, which means that $\underleftarrow{\lim }_{n}(B^n\boxslash B^n) = \ker(\gamma_\infty)= B^\infty\boxslash B^\infty$.
Moreover, since for every $n \geq 0$ the sequence
\[0 \to B^n\boxslash B^n \xrightarrow{p_{B^n}} B^n \xrightarrow{q_n} \overline{B^n} \to 0\]
is exact, too, \cite[Proposition 1, page 285]{BourbakiI} again entails that
\[0 \to \underleftarrow{\lim }_{n}\left(B^n\boxslash B^n\right) \xrightarrow{\underleftarrow{\lim }_{n}p_{B^n}} \underleftarrow{\lim }_{n}B^n \xrightarrow{\underleftarrow{\lim }_{n}q_n} \underleftarrow{\lim }_{n}\overline{B^n}\]
is exact, i.e., that
\[0 \to B^\infty\boxslash B^\infty \xrightarrow{p_{B^\infty}} B^\infty \xrightarrow{\underleftarrow{\lim }_{n}q_n} \underleftarrow{\lim }_{n}\overline{B^n}\]
is exact. As, in this very particular case, $\underleftarrow{\lim }_{n}\overline{B^n} = 0$ and $\underleftarrow{\lim }_{n} q_n = 0$, we conclude that $p_{B^\infty}$ is surjective as well, i.e.\ it is bijective and $K^\infty(B)$ is a $K$-coalgebra.

Let now $f\colon C\to B$ be a bialgebra map such that $p_C$ is bijective. By \cref{prop:pconvd}\,\ref{item:pconvd2}, we have that $\im(f) \subseteq K(B)$.
Thus, there is a (necessarily, unique) bialgebra map $f_1 \colon C \to K(B) = B^1$ such that $\sigma_1^0 \circ f_1 = f$. Similarly, 
$\im(f_1) \subseteq K(B^1) = B^2$, so that we get a unique bialgebra map $f_2 \colon C \to B^2$ such that $\sigma_2^1 \circ f_2 = f_1$. Carrying on this way we get $f_n \colon C \to B^n$ such that $\sigma^{n-1}_{n} \circ f_n = f_{n-1}$:
\[
\xymatrix{
B^\infty & \cdots \ar[r] & B^2 \ar[r]^{\sigma_2^1} & B^1 \ar[r]^{\sigma_1^0} & B^0  \\
 & & & & C \ar[u]_-{f} \ar[ul]|-{f_1} \ar[ull]|-{f_2} \ar[ullll]|-{f_\infty}
 }
\]
Then, there is a unique morphism $f_\infty \colon C \to B^\infty$ such that $\sigma_\infty^n \circ f_\infty = f_n$. In particular $\sigma_\infty^0 \circ f_\infty = f_0 = f$ and $f_\infty$ is uniquely determined by this equality as $\sigma^0_\infty \colon B^\infty \to B$ is injective. Thus $(B^\infty,\sigma^0_\infty)$ is the \emph{cofree bialgebra $B^\infty$ with $p_{B^\infty}$ bijective generated by the bialgebra} $B$.
\end{proof}

\begin{theorem}
\label{prop:CBleftArt}
Let $B$ be a right perfect bialgebra. Then $\mathrm{C}(B)= K^\infty(B)$.
\end{theorem}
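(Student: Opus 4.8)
The plan is to realise both $\mathrm{C}(B)$ and $K^\infty(B)$ as solutions of two mutually compatible universal problems and to match them by the usual back-and-forth argument; the one genuinely new input is that, in the right perfect case, $K^\infty(B)$ is not merely a right Hopf algebra but an honest Hopf algebra. First I would record that, since $B$ is right perfect, \cref{pro:Artinian} gives that $p_B$ is injective, so that $B$ belongs to $\Bialg^{\mathsf{in}}$ and \cref{thm:Kellycofree} applies: $K^\infty(B)$ is the cofree $K$-coalgebra on $\Bialg^{\mathsf{in}}$, with coreflection the inclusion $\sigma^0_\infty\colon K^\infty(B)\hookrightarrow B$. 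Being a $K$-coalgebra, $K^\infty(B)$ has $p$ bijective, hence by \cref{prop:Frobenius} it is a right Hopf algebra whose right antipode $S^r$ is anti-multiplicative and anti-comultiplicative.

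The crucial — and main — step is to upgrade this right antipode to a two-sided one. Here I would invoke weak finiteness: by \cref{lem:Skry}, a right perfect $B$ is weakly finite. The inclusion $\sigma^0_\infty$ is a coalgebra map which is right convolution invertible, its right inverse being $\sigma^0_\infty\circ S^r$ (this follows from $\id*S^r=u\circ\varepsilon$ in $K^\infty(B)$ together with the fact that $\sigma^0_\infty$ is an algebra map). Hence \cref{lem:Skry} makes $\sigma^0_\infty$ two-sided convolution invertible. Since the right convolution inverse is then unique, $\sigma^0_\infty\circ S^r$ is also a left convolution inverse, i.e.\ $(\sigma^0_\infty\circ S^r)*\sigma^0_\infty=u_B\circ\varepsilon$. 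As $\sigma^0_\infty$ is an injective algebra map, the left-hand side equals $\sigma^0_\infty\circ(S^r*\id)$ while the right-hand side equals $\sigma^0_\infty\circ(u\circ\varepsilon)$, so injectivity forces $S^r*\id=u\circ\varepsilon$ in $K^\infty(B)$. Together with $\id*S^r=u\circ\varepsilon$, this shows $S^r$ is a genuine antipode, so $K^\infty(B)$ is a Hopf algebra.

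With $K^\infty(B)$ known to be a Hopf algebra, the identification becomes formal. On one side, $\mathrm{C}(B)$ is a Hopf algebra, hence has $p$ bijective by \cref{prop:Frobenius}, hence is a $K$-coalgebra; the universal property of $K^\infty(B)$ applied to the bialgebra map $\epsilon_B\colon\mathrm{C}(B)\to B$ produces a bialgebra map $\alpha\colon\mathrm{C}(B)\to K^\infty(B)$ with $\sigma^0_\infty\circ\alpha=\epsilon_B$. On the other side, since $K^\infty(B)$ is now a Hopf algebra, the universal property of $\mathrm{C}(B)$ applied to $\sigma^0_\infty\colon K^\infty(B)\to B$ produces a Hopf algebra map $\beta\colon K^\infty(B)\to\mathrm{C}(B)$ with $\epsilon_B\circ\beta=\sigma^0_\infty$. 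Then $\epsilon_B\circ(\beta\circ\alpha)=\sigma^0_\infty\circ\alpha=\epsilon_B$ and $\sigma^0_\infty\circ(\alpha\circ\beta)=\epsilon_B\circ\beta=\sigma^0_\infty$, so the uniqueness clauses in the two universal properties (applied to $\epsilon_B$ and to $\sigma^0_\infty$, respectively) force $\beta\circ\alpha=\id_{\mathrm{C}(B)}$ and $\alpha\circ\beta=\id_{K^\infty(B)}$. Thus $\alpha,\beta$ are mutually inverse isomorphisms intertwining $\epsilon_B$ and $\sigma^0_\infty$, whence $\mathrm{C}(B)=K^\infty(B)$.

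As a consistency check, one expects the inclusion $\mathrm{C}(B)\subseteq K^\infty(B)$ to be visible concretely as well: by \cref{prop:epsiInj} the map $\epsilon_B$ is injective, by \cref{cor:CBsub} (cf.\ \cref{rem:suminb}) $\mathrm{C}(B)$ is the largest sub-bialgebra of $B$ admitting an antipode, and iterating \cref{prop:pconvd}\,\ref{item:pconvd2} (which sends a bialgebra with $p$ surjective into $K(-)$) along the tower $K^n(B)$ shows $\mathrm{C}(B)\subseteq\bigcap_n K^n(B)=K^\infty(B)$. I would, however, keep the universal-property argument as the backbone, since it delivers equality rather than mere inclusion. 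The only delicate point remains the weak-finiteness step, exactly as in the free case treated through \cref{prop:quoArtisHopf}; this is why the statement is confined to right perfect bialgebras.
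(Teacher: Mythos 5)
Your proof is correct and follows essentially the same route as the paper's: right perfectness gives $p_B$ injective (\cref{pro:Artinian}), Kelly's tower makes $K^\infty(B)$ a right Hopf algebra (\cref{thm:Kellycofree}), weak finiteness (\cref{lem:Skry}) upgrades the convolution invertibility of the inclusion $\sigma^0_\infty$ from one-sided to two-sided, this forces the right antipode of $K^\infty(B)$ to be a genuine antipode, and the universal property then closes the argument. The only cosmetic differences are that the paper obtains the right convolution invertibility of $\sigma^0_\infty$ by factoring it through $k_B$ and invoking \cref{cor:kconv}, whereas you use the right antipode of $K^\infty(B)$ directly, and that the paper cites \cref{lem:n-Hopfconv}\,\ref{item:fHopf2} where you inline its $n=0$ case and spell out the universal-property comparison.
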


\begin{proof}
We keep some notation from the proof of \cref{thm:Kellycofree}.
Since $B$ is right perfect, $p_B$ is injective (cf.\ \cref{pro:Artinian}). Thus, by \cref{prop:pconvd} and \cref{cor:kconv}, $K(B)$ is a sub-bialgebra of $B$ and the canonical injection $k_B:K(B)\to B$ is a two-sided convolution invertible bialgebra map.
Since the inclusion $\sigma^0_\infty:K^\infty(B)\to B$ can be written as $\sigma^0_\infty=\sigma^1_\infty\circ \sigma^0_1=\sigma^1_\infty\circ k_B$, we get that $\sigma^0_\infty$ is two-sided convolution invertible, too. By \cref{thm:Kellycofree}, $K^\infty(B)$ is a right Hopf algebra.
By \cref{lem:n-Hopfconv}\,\ref{item:fHopf2}, we get that $K^\infty(B)$ is a Hopf algebra, and hence,  by its universal property, that $\mathrm{C}(B)=K^\infty(B)$.
 \end{proof}

We now state without proof (the proof being dual) the dual version if \cref{prop:envleftn}.

\begin{proposition}
\label{prop:envleftndual}
Let $B$ be a left $n$-Hopf algebra. Then $\mathrm{C}(B)=K^\infty(B)$.
\end{proposition}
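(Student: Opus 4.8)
The statement to prove is \cref{prop:envleftndual}: for a left $n$-Hopf algebra $B$, we have $\mathrm{C}(B) = K^\infty(B)$. The proof is dual to that of \cref{prop:envleftn}, and I would carry it out by mirroring every step of that argument while interchanging the roles of $i_B$ and $p_B$, of quotients and sub-bialgebras, and of the colimit $Q^\infty$ and the limit $K^\infty$.

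The plan is as follows. First I would invoke \cref{prop:semiantip}\,\ref{item:semiantip2}: since $B$ is a left $n$-Hopf algebra, the map $p_B \colon B \boxslash B \to B$ is injective, so that $B$ lies in the category $\Bialg^{\mathsf{in}}$ on which the well-copointed endofunctor $(K,k)$ of \cref{thm:Kellycofree} is defined. Applying \cref{thm:Kellycofree} then yields that $K^\infty(B)$ is a right Hopf algebra (the coalgebra-limit version of a $K$-coalgebra), with its inclusion $\sigma^0_\infty \colon K^\infty(B) \to B$ enjoying the cofree universal property relative to bialgebras $C \to B$ with $p_C$ bijective.

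Next, I would establish that the inclusion $\sigma^0_\infty$ is two-sided convolution invertible. By \cref{cor:kconv}, since $B$ is a left $n$-Hopf algebra, the canonical inclusion $k_B \colon K(B) \to B$ is a two-sided convolution invertible bialgebra map. Writing $\sigma^0_\infty = \sigma^1_\infty \circ \sigma^0_1 = \sigma^1_\infty \circ k_B$ exactly as in the proof of \cref{prop:CBleftArt}, and using that convolution inverses compose, I would deduce that $\sigma^0_\infty$ is itself two-sided convolution invertible, hence in particular right convolution invertible. Here the left $n$-Hopf hypothesis is what feeds \cref{cor:kconv}; the right perfect case used \cref{lem:Skry} instead, but for a left $n$-Hopf algebra the left $n$-antipode supplies left convolution invertibility directly.

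Finally I would combine these with \cref{lem:n-Hopfconv}\,\ref{item:fHopf2}: we have a bialgebra map $\sigma^0_\infty \colon K^\infty(B) \to B$ which is injective and convolution invertible, and $K^\infty(B)$, being a $K$-coalgebra, is a right Hopf algebra. Since $B$ is a left $n$-Hopf algebra, \cref{lem:n-Hopfconv}\,\ref{item:fHopf2} forces $K^\infty(B)$ to be an honest Hopf algebra. With $K^\infty(B)$ now a Hopf algebra carrying the appropriate universal property from \cref{thm:Kellycofree}, its universal property identifies it as the cofree Hopf algebra, giving $\mathrm{C}(B) = K^\infty(B)$. The only point that requires care — and the step I expect to be the main subtlety rather than a routine dualisation — is checking that $K^\infty(B)$ genuinely satisfies the universal property of $\mathrm{C}(B)$ against \emph{all} Hopf algebras $H \to B$, not merely against those in the restricted class used in \cref{thm:Kellycofree}; this is handled because any Hopf algebra $H$ has $p_H$ bijective (\cref{prop:Frobenius}) and hence factors through the cofree $K$-coalgebra, exactly as in the proof of \cref{prop:envleftn}.
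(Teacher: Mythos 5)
Your proof is correct and follows essentially the same route as the paper's own argument (which the paper omits as being ``dual'' to \cref{prop:envleftn} and which parallels the proof of \cref{prop:CBleftArt}): \cref{prop:semiantip} gives $p_B$ injective, \cref{thm:Kellycofree} gives that $K^\infty(B)$ is a right Hopf algebra with the cofree universal property, \cref{cor:kconv} makes the inclusion $K^\infty(B)\to B$ two-sided convolution invertible, \cref{lem:n-Hopfconv} upgrades $K^\infty(B)$ to a Hopf algebra, and \cref{prop:Frobenius} ensures every Hopf algebra mapping to $B$ factors through it. One point of precision: in your final step, the $n$-Hopf hypothesis of \cref{lem:n-Hopfconv}\,\ref{item:fHopf2} must be carried by the \emph{domain} $K^\infty(B)$ --- which holds because it is a right $0$-Hopf algebra, as you note --- not by ``$B$ being a left $n$-Hopf algebra''; that assumption on $B$ enters only through \cref{prop:semiantip} and \cref{cor:kconv}.
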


\begin{invisible}
\begin{proof}
 By \cref{prop:semiantip}, the map $p_B$ is injective and so, by \cref{thm:Kellycofree}, the bialgebra $K^\infty(B)$ is a right Hopf algebra.
 Furthermore, as above, \cref{cor:kconv} entails that the canonical inclusion $k_B \colon K(B) \to B$ is two-sided convolution invertible and so is the inclusion $K^\infty(B) \to B$, too.
Therefore, by \cref{lem:n-Hopfconv} we conclude that $K^\infty(B)$ is a Hopf algebra, and hence,  by its universal property, that $\mathrm{C}(B)=K^\infty(B)$.
\end{proof}
\end{invisible}

\begin{remark}
\label{rmk:Ktilde}
Let $B$ be a bialgebra with $p_B$ injective. In \cref{prop:epsiInj} we proved that the map $\epsilon_B:\mathrm{C}(B)\to B$ is injective by using the existence of $\mathrm{C}(B)$. In fact, the bialgebra  $\mathrm{C}(B)$ can be iteratively constructed by means of the machinery we have developed so far. Indeed, by \cref{thm:Kellycofree} the iteratively constructed bialgebra $K^\infty(B)$ is a right Hopf algebra i.e. a right $0$-Hopf algebra. Thus we can apply the symmetric version of \cref{prop:envleftndual} to conclude that $\mathrm{C}(K^\infty(B))=\tilde{K}^\infty K^\infty(B)$ where $\tilde{K}(B)=\{b\in B\mid \Delta(b)\in B\otimes \im(p'_B)\}$. Here $p'_B: B\boxbackslash B \to  B, x^i\otimes {y_i}\mapsto\varepsilon(x^i) {y_i}$ and $B\boxbackslash B = \prescript{\mathrm{co}B}{}{(\prescript{\bullet}{}{B} \otimes \prescript{\bullet}{}{B})}$. Now \cref{prop:pconvd}\,\ref{item:pconvd2} entails that, for any any bialgebra map $f:H\to B$ with $H$ a Hopf algebra, one has $\im(f)\subseteq K^\infty(B)$. Therefore the similar property of $\tilde{K}$ implies $\im(f)\subseteq \tilde{K}^\infty K^\infty(B)$. Thus $\mathrm{C}(B)=\mathrm{C}(K^\infty(B))=\tilde{K}^\infty K^\infty(B)$.
\end{remark}

\section{Some concrete examples}
\label{ssec:example}

In this section we collect some examples where we explicitly compute the Hopf algebras $\mathrm{H}(B)$ and $\mathrm{C}(B)$ for some concrete bialgebras $B$.



\subsection{A non-commutative, non-cocommutative, finite-dimensional example}
\label{ssec:quotquant}

The first example we want to study is the bialgebra
\begin{equation*}
B=\Bbbk \left\langle x,y\mid yx=-xy,x^{3}=x,y^{2}=0\right\rangle
\end{equation*}
which is $6$-dimensional with basis $\left\{ 1,x,x^{2},y,xy,x^{2}y\right\} $. Its coalgebra structure is uniquely determined by $\Delta \left( x\right) =x\otimes x$ and $\Delta \left(y\right) =x\otimes y+y\otimes 1$, and the interested reader may recognize in it the bialgebra quotient of the quantum plane $\Bbbk _{-1}\left[ x,y\right]$ by the bi-ideal $I=\left\langle x^{3}-x,y^{2}\right\rangle $.
%
\begin{invisible}
Indeed, note that
\begin{align*}
\Delta \left( y^{2}\right) & =\left( x\otimes y+y\otimes 1\right)
\left( x\otimes y+y\otimes 1\right) =x^{2}\otimes y^{2}+xy\otimes
y+yx\otimes y+y^{2}\otimes 1 \\
& =x^{2}\otimes y^{2}+xy\otimes y+qxy\otimes y+y^{2}\otimes 1=x^{2}\otimes
y^{2}+\left( q+1\right) xy\otimes y+y^{2}\otimes 1.
\end{align*}
If $q=-1$ this element belongs to $I\otimes A+A\otimes I.$ Moreover $\Delta
\left( x^{3}-x\right) =x^{3}\otimes x^{3}-x\otimes x=\left( x^{3}-x\right)
\otimes x^{3}+x\otimes \left( x^{3}-x\right) \in I\otimes A+A\otimes I.$
Finally, $\varepsilon \left( y^{2}\right) =0=\varepsilon \left(
x^{3}-x\right) .$
\end{invisible}
%
%
It is also the quotient of the Ore extension $\Bbbk M[y,\varphi,\delta]$
by the ideal $\langle y^2\rangle$, with monoid $M=\langle x\mid x^{3}=x\rangle$, algebra endomorphism $\varphi \colon x\mapsto -x$ of $\Bbbk M$ and $\varphi$-derivation $\delta=0$, see \cite[\S 5.6]{DNC}.

Note that $B$ is finite-dimensional so that $i_{B}$ is surjective (by \cref{prop:semiantip}). It is not injective, otherwise $B$ should be a right Hopf algebra by \cref{prop:Frobenius} and so $1=\varepsilon \left( x\right) =xS^{r}\left( x\right) $ would make $x$ right invertible, which is not the case.

Denote by $H_{4}=\Bbbk \left\langle x,y\mid yx=-xy,x^{2}=1,y^{2}=0\right\rangle $ the Sweedler's $4$-dimensional Hopf algebra.

\begin{proposition}
\label{prop:quotquant}
For $B=\Bbbk \left\langle x,y\mid
yx=-xy,x^{3}=x,y^{2}=0\right\rangle $ we have $B\oslash B \cong H_{4}$ as coalgebras and $H_{4} = \mathrm{H}(B)$.
\end{proposition}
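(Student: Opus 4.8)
The plan is to identify the Hopf envelope $\mathrm{H}(B)$ with Sweedler's $4$-dimensional Hopf algebra $H_4$ by exploiting the machinery developed in the paper, and to compute $B \oslash B$ along the way. Since $B$ is finite-dimensional, \cref{prop:semiantip} guarantees that $i_B$ is surjective, and \cref{prop:HBfd} together with \cref{prop:HBfdArt} tells us that $\mathrm{H}(B) = Q(B) = B/\ker(i_B) \cong \im(i_B) = B \oslash B$. So the whole problem reduces to identifying the quotient bialgebra $B/\ker(i_B)$ (equivalently, $B \oslash B$) explicitly, and checking it is isomorphic to $H_4$.

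First I would exhibit a surjective bialgebra map $\pi \colon B \to H_4$. The natural candidate sends the generators $x,y$ of $B$ to the generators $x,y$ of $H_4$; one checks this respects all the defining relations, since in $H_4$ we have $yx=-xy$, $y^2=0$, and $x^2=1$, so that $x^3 = x \cdot x^2 = x$ holds automatically — hence the relation $x^3=x$ of $B$ is satisfied in $H_4$. It is also a coalgebra map because the comultiplication formulas $\Delta(x)=x\otimes x$ and $\Delta(y)=x\otimes y+y\otimes 1$ are the same in both bialgebras. This $\pi$ is clearly surjective. The key point is then to show $\ker(\pi) = \ker(i_B)$, which by \cref{lem:HopfidealiB} would suffice: indeed $I \coloneqq \ker(\pi)$ is a bi-ideal, $B/I \cong H_4$ has an antipode, and if I can verify the inclusion $I \subseteq \ker(i_B)$, then \cref{lem:HopfidealiB} yields at once $\ker(i_B)=I$ and $\mathrm{H}(B)=Q(B)=B/I \cong H_4 \cong \im(i_B) = B\oslash B$.

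The inclusion $I \subseteq \ker(i_B)$ is where I would do the actual computation. The bi-ideal $I$ is generated by $x^2-1$ (one should verify this: $B$ is $6$-dimensional and $H_4$ is $4$-dimensional, so $\dim I = 2$, and $\{x^2-1, \, x^2y - y\}$ spans $I$, with the second element being $(x^2-1)y$, so $I = \langle x^2-1\rangle$). Thus it suffices to show $x^2-1 \in \ker(i_B)$, i.e.\ $(x^2-1)\oslash 1 = 0$ in $B\oslash B$, equivalently $x^2 \oslash 1 = 1 \oslash 1$. For this I would use the surjectivity endomorphism $S$ from \cref{lem:iBsu}\,\ref{item:epi2} and the explicit structure of $B$: one computes $1 \oslash x = S(x) \oslash 1$ and leverages the relation $x^3 = x$ (which says $\id^{*3}$ and $\id$ agree on the grouplike-generated part), together with the fact that $x$ is grouplike, to conclude $x^2$ and $1$ become identified in $B\oslash B$. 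Concretely, from $x^3=x$ and $x$ grouplike one has $x^2\cdot x = x$, so $x^2 = x\cdot x^{-1}$ would hold if $x$ were invertible; in $B\oslash B$ the map $i_B$ being surjective produces precisely the element playing the role of $x^{-1}$, forcing $x^2 \oslash 1 = 1 \oslash 1$.

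The main obstacle I anticipate is the explicit verification that $x^2-1 \in \ker(i_B)$ and the determination of the coalgebra isomorphism $B\oslash B \cong H_4$. Rather than computing $B\oslash B$ directly from its definition as $(B\otimes B)/(B\otimes B)B^+$ — which involves keeping track of a $6\times 6 = 36$-dimensional tensor product modulo relations — it will be cleaner to proceed through the quotient description $B\oslash B \cong \im(i_B) = B/\ker(i_B)$ afforded by \cref{prop:HBfdArt}, and pin down $\ker(i_B)$ via the antipode criterion of \cref{lem:HopfidealiB}. The coalgebra isomorphism $B\oslash B\cong H_4$ then follows since both are $4$-dimensional with matching coalgebra structure induced by $\pi$. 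I would present the argument so that the structural results do the heavy lifting, reserving direct computation only for the single membership $x^2-1\in\ker(i_B)$.
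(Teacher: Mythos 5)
Your overall architecture is the same as the paper's: reduce via \cref{prop:HBfd} and \cref{prop:HBfdArt} to identifying $Q(B)=B/\ker(i_B)$, and then apply \cref{lem:HopfidealiB} to the bi-ideal $I=\langle x^2-1\rangle$ with $B/I\cong H_4$, so that everything hinges on the single membership $x^2-1\in\ker(i_B)$. However, at precisely that crucial point your proposal has a genuine gap: you never actually prove $x^2\oslash 1 = 1\oslash 1$. The mechanism you sketch --- that surjectivity of $i_B$ ``produces the element playing the role of $x^{-1}$'' and thereby forces the identification --- is not valid reasoning. Surjectivity of $i_B$ holds for \emph{every} finite-dimensional bialgebra (\cref{prop:semiantip}) and by itself identifies no particular elements of $B\oslash B$; in particular it cannot know about the relation $x^3=x$. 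What actually forces the identification is the defining relation of the quotient $B\oslash B$, namely $ab_1\oslash cb_2=(a\oslash c)\varepsilon(b)$, applied with the grouplike $b=x$ and combined with $x^3=x$. The paper's verification is one line and uses no surjectivity at all:
\begin{equation*}
x^{2}\oslash 1=(x^{2}\oslash 1)\varepsilon(x)=x^{2}x\oslash x=x^{3}\oslash x=x\oslash x=(1\oslash 1)\varepsilon(x)=1\oslash 1.
\end{equation*}
Your route could be repaired by first proving $1\oslash x=x\oslash 1$ (so that one may take $S(x)=x$ in \cref{lem:iBsu}) and then invoking \cref{cor:Sprops} to get $xS(x)-1=x^2-1\in\ker(i_B)$; but establishing $1\oslash x=x\oslash 1$ requires exactly the same elementary computation in $B\oslash B$, so nothing is gained by the detour.

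A secondary, smaller gap: \cref{lem:HopfidealiB} requires the inclusion of the whole bi-ideal, $I\subseteq\ker(i_B)$, not merely of its generator. Membership of $x^2-1$ alone does not immediately give $\langle x^2-1\rangle\subseteq\ker(i_B)$, since $\ker(i_B)$ is a priori only a coideal and a \emph{left} ideal (proof of \cref{prop:pconv}\,\ref{item:pconv1}). The paper closes this by observing that $x^2$ is central in $B$, so $\langle x^2-1\rangle=B(x^2-1)\subseteq\ker(i_B)$. You should make this observation explicit; your dimension count identifying $\ker(\pi)=\langle x^2-1\rangle$ is correct and useful, but it does not substitute for this step.
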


\begin{proof}
As $B$ is finite-dimensional, the coalgebra map $\widehat{q_B} \colon B\oslash B\to Q(B),x\oslash y\mapsto q_B(x)Sq_B(y)$ is an isomorphism by \cref{prop:HBfdArt}. Furthermore, in $B \oslash B$ we have
\begin{equation*}
x^{2}\oslash 1=x^{2}\oslash 1\varepsilon \left( x\right) =x^{2}x\oslash
x=x^{3}\oslash x=x\oslash x=1\oslash 1,
\end{equation*}
so that $x^2-1 \in \ker(i_B)$. Now, since $x^2$ is central in $B$, $\langle x^2-1 \rangle \subseteq B(x^2-1) \subseteq \ker(i_B)$, and since $\frac{B}{\langle x^2-1\rangle}\cong H_{4}$ as bialgebras, $\langle x^2-1\rangle$ is a bi-ideal. Hence, \cref{lem:HopfidealiB} entails that $\mathrm{H}(B) = Q(B) = \frac{B}{\langle x^2-1\rangle} \cong H_4$. 
\end{proof}

\begin{remark}
Keeping the notations as in the proof of \cref{prop:quotquant}, denote by $\pi$ the composition $B\overset{q_B}{\to} Q(B)\overset{\xi}{\to} H_4$, where $\xi$ is the obvious Hopf algebra isomorphism, i.e., $\pi \colon B\rightarrow H_{4},$ $\pi \left( x^{m}y^{n}\right)
=x^{m}y^{n}$. Then we can transport on $B\oslash B$ the Hopf algebra structure of $H_{4}$ in such a way that $\widehat{\pi }=\xi\circ \widehat{q_B }:B\oslash B\to H_4$ becomes a Hopf algebra map.
\begin{invisible}
Indeed, since $\xi$ is a Hopf algebra map, we have
 $\xi\widehat{q_B }(x\oslash y)=\xi(q_B(x)S_{Q(B)}q_B(y))=\xi q_B(x)\xi S_{Q(B)}q_B(y)=\xi q_B(x)S_{H_4}\xi q_B(y)
 =\widehat{\xi q_B }(x\oslash y)$ and hence $\xi\widehat{q_B }=\widehat{\xi q_B }=\widehat{\pi }.$  
\end{invisible}

Still, for $\mathrm{char}%
\left( \Bbbk \right) \neq 2,$ the canonical projection $p:B\otimes
B\rightarrow B\oslash B$ is not a bialgebra map otherwise $\alpha \coloneqq \widehat{%
\pi }\circ p:B\otimes B\rightarrow H_{4}$ would be an algebra map,
and this is not possible because $B \otimes 1$ centralises $1 \otimes B$ in $B \otimes B$, while both subalgebras are mapped onto the whole $H_4$, which is non-commutative. For example,
\[\alpha((1 \otimes x)(y \otimes 1)) = \alpha(y \otimes x) = \pi(y)S_{H_4}\pi(x) = yx \neq xy = \alpha(1 \otimes x)\alpha(y \otimes 1).\]
%
\end{remark}

In this case we are also able to explicitly describe the $n$-Hopf algebra structure of $B$.

\begin{proposition}
The bialgebra $B=\Bbbk \left\langle x,y\mid
yx=-xy,x^{3}=x,y^{2}=0\right\rangle $ has a two-sided invertible $1$-antipode $S:B\rightarrow B$ which is an anti-algebra map and it is defined by setting $S\left(
x\right) =x$ and $S\left( y\right) =\left( 1-x-x^{2}\right) y.$
\end{proposition}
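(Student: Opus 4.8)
The plan is to verify directly that the proposed map $S$ is a two-sided $1$-antipode which is an anti-algebra map, and then to check its invertibility. Since $B$ is finite-dimensional, \cref{coro:fdS} already guarantees that $B$ is a (two-sided) $n$-Hopf algebra for some $n$, and \cref{rmk:not-nHopf} together with the fact that $B$ is not a right Hopf algebra (observed just before the statement) forces $n \geq 1$. So the real content is (a) exhibiting an explicit $S$ with $S \ast \id^{\ast 2} = \id$, and (b) confirming minimality, i.e. that $B$ is not a $0$-Hopf algebra, which is precisely the non-existence of a right antipode already established. I expect the minimality to be essentially free from the preceding discussion.

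First I would record that $S$ is well-defined as an anti-algebra endomorphism: because $B$ is presented by generators $x,y$ and relations $yx = -xy$, $x^3 = x$, $y^2 = 0$, it suffices to check that the assignment $S(x) = x$, $S(y) = (1-x-x^2)y$ respects the opposite relations. Concretely I would verify $S(x)S(y) = -S(y)S(x)$ (the anti-multiplicative version of $yx=-xy$), $S(x)^3 = S(x)$, and $S(y)^2 = 0$, using $x^2y = yx^2$-type commutation rules derived from $yx=-xy$ (note $x^2$ is central since $yx^2 = -xyx = x^2y$). These are short monomial computations in the $6$-dimensional basis $\{1,x,x^2,y,xy,x^2y\}$.

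Next, the crux is the identity $S \ast \id \ast \id = \id$ in $\mathrm{Hom}_\Bbbk(B,B)$. Since both sides are algebra-to-algebra maps only up to the convolution product, I would check the identity on the algebra generators $x$ and $y$ using $\Delta(x) = x \otimes x$ and $\Delta(y) = x \otimes y + y \otimes 1$, and then argue it extends. On $x$: $\Delta^{(2)}(x) = x \otimes x \otimes x$, so $(S \ast \id^{\ast 2})(x) = S(x)\,x\,x = x \cdot x^2 = x^3 = x$, as desired. On $y$ the computation is the main calculation: $\Delta^{(2)}(y) = x \otimes x \otimes y + x \otimes y \otimes 1 + y \otimes 1 \otimes 1$, and one applies $S$ to the first tensor factor and multiplies, obtaining $S(x)xy + S(x)y + S(y) = x^2 y + xy + (1-x-x^2)y = (x^2 + x + 1 - x - x^2)y = y$. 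Because convolution by a multiplicative/antimultiplicative map interacts well with products and $\{x,y\}$ generate, I would then confirm $S \ast \id^{\ast 2} = \id$ holds on all of $B$; symmetrically one checks $\id^{\ast 2} \ast S = \id$, so $S$ is two-sided.

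Finally, for invertibility of $S$ in $\mathrm{End}_\Bbbk(B)$ (as a linear map), I would note that $S$ permutes/acts triangularly on the basis: $S(x) = x$ and $S(y) = (1-x-x^2)y = y - xy - x^2y$, so $S$ fixes the subalgebra generated by $x$ and acts on the span of $\{y,xy,x^2y\}$ by an explicit matrix whose determinant I would compute to be nonzero (equivalently, exhibit $S^{-1}$ directly, noting $S$ restricted to the $y$-part is unipotent-like once one uses $x^2y = yx^2$ and $x^3 = x$). The main obstacle I anticipate is purely bookkeeping: keeping the sign conventions from $yx = -xy$ straight when moving powers of $x$ past $y$ inside the anti-algebra map and the convolution, particularly in the $S(y)^2 = 0$ check and in verifying that the single-generator identities propagate to products $x^m y^n$. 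None of these steps is conceptually hard; the argument is a finite, explicit verification in a $6$-dimensional algebra.
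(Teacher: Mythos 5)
Your overall strategy (direct verification in the $6$-dimensional algebra) is close to the paper's, and your generator computations are correct, but there is one genuine gap: the step where you pass from $(S \ast \id^{\ast 2})(x) = x$ and $(S \ast \id^{\ast 2})(y) = y$ to $S \ast \id^{\ast 2} = \id$ on all of $B$ ``because convolution by a multiplicative/antimultiplicative map interacts well with products''. No such principle holds for identities of this shape. For the genuine antipode identity $S \ast \id = u_B\varepsilon_B$ with $S$ anti-multiplicative, the set of elements on which it holds \emph{is} closed under products, because $(S \ast \id)(ab) = S(b_1)S(a_1)a_2b_2$ and the inner sum $S(a_1)a_2$ collapses to the scalar $\varepsilon(a)1_B$, which slides out past $S(b_1)$. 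For $S \ast \id^{\ast 2} = \id$ one gets instead $(S \ast \id^{\ast 2})(ab) = S(b_1)S(a_1)a_2b_2a_3b_3$, where the $a$-legs and $b$-legs interleave and nothing collapses to a scalar; closure under products would require something like commutativity of $B$, which fails here ($yx=-xy$). Nor is $\id^{\ast 2}$ an algebra map (in characteristic $\neq 2$ one has $\id^{\ast 2}(x)\,\id^{\ast 2}(y) = xy + x^2y$ while $\id^{\ast 2}(xy) = xy - x^2y$), so there is no map-comparison shortcut: $S \ast \id^{\ast 2} = \id$ is an equality of \emph{linear} maps and must be checked on a basis, i.e.\ also on $x^2$, $xy$ and $x^2y$. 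The same objection applies to your ``symmetric'' check of $\id^{\ast 2} \ast S = \id$.

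The gap is fixable, because the identity does hold on the remaining basis vectors: using $\Delta^{(2)}(xy) = x^2 \otimes x^2 \otimes xy + x^2 \otimes xy \otimes x + xy \otimes x \otimes x$ and the values $S(x^2)=x^2$, $S(xy)=x^2y$ supplied by anti-multiplicativity, one finds $(S \ast \id^{\ast 2})(xy) = xy - x^2y + x^2y = xy$, and similarly for $x^2y$. The paper sidesteps part of this bookkeeping by a different device: it \emph{defines} $S \coloneqq 2\cdot\id - \id^{\ast 3}$, so that $S$ lies in the commutative convolution subalgebra $\Bbbk[\id]$; then $S \ast \id = \id \ast S$ is automatic, the left identity yields the right one for free, the whole verification reduces to computing convolution powers of $\id$ on the six basis elements, and the construction ties in with \cref{coro:fdS} (your $S$ and the paper's coincide, since $2y - \id^{\ast 3}(y) = (1-x-x^2)y$); anti-multiplicativity is then obtained exactly as you propose, by comparing with the map induced on $B^{\op}$ by the generators. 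Two smaller points: your appeal to \cref{rmk:not-nHopf} for $n \geq 1$ is inapplicable, since that remark assumes $i_B$ injective, which fails for this $B$; minimality follows simply because a $0$-antipode would be a right antipode, which $B$ lacks. Your invertibility argument (triangular action on $\mathrm{span}\{y,xy,x^2y\}$ with determinant $1$) is correct, though the paper's observation that $S^4 = \id$ is slicker.
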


\begin{proof} 
Since $B$ is finite-dimensional, $B$ has two sided $n$-antipode such that $S\ast \mathrm{Id}=\mathrm{Id}\ast S$ for some $n$, by \cref{coro:fdS}. Furthermore, $n>0$ because, as we already observed above, $B$ is not a right Hopf algebra.
We want to prove that $n=1.$
Since $\Delta(x) = x \otimes x$ and $\Delta(y) = x \otimes y + y \otimes 1$, a moment of thought shall convince the reader that $\id^{*n}(x) = x^n$ and  $\id^{*n}(y) = \sum_{k=0}^{n-1}x^ky$ for all $n \geq 1$.
\begin{invisible}
Indeed,
\[\id^{*(n+1)}(y) = x \cdot \id^{*n}(y) + y\]
and we can conclude by induction.
\end{invisible}
In particular,
\begin{gather*}
\id^{*3}(x) = x^3 = x, \qquad \id^{*3}(y) = x^2y + xy + y, \qquad \id^{*5}(x) = x^5 = x \\
\text{and} \qquad \id^{*5}(y) = x^4y+x^3y+x^2y+xy+y = 2x^2y+2xy+y
\end{gather*}
entail that
\[\left(\left(2 \cdot \id - \id^{*3}\right)*\id^{*2}\right)(x) = x \qquad \text{and} \qquad \left(\left(2 \cdot \id - \id^{*3}\right)*\id^{*2}\right)(y) = y.\]
Set $S \coloneqq 2 \cdot \id - \id^{*3}$. It satisfies
\[S(1) =1, \qquad S(x) = x \qquad \text{and} \qquad S(y) = (1-x-x^2)y.\]
One may check directly that the unique algebra map $T':\Bbbk \left\langle X,Y\right\rangle \rightarrow B^{\mathrm{op}}$ satisfying $T'\left( X\right) =x$ and $T'\left( Y\right) =\left( 1-x-x^{2}\right) y$ factors uniquely through an algebra map $T \colon B \to B^{\mathrm{op}}$.
\begin{invisible}
Indeed,
\begin{align*}
T'\left( YX\right) &=T'\left( X\right) T'\left( Y\right) =x\left(
1-x^{2}-x\right) y=\left( 1-x^{2}-x\right) xy=-\left( 1-x^{2}-x\right)
yx \\
& =-T'\left( Y\right) T'\left( X\right) =T'\left( -XY\right) \\
T'\left( X^{3}\right) &=T'\left( X\right) ^{3}=x^{3}=x=T'\left( X\right) \\
T'\left( Y^{2}\right) &=T'\left( Y\right) ^{2}=\left( 1-x^{2}-x\right)
y\left( 1-x^{2}-x\right) y\in By^{2}=0
\end{align*}

Thus $T'$ induces an algebra map $T:B\rightarrow B^{\mathrm{op}}$ such that $T\left( x\right) =x$ and $T\left( y\right) =\left( 1-x^{2}-x\right) y.$
\end{invisible}
Since the latter satisfies
\begin{align*}
T\left( x^{2}\right) & = T\left( x\right) ^{2}=x^{2} = 2x^2 - x^6 = S(x^2), \\
T\left( xy\right) & = T\left( y\right) T\left( x\right) = \left(1-x-x^{2}\right) yx = \left( -x + x^{2} + x^3\right)y = x^2y = 2xy - (2xy - x^2y) = S(xy),  \\
T\left( x^{2}y\right) & = T\left( xy\right) T\left( x\right) = x^{2}yx = -xy = 2x^2y - (xy + 2x^2y) = S(x^2y),
\end{align*}%
\begin{invisible}
    indeed,
    \begin{align*}
    \id^{*3}(xy) & = \id^{*2}(x^2)xy + \id^{*2}(xy)x = x^5y + (x^3y + xyx)x = xy-x^2y+xy = 2xy - x^2y, \\
    \id^{*3}(x^2y) & = \id^{*2}(x^3)x^2y + \id^{*2}(x^2y)x^2 = x^8y + (x^5y + x^2yx^2)x^2 = x^2y + xy + x^2y = xy + 2x^2y
    \end{align*}%
\end{invisible}%
we conclude that $S = 2 \cdot \id - \id^{*3}$ is an anti-algebra map and a left $1$-antipode which additionally satisfies $S * \id = \id * S$, constructed as in the proof of \cref{coro:fdS}.
To prove that $S$ is invertible, too, it suffices to observe that the above computations show that it is surjective. More precisely,
\[
S^2(y) = S(y)(1-x^2-x) = (1-x^2-x)y(1-x^2-x) = (1-x^2-x)(1-x^2+x)y = (1-2x^2)y
\]
\begin{invisible}
indeed $(1-x^2)^2-x^2 = 1 - 2x^2 + x^4 - x^2 = 1 - 2x^2 + x^2 - x^2 = 1-2x^2$,
\end{invisible}
entails that
$S^4(y) = S^2(1-2x^2)S^2(y) = (1-2x^2)^2y = y$
\begin{invisible}
    indeed $(1-2x^2)^2 = 1 - 4x^2 + 4x^4 = 1-4x^2+4x^2 = 1$
\end{invisible}
and hence $S^4 = \id$.
\end{proof}

We now deal with $\mathrm{C}(B)$.

\begin{proposition}
\label{prop:Cquotquant}
For $B=\Bbbk \left\langle x,y\mid
yx=-xy,x^{3}=x,y^{2}=0\right\rangle $ we have $\mathrm{C}(B) = \Bbbk$.
\end{proposition}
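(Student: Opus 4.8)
The plan is to identify $\mathrm{C}(B)$ with the largest Hopf subalgebra of $B$ and to show that this is trivial. Since $B$ is finite-dimensional, $p_B$ is injective by \cref{prop:semiantip}\,\ref{item:semiantip2}, so \cref{prop:CBfd} together with \cref{cor:CBsub} allow us to realise $\mathrm{C}(B)$ as the largest sub-bialgebra $H$ of $B$ that admits an antipode. Thus it suffices to prove that every sub-bialgebra $H\subseteq B$ which is a Hopf algebra equals $\Bbbk 1_B$.

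First I would analyse $B$ as a coalgebra. From $\Delta(x)=x\otimes x$ and $x^3=x$ one reads off that $1,x,x^2$ are grouplike, and since $B$ is a quotient of the pointed bialgebra $\Bbbk_{-1}[x,y]$ (equivalently, it is generated as an algebra by the grouplike $x$ and the skew-primitive $y$), $B$ is pointed with $\mathrm{G}(B)=\{1,x,x^2\}$ and coradical $B_0=\Bbbk 1\oplus\Bbbk x\oplus \Bbbk x^2$. A direct computation on the basis $\{1,x,x^2,y,xy,x^2y\}$, using $\Delta(y)=x\otimes y+y\otimes 1$, $\Delta(xy)=x^2\otimes xy+xy\otimes x$ and $\Delta(x^2y)=x\otimes x^2y+x^2y\otimes x^2$, then shows that the space of primitives is trivial, that is $P(B)=0$: matching the components of $\Delta(z)$ against $z\otimes 1+1\otimes z$ for a general $z$ forces every coefficient to vanish.

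Now let $H\subseteq B$ be a Hopf subalgebra. Its grouplikes satisfy $\mathrm{G}(H)\subseteq \mathrm{G}(B)=\{1,x,x^2\}$ and form a group, the inverse of a grouplike $g$ being $S_H(g)\in H\subseteq B$. However, neither $x$ nor $x^2$ is invertible in $B$: from $x^3=x$ an inverse of $x$ would force $x^2=1_B$, contradicting the linear independence of the basis, and likewise for $x^2$. Hence $\mathrm{G}(H)=\{1\}$, so that $H$ is a connected pointed coalgebra. By the structure of the first term of the coradical filtration of a pointed coalgebra (Taft--Wilson), $H_1=\Bbbk 1\oplus P(H)$, and since $P(H)\subseteq P(B)=0$ we get $H_1=\Bbbk 1=H_0$. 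As $H_1=H_0$ forces $H_n=H_0$ for all $n$ and $H=\bigcup_n H_n$, we conclude $H=\Bbbk 1$, whence $\mathrm{C}(B)=\Bbbk$. The only genuine work lies in the routine coalgebra computation establishing $P(B)=0$ and in justifying the non-invertibility of $x$ and $x^2$; once these are in place, the structural argument is immediate.
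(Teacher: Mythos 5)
Your proposal is correct and follows essentially the same route as the paper's own proof: realise $\mathrm{C}(B)$ as a pointed sub-bialgebra of $B$ (via \cref{prop:CBfd}), observe that its coradical is spanned by grouplike elements that must be invertible in $B$ (only $1_B$ qualifies, since $x^3=x$ rules out invertibility of $x$ and $x^2$), check $P(B)=0$ by direct computation, and conclude from the coradical filtration via $\mathrm{C}(B)_1 = \Bbbk + P(\mathrm{C}(B))$. The only cosmetic difference is that you run the argument for an arbitrary Hopf sub-bialgebra $H \subseteq B$ using \cref{cor:CBsub}, whereas the paper applies it directly to $\mathrm{C}(B) = K(B)$.
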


\begin{proof}
By \cref{prop:CBfd}, $\mathrm{C}(B)=K(B)$ and hence it is a sub-bialgebra of the pointed bialgebra $B$, whence pointed as well.
\begin{invisible}
We observed that $B$ has basis $\left\{ 1,x,x^{2},y,xy,x^{2}y\right\} $. Thus, the coalgebra structure of $B$ entails that $\Bbbk G(B)=\Bbbk+\Bbbk x+\Bbbk x^2\subseteq B$ is an exhaustive coalgebra filtration so that the coradical $B_0$ of $B$ is contained in $\Bbbk G(B)$ and hence $B_0=\Bbbk G(B)$.
Now $\mathrm{C}(B)_0$ is the direct sum of the simple sub-coalgebras of $\mathrm{C}(B)$. Since any simple sub-coalgebra of $\mathrm{C}(B)$ is a simple sub-coalgebra of $B$ and $B_0=\Bbbk G(B)$, it is one-dimensional.
\end{invisible} Since $\mathrm{C}(B)$ is a Hopf algebra, its coradical is spanned by invertible grouplike elements. There is only one such element in $B$, namely $1$, so that the coradical of $\mathrm{C}(B)$ is $\Bbbk$.
Then, if we denote, as usual, by $\mathrm{C}(B)_n$ the $n$-th term of the coradical filtration of $\mathrm{C}(B)$, we have that $\mathrm{C}(B)_1=\Bbbk +P(\mathrm{C}(B))$, in view of \cite[Lemma 5.3.2]{Montgomery}. Now it is easy to check that $P(B)=\{0\}$.
\begin{invisible}
 Let $z\in P\left( B\right) .$ Since $\left\{ 1,x,x^{2},y,xy,x^{2}y\right\} $
is a basis for $B,$ we can write $%
z=k_{0}1+k_{1}x+k_{2}x^{2}+h_{0}y+h_{1}xy+h_{2}x^{2}y$ for some $%
k_{i},h_{i}\in \Bbbk $. Recalling that $\Delta \left( x\right) =x\otimes x$
and $\Delta \left( y\right) =x\otimes y+y\otimes 1,$ we compute%
\begin{align*}
\Delta \left( z\right)  &=k_{0}1\otimes 1+k_{1}x\otimes x+k_{2}x^{2}\otimes
x^{2}+h_{0}\left( x\otimes y+y\otimes 1\right) +h_{1}\left( x^{2}\otimes
xy+xy\otimes x\right) +h_{2}\left( x^{3}\otimes x^{2}y+x^{2}y\otimes
x^{2}\right)  \\
=&k_{0}1\otimes 1+k_{1}x\otimes x+k_{2}x^{2}\otimes x^{2}+h_{0}x\otimes
y+h_{0}y\otimes 1+h_{1}x^{2}\otimes xy+h_{1}xy\otimes x+h_{2}x\otimes
x^{2}y+h_{2}x^{2}y\otimes x^{2} \\
=&\left( k_{0}1+h_{0}y\right) \otimes 1+\left( k_{1}x+h_{1}xy\right)
\otimes x+\left( k_{2}x^{2}+h_{2}x^{2}y\right) \otimes x^{2}+h_{0}x\otimes
y+h_{1}x^{2}\otimes xy+h_{2}x\otimes x^{2}y.
\end{align*}%
This element must be compared with
\begin{eqnarray*}
z\otimes 1+1\otimes z &=&z\otimes 1+k_{0}1\otimes 1+k_{1}1\otimes
x+k_{2}1\otimes x^{2}+h_{0}1\otimes y+h_{1}1\otimes xy+h_{2}1\otimes x^{2}y
\\
&=&\left( z+k_{0}1\right) \otimes 1+k_{1}1\otimes x+k_{2}1\otimes
x^{2}+h_{0}1\otimes y+h_{1}1\otimes xy+h_{2}1\otimes x^{2}y.
\end{eqnarray*}%
By matching the terms of the form $\left( \ast \right) \otimes 1$ we get the
condition $k_{0}1+h_{0}y=z+k_{0}1$ so that $z=h_{0}y$. Then, by matching the
terms of the form  $\left( \ast \right) \otimes y,$ we get the condition $%
h_{0}x=h_{0}1$ so that $h_{0}=0$ and hence $z=0.$
\end{invisible}
Since $P(\mathrm{C}(B))\subseteq P(B)$, we get that $\mathrm{C}(B)_1=\Bbbk =\mathrm{C}(B)_0$ and hence $\mathrm{C}(B)=\Bbbk$.
\end{proof}

\subsection{The monoid bialgebra case and additional results on \texorpdfstring{$i_B$}{iB} and \texorpdfstring{$p_B$}{pB}}\begin{invisible}[\ps{Added 04 03 2025 - the results herein are adaptations of the already existing ones}]\end{invisible}%
Let $M$ be a multiplicative monoid and let $B = \Bbbk M$ be the associated monoid bialgebra. By uniqueness of the left adjoint, the Hopf envelope $\mathrm{H}(\Bbbk M)$ is the group algebra of the enveloping group $G(M)$ of $M$, called the \emph{universal enveloping group} in \cite[\S4.11]{Bergman-book} (see e.g.\ \cite[Example 4.1]{Chirv}).
\begin{invisible}
Since the grouplike-elements functors $\mathcal{G} \colon \Hopf \to \Grp$ and $\mathcal{G} \colon \Bialg \to \Mon$ are right adjoints of the linearization functors $\Bbbk - \colon \Grp \to \Hopf$ and $\Bbbk - \colon \Mon \to \Bialg$, respectively, and since the underlying functors $\mathcal{U} \colon \Grp \to \Mon$ and $\mathcal{V} \colon \Hopf \to \Bialg$ clearly satisfy $\mathcal{U} \circ \mathcal{G} = \mathcal{G} \circ \mathcal{V}$, we conclude that $\mathrm{H}\left(\Bbbk M\right) \cong \Bbbk G(M)$ by uniqueness of the left adjoint.
\end{invisible}
Hence, we start by computing $\mathrm{C}(B)$. To this aim, denote by $M^\ell \coloneqq \{g\in M \mid \exists h\in M, gh=1\}$ the sub-monoid of $M$ consisting of the left units in $M$ and recall the iterative construction from \cref{ssec:itercofree}.

\begin{lemma}
\label{lem:KkM}
For $B = \Bbbk M$, $B\boxslash B = \mathsf{span}_{\Bbbk}\{g\otimes h\mid g,h\in M,gh=1\}$ and $\im(p_B) = \Bbbk M^\ell = K(B)$.
\begin{invisible}\rd{[Non so se è rilevante, ma $B\boxslash B=\Bbbk N$ dove $N=\{g\otimes h\mid g,h\in M,gh=1\}$ è un monoide col prodotto $(g\otimes h)(g'\otimes h')=gg'\otimes h'h$, cioè un sottomonoide del monoide moltilicativo $B\otimes B^\op$.]}\end{invisible}
\end{lemma}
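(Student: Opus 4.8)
The plan is to compute the three objects in turn, exploiting that $B = \Bbbk M$ has a basis of grouplike elements. First I would determine $B \boxslash B$ directly from the coinvariance condition \eqref{eq:coinvs}. Writing a general element as $\sum_{g,h \in M} c_{g,h}\, g \otimes h$ with finitely many nonzero $c_{g,h} \in \Bbbk$, and using $\Delta(g) = g \otimes g$ for each $g \in M$, the left-hand side of \eqref{eq:coinvs} becomes $\sum_{g,h} c_{g,h}\, g \otimes h \otimes gh$, while the right-hand side is $\sum_{g,h} c_{g,h}\, g \otimes h \otimes 1_B$. Since $\{g \otimes h \otimes k \mid g,h,k \in M\}$ is a basis of $B \otimes B \otimes B$, comparing coefficients forces $c_{g,h}\big([gh = k] - [k = 1_B]\big) = 0$ for every $k \in M$; taking $k = gh$ shows that $c_{g,h} = 0$ whenever $gh \neq 1_B$, whereas no condition is imposed when $gh = 1_B$. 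This yields $B \boxslash B = \mathsf{span}_\Bbbk\{g \otimes h \mid g,h \in M,\ gh = 1_B\}$.

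Next I would read off $\im(p_B)$ from the spanning set just obtained: one has $p_B(g \otimes h) = g\,\varepsilon(h) = g$ whenever $gh = 1_B$, and such a $g$ lies in $M^\ell$ by definition; conversely, every $g \in M^\ell$ admits some $h$ with $gh = 1_B$, so that $g = p_B(g \otimes h) \in \im(p_B)$. Hence $\im(p_B) = \Bbbk M^\ell$.

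Finally, for the identification $K(B) = \Bbbk M^\ell$ I would invoke \cref{lem:KBcoalg}, according to which $K(B)$ is the largest sub-coalgebra of $B$ contained in $\im(p_B) = \Bbbk M^\ell$. The key observation is that $M^\ell$ is itself a submonoid of $M$: it contains $1_B$, and if $gh = 1_B$ and $g'h' = 1_B$ then $(gg')(h'h) = g(g'h')h = gh = 1_B$, so $gg' \in M^\ell$. Consequently $\Bbbk M^\ell$ is a sub-bialgebra of $\Bbbk M$, in particular a sub-coalgebra, and it is contained in $\im(p_B) = \Bbbk M^\ell$. The maximality in \cref{lem:KBcoalg} then gives $\Bbbk M^\ell \subseteq K(B) \subseteq \im(p_B) = \Bbbk M^\ell$, whence $K(B) = \Bbbk M^\ell$.

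The computations are essentially routine linear algebra in the grouplike basis; the only point requiring a little care is the last equality, where one must both verify that $M^\ell$ is closed under multiplication (so that $\Bbbk M^\ell$ is genuinely a sub-coalgebra, not merely a subspace) and appeal to the characterization of $K(B)$ as the largest sub-coalgebra inside $\im(p_B)$ furnished by \cref{lem:KBcoalg}. I expect no serious obstacle beyond this.
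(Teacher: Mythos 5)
Your proof is correct and follows essentially the same route as the paper: extract the spanning set of $B \boxslash B$ from the coinvariance condition by comparing coefficients in the grouplike basis, read off $\im(p_B) = \Bbbk M^\ell$, and conclude $K(B) = \Bbbk M^\ell$ via \cref{lem:KBcoalg}. The only superfluous step is your verification that $M^\ell$ is a submonoid: $\Bbbk M^\ell$ is automatically a sub-coalgebra simply because it is spanned by grouplike elements (so $\Delta(g) = g \otimes g \in \Bbbk M^\ell \otimes \Bbbk M^\ell$), which is all that \cref{lem:KBcoalg} requires.
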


\begin{proof}
 From the definition, it follows easily that $B\boxslash B$ is spanned, as a vector space, by the set $\{g\otimes h\mid g,h\in M,gh=1\}.$
 \begin{invisible}
 Let $\sum_{g,h}k_{g,h}g\otimes h\in B\boxslash B$ with $k_{g,h}\neq 0$. Then $\sum_{g,h}k_{g,h}g\otimes h\otimes gh=\sum_{g,h}k_{g,h}g\otimes h\otimes 1$ so that, by applying $g^*\otimes h^*\otimes \id$, where $g^*(h)=\delta_{g,h}$, we get $k_{g,h} gh=k_{g,h}1$ and hence $gh=1$.
  \end{invisible}
 Thus $\im(p_B)=\Bbbk M^\ell$. Therefore, $K(B)=\Bbbk M^\ell$ in view of \cref{lem:KBcoalg}, as $\Bbbk M^\ell$ is a subcoalgebra of $B$.
 \begin{invisible}
 The inclusion $K(B)\subseteq \im(p_B)$ is always true.
  If $g\in M^\ell$, then $\Delta(g)=g\otimes g\in \Bbbk M^\ell\otimes \Bbbk M$ and hence $\im(p_B)\subseteq K(B).$
 \end{invisible}
\end{proof}

\begin{remark}
If $M$ is finite, then any element in $M^\ell$ is also left invertible and hence $M^\ell=M^\times\coloneq\{g\in M\mid \exists h\in M, gh=1=hg\}$ is a group, which confirms that $K(B)$ is a Hopf algebra in the finite-dimensional case.
\begin{invisible}
 Given $g\in M^\ell$, then $\{g^n\mid n\in\mathbb{N}\}$ is finite, so there are $a>b$ such that $g^a=g^b$. Since $g$ right invertible, this equality implies we can cancel $g^b$ on the right to get $g^{a-b}=1$ whence $g$ is two-sided invertible.
 \end{invisible}
\end{remark}

\begin{theorem}
\label{thm:monoidbialgebra}
    For the monoid bialgebra $B = \Bbbk M$, $\mathrm{C}(B) = K^2(B) = \Bbbk M^\times$ and $\mathrm{C}(B) = \mathrm{C}^c(B)$.
\end{theorem}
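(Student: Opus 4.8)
The plan is to realise each of the three objects $\mathrm{C}(B)$, $K^2(B)$ and $\mathrm{C}^c(B)$ as the group algebra $\Bbbk M^\times$, taking advantage of the facts that $B = \Bbbk M$ is cocommutative and that its grouplike elements are exactly the elements of $M$.

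I would first compute $K^2(B)$ by iterating \cref{lem:KkM}. That lemma already gives $K(B) = \Bbbk M^\ell$, and since $M^\ell = \{g \in M \mid \exists h \in M,\ gh = 1\}$ is easily seen to be a submonoid of $M$ (if $gh = 1$ and $g'h' = 1$ then $(gg')(h'h) = 1$), the bialgebra $K(B) = \Bbbk M^\ell$ is itself a monoid bialgebra. Applying \cref{lem:KkM} once more to it yields $K^2(B) = K(\Bbbk M^\ell) = \Bbbk (M^\ell)^\ell$. The one genuinely combinatorial step is then the monoid identity $(M^\ell)^\ell = M^\times$: if $g \in (M^\ell)^\ell$, choose $h \in M^\ell$ with $gh = 1$ and $k \in M$ with $hk = 1$, whence $k = (gh)k = g(hk) = g$ and therefore $hg = hk = 1$, so $g$ is two-sided invertible; the reverse inclusion is immediate. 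This gives $K^2(B) = \Bbbk M^\times$, and since $\Bbbk M^\times$ is a Hopf algebra (a group algebra) it is fixed by $K$, so the sequence stabilises.

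Next I would compute $\mathrm{C}^c(B)$ directly from \cref{thm:CcB}, which applies because $B = \Bbbk M$ is cocommutative. By \cref{lem:KkM} we have $B \boxslash B = \mathsf{span}_\Bbbk\{g \otimes h \mid g,h \in M,\ gh = 1\}$, so a basis element $g \otimes h$ lies in $\mathrm{C}^c(B)$ precisely when $h \otimes g \in B \boxslash B$ as well, i.e.\ when $gh = 1 = hg$. Hence $\mathrm{C}^c(B) = \mathsf{span}_\Bbbk\{g \otimes g^{-1} \mid g \in M^\times\}$, and the canonical map $\epsilon^c_B$, being the restriction of $p_B$, sends $g \otimes g^{-1} \mapsto g$ and is an isomorphism of Hopf algebras onto $\Bbbk M^\times$ (the multiplication in $B \otimes B^{\mathrm{op}}$ gives $(g \otimes g^{-1})(g' \otimes g'^{-1}) = gg' \otimes (gg')^{-1}$).

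Finally I would identify $\mathrm{C}(B)$ with $\mathrm{C}^c(B)$. Since $\ker(\epsilon_B)$ is a bi-ideal of the Hopf algebra $\mathrm{C}(B)$, it is a Hopf ideal by \cite[Theorem 1]{Nichols-Quot}, so $\im(\epsilon_B)$ is a Hopf algebra and $\epsilon_B$ is injective by \cref{lem:HopfimEB}. Thus $\mathrm{C}(B)$ embeds as a sub-coalgebra of the cocommutative coalgebra $B$ and is therefore cocommutative. A cocommutative $\mathrm{C}(B)$ then satisfies the defining universal property of $\mathrm{C}^c(B)$: the two canonical maps $\epsilon^c_B$ and $\epsilon_B$ factor through one another by the respective universal properties, and the uniqueness clauses force the comparison morphisms to be mutually inverse, whence $\mathrm{C}(B) = \mathrm{C}^c(B)$. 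Combining the three computations gives $\mathrm{C}(B) = K^2(B) = \Bbbk M^\times$ and $\mathrm{C}(B) = \mathrm{C}^c(B)$. I expect the main subtlety to be that $\Bbbk M$ need not be right perfect, so $\mathrm{C}(B) = K^\infty(B)$ is not available from \cref{prop:CBleftArt}; the argument sidesteps this by routing the identification of $\mathrm{C}(B)$ through the more tractable cocommutative cofree object rather than through the iterative construction.
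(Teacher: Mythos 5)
Your computations of $K^2(B)$ and of $\mathrm{C}^c(B)$ are both correct: the identification $K^2(B)=\Bbbk (M^\ell)^\ell=\Bbbk M^\times$ is exactly the paper's (including the monoid argument), and reading $\mathrm{C}^c(B)$ off \cref{thm:CcB} and \cref{lem:KkM} is a legitimate independent route that the paper does not take. The gap is in your third step, which is the one carrying the whole theorem. You claim that $\ker(\epsilon_B)$, being a bi-ideal of the Hopf algebra $\mathrm{C}(B)$, is a Hopf ideal by \cite[Theorem 1]{Nichols-Quot}. But bi-ideals of Hopf algebras are \emph{not} Hopf ideals in general: the paper's remark after \cref{cor:Bfd} records that there exist bialgebras without antipode that are quotients of Hopf algebras, and any such quotient exhibits a bi-ideal that is not a Hopf ideal. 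Nichols' theorem, as invoked in this paper, covers \emph{finite-dimensional} quotient bialgebras of Hopf algebras, and $\im(\epsilon_B)$ — which the theorem itself asserts to be $\Bbbk M^\times$ — need not be finite-dimensional. Nor can you fall back on \cref{prop:epsiInj}: for a general monoid neither $i_B$ nor $p_B$ is injective (\cref{lem:pM}\,\ref{item:pM2}, \ref{item:pM3}; see \cref{ex:converses}). So the injectivity of $\epsilon_B$, hence the cocommutativity of $\mathrm{C}(B)$, is not established, and your identification $\mathrm{C}(B)=\mathrm{C}^c(B)$ collapses. Note that knowing $\mathrm{C}^c(B)\cong\Bbbk M^\times$ cannot substitute for this, since its universal property only quantifies over \emph{cocommutative} Hopf algebras mapping to $B$, whereas $\mathrm{C}(B)$ must receive a factorisation from every Hopf algebra.

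The repair is the paper's own argument, which never touches the injectivity of $\epsilon_B$: verify the universal property of $\mathrm{C}(B)$ for $K^2(B)$ directly via \cref{prop:pconvd}\,\ref{item:pconvd2}. Given any bialgebra map $f\colon H\to B$ from a Hopf algebra $H$, the map $p_H$ is bijective (\cref{prop:Frobenius}), so $f$ factors (uniquely, since $k_B$ is injective) through $K(B)$; the factored map $H\to K(B)$ is again a bialgebra map from a Hopf algebra, so it factors through $K(K(B))=K^2(B)=\Bbbk M^\times$. Since $\Bbbk M^\times$ is a Hopf algebra and bialgebra maps between Hopf algebras preserve antipodes, $K^2(B)$ satisfies the universal property of $\mathrm{C}(B)$, whence $\mathrm{C}(B)=K^2(B)=\Bbbk M^\times$. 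Once this is known, $\mathrm{C}(B)$ is cocommutative and your final universal-property comparison yielding $\mathrm{C}(B)=\mathrm{C}^c(B)$ is sound — indeed it is exactly how the paper concludes.
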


\begin{proof}
By \cref{lem:KkM}, $K(B) = \Bbbk M^\ell$. For an arbitrary monoid $M$, one has $(M^\ell)^\ell=M^\times$ so that $K^2(B)=K(K(B))=K(\Bbbk M^\ell)=\Bbbk (M^\ell)^\ell=\Bbbk M^\times$, which is a Hopf algebra.
 \begin{invisible}
If $g\in (M^\ell)^\ell$, then there is $g'\in M^\ell$ such that $gg'=1$ but $g'\in M^\ell$ implies there is $g''\in M$ such that $g'g''=1$ so that $g'$ is two-sided invertible and hence its left and right inverse coincide, i.e. $g=g''$, and so $g\in M^\times$. Thus $(M^\ell)^\ell\subseteq M^\times$. The other inclusion is true.
 \end{invisible}
 As a consequence $K^2(B)=\mathrm{C}(B)$, by \cref{prop:pconvd}\,\ref{item:pconvd2}. The last claim follows by cocommutativity of $\mathrm{C}(B)$.
\end{proof}

\begin{remark}\label{rem:epsiMinj}
     For $B = \Bbbk M$, the canonical map $\epsilon_B:\mathrm{C}(B)\to B$ is given by the inclusion $K^2(B)\subseteq B$, so that it is injective. Moreover, $\mathrm{C}(B) = K(B)$ if and only if $M^\ell = M^\times$.
\end{remark}

Since monoid bialgebras offer a particularly rich family of examples, we can take advantage of them to provide additional evidence or to exhibit counterexamples to the converses of certain claims from the previous sections. To this aim, let us start with a complete characterization of injectivity and surjectivity of $p_B$ and of injectivity of $i_B$, followed by a few additional results on the surjectivity of $i_B$ for a monoid bialgebra $B = \Bbbk M$.

\begin{proposition}
\label{lem:pM}
Let $B=\Bbbk M$. Then
\begin{enumerate}[label=\alph*),ref={\itshape \alph*)},leftmargin=*]
\item\label{item:pM1} $p_B$ is surjective if, and only if, the monoid $M$ is a group;
\item\label{item:pM2} $p_B$ is injective if, and only if, any element in $M$ has at most one right inverse;
\item\label{item:pM3} $i_B$ is injective if, and only if, $M$ is right cancellative.
\end{enumerate}
\end{proposition}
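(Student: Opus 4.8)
The plan is to prove the three equivalences by translating each condition on $p_B$ or $i_B$ into a combinatorial statement about $M$, using the explicit descriptions of $B \boxslash B$ and $B \oslash B$ for a monoid bialgebra. The key structural facts are that $\{g \otimes h \mid g,h \in M\}$ is a basis of $B \otimes B$ and that in a monoid algebra the comultiplication is grouplike ($\Delta(g) = g \otimes g$). From \cref{lem:KkM} we already know $B \boxslash B = \mathsf{span}_\Bbbk\{g \otimes h \mid gh=1\}$, which will carry the burden of the computations for \ref{item:pM1} and \ref{item:pM2}.

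\textbf{Part \ref{item:pM1}.} For surjectivity of $p_B$, recall that $\im(p_B) = \Bbbk M^\ell$ by \cref{lem:KkM}, where $M^\ell$ is the set of left units. Hence $p_B$ is surjective if and only if $\Bbbk M^\ell = \Bbbk M$, i.e.\ if and only if $M^\ell = M$, meaning every element of $M$ has a right inverse. I would then argue that a monoid in which every element is right invertible is automatically a group: if every $g$ has a right inverse $g'$, then $g'$ also has a right inverse $g''$, and the standard cancellation argument ($g = g(g'g'') = (gg')g'' = g''$, so $g'g = 1$) shows the right inverse is a two-sided inverse. Thus $M^\ell = M$ is equivalent to $M$ being a group.

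\textbf{Part \ref{item:pM2}.} For injectivity of $p_B$, I would compute $\ker(p_B)$ directly. A general element of $B \boxslash B$ is $\sum_{gh=1} k_{g,h}\, g \otimes h$, and $p_B$ sends it to $\sum_{gh=1} k_{g,h}\, g$. Since distinct pairs $(g,h)$ with $gh=1$ may share the same first component $g$ (precisely when $g$ has several right inverses), the map $p_B$ fails to be injective exactly when some $g \in M$ admits two distinct right inverses $h \neq h'$: then $g \otimes h - g \otimes h' \in B \boxslash B$ is a nonzero element of the kernel. Conversely, if every element of $M$ has at most one right inverse, then the first components $g$ of the basis elements $g \otimes h$ of $B \boxslash B$ are pairwise distinct across basis elements, so $p_B$ is injective. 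This gives the stated equivalence.

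\textbf{Part \ref{item:pM3}.} For injectivity of $i_B$, I would describe $\ker(i_B)$ via the canonical map $\can \colon B \otimes B \to B \otimes B$, $x \otimes y \mapsto xy_1 \otimes y_2$, using \cref{lem:iB-inj}\,\ref{ib-inj_item4}: $i_B$ injective implies $\can$ injective, and for monoid algebras $\can(g \otimes h) = gh \otimes h$ is a bijection precisely when left translation is injective, i.e.\ when $M$ is right cancellative. More carefully, I expect $\ker(i_B)$ to be spanned by elements of the form $g - g'$ where $g,g'$ have a common right multiple failure of cancellation, so that $i_B$ is injective iff $gh = g'h$ forces $g = g'$, which is right cancellativity. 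The main obstacle here is identifying $\ker(i_B) = \ker(\pi)$ cleanly, where $\pi \colon B \otimes B \to B \oslash B$: one must show that the relation $g \oslash 1 = g' \oslash 1$ in $B \oslash B$ holds if and only if $g$ and $g'$ become equal after right cancellation in $M$, which requires unwinding the quotient by $(B \otimes B)B^+$ in terms of the monoid. Since $B^+$ is spanned by $\{m - 1 \mid m \in M\}$, the relation $g \oslash m = g \oslash 1\,\varepsilon(m)$ translates into monoid identities, and I would verify that right cancellativity is exactly what makes $i_B$ separate the basis elements $g \otimes 1$.
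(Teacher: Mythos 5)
Parts \ref{item:pM1} and \ref{item:pM2} of your proposal are correct. For \ref{item:pM1} you follow the same route as the paper ($\im(p_B)=\Bbbk M^\ell$ plus the standard fact that a monoid all of whose elements are right invertible is a group). For \ref{item:pM2} your argument is a clean variant of the paper's: since $\{g\otimes h\mid gh=1\}$ is a basis of $B\boxslash B$ (being a subset of the basis $\{g\otimes h\}$ of $B\otimes B$) and $p_B$ sends it into the basis $M$ of $B$, uniqueness of right inverses makes distinct basis elements go to distinct basis elements, which does give injectivity; the paper instead builds an explicit inverse of $p_B$ on its image from the map $\tau$ assigning to each left unit its unique right inverse. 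Your forward implication in \ref{item:pM3} is also valid and genuinely different from the paper's: you invoke \cref{lem:iB-inj}\,\ref{ib-inj_item4} to pass from injectivity of $i_B$ to injectivity of $\can$, and $\can(g\otimes h)=gh\otimes h$ fails to be injective on basis elements as soon as $ac=bc$ with $a\neq b$; the paper instead computes directly $i_B(a)=a\oslash 1=ac\oslash c=bc\oslash c=b\oslash 1$.

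The genuine gap is the converse implication in \ref{item:pM3}: right cancellativity $\Rightarrow$ $i_B$ injective. Here your text only restates the goal (``I would verify that right cancellativity is exactly what makes $i_B$ separate the basis elements'') without supplying a mechanism, and the mechanism is the whole point. The quotient $B\oslash B=(B\otimes B)/(B\otimes B)B^+$ identifies $a\otimes b$ with $ac\otimes bc$ for all $a,b,c\in M$, so two classes $g\oslash 1$ and $g'\oslash 1$ could a priori be equal through a zigzag of such moves (multiplying \emph{and} un-multiplying on the right by different elements); right cancellativity does not obviously control such chains, since the intermediate pairs need not have common right multiples. What is needed is an invariant of the quotient that separates the classes. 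The paper constructs exactly this: the functional $f\colon B\otimes B\to\Bbbk$, $f(a\otimes b)=\delta_{a,b}$, descends to $\overline{f}\colon B\oslash B\to\Bbbk$ \emph{precisely because} right cancellativity gives $\delta_{ac,bc}=\delta_{a,b}$; then the left $B\otimes B$-module structure converts the hypothesis $g\oslash 1=g'\oslash 1$ into $g\oslash g'=(1\otimes g')(g\oslash 1)=(1\otimes g')(g'\oslash 1)=g'\oslash g'=1\oslash 1$, and applying $\overline{f}$ yields $\delta_{g,g'}=1$, i.e.\ $g=g'$. (Note that applying $\overline{f}$ directly to $g\oslash 1=g'\oslash 1$ gives only $\delta_{g,1}=\delta_{g',1}$, which separates nothing; the module-structure step is essential.) Finally, separation of basis elements alone does not yet give injectivity of the linear map $i_B$: one must also know the images are linearly independent, which the paper gets because the elements $g\oslash 1$ are grouplike in the coalgebra $B\oslash B$, hence distinct grouplikes are linearly independent. (Alternatively one can argue that $(B\otimes B)B^+$ is spanned by differences of basis elements, so $B\oslash B$ is free on the equivalence classes; but some such argument must be made, and your sketch contains neither.)
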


\begin{proof}
\ref{item:pM1} Since $\im(p_B)=\Bbbk M^\ell$ by \cref{lem:KkM}, the surjectivity of $p_B$ is equivalent to $M=M^\ell$ i.e.\ to $M$ being a group.

\ref{item:pM2} If $p_B$ is injective and we take $a,b,c\in M$ with $ab=1=ac$, then $a\otimes b\in B\boxslash B\ni a\otimes c$ and we have $p_B(a\otimes b)=a=p_B(a\otimes c)$ so that $a\otimes b=a\otimes c$ and hence $b=c$. Thus an element in $M$ has at most one right inverse. Conversely, suppose that every $a\in M^\ell$ has a unique right inverse in $M$, say $\tau(a)$. This induces a linear map $\tau \colon \im(p_B)\to B$ which in turn allows us to define a linear map $f \colon \im(p_B) \to B\boxslash B$ by setting $f(a) \coloneqq a\boxslash \tau(a)$. The latter turns out to be injective, since $p_B \circ f =s$ where $s:\im(p_B)\hookrightarrow B$ is the inclusion, and surjective, in view of \cref{lem:KkM}. Whence $p_B=s\circ f^{-1}$ and so $p_B$ itself is injective, as claimed.

 \ref{item:pM3} Suppose that $i_B$ is injective and that $ac = bc$ in $M$. Then, by taking advantage of the left $B\otimes B$-module structure of $B\oslash B$ from \cref{lem:oslash},
 \[i_B(a) = a \oslash 1_B = ac \oslash c 
 = bc \oslash c = b\oslash 1_B = i_B(b),\]
  from which we conclude that $a=b$. Conversely, suppose that $M$ is right cancellative. If we can prove that for $a,b \in M$, $i_B(a) = i_B(b)$ implies $a=b$, then it will follow that $i_B$ is injective: 
 in such a case, the elements $a \oslash 1_B$ in $B \oslash B$ would be distinct grouplike elements, and so linearly independent.
Since $M$ is right cancellative, the Kronecker delta obeys the identity $\delta_{ac,bc}=\delta_{a,b}$ for every $a,b,c\in M$. As a consequence, the linear map $f:B\otimes B\to\Bbbk$, given on the basis by $f(a\otimes b)\coloneqq\delta_{a,b}$, for every $a,b\in M$,
induces a linear map $\overline{f}:B\oslash B\to \Bbbk$.

Now, if $a,b \in M$ are such that $i_B(a) = i_B(b)$, then
\[a \oslash b = (1_B \otimes b)(a \oslash 1_B) = (1_B \otimes b)(b \oslash 1_B) = b \oslash b = 1_B \oslash 1_B,\]
so that $\delta_{a,b}=\overline{f}(a\oslash b)=\overline{f}(1_B\oslash 1_B)=\delta_{1_B,1_B}=1$ and hence $a=b$ completing the proof.
\end{proof}

\begin{remark}
 Let $B=\Bbbk M$ be a monoid algebra. By construction $Q^\infty(B)$  is a quotient bialgebra of $B$ and hence it is a monoid algebra $\Bbbk C$ where the monoid $C$ is a homomorphic image of $M$. By \cref{thm:Kellyfree}, $Q^\infty(B)=\Bbbk C$ has $i_{Q^\infty(B)}$ injective and hence by \cref{lem:pM}, $C$ is right cancellative. The universal property of $Q^\infty(B)$ together with \cref{lem:pM} now entail that $C$ is the \emph{maximal right cancellative monoid homomorphic image}\footnote{We are grateful to Davide Ferri for bringing this notion to our attention.} of $M$, see \cite[bottom of page 133]{Masat}. Thus the construction of $Q^\infty(B)$ can be regarded as a bialgebra counterpart of the construction of the maximal right cancellative monoid homomorphic image.
\end{remark}

It is well-known that, for a general monoid $M$, the canonical monoid morphism $M \to G(M)$ to the enveloping group is not surjective. E.g., $\N \subseteq \Z$ is not. The following result shows that, instead, this is the case for certain monoids which are generated by suitable elements. In particular, it entails that these monoids admit a \emph{maximal group homomorphic image}, see \cite[page 18]{CP}.


\begin{proposition}
\label{cor:localiM}
Let $B = \Bbbk M$ for a monoid $M$. Then $i_B$ is surjective if and only if for every $x \in M$, there exists $y \in M$ such that $1 \oslash x = y \oslash 1$. In particular, if $M$ is generated as a monoid by a subset $\{x_i\mid i \in I \}$ such that $1\oslash x_i=y_i\oslash 1$ for some $y_i\in M$, then $\mathrm{H}(B)=Q(B)=B/\ker(i_B)$ where $\ker(i_B) = \langle y_ix_i-1,x_iy_i-1\mid i\in I\rangle$ and the enveloping group $G(M)$ of $M$ can be realised as the multiplicative group generated by $\left\{x_i+\ker(i_B)\mid i\in I\right\}$ in $B/\ker(i_B)$.
The above assumption holds true if $y_ix_iz_i=z_i$ for some $y_i,z_i\in M$ for every $i \in I$.
\end{proposition}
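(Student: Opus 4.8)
The plan is to establish the main claim (surjectivity of $i_B$ under the stated hypotheses) by invoking \cref{prop:localiB} together with \cref{cor:localiM}'s own first assertion, and then to identify $\ker(i_B)$ and the enveloping group explicitly. First I would observe that the equivalence at the start of the statement is essentially the monoid-algebra version of \cref{lem:iBsu}\,\ref{item:epi4}: since the grouplike elements $\{y\oslash 1\mid y\in M\}\cup\{1\oslash x\mid x\in M\}$ form part of a basis of $B\oslash B$, and $i_B$ sends the basis $M$ of $B$ to the grouplikes $x\oslash 1$, surjectivity of $i_B$ reduces to every $1\oslash x$ lying in the span of the $x'\oslash 1$; because these are grouplike (hence linearly independent), this forces $1\oslash x=y\oslash 1$ for a single $y\in M$. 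So the first sentence follows from \cref{lem:iBsu}.

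Next I would treat the ``In particular'' clause. Granting the generating hypothesis $1\oslash x_i=y_i\oslash 1$, \cref{prop:localiB} immediately gives that $i_B$ is surjective, and then \cref{prop:etaSurj}\,\ref{item:etaSurj1} (or directly \cref{prop:pconv}) yields $\mathrm{H}(B)=Q(B)=B/\ker(i_B)B$. Since $M$ is a monoid algebra and $i_B$ is a coalgebra map, I would argue that $\ker(i_B)$ is already a two-sided ideal (using that $B^\cop=\Bbbk M^\cop$ also has $i$ surjective here, via \cref{prop:pconv}\,\ref{item:pconv4}), so $Q(B)=B/\ker(i_B)$. To pin down $\ker(i_B)$, note that each relation $y_ix_i-1$ and $x_iy_i-1$ lies in $\ker(i_B)$: indeed $i_B(y_ix_i)=(y_i\otimes 1)\cdot(x_i\oslash 1)=(y_i\otimes 1)\cdot(1\oslash x_i)=y_i\oslash x_i$, and applying the same move shows $y_i\oslash x_i=1\oslash 1=i_B(1)$, and symmetrically for $x_iy_i$. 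Conversely, in the quotient $B/\langle y_ix_i-1,x_iy_i-1\rangle$ each generator $x_i$ becomes invertible, so this quotient is a group algebra $\Bbbk G$ with $G$ a group, hence a Hopf algebra; by the universal property of $\mathrm{H}(B)$ and minimality one gets $\ker(i_B)=\langle y_ix_i-1,x_iy_i-1\mid i\in I\rangle$ and $G=G(M)$, the enveloping group, realised as the group generated by the images of the $x_i$.

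For the final sentence, I would show that the hypothesis $y_ix_iz_i=z_i$ for some $y_i,z_i\in M$ implies $1\oslash x_i=y_i\oslash 1$, which feeds the previous part. Computing in $B\oslash B$ using the left $B\otimes B$-module structure from \cref{lem:oslash}: from $x_iz_i\oslash z_i = (x_i\otimes 1)\cdot(z_i\oslash z_i)$ and $z_i\oslash z_i = 1\oslash 1$ (as $z_i$ is grouplike, $z_i\oslash z_i=(1\otimes z_i)\cdot(z_i\oslash 1)$ and one reduces the right tensorand), I would manipulate $1\oslash x_i = y_ix_iz_i\oslash x_iz_i\varepsilon(\cdots)$ type identities to collapse to $y_i\oslash 1$. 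Concretely, $y_i\oslash 1 = y_ix_iz_i\cdot? $ — the cleanest route is $1\oslash x_i = z_i z_i^{\text{-left-moving}}$; rather, mimic \cref{ex:regmon}: $1\oslash x_i = y_ix_iz_i\oslash x_i z_i\,(\text{using grouplikeness to insert }z_i) $ and then cancel $z_i$ on the far right via the module action. The main obstacle I anticipate is exactly this last computation: unlike the regular-monoid case of \cref{ex:regmon} where $x^\dagger x x^\dagger$ gives a clean symmetric expression, here $y_ix_iz_i=z_i$ is a weaker one-sided condition, so I must carefully choose how to insert grouplike factors and use $ab_1\oslash cb_2 = a\oslash c\,\varepsilon(b)$ to strip the trailing $z_i$ without introducing stray factors; getting the bookkeeping of which element moves across the $\oslash$ right is the delicate step, but it is a finite symbolic manipulation once the module identity is applied in the correct order.
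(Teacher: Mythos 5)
Your overall plan does track the paper's own proof: \cref{prop:localiB} together with linear independence of grouplike elements for the opening equivalence, then $J\coloneqq\langle y_ix_i-1,\,x_iy_i-1\mid i\in I\rangle\subseteq\ker(i_B)$ with $B/J$ a group algebra to identify $\ker(i_B)$ and $\mathrm{H}(B)$, and grouplike-stripping for the last assertion (your computation $1\oslash x_i = y_ix_iz_i\oslash x_iz_i = y_i\oslash 1$ is, despite your hedging, exactly the paper's $1\oslash a=c\oslash ac=bac\oslash ac=b\oslash 1$ and works with no difficulty). However, there is a genuine error in your verification that $y_ix_i-1\in\ker(i_B)$. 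Your chain $i_B(y_ix_i)=(y_i\otimes 1)\cdot(x_i\oslash 1)=(y_i\otimes 1)\cdot(1\oslash x_i)=y_i\oslash x_i$ rests on the identity $x_i\oslash 1=1\oslash x_i$, which is not the hypothesis (the hypothesis is $1\oslash x_i=y_i\oslash 1$) and is false in general; moreover your follow-up claim $y_i\oslash x_i=1\oslash 1$ is also false in general. Indeed, take $B=\Bbbk\mathbb{Z}$ with generator $t$, $x=t$, $y=t^{-1}$: then $y\oslash x=t^{-1}\oslash t=t^{-2}\oslash 1\neq 1\oslash 1$, because $i_B$ is injective ($B$ being a Hopf algebra, \cref{prop:Frobenius}), whereas of course $i_B(yx)=1\oslash 1$. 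So your displayed equalities cannot all hold.

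The missing ingredient is the flip map $\sigma\colon B\oslash B\to B\oslash B$, $u\oslash v\mapsto v\oslash u$, which is well defined precisely because $\Bbbk M$ is cocommutative; this is how the paper argues (via \cref{pro:cocom}). Applying $\sigma$ to the hypothesis $1\oslash x_i=y_i\oslash 1$ gives $x_i\oslash 1=1\oslash y_i$, and then
\[
y_ix_i\oslash 1=(y_i\otimes 1)\cdot(x_i\oslash 1)=(y_i\otimes 1)\cdot(1\oslash y_i)=y_i\oslash y_i=1\oslash 1,
\]
where the last equality strips the grouplike $y_i$; the computation for $x_iy_i-1$ is symmetric. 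A second, smaller imprecision: \cref{prop:etaSurj} and \cref{prop:pconv} alone do not give $\mathrm{H}(B)=Q(B)$ — they only give that $\eta_B$ is surjective, respectively that $\mathrm{H}(B)$ is a quotient of $Q(B)$. To get equality one needs $Q(B)$ to be a Hopf algebra, which here follows either from \cref{pro:cocom} (cocommutativity plus $i_B$ surjective) or, once $J\subseteq\ker(i_B)$ is correctly established and $B/J$ is seen to be spanned by invertible grouplikes, from \cref{lem:HopfidealiB}, which simultaneously yields $\ker(i_B)=J$ and $\mathrm{H}(B)=Q(B)=B/J$; this lemma is the precise form of your "universal property and minimality" step.
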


\begin{proof}
If for every $x \in M$ we have that $1\oslash x = y\oslash 1$ for some $y\in M$, then $i_B$ is surjective by \cref{prop:localiB}. Conversely, suppose that $i_B$ is surjective. For every $x\in M$ there is $w\in B$ such that $1\oslash x = i_B(w) = w\oslash 1$. Write $w$ as a linear combination $w = \sum_{g\in M}k_gg$ with finitely many $k_g\in \Bbbk$ non-zero.
Then $1\oslash x=\sum_{g\in M}k_gg\oslash 1.$ By linear independence of grouplike elements, there is $y\in M$ with $1\oslash x=y\oslash 1$. This proves the first claim.

If $b\oslash 1 = 1\oslash a$, then, by employing the ``flip map'' $\sigma \colon B\oslash B\to B\oslash B$ considered in the proof of \cref{pro:cocom}, we have
\[ba\oslash 1 = (b\otimes 1)(a\oslash 1) = (b\otimes 1)\sigma (1\oslash a) = (b\otimes 1)\sigma (b\oslash 1) = (b\otimes 1)(1\oslash b) = b\oslash b = 1\oslash 1\]
so that $ba-1\in K\coloneqq \ker(i_B)$, but we also have that $a\oslash 1=\sigma(1\oslash a)=\sigma(b\oslash 1) = 1\oslash b$ and hence, by symmetry, $ab-1\in K$, too.
In particular, we have that $y_ix_i-1,x_iy_i-1\in K$.
Since $K$ is a two-sided ideal  of $B$ (by \cref{pro:cocom}, too), we have that $J\coloneqq\langle y_ix_i-1,x_iy_i-1\mid i\in I\rangle\subseteq K$. 

Now, the bialgebra structure is such that $J$ is a bi-ideal and $B/J$ is spanned, as a vector space, by words in the alphabet $\{x_i + J\mid i \in I\}$ which are grouplike and invertible. Thus, $B/J$ is a Hopf algebra. By \cref{lem:HopfidealiB}, 
%
$\mathrm{H}(B)=Q(B)=B/K=B/J$ and $\mathrm{H}(B)=\Bbbk G$ where $G=G(\mathrm{H}(B))$ is the multiplicative group generated by $\{x_i + J\mid i \in I\}$ inside $B/J$. The universal property of $\mathrm{H}(B)$ entails that $G$ is the enveloping group of the monoid $M$.

Concerning the last assertion, if $bac=c$, then $1\oslash a=c\oslash ac
=bac\oslash ac=b\oslash 1$ and the main assumption is satisfied.
\end{proof}

 We say that an element $a$ in a semigroup $N$ is \emph{periodic} if $a^i=a^{i+p}$ for some positive integers $i,p$. A monoid $M$ generated by periodic elements, such as a locally finite monoid as in \cref{exa:lfmon} or a monoid as in \cref{exa:notArtLoc} and \cref{exa:periodmon} below, always satisfies $M^\ell = M^\times$ and hence $\mathrm{C}(\Bbbk M) = K(\Bbbk M)$ in view of \cref{rem:epsiMinj}. 
 \begin{invisible}
 Indeed, suppose that an element $z$ satisfies $zy=1$. Let $x$ be the left-most letter in $z$, i.e. $z = xz'$. Then $1 = zy = xz'y$ entails that $x$ is right invertible. But it is also periodic, whence $x^{p+k} = x^p$ for some $p,k$, from which $x^k = x^{p+k}(z'y)^{p} = x^p(z'y)^{p} = 1$ and so $z'y = x^{k-1} = x^{-1}$. Write then $z = x^tuz''$ where $u$ is another letter. Then $uz''yx^t = x^{-t}x^t = 1$ and so we can repeat the argument to show that $u$ is invertible, too. After a finite number of steps, we conclude that $z$ is a product of powers of invertible generators and so it is invertible itself.
 \end{invisible}
 Recall from \cite[page 19]{CP} that if $i,p$ are such that $i+p$ is the minimal positive integer for which $a^{i+p}$ equals $a^i$, then we call them the \emph{index} and the \emph{period} of $a$ respectively, and from \cite[\S1.9]{CP} that an element $b$ in $N$ is called \emph{regular} if there exists an element $a\in N$ such that $bab = b$, see \cref{ex:regmon}. In the latter case, we say that $a$ is a \emph{pseudoinverse} of $b$. We denote by $\mathcal{P}(b)$ the set of pseudoinverses of $b$.
Recall also the \emph{Green's relation} $  \mathcal{J}$ on a monoid $M$ defined, for $a,b\in M$, by setting $a\mathcal{J}b$ if $MaM= MbM$, see \cite[\S 2.1]{CP}. Denote by $\mathcal{J}_a$ the equivalence class of $a$.

\begin{lemma}
\label{lem:eurekaenv}
Let $M$ be a monoid. Any pseudoinverse $a \in M$ obeys the identity $bac=c$ for some $b,c\in M$ as in \cref{cor:localiM}. This happens in particular if $a\in\mathcal{J}_1$, or $a$ is regular, or $a$ is periodic.
\end{lemma}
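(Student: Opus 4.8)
The plan is to unwind \cref{cor:localiM}: the property required of $a$ is simply the existence of $b,c\in M$ with $bac=c$, and this is automatic once $a$ is a pseudoinverse. Indeed, if $a\in\mathcal{P}(b)$ for some $b$, that is $bab=b$, then the choice $c=b$ gives $bac=bab=b=c$; this already proves the opening assertion. Hence the whole lemma reduces to checking that each of the three listed hypotheses either exhibits $b,c$ directly or, more uniformly, realises $a$ as a pseudoinverse of a suitable element so that the opening assertion applies.

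First I would treat the cases $a\in\mathcal{J}_1$ and $a$ regular, both of which are short. If $a\in\mathcal{J}_1$ then $MaM=M1M=M$, equivalently $1\in MaM$, so $uav=1$ for some $u,v\in M$; then $(vu)\,a\,(vu)=v(uav)u=vu$, so $a\in\mathcal{P}(vu)$ and we are done. If instead $a$ is regular, say $axa=a$ with $x\in M$, then setting $b=x$ and $c=xa$ gives $bac=xaxa=x(axa)=xa=c$ directly (equivalently $(xax)\,a\,(xax)=xax$, so $a\in\mathcal{P}(xax)$).

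The periodic case is the one I expect to be the real obstacle, because a periodic element need not be regular: in $\langle x\mid x^{n+1}=x^n\rangle$ the generator $x$ is periodic but, for $n\ge 2$, satisfies no identity $xyx=x$, so this case cannot be folded into the previous two. Here I would first promote periodicity to an idempotent power. From $a^i=a^{i+p}$ one gets $a^m=a^{m+p}$ for all $m\ge i$ by an immediate induction, and choosing $n$ a multiple of $p$ with $n\ge i$ yields $a^{2n}=a^n$; consequently $a^m=a^{m-n}a^{2n}=a^{m+n}$ for every $m\ge n$. Taking $b=a^{2n-1}$ (allowed since $n\ge 1$, so $2n-1\ge n$) then gives $bab=a^{4n-1}=a^{(2n-1)+2n}=a^{2n-1}=b$, after cancelling $2n=n+n$ in the exponent via the stabilisation just established, so $a\in\mathcal{P}(a^{2n-1})$ and the opening assertion finishes the proof. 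The only genuine work beyond routine monoid algebra is this passage from periodicity to the idempotent power $a^n$ and the attendant exponent bookkeeping, which I would spell out as the stabilisation identity $a^m=a^{m+n}$ for $m\ge n$ before selecting the witness.
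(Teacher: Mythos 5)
Your proof is correct and takes essentially the same route as the paper's: everything is reduced to exhibiting $a$ as a pseudoinverse, the $\mathcal{J}_1$ case is handled via $1=uav$ and $b=vu$, the regular case via $a\in\mathcal{P}(xax)$ (where the paper simply cites Clifford--Preston instead of computing), and the periodic case by taking $b$ a power of $a$ one less than a sufficiently large multiple of the period (your $a^{2n-1}$ versus the paper's $a^{kp-1}$). The only cosmetic difference is your detour through the idempotent power $a^n=a^{2n}$; the paper verifies $a^{kp-1}aa^{kp-1}=a^{kp-1}$ directly by exponent arithmetic.
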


\begin{proof}
If $a\in\mathcal{P}(b)$, then $bab=b$ so that  $bac=c$ for $c=b$.

We have that $a\in\mathcal{J}_1$ if and only if $MaM=M1M=M$, if and only if there are $a',a''\in M$ with $1=a'aa''$. In this case, $b\coloneqq a''a'$ satisfies $bab=b$ and so $a\in\mathcal{P}(b)$.

If $a$ is regular with pseudoinverse $b$, then $a$ is the pseudoinverse of $bab$, 
see \cite[Lemma 1.14]{CP}.

Suppose that $a$ is periodic with index $i$ and period $p$. Then $a^{i+p} = a^i$. Pick a positive integer $k$ such that $kp-1\geq i$, e.g.~$k\coloneqq\lceil\frac{i+1}{p}\rceil$ by using the ceiling function. Then
\[a^{kp-1}aa^{kp-1} = a^{2kp-1}=a^{kp-1-i}a^{i+kp}=a^{kp-1-i}a^{i}=a^{kp-1}\]
and so $a$ in the pseudoinverse of $a^{kp-1}$.
\end{proof}


As remarked after \cref{cor:CBsub}, for a bialgebra $B$ a sufficient condition to claim that $\mathrm{C}(B)$ is a sub-bialgebra of $B$ itself is to be weakly finite. In fact, under the latter hypothesis, in \cite[Theorem 2.3]{Ery-Skry} $\mathrm{C}(B)$ is constructed as a suitable sub-bialgebra of $B$, even if this construction is not explicit.
We now provide an instance of a bialgebra $B$ with $p_B$ injective which is not weakly finite.

\begin{example}
\label{exa:notweakpBinj}
Let $M$ be a monoid with $ M^\times\subsetneqq M^\ell$. Then there are $g,h\in M$ such tat $gh=1$ but $hg\neq 1$ and hence $B=\Bbbk M$ is not weakly finite. Still, by \cref{lem:pM}, $p_B$ is injective if any element in $M$ has at most one right inverse.

For instance, take the free monoid $M=\langle a,b\mid ab=1 \rangle$ generated by $\{a,b\}$ and subject to the relations $ab=1$. Then $M^\times=\{1\}$ and $M^\ell=\langle a\rangle$ so that  $p_B$ is injective, but the bialgebra $B=\Bbbk M$ is not weakly finite (this bialgebra is the so-called \emph{Jacobson algebra}, which can be regarded as the Leavitt path algebras of the Toeplitz graph, see \cite[Example 7]{Abrams}).
\begin{invisible}
The elements of $M$ are of the form $b^ia^j$. If $b^ia^j\in M^\ell$, then there is $b^sa^t$ such that $b^ia^jb^sa^t=1$. Since $b$ is not invertible, we must have $i=0$ (otherwise $b(b^{i-1}a^jb^sa^t)=1$) and hence $M^\ell\subseteq \langle a\rangle$. The other inclusion is clear as $a^jb^j=1$. Thus $M^\ell=\langle a\rangle$. Now, if $g\in M^\times$ then $g\in M^\ell$ and so $g=a^j$ for some $j$. Then $1=g^{-1}g=g^{-1}a^j$ so that $j=0$ as $a$ is not invertible. Hence $M^\times=\{1\}$. Let us see that any element in $M$ has at most one right inverse. If $g\in M$ has a right inverse $h$, then $g\in M^\ell$ and so $g=a^j$ as seen above. Write $h=b^sa^t$. From $gh=1$, i.e. $a^jb^sa^t=1$ we deduce that $t=0$ (as $a$ is not invertible). Thus $a^jb^s=1$. If $j>s$, then  $a^{j-s}=a^{j-s}a^sb^s=a^jb^s=1$, a contradiction as $a$ is not invertible. Similarly $j<s$ yields $b^{s-j}=a^jb^jb^{s-j}
=a^jb^s=1$, a contradiction as $b$ is not invertible. Thus $j=s$ and hence $h=b^sa^t=b^j$ is uniquely determined.
\end{invisible}
In this case we also have that $i_B$ is surjective by \cref{cor:localiM}. Notice that neither $p_B$ is surjective (still by \cref{lem:pM}), nor $i_B$ is injective, since $B$ is not a right Hopf algebra ($b$ does not have a right inverse).
  Since $a\in \mathcal{P}(b)$ and $b\in \mathcal{P}(a)$, by \cref{cor:localiM} we have $\ker(i_B)
  =\langle ba-1\rangle$, $\mathrm{H}(B)=Q(B)=B/\ker(i_B)\cong\Bbbk \mathbb{Z}$ and, by \cref{thm:monoidbialgebra}, $\mathrm{C}(B) =\Bbbk M^\times= \Bbbk$.

More generally, for $M=\langle a_1,\ldots,a_n\mid a_1\cdots a_n=1 \rangle$, $B = \Bbbk M$ has $i_{B}$ surjective for any $n\geq 1$.
 Since $a_{i}\in \mathcal{P}(a_{i+1}\cdots a_na_1\cdots a_{i-1})$, by \cref{cor:localiM} we have \[\ker(i_B)
  =\langle a_{i+1}a_{i+2}\cdots a_na_1\cdots a_{i-1}a_i - 1 \mid i=1,\ldots,n-1\rangle\] and $\mathrm{H}(B)=Q(B)=B/\ker(i_B)\cong  \Bbbk G_{n-1}$ where $G_{n-1}$ is the free group on $n-1$ elements.
\end{example}

In fact, \cref{prop:epsiInj} states that the injectivity of $i_B$ and of $p_B$ are both sufficient conditions for the injectivity of $\epsilon_B \colon \mathrm{C}(B) \to B$. Let us show that they are not necessary.

\begin{example}\label{ex:converses}
Let $M$ be a monoid having an $a\in M$ with at least two different right inverses $b\neq c$, e.g.\ the free monoid $M=\langle a,b,c\mid ab=1=ac \rangle$ generated by $\{a,b,c\}$ and subject to the relations $ab=1=ac$. Then for $B = \Bbbk M$, $p_B$ is neither injective nor surjective by \cref{lem:pM}, but still the canonical map $\epsilon_B:\mathrm{C}(B)\to B$ is injective as we remarked in \cref{rem:epsiMinj}. Thus the converse of \cref{prop:epsiInj}\,\ref{item:epsiInj2} is not true in general. Note also that $i_B(b)=b\oslash 1=b\oslash ab=1\oslash a= c\oslash ac=c\oslash 1=i_B(c)$.
Thus $i_B$ is not injective and so also the converse of \cref{prop:epsiInj}\,\ref{item:epsiInj1} fails to be true.
\begin{invisible}
From AdjMon we know that $i_B$, for $B=\Bbbk M$, is injective exactly when the monoid $M$ is right cancellative. In this case, if $ab=1$, then $bab=b$ and hence $ba=1$ so that $M^\ell=M^\times$. In particular any element in $M$ has at most one right inverse, so that $i_B$ injective implies $p_B$ injective in this context. Instead the converse implication is not true.
For instance, take $M=\langle a,b,c\mid ba=1=ca \rangle$. Then $M$ is not right cancellative and any element in $M$ has, apparently, at most one right inverse (we should have $M^\ell=\langle b,c\rangle$). If this is correct, we get that $i_B$ is not injective while $p_B$ is injective. Is $i_B$ surjective?
\end{invisible}
Nevertheless, $i_B$ is surjective by \cref{cor:localiM}.

  Since $a\in \mathcal{P}(b)\cap \mathcal{P}(c)$ and $b\in \mathcal{P}(a)\ni c$, by \cref{cor:localiM}, we have $\ker(i_B)
  =\langle ba-1,ca-1\rangle$ and $\mathrm{H}(B)=Q(B)=B/\ker(i_B)\cong\Bbbk \mathbb{Z}$. %
Furthermore, $M^\times=\{1\}$ and $M^\ell=\langle a\rangle$, so that $\mathrm{C}(B) =\Bbbk M^\times= \Bbbk$ by \cref{thm:monoidbialgebra}.
\begin{invisible}
The elements of $M$ are of the form $xa^j$ where $x$ is a word in $b,c$. If $xa^j\in M^\ell$, then there is $ya^t$ such that $xa^jya^t=1$. If $x$ starts with $b$, then $b$ would be invertible, which is not the case. If $x$ starts with $c$, then $c$ would be invertible, which is not the case either. Hence $x = 1$ and $M^\ell\subseteq \langle a\rangle$. The other inclusion is clear as $a^jb^j=1$. Thus $M^\ell=\langle a\rangle$. Now, if $g\in M^\times$ then $g\in M^\ell$ and so $g=a^j$ for some $j$. Then $1=g^{-1}g=g^{-1}a^j$ so that $j=0$ as $a$ is not invertible. Hence $M^\times=\{1\}$.
\end{invisible}
\end{example}


For many bialgebras $B$, such as when $B$ is right perfect or left $n$-Hopf, both the surjectivity of $i_B$ and the injectivity of $p_B$ hold. The two properties are, however, unrelated in general.

\begin{example}
\label{ex:iBpB}
\cref{ex:converses} exhibits a bialgebra with $i_B$ surjective and $p_B$ not injective.
    \begin{invisible}
    Particular case of Lemma 2.21 in AdjMon.
    \end{invisible}
%
On the other hand, consider the monoid bialgebra $B = \Bbbk \N$. It has $p_B$ injective because in $\N$ every element has at most one right inverse. However, it cannot have $i_B$ surjective because it embeds into $\Bbbk \Z$, which is a Hopf algebra, and hence $B$ has $i_B$ already injective by \cref{lem:iB-inj}\,\ref{ib-inj_item3}: if $i_B$ were also surjective, then $\Bbbk \N$ would have been a commutative right Hopf algebra (by \cref{prop:Frobenius}) and so a Hopf algebra (see \cite[Theorem 3]{GNT}), which is clearly not the case.
\end{example}

Being left Artinian, or even right perfect, is a sufficient but not necessary condition for $i_B$ to be surjective.

\begin{example}
\label{exa:lfmon}
A monoid $M$ is called a \emph{locally finite monoid} if every finite subset generates a finite sub-monoid.
 Clearly the monoid algebra $B=\Bbbk M$ of such a monoid is a locally finite algebra, see e.g. \cite[Remark 2.4]{Ikeda}  in the group case.
\begin{invisible}
Let $x_1,x_2,\cdots x_n\in \Bbbk M$. Write
$x_i=\sum_{j=1}^m k_{ij} g_{ij}$. Then $H:=\langle g_{ij}\mid 1\leq i\leq n,1\leq j\leq m\rangle$ is finite by assumption. Clearly each $x_i\in \Bbbk H$ which is finite-dimensional. Hence the subablgebra generated by $x_1,x_2,\cdots x_n$ is finite-dimensional and $\Bbbk M$ is locally finite.

In fact, in \href{https://math.stackexchange.com/questions/4908076/the-relation-between-locally-finite-algebras-and-group-algebras-of-locally-finit}{stack} it is noticed that $M$ is a locally finite group iff the group algebra $\Bbbk M$ is a locally finite algebra.
\end{invisible}
In view of \cref{prop:localiB}\,\ref{item:localiB4}, we get that $i_B$ is surjective. However, as we already observed, the monoid algebra $B$ is not left Artinian unless $M$ is finite.

Now, let $M$ be a monoid which is the union of an increasing chain of finite sub-monoids $(M_i)_{i\in I}$. Then $M$ is a locally finite monoid, cf.\ \cite[Lemma 1.A.9]{Kegel-Wehrfritz} in the group case.
\begin{invisible}
Let $g_1,g_2,\cdots ,g_n\in M$. If $M=\cup_{i\in I} M_i$ with $(M_i)_{i\in I}$ a chain of finite sub-monoids, then for each $t$ there is $i_t\in I$ such that $g_t\in M_{i_t}$. Let $M_N \coloneqq \max\{M_{i_t}\mid t = 1,\ldots, n\}$. Then $g_1,g_2,\cdots ,g_n\in M_N$ and $M_N = M_{i_1}\cup\cdots\cup M_{i_n}$. Hence $\langle g_1,g_2,\cdots ,g_n\rangle \subseteq M_N$  which is finite.
\end{invisible}
Therefore the argument above applies.

For instance, let $\{x_i\mid i\in \N\}$ be a set of variables. For every $i\in \N$ take two positive integers  $a_i\neq b_i$ and consider the commutative monoid $M=\langle x_i\mid i,j\in \N,x_i\!^{a_i}=x_i\!^{b_i} ,x_ix_j=x_jx_i\rangle$ and the monoid algebra $B=\Bbbk M$.
Then $M$ is the union of the increasing chain of finite sub-monoids $M_n=\langle x_i\mid 0\leq i,j\leq n,x_i\!^{a_i}=x_i\!^{b_i} ,x_ix_j=x_jx_i\rangle$. Therefore, $B$ is locally finite, $i_B$ is surjective, but $B$ is not a right perfect ring as the descending chain of cyclic ideals $(x_0)\supset (x_1x_0)\supset (x_2x_1x_0)\supset \cdots$ does not stabilize.
Set $n_i\coloneqq |a_i-b_i|$. Note that if $u$ is periodic with index $i$ and period $p$, i.e.\ $u^i=u^{i+p}$, then $u^{p-1}uu^{i} = u^{i+p} = u^{i}$ so that $buc=c$ for $b=u ^{p-1}$, $c=u ^{i}$. Thus, since $x_i$ is periodic, by \cref{cor:localiM} we have $\ker(i_B)
  =\langle x_i^{n_i}-1\mid i\in\N\rangle$ and $\mathrm{H}(B)=Q(B)=B/\ker(i_B)\cong\Bbbk G$  where $G\coloneqq \langle x_i\mid i,j\in \N,x_i\!^{n_i}=1 ,x_ix_j=x_jx_i\rangle$.
\end{example}

Similarly, being locally finite is a sufficient but not necessary condition for having $i_B$ surjective.

\begin{example}
\label{exa:notArtLoc}
Let $\{x_i\mid i\in I\}$ be a set of variables  and for every $i\in I$ take  two positive integers  $a_i\neq b_i$.
Consider the monoid $M=\langle x_i\mid i\in I,x_i\!^{a_i}=x_i\!^{b_i} \rangle$ generated by the periodic elements $\{x_i\mid i\in I\}$ and the monoid algebra $B=\Bbbk M$. Since $x_i$ belongs to the sub-bialgebra it generates, which is isomorphic to $ \Bbbk\langle x_i \mid x_i^{a_i}=x_i^{b_i}\rangle$ and hence is a finite-dimensional sub-bialgebra of $B$,  \cref{prop:localiB}\,\ref{item:localiB3} allows us to conclude that $i_B$ is surjective.
Set $n_i\coloneqq |a_i-b_i|$.
As in \cref{exa:lfmon}, we get $\ker(i_B)=\langle x_i\!^{n_i}-1\mid i\in I\rangle$ and hence $\mathrm{H}(B)=Q(B)=B/\ker(i_B)\cong\Bbbk G$ where $G\coloneqq \langle x_i\mid i\in I,x_i\!^{n_i}=1 \rangle$.

Assume that $I$ has at least two elements  $i\neq j$.
Then $B$ is not locally finite as the sub-algebra $\Bbbk\langle x_i,x_j\rangle$ of $B$ generated by $x_i,x_j$ is not finite-dimensional. The monoid algebra $B$ is not even a right perfect
ring.
\end{example}

\begin{example}
\label{exa:periodmon}
Let us dig deeper into a particular instance of \cref{exa:notArtLoc}.
Given $m,n\in\mathbb{N}$, consider the finite monoid $M=\langle x\mid x^{m+n}=x^{n} \rangle$ and the monoid algebra $B=\Bbbk M$.
We know that $\mathrm{H}(B)=Q(B)= B/\ker(i_B)\cong \Bbbk C_m$  where $C_m\coloneqq\langle x\mid x^{m}=1 \rangle$ is the cyclic group of order $m$.


Note that, since $\id^{* m+n}=\id^{*n}$, $S=\id^{* m-1}$ is a two sided $n$-antipode whenever $m>1$. The minimality of $n$ can be checked as we did in \cref{exa:nHopf}.
In the particular case when $m=3$ and $n=2$, we get $M=\langle x\mid x^{5}=x^{2} \rangle$ and $S=\id^{* 2}$. Thus $S(x^k)=x^{2k}$ for all $k\in\mathbb{N}$ and hence $S^2(x)=S(x^2)=x^4$, $S^3(x)=S(x^4)=x^8=x^2=S(x)$ so that $S$ is neither injective nor surjective.

Let us turn to $\mathrm{C}(B)$. By the foregoing, $\mathrm{C}(B)=K(B)$ and hence it is a sub-bialgebra of $B$ whence pointed as well.
\begin{invisible}
By \cite[Proposition 3.4.3(d)]{Radford-book}, the coradical is $\mathrm{C}(B)_0=B_0\cap \mathrm{C}(B)=B\cap \mathrm{C}(B)=\mathrm{C}(B)$. Thus $\mathrm{C}(B)$ is direct sum of its simple sub-coalgebras. Since any simple sub-coalgebra of $\mathrm{C}(B)$ is a simple sub-coalgebra of $B$, then it is one-dimensional.
\end{invisible} Since $\mathrm{C}(B)$ is a Hopf algebra, it is then spanned by invertible grouplike elements. If $n\neq0$, i.e. if $B$ is not natively a Hopf algebra, then there is only one such an element in $B$, namely $1$. In this case we get $\mathrm{C}(B)=\Bbbk$.
Another way to arrive at the same conclusion is to use \cref{lem:KkM}. Indeed $K(B)=\Bbbk M^\ell=\Bbbk$ as $M^\ell=\{1\}$ for $n\neq 0$.
\end{example}

\subsection{Further examples}

\begin{example}
\label{exa:Radford}
Following \cite[Example 3]{Radford-simple}, consider a coalgebra $C\neq 0$. This coalgebra has an associative multiplication defined by $\varepsilon\otimes C:C\otimes C\to C$ with respect to which $\Delta$ and $\varepsilon$ are multiplicative.
The coalgebra structure of $C$ extends uniquely to a bialgebra structure on the algebra $B=\Bbbk\cdot 1+C$ obtained by adjoining a unity to $C$. We claim that $\mathrm{H}(B) = \Bbbk = \mathrm{C}(B)$.

Let us first check that $B$ is not a right Hopf algebra.
If $S$ is a right antipode, then for every $c \in C$ we have $\varepsilon_B(c)1=c_1S(c_2)=\varepsilon_C(c_1)S(c_2)=S(c)$ so that $\varepsilon_B(c)1=S(c)$. Since we also have  $\varepsilon_B(1)1=S(1)$, we conclude that $S=u_B \circ \varepsilon_B$. Then, for every $c \in C$ we have $\varepsilon_B(c)1=c_1S(c_2)=c_1\varepsilon_B (c_2)=c$ and hence $c\in \Bbbk 1$ and so $c=0$ for every $c\in C$, a contradiction as $C\neq 0$.

Nevertheless, $B$ is a $1$-Hopf algebra with two-sided $1$-antipode $u_B \circ \varepsilon_B$. Indeed, for $c\in C$ we can compute $\mathrm{Id}^{*2}\left( c\right)  =c_1c_2=\varepsilon(c_1)c_2=c$ and $
\mathrm{Id}^{*2}\left( 1\right)  =1\cdot1=1$ so that $\mathrm{Id}^{*2}=\mathrm{Id}$ and hence $B$ is a right $1$-Hopf algebra and $S=\mathrm{Id}^{*0}=u_B \circ \varepsilon_B $ is a right $1$-antipode (it cannot be a right $0$-antipode, in view of the above). $B$ is also a left $1$-Hopf algebra and $S$ a left $1$-antipode, because on the one hand $B$ cannot be a left Hopf algebra (or \cref{lem:nthS} would entail that it is a Hopf algebra, contradicting the fact that it cannot have a right antipode), and, on the other hand, $\id^{*0} * \mathrm{Id}^{*2} = \mathrm{Id}^{*2} = \mathrm{Id}$.

Now, by the proof of \cref{prop:pconv} we know that, for every $b\in B$, we have $b_1S(b_2) -\varepsilon_B (b) 1_B \in \ker\left(
i_{B}\right)$, i.e., by definition of $S$, $b-\varepsilon_B ( b)1_B\in \ker\left( i_{B}\right)$. Thus $B^+ \coloneqq \ker(\varepsilon_B)$ is a bi-ideal such that $B/B^+ \cong \Bbbk$ is a Hopf algebra and which is contained in $ \ker\left( i_{B}\right)$. Hence, in view of \cref{lem:HopfidealiB}, $\mathrm{H}(B) = Q(B) = B/I = B/B^+ \cong \Bbbk$. Moreover, by \cref{prop:semiantip} the map $i_B$ is surjective, and so we also have $B\oslash B = \im(i_B) \cong B/\ker\left( i_{B}\right)\cong\Bbbk$.
%
%

By \cref{prop:semiantip} again, $p_B$ is injective and we obtained this by taking $T=S$ in \cref{prop:pBin}. Since, by the proof of \cref{prop:pconvd}, we have
$b'_1T(b'_2)=\varepsilon(b')1$ for every $b'\in K(B)$, by the definition of $S$ we get $b'=\varepsilon(b')1$ and hence $K(B)\subseteq \Bbbk\cdot 1$. Thus, $\mathrm{C}(B)=K(B)= \Bbbk\cdot 1$.
\end{example}

The following example naturally arises as a dual version of the previous one. It is of particular interest, because it also shows the existence of infinite-dimensional left Artinian bialgebras and of bialgebras which are right perfect but not left Artinian.

\begin{example}
\label{exa:Radual}
Let $A$ be an algebra and consider the product algebra $B\coloneqq A\times \Bbbk$.
Define on it a bialgebra structure by setting ,  for every $a\in A,k\in \Bbbk$,
\[\Delta(a,k)=(1,1)\otimes(a,0)+(a,k)\otimes(0,1)\qquad \text{and} \qquad\varepsilon(a,k)=k.\]
Note that if $A$ is left Artinian and infinite-dimensional, then so is $B$ as the left ideals in $B$ are of the form $I\times 0$ and $I\times \Bbbk$ where $I$ is a left ideal in $A$. For the same reason, if $A$ is right perfect but not left Artinian, e.g.~the triangular $\Q$-algebra $\left(\begin{smallmatrix}
    \mathbb{Q} & \mathbb{R} \\ 0 & \mathbb{Q}
\end{smallmatrix}\right)$ as in  \cite[page 318]{AndFull}, then so is $B$.
We now compute $\mathrm{C}(B)$ and $\mathrm{H}(B)$. First note that $B$ is neither a left nor a right Hopf algebra, as $(0,1)\in B$ is a grouplike which is  neither left nor right invertible.
\begin{invisible}
  Otherwise we would have
$(1,1)=1_{B}=\varepsilon(0,1)1_{B}=S((0,1)_1)(0,1)_2=S(1,1)(0,0)+S(0,1)(0,1)=S(0,1)(0,1)\in 0\times \Bbbk$ or
$(1,1)=1_{B}=\varepsilon(0,1)1_{B}=(0,1)_1S((0,1)_2)=(1,1)S(0,0)+(0,1)S(0,1)=(0,1)S(0,1)\in 0\times \Bbbk$, a contradiction.
\end{invisible}
Moreover $\id^{*2}(a,k)=(1,1)(a,0)+(a,k)(0,1)=(a,0)+(0,k)=(a,k)$ so that $\id^{*2}=\id$, and hence $S \coloneqq u_B\circ \varepsilon_B$ is a two-sided $1$-antipode.

As in \cref{exa:Radford}, we conclude that $B^+= \ker\left( i_{B}\right)$, that $\mathrm{H}(B)=Q(B)\cong B\oslash B\cong \Bbbk$ and that $\mathrm{C}(B)=K(B)= \Bbbk\cdot 1_B$.
\begin{invisible}
 By the proof of \cref{prop:pconv}, we know that, for every $b\in B$, we have $b_1S(b_2) -\varepsilon_B (b)1_B \in \ker\left(
i_{B}\right) $ i.e. $b-\varepsilon_B ( b)1_B\in \ker\left( i_{B}\right) .$ Thus $B^+\subseteq \ker\left( i_{B}\right)$.
On the other hand, since $\ker(i_B)$ is a coideal, we have $\varepsilon-B(\ker(i_B))=0$ i.e. $\ker\left( i_{B}\right)\subseteq B^+$.
Thus $B^+= \ker\left( i_{B}\right)$.
As
a consequence $Q\left( B\right) =B/\ker\left( i_{B}\right) B=B/B^+\cong\Bbbk $. Since this is a Hopf algebra, the universal property of $Q(B)$ entails $\mathrm{H}(B)=Q(B)$.
Note that, by \cref{prop:semiantip}, the map $i_B$ is surjective so that we also have $B\oslash B=\im(i_B)\cong B/\ker\left( i_{B}\right) =B/B^+\cong\Bbbk$.

By \cref{prop:semiantip}, the map $p_B$ is injective and this has been obtained by taking $T=S$ in \cref{prop:pBin}. Since, by the proof of \cref{prop:pconvd}, we have
$b'_1T(b'_2)=\varepsilon_B(b')1_B$, for every $b'\in K(B)$, definition of $S$, we get $b'=\varepsilon_B(b')1_B$ and hence $K(B)\subseteq \Bbbk\cdot 1_B$ and so $\mathrm{C}(B)=K(B)= \Bbbk\cdot 1_B$.
\end{invisible}
In particular, in case $A=\Bbbk C_n$ for $C_n=\langle c\mid c^n=1\rangle$, $n>1$, we get the bialgebra $B=A\times  \Bbbk=\Bbbk\langle x\mid x^{n+1}=x\rangle$ where $x\coloneqq (c,0)$, $\Delta(x)=1\otimes x+x\otimes (1-x^n)$ and $\varepsilon(x) = 0$.
One easily shows that the set of grouplike elements in $B$ is $G(B)=\{1,1-x^n\}$ so that
$x\notin \Bbbk G(B)$.
\begin{invisible}
Set $g \coloneqq 1-x^n$ and note that $g^2=g$. Since $(1\otimes x)(x\otimes (1-x^n))=0$, for $t>0$ we get that $\Delta(x^t)=1\otimes x^t+x^t\otimes (1-x^n)^t
=1\otimes x^t+x^t\otimes (1-x^n) = 1 \otimes x^t + x^t \otimes g$. Thus, if $z=\sum_{t=0}^nk_tx^t$ is grouplike, we get $k_0 = \varepsilon(z) = 1$ and
\[z \otimes z = \Delta(z)=k_01\otimes 1+ \sum_{t=1}^nk_t1\otimes x^t+\sum_{t=1}^nk_tx^t\otimes (1-x^n) = 1 \otimes z + (z-1) \otimes g.\]
I.e. $z$ has to satisfy
\[(z-1) \otimes (z-g) = 0\]
which forces $z=1$ or $z=g$.
\end{invisible}
\end{example}

In the examples so far where we explicitly computed $\mathrm{C}(B)$, we always found $\mathrm{C}(B) = \Bbbk$. In general, this is not the case (see, e.g., \cref{thm:monoidbialgebra}). The forthcoming \cref{ex:CBnontriv} exhibits a less elementary example.

\begin{remark}
\label{rmk:findual}
 It is known, see e.g. \cite[page 87]{Abe}, that the finite dual defines two auto-adjunctions namely $(-)^\circ\dashv (-)^\circ :\Bialg^{\mathrm{op}}\to\Bialg$  and $(-)^\circ\dashv (-)^\circ :\Hopf^{\mathrm{op}}\to\Hopf$. Moreover, the forgetful functor $F:\Hopf\to\Bialg$ forms an adjoint triple $\mathrm{H}\dashv F\dashv\mathrm{C}$ together with the free and the cofree functors. Thus, we also have the adjunctions $H\dashv F$ and $\mathrm{C}^{\mathrm{op}}\dashv F^{\mathrm{op}}:\Hopf^{\mathrm{op}}\to\Bialg ^{\mathrm{op}}$. Since $(-)^\circ\circ F^{\mathrm{op}} =F\circ (-)^\circ$, by uniqueness of left adjoint functor, we get $\mathrm{C}^{\mathrm{op}}\circ (-)^\circ\cong (-)^\circ\circ \mathrm{H}$ and hence one has $\mathrm{C}(B^\circ)\cong \mathrm{H}(B)^\circ$ for every bialgebra $B.$
 \[
 \begin{gathered}
 \xymatrix@R=0.6cm{
 \Bialg \ar[d]_{\mathrm{H}} \ar[r]^{(-)^\circ} & \Bialg^{\mathrm{op}} \ar[d]^{\mathrm{C}^{\mathrm{op}}} \\
 \Hopf  \ar[r]^{(-)^\circ} & \Hopf^{\mathrm{op}}}
 \end{gathered} \qedhere
 \]
\end{remark}

\begin{example}\label{ex:CBnontriv}
\begin{enumerate}[label=\arabic*),leftmargin=*]
\item Consider the dual bialgebra $B^*$ of the finite-dimensional bialgebra $B$ from \cref{ssec:quotquant}, i.e.\ $B=\Bbbk \left\langle x,y\mid yx=-xy,x^{3}=x,y^{2}=0\right\rangle$. Since the finite dual coincide with the linear dual in the finite-dimensional case, by \cref{rmk:findual} we have
\[\mathrm{C}(B^*)= \mathrm{C}(B^\circ)\cong \mathrm{H}(B)^\circ= H_4^\circ=H_4^*\cong H_4,\]
where we used \cref{prop:quotquant} to write $\mathrm{H}(B)=H_4$, the Sweedler's $4$-dimensional Hopf algebra.
As a consequence $\mathrm{C}(B^*)$ is not trivial.

\item Let $M$ be a monoid and let $\mathcal{R}_{\Bbbk}(M)\coloneqq (\Bbbk M)^\circ$ be the set of representative functions on $M$ (see, e.g., \cite[Chapter 2, \S 2.2]{Abe}). Then
$\mathrm{C}(\mathcal{R}_{\Bbbk}(M))=\mathrm{C}((\Bbbk M)^\circ)\cong \mathrm{H}(\Bbbk M)^\circ=(\Bbbk G)^\circ=\mathcal{R}_{\Bbbk}(G)$ where $G\coloneqq G(M)$ is the universal enveloping group of $M$. \qedhere
\end{enumerate}
\end{example}

For the convenience of the interested reader, let us conclude by pointing out that the constructions of the Hopf envelope and of the cofree Hopf algebra are also suited for the (quasi)triangular and the dual (quasi)triangular setting, respectively.

\begin{example}
 Let $(B,\mathcal{R})$ be a quasitriangular bialgebra, see \cite[Definition 12.2.3]{Radford-book}. If $\eta_B:B\to \mathrm{H}(B)$ is surjective (e.g. $B$ is finite-dimensional or, more generally, a right perfect bialgebra), then, cf.~\cite[Exercise 12.2.2]{Radford-book}, $(\mathrm{H}(B),\mathcal{S})$ is a quasitriangular Hopf algebra where $\mathcal{S}\coloneqq (\eta_B\otimes \eta_B)(\mathcal{R})$. If $f \colon (B,\mathcal{R})\to (H,\mathcal{T})$ is a morphism of quasitriangular bialgebras (see \cite[Definition 12.2.7]{Radford-book}) and $H$ is a Hopf algebra, in particular $f \colon B\to H$ is a bialgebra map into a Hopf algebra so that there is a unique Hopf algebra map $\hat{f} \colon \mathrm{H}(B)\to H$ such that $\hat{f}\circ \eta_B=f$. We have $(\hat{f}\otimes \hat{f})(\mathcal{S})=(\hat{f}\otimes \hat{f})(\eta_B\otimes \eta_B)(\mathcal{R})=(f\otimes f)(\mathcal{R})=\mathcal{T}$ so that $\hat{f}$ is a morphism of quasitriangular Hopf algebras. Thus $(\mathrm{H}(B),\mathcal{S})$ is the free quasitriangular Hopf algebra generated by $(B,\mathcal{R})$.

If $(B,\mathcal{R})$ is triangular, $(\mathrm{H}(B),\mathcal{S})$ becomes the free triangular Hopf algebra generated by it.

Dually, if $(B,\beta)$ is a coquasitriangular (or dual quasitriangular) bialgebra, see e.g.~\cite[Definition 14.2.1]{Radford-book}, and if $\epsilon_B \colon \mathrm{C}(B) \to B$ is injective, then $\big(\mathrm{C}(B), \beta\circ(\epsilon_B \otimes \epsilon_B)\big)$ is a coquasitriangular Hopf algebra, 
and it is the cofree coquasitriangular Hopf algebra on $(B,\beta)$.
Similarly, if $(B,\beta)$ is cotriangular, then $\big(\mathrm{C}(B), \beta\circ(\epsilon_B \otimes \epsilon_B)\big)$ is the cofree cotriangular Hopf algebra on $(B,\beta)$.
 \end{example}

\bibliography{references}
\bibliographystyle{acm}

\end{document}